\documentclass[a4paper,12pt]{amsart}
\usepackage{amsmath,amsthm,amsfonts,amssymb,graphicx,float,wrapfig,calc,microtype,thmtools}
\usepackage[T1]{fontenc}
\usepackage[english]{babel}
\usepackage[usenames,dvipsnames,svgnames,table]{xcolor}

\usepackage{imakeidx}
\newcommand{\DefNoIndex}[1]{\textcolor{Maroon}{\emph{#1}}}
\newcommand{\defn}[1]{\textcolor{Maroon}{\emph{#1}}\index{#1}}
\newcommand{\hdefn}[2]{\textcolor{Maroon}{\emph{#1-#2}}\index{#2@#1-#2}}
\newcommand{\splitdefn}[2]{\textcolor{Maroon}{\emph{#1 #2}}\index{#2@#1 #2}}
\newcommand{\DefnIndex}[2]{\textcolor{Maroon}{\emph{#1}}\index{#2}}

\makeindex

\usepackage[unicode=true]{hyperref}
\hypersetup{
colorlinks,
linkcolor={black},
citecolor={black},
urlcolor={blue!60!black},
pdftitle={Universality in Minor-Closed Graph Classes},
pdfauthor={Huynh -- Mohar -- Samal -- Thomassen -- Wood}}

\usepackage[shortlabels]{enumitem}
\setlist[itemize]{topsep=0ex,itemsep=0ex,parsep=0ex}
\setlist[enumerate]{topsep=0ex,itemsep=0ex,parsep=0ex}

\usepackage[longnamesfirst,numbers,sort&compress]{natbib}
\makeatletter
\def\NAT@spacechar{~}
\makeatother

\usepackage[margin=30mm]{geometry}
\renewcommand{\baselinestretch}{1.1}
\setlength{\footnotesep}{\baselinestretch\footnotesep}
\allowdisplaybreaks

\usepackage[noabbrev,capitalise]{cleveref}
\crefname{lem}{Lemma}{Lemmas}
\crefname{thm}{Theorem}{Theorems}
\crefname{cor}{Corollary}{Corollaries}
\crefname{prop}{Proposition}{Propositions}
\crefname{conj}{Conjecture}{Conjectures}
\crefname{open}{Open Problem}{Open Problems}
\crefname{claim}{Claim}{Claims}
\crefname{enumi}{Item}{Items}
\crefformat{equation}{(#2#1#3)}
\Crefformat{equation}{Equation #2(#1)#3}
\crefformat{enumi}{#2#1#3}
\Crefformat{enumi}{Item (#2#1#3)}

\theoremstyle{plain}
\newtheorem{thm}{Theorem}[section]
\newtheorem{lem}[thm]{Lemma}

\newtheorem{prop}[thm]{Proposition}
\newtheorem{claim}{Claim}[thm]
\theoremstyle{definition}

\usepackage{scalerel,stackengine}
\stackMath
\newcommand\reallywidehat[1]{%
\savestack{\tmpbox}{\stretchto{%
  \scaleto{%
    \scalerel*[\widthof{\ensuremath{#1}}]{\kern.1pt\mathchar"0362\kern.1pt}%
    {\rule{0ex}{\textheight}}%
  }{\textheight}%
}{2.4ex}}%
\stackon[-6.9pt]{#1}{\tmpbox}%
}

\newcommand{\CartProd}{\mathbin{\square}}

\newcommand{\T}{\ensuremath{\protect{\mathcal{T}}}}
\renewcommand{\ge}{\geqslant}

\renewcommand{\geq}{\geqslant}
\renewcommand{\leq}{\leqslant}

\DeclareMathOperator{\rad}{rad}
\DeclareMathOperator{\tw}{tw}
\DeclareMathOperator{\bw}{bw}
\DeclareMathOperator{\stw}{stw}
\DeclareMathOperator{\pw}{pw}
\DeclareMathOperator{\dist}{dist}

\DeclareMathOperator{\col}{col}
\newcommand{\sreach}{S}

\renewcommand{\thefootnote}{\fnsymbol{footnote}}	
\allowdisplaybreaks
\sloppy

\newcommand{\note}[1]{}

\newcommand{\PP}{P_{\hspace{-0.25ex}\infty}}
\newcommand{\PPP}{C_{\hspace{-0.15ex}\infty}}

\newcommand{\LL}{\mathcal{L}}
\newcommand{\FF}{\mathcal{F}}
\newcommand{\XX}{\mathcal{X}}

\newcommand{\CC}{\mathcal{C}}
\newcommand{\YY}{\mathcal{Y}}
\newcommand{\GG}{\mathcal{G}}

\newcommand{\HH}{\mathcal{H}}
\newcommand{\JJ}{\mathcal{J}}
\newcommand{\DD}{\mathcal{D}}
\newcommand{\TT}{\mathcal{T}}
\newcommand{\RR}{\mathcal{R}}

\newcommand{\NN}{\mathbb{N}}
\newcommand{\ZZ}{\mathbb{Z}}
\renewcommand{\SS}{\mathcal{S}}
\newcommand{\PART}{\mathcal{P}}
\newcommand{\GGG}[2]{#1\langle{#2}\rangle}

\begin{document}

\title{Universality in Minor-Closed Graph Classes\footnotemark[1]}

\footnotetext[1]{1 September 2021. Revised \today}

\author[T. Huynh]{Tony Huynh}
\address{
Discrete Mathematics Group\newline
Institute for Basic Science\newline
Daejeon, South Korea}
\email{tony@ibs.re.kr}
\thanks{Research of T. Huynh is supported by the Institute for Basic Science (IBS-R029-C1).}

\author[B. Mohar]{Bojan Mohar}
\address{Department  of Mathematics\newline 
Simon Fraser University\newline
Burnaby, BC, Canada}
\email{mohar@sfu.ca}
\thanks{Research of B.~Mohar is supported by the NSERC Discovery Grant R832714 (Canada) and in part by the ERC Synergy grant KARST (European Union, ERC, KARST, project number 101071836).}

\author[R. \v{S}\'amal]{Robert \v{S}\'amal}
\address{Computer Science Institute\newline 
Charles University\newline
Prague, Czech Republic}
\email{samal@iuuk.mff.cuni.cz}
\thanks{Research of R.~\v{S}\'amal is partially supported by grant 25-16627S of the Czech Science Foundation. This project has received funding from the European Union's Horizon 2020 research and innovation programme under the MSCA grant agreement No 823748 and also by ERC grant agreement No 810115.}

\author[C. Thomassen]{Carsten Thomassen}
\address{Department of Applied Mathematics and Computer Science\newline 
Technical University of Denmark\newline 
Lyngby, Denmark}
\email{ctho@dtu.dk}
\thanks{Research of C.~Thomassen supported by the Independent Research Fund Denmark, 8021-002498 AlgoGraph.}

\author[D. R. Wood]{David R. Wood}
\address{School of Mathematics\newline 
Monash University\newline
Melbourne, Australia}
\email{david.wood@monash.edu}
\thanks{Research of D.~R.~Wood is supported by the Australian Research Council and by NSERC.}

\subjclass{05C10 planar graphs, 05C83 graph minors, 05C63 infinite graphs}

\begin{abstract}
Stanis{\l}aw Ulam asked whether there exists a universal countable planar graph (that is, a countable planar graph that contains every countable planar graph as a subgraph). J\'anos Pach~(1981) answered this  question in the negative. We strengthen this result by showing that every countable graph that contains all countable planar graphs must contain (i) an infinite complete graph as a minor, and (ii) a subdivision of the complete graph $K_t$, for every finite $t$. 

On the other hand, we construct a countable graph that contains all countable planar graphs and has several key properties such as linear colouring numbers, linear expansion, and every finite $n$-vertex subgraph has a balanced separator of size $O(\sqrt{n})$. The graph is $\TT_6\boxtimes P_{\!\infty}$, where $\TT_k$ is the universal treewidth-$k$ countable graph (which we define explicitly), $P_{\!\infty}$ is the 1-way infinite path, and $\boxtimes$ denotes the strong product. 
More generally, for every $t\in\NN$ we construct a countable graph that contains every countable $K_t$-minor-free graph and has the above key properties. 

Our final contribution is a construction of a countable graph that contains every countable $K_t$-minor-free graph as an induced subgraph, has linear colouring numbers and linear expansion, and contains no subdivision of the countably infinite complete graph (implying (ii) above is best possible). 
\end{abstract}

\maketitle

\renewcommand{\thefootnote}{\arabic{footnote}}

\newpage
\tableofcontents

\setlength{\parindent}{0cm}
\setlength{\parskip}{1.4ex}
\newpage

\section{Introduction}
\label{Intro}

A graph\footnote{Graphs in this paper are simple and either finite or countably infinite, unless explicitly stated otherwise.} is \defn{planar} if it has a drawing in the Euclidean plane (or equivalently on the sphere) with no edge-crossings. Planar graphs are of broad importance in graph theory. The 4-Colour Theorem for colouring  maps~\citep{AH89,RSST97} can be restated as every planar graph is 4-colourable. The 4-Colour Conjecture (as it was) motivated the development of much of 20th century graph theory, and its proof was one of the first examples of a computer-based mathematical proof. The Kuratowski--Wagner Theorem~\citep{Kuratowski30,Wagner37} characterises planar graphs as those graphs not  containing the complete graph $K_5$ or the complete 
\begin{wrapfigure}{r}{59mm}
\centering
{\vspace*{-1.5ex}\includegraphics{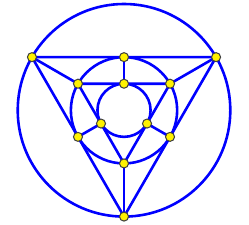}\vspace*{-1.3ex}}
\caption{The icosahedron graph.}
\vspace*{-1.3ex}
\end{wrapfigure}
 bipartite graph  $K_{3,3}$ as a minor. More generally, the Graph Minor Theorem of \citet{RS-GraphMinors} says that any proper minor-closed class of graphs can be characterised by a finite set of excluded minors. Planar graphs are foundational in Robertson and Seymour's Graph Minor Structure Theorem, which shows that graphs in any proper minor-closed class can be constructed using four types of ingredients, where planar graphs are the most basic building block.

Planar graphs also arise in many areas of mathematics outside of graph theory. The Koebe disc packing theorem \citep{Koebe36}, which says that every planar graph can be represented by touching discs in the plane, is the beginning of a rich theory of conformal mappings~\citep{HeSc93,Stephenson05}. Steinitz's Theorem~\citep{Steinitz22} connects planar graphs and classical geometry: the 1-skeletons of polyhedra are exactly the 3-connected planar graphs. Knot diagrams represent knots as planar graphs equipped with over/under relations~\citep{CG12,KK14}. Embeddings of graphs on surfaces, especially triangulations, are fundamental objects in hyperbolic geometry~\citep{STT-JAMS88,CKP19}. Planar graphs also have applications in topology. For example, \citet{Thomassen89a,Thomassen90} presented a simple proof of  the Jordan Curve Theorem based on planar graphs. Planar graphs also arise in physics. Quantum field theory studies Feynman diagrams, which are graphs describing particle interactions. Planar Feynman diagrams \citep{MM19,BBBD15} lead to notions such as the `planar limit' of quantum field theories. Indeed, there is a beautiful theory of `on-shell diagrams' which says that scattering amplitudes in $N=4$ supersymmetric Yang-Mills theory can be calculated using planar graphs~\citep{GK16}. Planar graphs also arise in computational geometry~\citep{GOT18}, for example as Delaunay triangulations of point sets, which leads to  practical applications such as computer graphics and 3D printing. They are central in graph drawing research \citep{HandbookGraphDrawing}, which underlies the field of network visualisation. 


The focus of this paper is on infinite graphs, which are ubiquitous mathematical objects. For example, they are important in random walks and percolation~\citep{Grimmett99}, where it is desirable to avoid finite-size boundary effects. They also arise as group diagrams in combinatorial group theory \citep{LyndonSchupp77} and low-dimensional topology~\citep{Gersten83,Stallings83}. Bass--Serre theory analyses the algebraic structure of groups by considering the action of automorphisms on infinite trees~\citep{Serre77,Serre80,Bass93}. Several group-theoretic results can be proven by considering a group action on infinite graphs. For example, there is a simple proof that every subgroup of a free group is free using infinite graphs and covering spaces~\citep{NielsenSchreierTheorem}. Infinite Cayley graphs are central objects in geometric group theory \citep{delaHarpe00}. Infinite planar graphs play a key role in the theory of patterns and tilings~\citep{GS87}. Infinite graphs also arise in nanotechnology  \citep{FHI11} and as models of the world wide web~\citep{Bonato08}. In addition, numerous results for finite graphs have led to extensions for infinite graphs or to interesting open problems. See \citep{Thom83,Komjath11,NashWilliams67,Diestel90} for surveys on infinite graphs. 


\subsection{Universality for Planar Graphs}

This paper addresses universality questions for planar graphs and other more general classes. A graph $G$ \defn{contains} a graph $H$ if $H$ is isomorphic to some subgraph of $G$. A graph $U$ is \defn{universal} for a graph class $\GG$ if $U\in\GG$ and $U$ contains every graph in $\GG$. The starting point for this work is the following question of Ulam: 


\smallskip
\emph{Is there a universal graph for the class of countable planar graphs?}

\smallskip 
This question was answered in the negative by \citet{Pach81a}, who proved that no countable planar graph contains every countable planar graph. Pach's result suggests the following question, which motivates the present paper. Recall that graphs in this paper are countable (that is, finite or countably infinite), unless explicitly stated otherwise. 

\smallskip
\emph{What is the `simplest' graph that contains every planar graph?}

\smallskip
Of course, the answer depends on one's measure of simplicity. First, it is desirable that a graph that  contains every planar graph has bounded chromatic number. By the 4-Colour Theorem~\citep{AH89,RSST97} and the de~Bruijn--Erd\H{o}s Theorem~\citep{dBE51}, the complete 4-partite graph\footnote{Let $K_n$ be the complete graph with $n$ vertices. Let $K_{m,n}$ be the complete bipartite graph with $m$ vertices in one colour class and $n$ vertices in the other colour class. Let $K_{n_1,\dots,n_k}$ be the complete $k$-partite graph with $n_i$ vertices in the $i$-th colour class.} with each colour class of cardinality $\aleph_0$  contains every planar graph. But this graph is very dense. At the other extreme, one might hope that some graph with bounded genus or excluding some fixed graph as a minor contains every planar graph. Our first theorem (proved in \cref{ForcingMinor}) dashes this hope and strengthens the above-mentioned result of \citet{Pach81a}.

\begin{thm}
\label{InfiniteCliqueMinor}
If a (countable) graph $U$ contains every planar graph, then 
\begin{enumerate}[(a)]
\item the infinite complete graph $K_{\aleph_0}$ is a minor of $U$, and
\item $U$ contains a subdivision of $K_t$ for every $t\in\NN$. 
\end{enumerate}
\end{thm}

If a graph $U$ contains every planar graph, then \cref{InfiniteCliqueMinor} says it is impossible for $U$ to  exclude a fixed minor or subdivision. However, it is desirable that $U$ satisfies many of the key properties of planar graphs. We contend that these properties include the following:

\smallskip
\begin{enumerate}[(K1)]
\item\label{KeyPropertyBoundedMinDegree} Every finite subgraph of $U$ has bounded minimum degree.
\item\label{KeyPropertySeparators}  Every finite $n$-vertex subgraph of $U$ has a balanced separator of order $O(\sqrt{n})$.
\item\label{KeyPropertyBoundedColouringNumbers} $U$ has bounded $r$-colouring numbers.
\item\label{KeyPropertyNoInfiniteSubdivision} $U$ contains no subdivision of $K_{\aleph_0}$.
\end{enumerate}

\smallskip
We now justify these key properties. \cref{KeyPropertyBoundedMinDegree} is the primary indicator of a sparse graph class, and implies several other desirable properties. In particular, if every finite subgraph of $U$ has minimum degree at most $d\in\NN$, then applying a greedy algorithm, every finite subgraph of $U$ is $(d+1)$-colourable, and in fact is $(d+1)$-list-colourable. 
By the de~Bruijn--Erd\H{o}s Theorem~\citep{dBE51}, $U$ itself is $(d+1)$-colourable, and is $(d+1)$-list-colourable by a similar argument. \cref{KeyPropertySeparators} implies the Lipton--Tarjan separator theorem \citep{LT79}, which is a seminal result about the structure of planar graphs. \cref{KeyPropertyBoundedColouringNumbers} is a robust measure of graph sparsity (defined and discussed in greater depth in \cref{GeneralisedColouringNumbers}). For now, note that \cref{KeyPropertyBoundedColouringNumbers} implies \cref{KeyPropertyBoundedMinDegree}, which in turn implies that $U$ has bounded chromatic number. Moreover, \cref{KeyPropertyBoundedColouringNumbers} implies that numerous other colouring parameters are bounded in $U$, including acyclic chromatic number, game chromatic number, etc. More generally, \cref{KeyPropertyBoundedColouringNumbers} implies that $U$ has bounded expansion, which is a key property in the Graph Sparsity Theory of \citet{Sparsity}. Finally, in light of \cref{InfiniteCliqueMinor}(b), \cref{KeyPropertyNoInfiniteSubdivision} is important since planar graphs have no $K_5$ or $K_{3,3}$ subdivision. 

\subsection{Product Constructions}

The second main contribution of this paper is to construct graphs that  contain every planar graph and satisfy \cref{KeyPropertyBoundedMinDegree}--\cref{KeyPropertyBoundedColouringNumbers}. To describe these graphs we need two ingredients. 

The first ingredient is the notion of treewidth, which is a parameter that measures how similar a given graph is to a tree (see \cref{SimplicialDecompositions} for the definition). For example, a connected graph with at least two vertices has treewidth 1 if and only if  it is a tree. Treewidth is of great importance in structural graph theory, especially in Robertson and Seymour's work on graph minors~\citep{RS-GraphMinors}. The Grid-Minor Theorem of \citet{RS-V} shows that treewidth is related to planarity in the sense that a minor-closed graph class $\GG$ has bounded treewidth if and only if some finite planar graph is not in $\GG$. In a sense detailed below, planar graphs are the smallest minor-closed class with a rich structure. Treewidth and planar graphs are also of great importance in algorithmic graph theory. In particular, many NP-complete problems remain NP-complete for finite planar graphs, but are polynomial-time solvable for classes of finite graphs with bounded treewidth (those excluding some planar graph as a minor). This shows that finite planar graphs often lie at the boundary of hard and easy instances of numerous algorithmic problems. 

It is well known that for each $k\in\NN$, there is a graph $\TT_k$ that is universal for the class of treewidth-$k$ graphs. \cref{Treewidth} gives an explicit definition of such a graph that is of independent interest and important for the main results that follow. 

The second ingredient is the notion of a graph product, as illustrated in \cref{ProductExample}. For graphs $G$ and $H$,  the \defn{Cartesian product} $G\CartProd H$ is the graph with vertex-set $V(G)\times V(H)$, where vertices $(v,w)$ and $(x,y)$ are adjacent if $v=x$ and $wy\in E(H)$, or $w=y$ and $vx \in E(G)$. The \defn{direct  product} $G\times H$ is the graph with vertex-set $V(G)\times V(H)$, where vertices $(v,w)$ and $(x,y)$ are adjacent if $vx\in E(G)$ and $wy\in E(H)$. Finally, the \defn{strong product} $G\boxtimes H$ is the union of $G\CartProd H$ and $G\times H$.

\begin{figure}[!ht]
\centering
\includegraphics{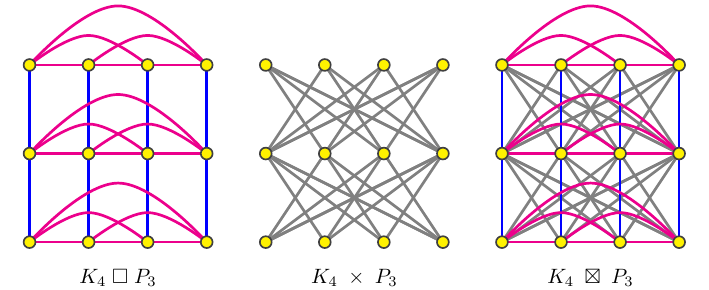}
\caption{Cartesian, direct and strong graph products.\label{ProductExample}}
\end{figure}

With these ingredients in hand, we prove the following, where $\PP$ is the 1-way infinite path.

\begin{thm}
\label{InfinitePlanarStructure}
Both $\TT_6\boxtimes\PP$ and $\TT_3\boxtimes\PP\boxtimes K_3$  contain every planar graph and satisfy \cref{KeyPropertyBoundedMinDegree}--\cref{KeyPropertyBoundedColouringNumbers}.
\end{thm}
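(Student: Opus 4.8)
The plan is to read off both containments from the planar graph product structure theorem together with the universality of $\TT_k$, and then to verify \cref{KeyPropertyBoundedMinDegree}--\cref{KeyPropertyBoundedColouringNumbers} directly from the product descriptions $\TT_6\boxtimes\PP$ and $\TT_3\boxtimes\PP\boxtimes K_3$. For the containment, I would first invoke the product structure theorem for countable planar graphs (whose finite form is due to Dujmović, Joret, Micek, Morin, Ueckerdt, and Wood): every countable planar graph $G$ is a subgraph of $H\boxtimes P$ for some graph $H$ with $\tw(H)\le 6$ and some path $P$, and also a subgraph of $H'\boxtimes P'\boxtimes K_3$ for some $H'$ with $\tw(H')\le 3$ and some path $P'$. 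In the countable case one applies this component by component, using a breadth-first layering of each component rooted at a single vertex, so the layers -- and hence the path -- are indexed by $\NN$; since the treewidth of a disjoint union is the maximum of the treewidths, the treewidth bound survives and one may take $P=P'=\PP$. As $\TT_k$ is universal for treewidth-$k$ graphs (\cref{Treewidth}) and the strong product is monotone under taking subgraphs, $H\subseteq\TT_6$ and $H'\subseteq\TT_3$ give $G\subseteq\TT_6\boxtimes\PP$ and $G\subseteq\TT_3\boxtimes\PP\boxtimes K_3$. For the remaining properties it is convenient to note that each host graph has the form $H\boxtimes P$ with $P$ a path and $\tw(H)$ bounded: $\TT_6\boxtimes\PP$ directly, and $\TT_3\boxtimes\PP\boxtimes K_3=(\TT_3\boxtimes K_3)\boxtimes\PP$ with $\tw(\TT_3\boxtimes K_3)\le(\tw(\TT_3)+1)\,|V(K_3)|-1\le 11$.

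For \cref{KeyPropertyBoundedColouringNumbers}, by a compactness argument it suffices to bound, uniformly over the finite subgraphs, the weak $r$-colouring number (which dominates the $r$-colouring number). Every finite subgraph of either host graph lies in $H\boxtimes P$ with $P$ a path and $\tw(H)\le 11$, so it is enough to prove that for fixed $k$ and $r$, every graph $H\boxtimes P$ with $\tw(H)\le k$ and $P$ a path has $\col_r$ (and its weak variant) bounded by a function of $k$ and $r$. This is precisely the ingredient underlying the product-structure proof that planar graphs have bounded $r$-colouring numbers; I would cite it, or reprove it by fixing an elimination ordering of $H$ witnessing a polynomial bound on its weak $r$-colouring number, ordering $V(H\boxtimes P)$ primarily by path-coordinate and secondarily by the $H$-ordering, and using that a path of length at most $r$ in $H\boxtimes P$ stays within $2r+1$ consecutive layers and projects to a walk of length at most $r$ in $H$. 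Then \cref{KeyPropertyBoundedMinDegree} follows from \cref{KeyPropertyBoundedColouringNumbers} as noted in the introduction, since $\col_1$ bounds degeneracy.

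For \cref{KeyPropertySeparators}, let $G$ be a finite $n$-vertex subgraph of a host graph, so $G\subseteq H\boxtimes P$ with $P$ a path and $\tw(H)\le k$ for $k\le 11$. Let $L_j$ be the vertices of $G$ in the $j$-th layer, put $s=\lceil\sqrt n\,\rceil$, and for $t\in\{0,\dots,s-1\}$ let $S_t=\bigcup_{j\equiv t\ (\mathrm{mod}\ s)}L_j$. The sets $S_0,\dots,S_{s-1}$ partition $V(G)$, so some $S_{t^*}$ has at most $n/s\le\sqrt n$ vertices. Deleting $S_{t^*}$ removes one layer from each block of $s$ consecutive layers, and since no edge of $H\boxtimes P$ joins non-consecutive layers, every component of $G-S_{t^*}$ lies in a window of at most $s-1$ consecutive layers; such a window induces a subgraph of $H\boxtimes P_{s-1}$, which has treewidth at most $(\tw(H)+1)(s-1)-1\le(k+1)\sqrt n$. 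Hence $\tw(G-S_{t^*})\le(k+1)\sqrt n$, so $G-S_{t^*}$ has a $\tfrac23$-balanced separator $X$ with $|X|\le(k+1)\sqrt n+1$, and then $S_{t^*}\cup X$ is a $\tfrac23$-balanced separator of $G$ of order at most $(k+2)\sqrt n+1=O(\sqrt n)$.

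The main obstacle is the countable version of the product structure theorem: one must ensure the path factor can be taken to be the one-way infinite path $\PP$ rather than a two-way infinite path (which is not a subgraph of $\PP$), and that passing from finite subgraphs to $G$ does not inflate $\tw(H)$ beyond the stated constant; handling components separately with $\NN$-indexed breadth-first layerings resolves both. Granting this and the $r$-colouring number bound for $H\boxtimes P$, the remaining steps -- the separator argument above and the bookkeeping with finite subgraphs -- are routine.
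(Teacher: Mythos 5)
Your verification of \cref{KeyPropertyBoundedMinDegree}--\cref{KeyPropertyBoundedColouringNumbers} is essentially sound and close in spirit to what the paper does: the paper cites \cref{ProductTreewidth} for the $O(\sqrt{n})$ treewidth/separator bound on subgraphs of $H\boxtimes P$ and \cref{GenColourProduct} (van den Heuvel--Wood plus extendability of $\col_r$, \cref{colExtendable}) for linear colouring numbers, and your layering-mod-$s$ separator argument and your ``bound $\col_r$ on finite subgraphs, then apply compactness'' plan are acceptable reproofs of these ingredients.

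The genuine gap is in the containment step. You assert a \emph{countable} product structure theorem (``every countable planar graph is a subgraph of $H\boxtimes P$ with $\tw(H)\le 6$'') and justify it by ``applying the finite theorem component by component, using a breadth-first layering of each component''. This does not work as stated: a countable planar graph can have infinite connected components, and the finite theorem simply does not apply to them; its proof is an induction on finite near-triangulations and does not extend verbatim to infinite plane graphs (which may have infinite faces, ends, accumulation points, etc.). Choosing an $\NN$-indexed BFS layering addresses only the cosmetic issue of getting $\PP$ rather than a two-way infinite path; it does not produce the required partition of an infinite component into vertical paths whose quotient has bounded treewidth. The transfer from the finite statement (\cref{FinitePlanarStructure6}, \cref{SimpleFinitePlanarStructure}) to the infinite one is exactly the non-trivial content of the paper's \cref{FiniteToInfiniteGamma,FiniteToInfiniteProduct}: a K\"onig's Lemma compactness argument over triples (apex set, layering, partition) of finite subgraphs, which in addition needs the extendability of treewidth (\cref{TreewidthExtendable}, and more generally \cref{TreewidthApexExtendable}) to bound the treewidth of the limiting quotient, together with the universality of $\TT_k$ (\cref{TreewidthUniversal}) to replace the quotient by a single host. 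Your proposal uses none of this machinery and supplies no substitute, so the central claim that $\TT_6\boxtimes\PP$ and $\TT_3\boxtimes\PP\boxtimes K_3$ contain every (infinite) planar graph is not established. A minor further point: the treewidth-$6$ (indeed simple treewidth-$6$) bound is due to Ueckerdt, Wood and Yi, not to Dujmovi\'c et al., whose bound is treewidth $8$ (respectively treewidth $3$ with a $K_3$ factor); the constants you use are correct but the attribution and the source of the ``6'' should be fixed.
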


We actually prove a strengthening of this result in terms of so-called `simple treewidth'. We define a graph $\SS_k$ that is universal for the class of graphs with simple treewidth $k$, and we show that $\TT$ can be replaced by $\SS$ in \cref{InfinitePlanarStructure}. This is noteworthy since $\SS_3$ is planar but $\TT_3$ is not. These results are proved in \cref{PlanarGraphs}. 

We also establish several results analogous to \cref{InfinitePlanarStructure} for more general graph classes. First, consider graphs embeddable on an arbitrary surface. The \defn{join} $A+B$ is the graph obtained from the disjoint union of graphs $A$ and $B$ by adding all edges $uv$ with $u\in V(A)$ and $v \in V(B)$. 
We show that for each $g\in\NN_0$, each of  
$\SS_3 \boxtimes \PP \boxtimes K_{\max\{2g,3\}}$ and
$(\SS_6 +K_{2g} )  \boxtimes \PP$ contain every graph of Euler genus $g$, and satisfy \cref{KeyPropertyBoundedMinDegree}--\cref{KeyPropertyBoundedColouringNumbers}. This result is proved in \cref{Genus}. We provide similar results for various non-minor-closed classes, such as graphs that can be drawn on a fixed surface with a bounded number of crossings per edge; see \cref{NonMinorClosed}. Moreover, for each finite graph $X$, we construct a graph that contains every $X$-minor-free graph. This graph is more complicated to describe than the above products, but it still satisfies \cref{KeyPropertyBoundedMinDegree}--\cref{KeyPropertyBoundedColouringNumbers}; see \cref{ExcludedMinors}. 

Graph products also provide a mechanism to construct 4-colourable  sparse graphs that contain every planar graph. To see this, note that $\chi(G\times H)\leq\min\{\chi(G),\chi(H)\}$ for all graphs $G$ and $H$. Also note that if $X$ is a $k$-colourable subgraph of $G$, then $X$ is contained in $G\times K_k$. Together, these observations imply that if $U$ is any graph that contains every planar graph, then $U \times K_4$ is 4-colourable and contains every planar graph. For example,  by \cref{InfinitePlanarStructure}, $(\SS_6\boxtimes\PP) \times K_4$ is a 4-colourable graph that contains every planar graph. Moreover, $U\times K_4$ inherits many of the desirable properties of $U$ with a small change in the constants.

\subsection{Avoiding an Infinite Complete Graph Subdivision}

Our final results are related to property \cref{KeyPropertyNoInfiniteSubdivision}, and are presented in \cref{NoInfiniteSubdivision}. Graphs containing no infinite complete graph subdivision were characterised in \citet{RST-TAMS92}. \cref{InfiniteCliqueMinor}(b) says that arbitrarily large (finite) complete graph subdivisions are unavoidable in any graph that contains all planar graphs. On the other hand, we prove that infinite complete graph subdivisions are avoidable. In fact, this result holds for induced subgraphs and in the setting of $K_t$-minor free graphs.

\begin{thm}
\label{Main3} 
For each $t\in\NN$, there is a graph $U_t$ that contains every $K_t$-minor-free graph as an induced subgraph, and satisfies 
\cref{KeyPropertyBoundedMinDegree}, \cref{KeyPropertyBoundedColouringNumbers}
and \cref{KeyPropertyNoInfiniteSubdivision}.
\end{thm}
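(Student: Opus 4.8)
The plan is to build $U_t$ from two ingredients: the product constructions used for \cref{InfinitePlanarStructure} (extended to $K_t$-minor-free graphs as promised in \cref{ExcludedMinors}) to guarantee \cref{KeyPropertyBoundedMinDegree} and \cref{KeyPropertyBoundedColouringNumbers}, and a careful ``stretching'' of that product to destroy any infinite complete graph subdivision while preserving induced containment of all $K_t$-minor-free graphs. The starting observation is that a graph of the form $H\boxtimes\PP$ (or the more elaborate product for excluded minors) already contains every $K_t$-minor-free graph \emph{as a subgraph}; the two tasks are therefore (i) to promote this to an \emph{induced} subgraph containment, and (ii) to kill $K_{\aleph_0}$-subdivisions.

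For (i), the standard trick is to take a graph $W$ that contains every $K_t$-minor-free graph as a subgraph and pass to a blow-up/complement-type modification: replace each vertex $v$ by two twin vertices $v_0,v_1$, and encode an arbitrary target graph $G\subseteq W$ by choosing, for each $v\in V(G)$, which of $v_0,v_1$ lies in the copy of $G$, so that an edge of $W$ not present in $G$ is avoided because its endpoints are represented by non-adjacent twins. More precisely, I would take the universal graph from \cref{ExcludedMinors}, form an appropriate product with a fixed finite gadget (something like $\boxtimes K_2$ together with a deletion of a perfect matching, or a tailored ``induced-universal'' wrapper), and check that every $K_t$-minor-free $G$ embeds as an induced subgraph. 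Crucially this operation changes the colouring numbers and minimum degrees of finite subgraphs only by bounded factors, so \cref{KeyPropertyBoundedMinDegree} and \cref{KeyPropertyBoundedColouringNumbers} survive; this is the routine part.

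The substance is (ii). The key structural input is the characterisation of graphs with no $K_{\aleph_0}$-subdivision from \citet{RST-TAMS92}: such graphs admit a tree-decomposition into finite-or-small pieces of a controlled kind (equivalently, they have no infinite ``topologically connected'' clique-like structure). I would exploit that the product $H\boxtimes\PP$ already has a natural layered structure along $\PP$, and that each layer is a copy of the fixed finite-ish graph $H$ — except $\TT_k$ and $\SS_k$ are infinite. The plan is to re-engineer the construction so that $U_t$ has a tree-decomposition in which (a) each bag is finite, and (b) the decomposition tree has bounded degree or is otherwise ``thin'', which by the \citet{RST-TAMS92} criterion precludes a $K_{\aleph_0}$-subdivision, while still being rich enough to host every $K_t$-minor-free graph as an induced subgraph. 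Concretely: every $K_t$-minor-free graph has a tree-decomposition of bounded width after removing $O(1)$ apex-like vertices per bag only in the planar-ish pieces, but in general one uses that $K_t$-minor-free graphs embed into $\TT_{f(t)}\boxtimes\PP$-type hosts; one then takes a host whose own tree-structure has finite bags, e.g.\ by indexing layers by $\NN$ and only connecting consecutive layers, so that any subdivided clique would have to ``wind'' through infinitely many finite cuts, contradicting that a $K_{\aleph_0}$-subdivision needs $\aleph_0$ internally-disjoint paths across some finite separation.

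The main obstacle I anticipate is reconciling the three requirements simultaneously: induced-universality typically pushes toward denser, more ``complete-multipartite-like'' hosts (which are exactly the graphs full of $K_{\aleph_0}$-subdivisions), whereas \cref{KeyPropertyNoInfiniteSubdivision} demands a sparse, tree-like global shape. The delicate point is to verify that the induced-embedding gadget does not secretly create an infinite subdivided clique: one must track, for a hypothetical $K_{\aleph_0}$-subdivision inside $U_t$, where its branch vertices and connecting paths live, and show they are confined by a finite separation coming from the layered/tree structure — this amounts to checking that the \citet{RST-TAMS92} obstruction genuinely cannot be embedded, which I expect to require a somewhat technical case analysis on how paths can traverse the product layers. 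Once that containment-of-obstruction argument is in place, properties \cref{KeyPropertyBoundedMinDegree} and \cref{KeyPropertyBoundedColouringNumbers} follow from the bounded-factor estimates on the gadget together with the corresponding bounds already established for the $K_t$-minor-free universal graph in \cref{ExcludedMinors}.
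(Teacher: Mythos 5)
Your proposal has two genuine gaps, and they sit exactly where the paper has to work hardest. First, the twin/blow-up gadget does not deliver induced universality. With two twins per vertex and any fixed bipartite pattern between twin classes, realising a target $G$ inside the host $W$ as an induced subgraph amounts to choosing $f:V(G)\to\{0,1\}$ so that $v_{f(v)}w_{f(w)}$ is an edge whenever $vw\in E(G)$ and a non-edge whenever $vw\in E(W)\setminus E(G)$. With the matching pattern, $G$-edges force $f(v)=f(w)$ while host edges absent from $G$ force $f(v)\neq f(w)$, so a triangle of $W$ carrying exactly two edges of $G$ is already unrealisable; no bounded number of twins or fixed finite gadget repairs this in general, which is precisely why induced universality is not a ``routine'' bounded-factor wrapper around subgraph universality. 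The paper achieves induced containment by a different mechanism: it partitions graphs into \emph{finite} labelled parts, records in an $\HH$-system exactly which graph is induced on each part and between adjacent parts (with a reserved label meaning ``no edges''), and embeds the quotient into the labelled, oriented universal chordal graph via \cref{TreewidthUniversalPreserving,Maximal}; the edge-maximal chordal completion step in \cref{Maximal} is what forces the image to be induced (see \cref{JJJ}).

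Second, the plan to keep the product host and kill $K_{\aleph_0}$-subdivisions through its layered structure cannot work, because the host itself already contains such subdivisions. For example $K_{1,\aleph_0}\subseteq\TT_k$, and in $K_{1,\aleph_0}\boxtimes\PP$ one may take all vertices of the centre column as branch vertices and join each pair $(c,a),(c,b)$ by a path routed through a fresh leaf column $(\ell,a),(\ell,a+1),\dots,(\ell,b)$; these paths are pairwise internally disjoint, so $\TT_k\boxtimes\PP$ (and hence the graph of \cref{MinorUniversal}, which contains it) violates \cref{KeyPropertyNoInfiniteSubdivision}. Since your wrapper keeps a copy of the host as a subgraph, no gadget on top of it can restore \cref{KeyPropertyNoInfiniteSubdivision}: the host must be replaced, and the infinite layers $\TT_k$ mean there is no finite-bag tree-decomposition to extract from the product either. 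This is exactly what the paper does for \cref{Main3}: it partitions every $K_t$-minor-free graph into finite connectors, each a union of at most $t-2$ geodesics (\cref{ConnectingSubgraph2,MinorJJJ}), with chordal quotient of treewidth $t-2$, builds the strongly universal graph for the resulting class $\JJ^{(t-2)}$ (\cref{JJJ}), deduces \cref{KeyPropertyNoInfiniteSubdivision} from the finite-chordal-partition characterisation in \cref{InfiniteCompleteSubdivision}, and proves \cref{KeyPropertyBoundedColouringNumbers} not by inheritance from \cref{MinorUniversal} (your final graph is not obtained from that one by any bounded modification, so those bounds do not transfer) but directly from the geodesic structure of the parts via \cref{PartitionGenColNum}.
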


Since planar graphs contain no $K_5$ minor, the case $t=5$ in \cref{Main3} includes planar graphs. See 
\cref{KtMinorFreeNoSubdiv} in \cref{ExcludedMinorNoSubdiv} for a precise version of \cref{Main3}.

It is open whether the graph $U_t$ in \cref{Main3} satisfies \cref{KeyPropertySeparators}, although $n$-vertex subgraphs of $U_t$ do have balanced separators of size $O(n^{3/4}\log n)$ by a result of \citet{PRS94}; see \cref{Expansion}. 

\subsection{Related work}
\label{Related}

Before continuing, we survey related work on graph universality, focusing on connections to planar graphs. A graph $U$ is \defn{strongly universal} for a graph class $\GG$ if $U\in\GG$ and every graph in $\GG$ is isomorphic to some induced subgraph of $U$. 

\citet{Ackermann37}, \citet{ER63a} and \citet{Rado64} independently proved that there exists a graph, now called the \defn{Rado graph} or the \defn{random graph} \citep{Cameron01,Cameron97,Cameron84}, that contains every graph as an induced subgraph. \citet{Pach75} went further by showing that there exists a graph $U$ such that every graph $G$ can be isometrically embedded in $U$; that is, the distance between any two vertices in $G$ equals the distance between the corresponding vertices in $U$. See  \citep{Cameron01,Cameron97,Cameron84} for surveys on the Rado graph, and see 
\citep{HK88,KP91,BHM13a,BV14,BH13,Rotman71} for various related results. 




Universal graphs for classes defined by an excluded subgraph have been extensively studied. For example, \citet{Henson71} proved that for each  $t\in\NN$ there is a strongly universal graph for the class of $K_t$-free graphs. Several authors have addressed the question: for which (usually finite) graphs $X$, is there is a strongly universal $X$-free graph \citep{Komjath99,HP84,CS07a,CS16,FK97a,FK97b,CS01,KMP88,KP84,CT07,CSS99}?

\citet{DHV85} considered universality questions for classes excluding $K_t$ as a minor. For $t\leq 4$, they constructed a strongly universal graph, and for $t\geq 5$, they showed there is no universal graph. Note that this result (for $t\geq 5$) is an immediate consequence of our \cref{InfiniteCliqueMinor}(a). 
\citet{DHV85}  also proved that for each integer $t\geq 5$, there is no universal graph for the class of graphs containing no $K_t$-subdivision. This is implied and strengthened by \cref{InfiniteCliqueMinor}(b). 
Similarly, \citet{Diestel85} considered universality questions for classes excluding $K_{m,n}$ as a subdivision. For $m=2$ and $n=3$, Diestel showed there is a universal graph but no strongly universal graph. In the case $m=n=3$, the proof of \citet{Pach81a} can be extended to show that there is no universal graph. For $2\leq n\leq m$ and $m \geq 4$, Diestel showed that there is no universal graph (thus solving a conjecture of Halin).

\citet{BHM13} constructed, for each integer $k$, a strongly universal graph for the class of graphs with the property that every  subgraph is $k$-degenerate (that is, every finite subgraph has minimum degree at most $k$).  Similarly, for every finite graph $H$, \citet{MMS09} constructed a strongly universal graph for the class of graphs that admit a homomorphism to $H$.  By Euler's formula, every finite planar graph has minimum degree at most $5$, and  by the 4-Colour Theorem, every planar graph admits a homomorphism to $K_4$. However, there are 2-degenerate graphs (such as 1-subdivisions of complete graphs) and there are $4$-colourable graphs (such as complete 4-partite graphs) that satisfy none of \cref{KeyPropertySeparators}--\cref{KeyPropertyNoInfiniteSubdivision}. So degeneracy or homomorphisms to a fixed graph $H$ are too broad for our purposes.  


Universality questions for finite graphs are also widely studied (usually for induced subgraphs). Results are known for trees \citep{ADK17,CGP78,CG78,GL68,Bodini02}, cycles and paths \citep{AAHKS20}, bounded degree graphs \citep{AC07,AA02,ELO08,AAHKBS17,AN19,Butler09}, graphs with a given number of vertices \citep{Moon65,AKTZ19,Alon17,BT81}, graphs with a given number of vertices and edges \citep{BDS19,CE83}, and sparse  graphs \citep{BCEGS82,BCLR89}. The following question has been the focus of attention for finite planar graphs: what is the least number of vertices in a graph that contains every $n$-vertex planar graph as an induced subgraph~\citep{BGP22,KNR92,GL07,DEGJMM21,EJM23,DGJMMW}? Improving on a long sequence of results, \citet{DEGJMM21} recently obtained an optimal answer for this question, by constructing, for each value of $n$, a graph with $n^{1+o(1)}$ vertices that contains every $n$-vertex  planar graph as an  induced subgraph. \citet{EJM23} improved this result by constructing a graph with $n^{1+o(1)}$ vertices and edges that contains every $n$-vertex  planar graph as an induced subgraph. \citet{EJM23} also constructed a graph with $(1+o(1))n$ vertices and $n^{1+o(1)}$ edges that contains every $n$-vertex planar graph (as a subgraph). The proofs in \cite{BGP22,DEGJMM21,EJM23,DGJMMW} all use \cref{FinitePlanarStructure} below, which is also a key tool in our work. 


One can also ask universality questions for the minor relation.  \citet{RST94} proved that every $n$-vertex planar graph is a minor of the planar grid $P_{14n} \CartProd P_{14n}$. The question becomes more challenging for infinite planar graphs. For example, \citet{DK99} observed that the planar graph obtained from $K_4$ by joining to each of its four vertices infinitely many new vertices of degree 1 is not a minor of the infinite grid. Nevertheless, \citet{DK99} constructed an infinite planar graph $U$ such that every planar graph is a minor of $U$. 


Finally, we mention uncountable graphs. \citet{Wagner67} proved that a graph (of any cardinality) is planar if and only if it has at most continuumly many vertices, no $K_5$ or $K_{3,3}$ subdivision, and at most countably many vertices of degree at least 3. See \citep{Kojman98,KP84} for more on universality for uncountable graphs. 

\section{Preliminaries}
\label{Preliminaries}

This section contains preliminary material, including definitions~(\cref{Definitions}); 
an introduction to extendability~(\cref{Extendability}); 
universality for trees~(\cref{Trees}); and
useful results about simplicial decompositions, chordal graphs and tree-decompositions~(\cref{SimplicialDecompositions}). 
Then we provide more detail about some of the key properties of sparse graph classes that were introduced in \cref{Intro}. In particular, we look at the relationship between separators and treewidth~(\cref{SeparatorsTreewidth}), generalised colouring numbers~(\cref{GeneralisedColouringNumbers}), and bounded expansion~(\cref{Expansion}). 

\subsection{Definitions}
\label{Definitions}

We  use the following notation: $\NN:=\{1,2,\dots\}$ and $\NN_0:=\NN\cup\{0\}$. For $m,n\in\ZZ$ with $m\leq n$, let $[m,n]:=\{m,m+1,\dots,n\}$ and $[n]:=\{1,2,\dots,n\}$. 

We use standard graph-theoretic notation and terminology \citep{Diestel5}.

We consider undirected graphs $G$ with vertex-set $V(G)$ and edge-set $E(G)\subseteq \binom{V(G)}{2}$ (with no loops or parallel edges). A graph $G$ is \defn{trivial} if $E(G)=\emptyset$. A graph $G$ is \defn{finite} if $V(G)$ is finite, and is \defn{countable} if $V(G)$ is countable (that is, finite or countably infinite). Recall that, for brevity, we assume all graphs are countable. 

A graph $H$ is a \defn{subgraph} of a graph $G$ if $V(H)\subseteq V(G)$ and $E(H)\subseteq E(G)$. Two graphs $G_1$ and $G_2$ are \defn{isomorphic} if there is a bijection $f:V(G_1)\to V(G_2)$ such that for all $v,w\in V(G_1)$ we have $vw\in E(G_1)$ if and only if $f(v)f(w)\in E(G_2)$. 

The \defn{neighbourhood} of a vertex $v$ in a graph $G$ is $N_G(v):= \{w\in V(G): vw\in E(G)\}$; then $v$ has \defn{degree} $\deg_G(v):= |N_G(v)|$. For $S\subseteq V(G)$, let $N_G(S):=\bigcup_{v\in S}(N_G(v)\setminus S)$. A graph $G$ is \hdefn{$k$}{regular} if every vertex of $G$ has degree $k$. A graph $G$ is \defn{locally finite} if every vertex in $G$ has finite degree.

A \defn{path} in a graph $G$ is a sequence of distinct vertices, denoted $v_1,v_2, \ldots v_n$ or $v_1,v_2,\dots$, where $v_1v_2,v_2v_3,\dots,v_{n-1}v_n\in E(G)$. A \hdefn{$vw$}{path} in a graph $G$ is a path from $v$ to $w$. Let $\dist_G(v,w)$ be the length of a shortest $vw$-path in $G$. If there is no such path, then $\dist_G(v,w)=\infty$. A $vw$-path $P$ in $G$ is a \defn{geodesic} if the length of $P$ equals $\dist_G(v,w)$. For $A,B\subseteq V(G)$, an \hdefn{$AB$}{path} in $G$ is a $vw$-path $P$, such that $v\in A$ and $w\in B$ and no internal vertex of $P$ is in $A\cup B$.

A \defn{cycle} in a graph $G$ is a circular sequence of distinct vertices, denoted $(v_1,v_2,\dots,v_n)$, where $v_1v_2,v_2v_3,\dots,v_{n-1}v_n,v_nv_1\in E(G)$. A cycle $C$ in a graph $G$ is \defn{Hamiltonian} if  every vertex of $G$ is in $C$. Where it is convenient we consider a path or a cycle to be the corresponding subgraph. 

A graph $G$ is \defn{locally Hamiltonian} if for each vertex $v\in V(G)$, the subgraph $G[N_G(v)]$ has a Hamiltonian cycle. By definition, every locally Hamiltonian graph is locally finite. The following lemma will be useful.

\begin{lem}
\label{LocallyHam3Conn}
Every connected locally Hamiltonian graph is 3-connected. 
\end{lem}

\begin{proof}
Suppose for the sake of contradiction that $G$ is locally Hamiltonian but not 3-connected. Since $G$ is locally Hamiltonian, it has at least four vertices. If $G$ has a cut-vertex $v$, then $G[N_G(v)]$ is not connected, contradicting that $G$ is locally Hamiltonian. So  $G$ is 2-connected. Since $G$ is not 3-connected and has at least four vertices, $G$ has two vertices $u,v$ such that $G-u-v$ is disconnected. Since $G$ is 2-connected, $u$ has neighbours $u_1,u_2$ in distinct components of $G-u-v$. Now any path joining $u_1,u_2$ in $G[N_G(u)]$ must contain $v$. Hence $G[N_G(u)]$ has no Hamiltonian cycle, which is the desired contradiction. 
\end{proof}

For a vertex $v$ in a directed graph $G$, the \defn{in-neighbourhood} of $v$ is $N^-_G(v):= \{w\in V(G): wv\in E(G)\}$ and the \defn{out-neighbourhood} of $v$ is $N^+_G(v):= \{w\in V(G): vw\in E(G)\}$. Then $v$ has \defn{in-degree} $\deg^-_G(v):= |N^-_G(v)|$ and \defn{out-degree} $\deg^+_G(v):= |N^+_G(v)|$. 

A set $\GG$ of graphs is a \defn{graph class} if for every graph $G\in\GG$, every graph isomorphic to $G$ is also in $\GG$. A graph class $\GG$ is \defn{monotone} if for every $G\in\GG$ every subgraph of $G$ is in $\GG$. A graph class $\GG$ is \defn{hereditary} if for every $G\in\GG$ every induced subgraph of $G$ is in $\GG$. A graph class $\GG$ is \defn{countable} if $\GG$ has countably many equivalence classes under the isomorphism relation. 

A \defn{colouring} of a graph $G$ is a function that assigns one `colour' to each vertex of $G$, such that adjacent vertices of $G$ are assigned distinct colours. For $k\in\NN$, a \hdefn{$k$}{colouring} is a colouring with at most $k$ colours. The \defn{chromatic number}, $\chi(G)$, of $G$ is the minimum $k\in\NN$ such that $G$ is $k$-colourable. If there is no such $k$, then $G$ has chromatic number $\chi(G)=\infty$. 

A \defn{clique} in a graph $G$ is a set of pairwise adjacent vertices in $G$. The \defn{clique-number}, $\omega(G)$, of $G$ is the maximum $k\in\NN$ such that $G$ has a clique of cardinality $k$. If there is no such $k$, then $G$ has clique number $\omega(G)=\infty$. 

For $d\in\NN$, a graph $G$ is \hdefn{$d$}{degenerate} if every finite subgraph of $G$ has minimum degree at most $d$. The \defn{degeneracy} of $G$ is the minimum $d\in\NN$ such that $G$ is $d$-degenerate.

\index{infinite path} The \DefNoIndex{1-way infinite path} $\PP$ is the graph with vertex-set $\NN$ and edge-set $\{\{n,n+1\}:n\in\NN\}$. The \DefNoIndex{2-way infinite path} $\PPP$ has vertex-set $\ZZ$ and edge-set $\{\{n,n+1\}:n\in\ZZ\}$. 

A graph $H$ is a \defn{minor} of a graph $G$ if a graph isomorphic to $H$ can be obtained from a subgraph of $G$ by contracting edges. A graph class $\GG$ is \defn{minor-closed} if for every graph $G\in\GG$, every minor of $G$ is also in $\GG$. A minor-closed class $\GG$ is \defn{proper} if some graph is not in $\GG$. 

A \defn{model} of a graph $H$ in a graph $G$ is a collection $(X_v)_{v\in V(H)}$ of pairwise disjoint connected subgraphs of $G$ indexed by the vertices of $H$, such that for each  edge $vw\in E(H)$ there is an edge of $G$ between $X_v$ and $X_w$. Each subgraph $X_v$ is called a \defn{branch set} of the model. Note that $H$ is a minor of $G$ if and only if there is a model of $H$ in $G$. 

A \defn{subdivision} of a graph $H$ is any graph obtained from $H$ by repeatedly applying the following operation: delete an edge $vw$, introduce a new vertex $x$, and add new edges $vx$ and $xw$. A graph $G$ \defn{contains an $H$-subdivision} if $G$ contains a subgraph isomorphic to a subdivision of $H$. In this case, $H$ is a minor of $G$. 

Planar graphs form a proper minor-closed class. 
More generally, for any surface $\Sigma$, the class of graphs embeddable in $\Sigma$ form a proper minor-closed class. Here a \defn{surface} is a 2-dimensional manifold without boundary.  The \defn{Euler genus} of the orientable surface with $h$ handles is $2h$. The \defn{Euler genus} of  the non-orientable surface with $c$ cross-caps is $c$. The \defn{Euler genus} of a graph $G$ is the minimum Euler genus of a surface in which $G$ embeds (with no crossings). See~\citep{MoharThom} for background on embeddings of graphs on surfaces.

If $H$ is a subgraph of a graph $G$, then a \defn{chord} of $H$ (with respect to $G$) is an edge $vw\in E(G)\setminus E(H)$ with $v,w\in V(H)$. A graph $G$ is \defn{chordal} if every cycle $C$ of length at least 4 has a \defn{chord}.

A \defn{vertex-partition}, or simply \defn{partition}, of a graph $G$ is a collection $\PART$, where each element of $\PART$ is a subset of $V(G)$, and each vertex of $G$ is in exactly one element of $\PART$. Each element of $\PART$ is called a \defn{part}. A partition $\PART$ of a graph $G$ is \defn{finite} if each part of $\PART$ is finite. The \defn{quotient} of $\PART$ is the graph, denoted by $G/\PART$, with vertex-set $\{A\in\PART:A\neq\emptyset\}$ where distinct parts $A,B\in V(G/\PART)$ are adjacent if and only if some vertex in $A$ is adjacent in $G$ to some vertex in $B$. A partition of $G$ is \defn{connected} if the subgraph induced by each part is connected. In this case, the quotient is a minor of $G$ (obtained by contracting each part into a single vertex). \note{We slightly changed these definitions to allow empty parts in a partition $\PART$, and in the quotient $G/\PART$ we only take the non-empty parts.}

A partition $\PART$ of a graph $G$ is \defn{chordal} if the quotient $G/\PART$ is chordal. 

A set $S$ of vertices in a graph $G$ is \defn{separating} if $G-S$ is disconnected. Sometimes we also say $G[S]$ is separating. A separating set $S$ in $G$ is \defn{minimal} if no proper subset of $S$ is separating. If $S$ is a minimal separating set, then for each component $X$ of $G-S$, each vertex in $S$ has a neighbour in $X$.

An \defn{orientation} of a graph $G$ is a directed graph obtained from $G$ by directing each edge from one end to the other. An orientation is \defn{acyclic} if there is no directed cycle. In an acyclically oriented graph, if there is a directed path from a vertex $v$ to a vertex $w$, then $v$ is an \defn{ancestor} of $w$ and $w$ is a \defn{descendant} of $v$.

In an orientation of a graph $G$, a path $v_1,v_2,\dots$ in $G$ is \defn{backward} if $v_iv_{i+1}$ is oriented from $v_{i+1}$ to $v_i$, for each $i\geq 1$. 

\note{In places we previously used `anti-directed' to mean `backward'. We now use `backward' consistently.}

\subsection{Extendability}
\label{Extendability}

A graph class $\Gamma$ is \defn{extendable} if the following property holds for every graph $G$: if every finite subgraph of $G$ is in $\Gamma$, then $G$ is in $\Gamma$. (Recall that $G$ is assumed to be countable.) The following result, essentially due to Erd\H{o}s, provides an example of an extendable class. 
 
\begin{lem}[see \citep{DS54,Imrich75}] 
\label{PlanarityExtendable}
The class of planar graphs is extendable. 
\end{lem}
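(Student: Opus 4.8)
The plan is to show that planarity can be detected by looking at finite subgraphs, using the Kuratowski--Wagner characterisation together with a compactness-type argument, and then to handle the infinite case by passing to a suitable limit of finite embeddings. Throughout, $G$ is a countable graph, all of whose finite subgraphs are planar, and the goal is to produce a planar embedding of $G$.

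First I would dispose of the case where $G$ is itself finite (then there is nothing to prove) and assume $G$ has vertex set $\NN$. Let $G_n := G[\{1,\dots,n\}]$, so $G_1 \subseteq G_2 \subseteq \cdots$ is an exhaustion of $G$ by finite planar graphs. The first key point is to bound the ``rotation complexity'': for a $2$-connected finite planar graph the embedding in the sphere is unique up to reflection (Whitney's theorem), which is the real reason planarity is a local property. For general $G$ one should first reduce to the $2$-connected (or $3$-connected) case, since planarity, extendability, and the $2$-connected block structure interact cleanly: a graph is planar iff each of its blocks is planar, and an exhaustion of $G$ induces exhaustions of each block, so it suffices to treat each block separately and then glue the embeddings along cut vertices in a tree-like fashion. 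The bulk of the work is thus for $2$-connected $G$.

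For $2$-connected $G$, the plan is a König-lemma / compactness argument. For each $n$, consider the (finite, nonempty by hypothesis) set $\Phi_n$ of combinatorial embeddings (rotation systems, up to global reflection) of $G_n$ in the sphere. Every embedding in $\Phi_{n+1}$ restricts to one in $\Phi_n$ (deleting a vertex and its incident edges from an embedded graph yields an embedded graph), giving maps $\Phi_{n+1} \to \Phi_n$. This is an inverse system of nonempty finite sets, so by König's infinity lemma (equivalently, compactness of $\prod_n \Phi_n$) there is a coherent choice $(\phi_n)_{n}$ with $\phi_{n+1}|_{G_n} = \phi_n$. These glue to a rotation system $\phi$ on all of $G$. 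It then remains to argue that $\phi$ is the rotation system of an actual embedding of $G$ in the sphere (or plane). Here I would use that a rotation system on a graph determines a set of facial walks, and check that $\phi$ has a face that is ``infinite'' / contains all but finitely much of the graph on its boundary, or more robustly: a countable graph with a rotation system such that every finite subgraph embeds with the induced rotation is planar — this can be shown by realising $\phi$ as a limit of the planar drawings of the $G_n$, nesting the drawings so that $G_n$ is drawn inside a disc disjoint from the part of the plane reserved for the vertices $>n$.

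The step I expect to be the main obstacle is precisely this last one: going from a coherent combinatorial rotation system to a genuine topological embedding of the infinite graph in the plane. One subtlety is that an infinite graph can have a ``locally planar'' rotation system that does not globally embed in the plane (e.g.\ accumulation-point issues, or the graph being planar only on the sphere ``at infinity''); for countable graphs planarity in the plane and on the sphere coincide for the purpose of this lemma only after a little care, and one must be sure the limiting drawing does not force a vertex or edge to accumulate badly. I would handle this by the explicit nested-disc construction above — draw $G_1$ in a small disc $D_1$, extend the drawing of $G_n$ to one of $G_{n+1}$ inside a slightly larger disc $D_{n+1} \supseteq D_n$ using that the induced embeddings are compatible (so the new vertex $n+1$ and its edges can be added inside the appropriate face of the $G_n$-drawing, which by coherence is a face of $G_{n+1}$'s embedding too), and take the union; since each vertex and edge is ``frozen'' from some finite stage on, the union is a bona fide planar drawing of $G$. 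An alternative, if one wants to avoid topological bookkeeping, is to cite the Kuratowski--Wagner theorem directly: if $G$ were non-planar it would contain a $K_5$- or $K_{3,3}$-subdivision, which is a countable but possibly infinite object; however one can extract from it a finite subgraph of $G$ that is already non-planar (a $K_5$- or $K_{3,3}$-subdivision uses only finitely many branch vertices and finitely many internally disjoint paths between them, hence lies in some $G_n$), contradicting the hypothesis. This second route is cleaner and is probably the intended proof; the infinity-lemma route is the backup if one insists on producing the embedding explicitly.
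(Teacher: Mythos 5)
Your second, ``cleaner'' route is exactly the paper's proof: by the Kuratowski theorem for countable graphs (which is the nontrivial input here, and what the citation to \citet{Imrich75} supplies), a non-planar countable $G$ contains a $K_5$- or $K_{3,3}$-subdivision, and any such subdivision is automatically a \emph{finite} subgraph of $G$ (each subdivision path is a finite path joining two branch vertices), hence lies in some finite subgraph and is itself non-planar, contradicting the hypothesis. So your hedge that the subdivision is ``a countable but possibly infinite object'' is an overstatement, but your extraction argument covers it in any case. Your primary route via K\"onig's lemma on rotation systems of the $G_n$ plus the nested-disc realisation is a genuinely different approach --- essentially a direct Dirac--Schuster-style construction of the embedding --- and it can be made to work (the $2$-connectedness reduction is unnecessary: $\Phi_n$ is finite and nonempty and restriction maps exist regardless), but the step you yourself flag, turning the coherent rotation system into an actual plane drawing, is precisely the topological work that the countable Kuratowski theorem packages away; the paper avoids it entirely by taking that theorem as given, which makes the whole proof four lines.
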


\begin{proof}
If a finite graph $H$ is a subdivision of a graph $G$, then $H$ is a subdivision of a finite subgraph of $G$. That is, if no finite subgraph of $G$ contains a subdivision of $H$, then $H$ is not a subdivision of $G$.  Kuratowski's Theorem says that a finite graph $G$ is planar if and only if $G$ contains no $K_5$ or $K_{3,3}$ subdivision~\citep{Kuratowski30}. 
Erd\H{o}s extended this result to the setting of countable graphs by an elegant argument based on K\"onig's Lemma below (see \cite{Thomassen83} for example). By the above observation, $G$ is planar if and only if no finite subgraph contains a $K_5$ or $K_{3,3}$ subdivision. That is, $G$ is planar if and only if every finite subgraph of $G$ is planar. Hence planarity is extendable.
\end{proof}

A function $f$ is a \defn{graph parameter} if $f(G)\in\NN$ for every graph $G$, and $f(G_1)=f(G_2)$ for all isomorphic graphs $G_1$ and $G_2$. A graph parameter is \defn{extendable} if for every graph $G$ and integer $k\in\NN$, the class $\{G : f(G) \leq k\}$ is extendable. 

The following two lemmas are useful for showing that certain graph parameters are extendable. For example, they each imply that chromatic number is extendable (the de~Bruijn--Erd\H{o}s~Theorem~\citep{dBE51}). 

\begin{lem}[K\"onig's Lemma \citep{Konig27}] 
\label{Konig} 
Let $V_1,V_2,\dots$ be an infinite sequence of disjoint non-empty finite sets. Let $G$ be a graph with vertex-set $\bigcup_{n\in\NN} V_n$, such that for all $n\in\NN$ every vertex in $V_{n+1}$ has a neighbour in $V_n$. Then $G$ contains a path $v_1,v_2,\dots$ with $v_n\in V_n$ for all $n\in\NN$. 
\end{lem}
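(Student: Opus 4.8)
The plan is to run the classical ``infinite subtree'' argument. First I would fix, for every $n\in\NN$ and every vertex $v\in V_{n+1}$, one neighbour $p(v)\in V_n$ of $v$ in $G$; this is possible by hypothesis. This defines a function $p$ on $\bigcup_{n\ge 2}V_n$ that maps $V_{n+1}$ into $V_n$ for each $n$. For $v\in V_n$ and $w\in V_m$ with $m\ge n$, call $w$ a \emph{descendant} of $v$ if $p^{\circ(m-n)}(w)=v$, where $p^{\circ k}$ denotes the $k$-fold iterate of $p$ and $p^{\circ 0}$ is the identity. Every $w\in V_m$ is then a descendant of exactly one vertex of $V_1$, namely $p^{\circ(m-1)}(w)$.

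Since each $V_n$ is non-empty and there are infinitely many of them, $\bigcup_{n\in\NN}V_n$ is infinite; as it is partitioned according to its unique ancestor in the finite set $V_1$, pigeonhole gives a vertex $v_1\in V_1$ with infinitely many descendants. I would now build the path recursively, maintaining the invariant that $v_n\in V_n$ has infinitely many descendants. Given such a $v_n$, note that every descendant $u$ of $v_n$ other than $v_n$ itself lies in some $V_m$ with $m>n$, and then $u$ is a descendant of $w':=p^{\circ(m-n-1)}(u)$, which lies in $V_{n+1}$ and satisfies $p(w')=v_n$; there are only finitely many such $w'$ since $V_{n+1}$ is finite. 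Hence the infinitely many descendants of $v_n$ are covered by finitely many descendant-sets, one for each such $w'$, so at least one child $v_{n+1}\in V_{n+1}$ of $v_n$ (that is, with $p(v_{n+1})=v_n$) again has infinitely many descendants.

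This produces a sequence $v_1,v_2,\dots$ with $v_n\in V_n$ and $p(v_{n+1})=v_n$ for every $n$; since $p(v_{n+1})$ is by construction a neighbour of $v_{n+1}$, we get $v_nv_{n+1}\in E(G)$, and the $v_n$ are pairwise distinct because the sets $V_n$ are pairwise disjoint. Thus $v_1,v_2,\dots$ is the required one-way infinite path. There is no genuine obstacle here; the only thing to get right is the bookkeeping of the descendant relation and the verification that the pigeonhole step really can be iterated (that we always retain a vertex with infinitely many descendants). Equivalently, one may observe that the digraph with arcs $v\mapsto p(v)$ is a forest of in-trees rooted in $V_1$ in which every vertex has finitely many children, so the statement reduces to König's lemma in its original tree form; but the direct recursive argument above is self-contained, and is what I would write.
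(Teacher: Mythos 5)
Your proof is correct: it is the standard argument for K\"onig's Lemma, fixing a parent function $p$, partitioning $\bigcup_n V_n$ by ancestor in the finite set $V_1$, and iterating the pigeonhole step while maintaining the invariant that the current vertex has infinitely many descendants; the inductive step (that the descendants of $v_n$ are covered by the finitely many children $w'\in V_{n+1}$ with $p(w')=v_n$) is verified correctly, and adjacency and distinctness of the $v_n$ follow as you say. The paper states this lemma only with a citation to K\"onig and gives no proof, so there is nothing to compare against; your write-up is exactly the classical proof one would supply.
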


\begin{lem}[Zorn's Lemma; see \citep{Konig27}] \label{Zorn} 
If a partially ordered set $P$ has the property that every chain in $P$ has an upper bound in $P$, then $P$ contains at least one maximal element.
\end{lem}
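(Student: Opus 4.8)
The plan is to argue by contradiction: assume $P$ has no maximal element and show that $P$ would then be forced to contain a strictly increasing family indexed by all the ordinals, which is impossible since $P$ is a set. First observe that the empty set is (vacuously) a chain, so the hypothesis applied to $\emptyset$ produces an element of $P$; in particular $P\neq\emptyset$. Next, for any chain $C$ in $P$, take an upper bound $b$ of $C$; since $b$ is not maximal there is $b'\in P$ with $b'>b$, and this $b'$ is a \emph{strict} upper bound of $C$ (it exceeds every element of $C$). So the set of strict upper bounds of each chain is non-empty.

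Using the Axiom of Choice, fix a function $g$ that assigns to each chain $C\subseteq P$ a strict upper bound $g(C)$. By transfinite recursion define $a_\alpha:=g(\{a_\beta:\beta<\alpha\})$ for every ordinal $\alpha$. An easy induction shows that at every stage $\{a_\beta:\beta<\alpha\}$ is a chain and the map $\beta\mapsto a_\beta$ is strictly increasing on $\alpha$, the point being that each $a_\beta$ strictly exceeds all earlier $a_\gamma$. Hence $\alpha\mapsto a_\alpha$ is an injection from the class of all ordinals into the set $P$; by Replacement this would make the class of ordinals a set, contradicting the Burali--Forti paradox (equivalently, Hartogs' theorem supplies an ordinal that admits no injection into $P$). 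Therefore $P$ must have a maximal element.

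I expect the only substantive work to be the set-theoretic bookkeeping behind the transfinite recursion: verifying that the recursion is well-defined (including at limit stages) and that a strictly increasing ordinal-indexed family cannot be contained in a set. It is worth noting that Zorn's Lemma is equivalent to the Axiom of Choice over $\mathrm{ZF}$, so some form of choice is genuinely unavoidable. A presentation that sidesteps explicit ordinals is the Bourbaki--Witt ``tower'' argument: work in the poset of chains of $P$ ordered by inclusion, turn $g$ into a successor operation $C\mapsto C\cup\{g(C)\}$, let $\mathcal{T}_0$ be the intersection of all collections of chains that contain $\emptyset$ and are closed under this operation and under unions of $\subseteq$-chains, prove the (mildly delicate) fact that $\mathcal{T}_0$ is itself totally ordered by $\subseteq$, and then note that $C_0:=\bigcup\mathcal{T}_0$ lies in $\mathcal{T}_0$, so $g(C_0)\in C_0$ --- contradicting that $g(C_0)$ is a strict upper bound of $C_0$. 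Either route works; the ordinal version is shorter to write out.
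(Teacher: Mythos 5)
Your proof is correct: the reduction from ``upper bound'' to ``strict upper bound'' under the no-maximal-element assumption, the use of the empty chain to get $P\neq\emptyset$, and the transfinite recursion producing an ordinal-indexed strictly increasing family (contradicting Hartogs/Burali--Forti via Replacement) constitute the standard proof of Zorn's Lemma, and the Bourbaki--Witt alternative you sketch is also sound. Note, however, that the paper does not prove this lemma at all --- it is quoted as a classical set-theoretic fact with a reference and used as a black box (e.g.\ in the proof that $\FF$-width is extendable) --- so there is no proof in the paper to compare yours against; your write-up is simply a correct self-contained justification of the cited result.
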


\subsection{Trees}
\label{Trees}

A \defn{tree} is a connected graph with no cycles. An orientation of a tree is a \hdefn{$1$}{orientation} if every vertex has in-degree at most 1. For every directed edge $vw$ in a 1-oriented tree, $v$ is the \defn{parent} of $w$ and $w$ is a \defn{child} of $v$. If there is a directed path from a vertex $v$ to a vertex $w$ in a 1-oriented tree, then $v$ is an \defn{ancestor} of $w$ and $w$ is a \defn{descendant} of $v$. 
If a vertex $r$ in a 1-oriented tree $T$ has in-degree 0, then every edge of $T$ is oriented away from $r$, implying that $r$ is the only vertex with in-degree 0. In this case, we say $T$ is \defn{rooted} at $r$. 
In a tree $T$ with a specified root vertex $r$, we implicitly consider the edges of $T$ to be oriented away from $r$.

\begin{lem}
\label{TreeOrientation}
For every vertex $r$ of a tree $T$ there is a 1-orientation of $T$ rooted at $r$. Moreover, a tree $T$ has an unrooted 1-orientation if and only if $T$ contains a 1-way infinite path. \end{lem}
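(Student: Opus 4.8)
The statement has two parts. For the first part, fix a tree $T$ and a vertex $r$. The plan is to orient every edge $vw$ of $T$ ``away from $r$'': since $T$ is a tree, for each vertex $v\neq r$ there is a unique $rv$-path $P_v$, and the edge of $P_v$ incident to $v$ is the unique edge we orient towards $v$. Concretely, orient $vw$ from $v$ to $w$ when $v$ lies on the unique $rw$-path (equivalently, when $\dist_T(r,v)<\dist_T(r,w)$, which for an edge means $\dist_T(r,w)=\dist_T(r,v)+1$). Every vertex $w\neq r$ then has exactly one in-neighbour, namely the penultimate vertex of $P_w$, so the in-degree is at most $1$ everywhere, and $r$ has in-degree $0$. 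By the definition of ``rooted at $r$'' given just above the lemma, this is a $1$-orientation rooted at $r$. The only mild subtlety is that $T$ may be infinite, but distances to $r$ are still finite (as $T$ is connected) and the unique-path argument is unaffected, so nothing changes.

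For the second part I would argue both directions. ($\Leftarrow$) Suppose $T$ contains a $1$-way infinite path $Q=q_1,q_2,\dots$. Orient each edge $q_iq_{i+1}$ from $q_{i+1}$ to $q_i$ (i.e.\ ``towards the start of $Q$''); this gives every $q_i$ in-degree $1$ from $q_{i+1}$, including $q_1$. For every vertex $v\notin V(Q)$, let $q_j$ be the vertex of $Q$ closest to $v$ (unique since $Q$ is connected and $T$ has no cycles), and orient the edges of the unique $q_jv$-path away from $q_j$; then each such $v$ gets in-degree $1$ from its parent on that path, and no $q_i$ receives an extra in-edge because the $q_jv$-path meets $Q$ only at $q_j$. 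Edges not yet oriented — none remain, since every edge of $T$ lies either on $Q$ or on exactly one of these pendant paths (here one uses that $T$ is a tree, so deleting $V(Q)$ leaves components each attached to $Q$ by a single edge). This is an unrooted $1$-orientation, since $q_1$ has in-degree $1$ and hence no vertex has in-degree $0$.

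($\Rightarrow$) Conversely, suppose $T$ has a $1$-orientation with no vertex of in-degree $0$. Pick any vertex $v_1$; it has an in-neighbour $v_2$; which has an in-neighbour $v_3$; and so on, building a directed walk $v_1,v_2,v_3,\dots$ following in-edges. Because each vertex has in-degree at most $1$, this walk is forced (the predecessor is unique) and, since $T$ is acyclic as an undirected graph, the $v_i$ are pairwise distinct — if $v_i=v_j$ for $i<j$ then $v_i,\dots,v_j$ would be a cycle in $T$. Hence $v_1,v_2,\dots$ is a $1$-way infinite path in $T$. (Equivalently: the map sending each vertex to its unique in-neighbour is a well-defined function on all of $V(T)$ with no periodic orbit, so iterating it from any vertex produces an injective infinite sequence, i.e.\ a ray.)

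The routine parts are the path-uniqueness bookkeeping in trees; the one step that deserves a careful word is the forward direction of the second part, where one must rule out that following in-edges ever repeats a vertex — this is exactly where acyclicity of $T$ is used, and it is the crux of the argument.
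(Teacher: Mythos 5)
Your proposal is correct and follows essentially the same route as the paper: orient edges away from $r$ for the first part, orient the ray towards its starting vertex and all remaining edges away from it for one direction of the second part, and iterate the (unique) in-neighbour map, using acyclicity of $T$ to get an infinite ray, for the converse. The extra bookkeeping you include (uniqueness of the attachment of components of $T-V(Q)$ to $Q$, distinctness of the $v_i$) only makes explicit what the paper leaves implicit.
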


\begin{proof}
For the first claim, simply orient every edge of $T$ away from $r$, to obtain a 1-orientation of $T$ rooted at $r$.  

Now suppose that $T$ contains a 1-way infinite path $P$ starting at some vertex $v$. Orient every edge of $P$ towards $v$, and orient every edge of $T-E(P)$ away from $P$. Then every vertex has in-degree exactly 1, and $T$ has an unrooted 1-orientation. 

Finally, suppose that $T$ has an unrooted 1-orientation. So every vertex has a parent. Let $v_1$ be any vertex in $T$. Suppose that $v_1,v_2,\dots,v_i$ is a backward path in $T$ (meaning that each edge is oriented from $v_j$ to $v_{j-1}$). Let $v_{i+1}$ be the parent of $v_i$. So $v_1,\dots,v_i$ are all descendants of $v_{i+1}$. Repeating this step, we obtain a 1-way infinite backward path $v_1,v_2,\dots$. 
\end{proof}

\subsection{Treewidth and Simplicial Decompositions}
\label{SimplicialDecompositions}

This section introduces graph treewidth and its connection to Halin's Simplicial Decomposition Theorem via chordal graphs. 

\note{Added more explanation here.}

A \defn{tree-decomposition} of a graph $G$ is a collection $(B_x\subseteq V(G):x\in V(T))$ of subsets of $V(G)$ (called \defn{bags}) indexed by the nodes of a tree $T$, such that:
\begin{enumerate}[label=(\alph*)]
\item for every edge $uv\in E(G)$, some bag $B_x$ contains both $u$ and $v$, and
\item for every vertex $v\in V(G)$, the set $\{x\in V(T):v\in B_x\}$ induces a non-empty subtree of $T$.
\end{enumerate}
We emphasise that the subtree in (b) must be connected.  Note that every countable graph has a tree-decomposition with finite bags\footnote{If $V(G)=\{v_1,v_2,\dots\}$ then the sequence $( \{ v_1 \}, \{v_1,v_2\}, \{v_1,v_2,v_3\},\dots )$ defines a tree-decomposition indexed by a path, in which every bag is finite.}.  A graph $G$ has \defn{bounded treewidth} if for some $k \in \NN_0$, $G$ has a tree-decomposition in which every bag has size at most $k+1$. In this case, the \defn{treewidth} of $G$, denoted by \defn{$\tw(G)$}, is the minimum such $k\in\NN_0$. In this paper, the $\tw(G)$ notation implicitly means that $G$ has bounded treewidth. Treewidth is recognised as the most important measure of how similar a given graph is to a tree. Indeed, a connected graph with at least two vertices has treewidth 1 if and only if it is a tree. 

See \citep{Diestel90,KT90,KT90a,KT91} for work on tree-decompositions of infinite graphs. 
\citet{RST-TAMS92} defined a graph $G$ to have \defn{treewidth $<\aleph_0$} if $G$ has a tree-decomposition $(B_x\subseteq V(G):x\in V(T))$ such that $B_x$ is finite for all $x \in V(T)$, and
\begin{enumerate}[(c)]
\item For each one-way infinite path $x_1,x_2, \ldots$ in $T$, 
$\bigcup\limits_{i=1}^\infty \bigcap\limits_{j\geq i} B_{x_j}$ is finite.
\end{enumerate}
For example, the disjoint union of all finite complete graphs has treewidth $<\aleph_0$, whereas the countably infinite complete graph $K_{\aleph_0}$ does not. 

A \defn{path-decomposition} is a tree-decomposition in which the underlying tree is a 1-way infinite path. We denote a path-decomposition by the corresponding sequence of bags $(B_1,B_2,\dots)$. A graph $G$ has \defn{bounded pathwidth} if, for some $k\in\NN_0$, $G$ has a path-decomposition in which every bag has size at most $k+1$. In this case, the \defn{pathwidth} of $G$, denoted by \defn{$\pw(G)$}, is the minimum such $k\in\NN_0$.
A graph $G$ has \defn{bounded bandwidth} if, for some $k\in\NN_0$, there is a linear ordering $v_1,v_2,\dots$ of $V(G)$, such that $|i-j|\leq k$ for every edge $v_iv_j\in E(G)$. In this case, the \defn{bandwidth} of $G$, denoted by \defn{$\bw(G)$}, is the minimum such $k\in\NN_0$. It is easily seen that $\tw(G)\leq\pw(G)\leq \bw(G)$.

The general definition of a simplicial decomposition can be found in \cite{Thom83}. Here, we give the definition in the special case of countable graphs containing no $K_{\aleph_0}$. Let $H_1,H_2,\ldots$ be pairwise disjoint (finite or infinite) graphs that contain no $K_{\aleph_0}$ and no finite separating complete subgraph. (We consider the empty graph to be complete, so each $H_i$ is connected.)\ Form a sequence of graphs $G_1,G_2, \ldots$ as follows. Let $G_1:=H_1$. Having defined $G_n$, define $G_{n+1}$ by selecting, for some $p\in\NN_0$,  a $K_p$ subgraph in $G_n$ and a $K_p$ subgraph in $H_{n+1}$, and identifying the two copies of $K_p$. Let $G := \bigcup_{n\in\NN} G_n$. If $G$ contains no   $K_{\aleph_0}$, then $H_1,H_2, \ldots$ are said to form a \defn{simplicial decomposition} of $G$, where each $H_i$ is called a \defn{simplicial summand}. \citet{Halin64a} proved the following (where, as always, we assume graphs are countable):

\begin{thm}[\citep{Halin64a}]
\label{HalinsTheorem}
Every graph containing no $K_{\aleph_0}$ has a simplicial decomposition. 
\end{thm}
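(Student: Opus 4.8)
The plan is to produce a tree-decomposition of $G$ in which every adhesion set $B_s\cap B_t$ is a clique of $G$ and every torso is \emph{prime} — has no finite separating complete subgraph — and then to unfold it into a sequence of summands. Since $G$ contains no $K_{\aleph_0}$, every clique of $G$ is finite; so every adhesion set will automatically be a finite clique, and (as the adhesions are cliques) every torso is simply the induced subgraph $G[B_t]$, with no edges added. We may assume $G$ is connected: otherwise decompose each component of $G$ separately and concatenate the decompositions, gluing successive components along the empty clique $K_0$ (the empty graph being complete by convention).

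To obtain the decomposition, I would apply Zorn's Lemma to the family of \emph{nested} sets of finite-clique separations of $G$ — pairs $(A,B)$ with $A\cup B=V(G)$, $A\cap B$ a finite clique, and no edge between $A\setminus B$ and $B\setminus A$ — ordered by inclusion; a chain of nested families has its union as an upper bound, since nestedness is a condition on pairs. A maximal nested family $\mathcal{N}$ induces a tree-decomposition of $G$ whose adhesion sets are the separators occurring in $\mathcal{N}$, hence cliques, and whose torsos are all prime: a finite clique separator inside some torso would give a separation nested with every member of $\mathcal{N}$ and not already present, contradicting maximality. (Equivalently one can build this decomposition by starting from the one-bag decomposition and repeatedly splitting any torso that has a finite clique separator, then taking a limit.) Passing to a reduced form — iteratively merging a bag into an adjacent bag containing it, which preserves clique adhesions and prime torsos — makes the underlying tree $T$ countable, because in a reduced tree-decomposition of a countable graph each node has only countably many neighbours (the `private' vertex sets on the different sides being pairwise disjoint and non-empty) and $T$ is connected.

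It remains to serialize. Root $T$ and list $V(T)=t_1,t_2,\dots$ in breadth-first order, so that $\{t_1,\dots,t_n\}$ induces a connected subtree for each $n$ and, for $n\ge 2$, the node $t_n$ has a unique neighbour $t_j$ among $t_1,\dots,t_{n-1}$; take $H_i:=G[B_{t_i}]$ on pairwise-disjoint vertex sets. Each $H_i$ is connected and contains neither $K_{\aleph_0}$ (it is a subgraph of $G$) nor a finite separating complete subgraph (it is a torso). Form $G_1:=H_1$ and obtain $G_n$ from $G_{n-1}$ by identifying the clique $B_{t_n}\cap B_{t_j}$ of $H_n$ with its copy in $G_{n-1}$. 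A short argument shows $G_n\cong G[B_{t_1}\cup\dots\cup B_{t_n}]$: any vertex of $B_{t_n}$ lying in an earlier bag must, by connectivity of the subtree of bags containing it, also lie in $B_{t_j}$; and any edge of $G$ with both ends in $B_{t_1}\cup\dots\cup B_{t_n}$ has both ends in a single such bag, namely the one at the median in $T$ of the two bag-nodes containing its ends and a bag-node realising it, which lies on the path between the former two and hence in $\{t_1,\dots,t_n\}$. Thus $\bigcup_n G_n=G$, which has no $K_{\aleph_0}$ by hypothesis, so $H_1,H_2,\dots$ is a simplicial decomposition of $G$.

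\textbf{The main obstacle} is the middle step: producing the prime, clique-adhesion tree-decomposition. Turning a maximal nested separation system into a genuine tree-decomposition, and in particular controlling \emph{infinite} nested chains of separations so that the limit decomposition still decomposes $G$ and has \emph{all} torsos simultaneously prime, is exactly where the hypothesis ``$G$ has no $K_{\aleph_0}$'' and the countability of $G$ are needed. The serialization is routine by comparison.
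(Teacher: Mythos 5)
The paper does not actually prove this statement -- it is quoted from \citet{Halin64a} -- so your argument has to stand entirely on its own. Your outer reduction is sound: a tree-decomposition with finite complete adhesion sets and prime torsos can be serialized into a simplicial decomposition, essentially by the argument of \cref{SimpDecTreeDec} (one small slip: if a node has infinitely many children, a literal breadth-first order is not an enumeration of order type $\omega$; you need an interleaved enumeration listing each node after its ancestors, which exists since the tree is countable and every node has finite depth). The problem is the step you yourself flag as ``the main obstacle'': it is not a deferred technicality but the entire content of the theorem, and the specific bridge you propose is false as stated.

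Concretely, the claim that a maximal nested family $\mathcal{N}$ of finite-clique separations ``induces a tree-decomposition of $G$ whose adhesion sets are the separators occurring in $\mathcal{N}$'' fails. Let $G$ have vertices $u,w,v_1,v_2,\dots$ and edges $uw$, $uv_i$ and $v_iv_{i+1}$ for all $i$. For $i\geq 2$ set $A_i:=\{u,v_1,\dots,v_i\}$, $B_i:=\{u,w,v_i,v_{i+1},\dots\}$, and set $A_\infty:=\{u,v_1,v_2,\dots\}$, $B_\infty:=\{u,w\}$. These are finite-clique separations forming a nested chain of order type $\omega+1$, so by your own Zorn argument they lie in some maximal nested family $\mathcal{N}$. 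But no tree-decomposition of $G$ realises all of them as edge-induced separations: each of these separations has $v_1$ and $w$ on opposite sides, so each would have to be induced by an edge on the finite tree-path between a bag containing $v_1$ and a bag containing $w$, and they are pairwise distinct -- infinitely many distinct separations, finitely many edges. (Note $G$ \emph{does} have a prime decomposition, into the triangles $\{u,v_i,v_{i+1}\}$ and the edge $\{u,w\}$; the point is that a maximal nested family is the wrong object to extract it from.) This also destroys the one-line primeness argument: if the decomposition you extract uses only \emph{some} of the separators of $\mathcal{N}$ as adhesions, then a clique separator inside a torso may correspond to a separation already in $\mathcal{N}$, and maximality gives no contradiction. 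The ``split torsos repeatedly and take a limit'' variant has the same defect: at limit stages the resulting structure need not be a tree-decomposition, and deciding which clique separations to realise, in which order, so that the limit is a genuine tree-decomposition with all torsos prime is precisely where countability and the absence of $K_{\aleph_0}$ must do real work (your Zorn step uses neither). The known proofs, Halin's original and Diestel's treatment in \citep{Diestel90}, instead build the summands recursively along an enumeration of the vertices, choosing each summand as a suitable maximal prime subgraph with complete attachments; that recursive construction is the missing ingredient, not a routine appendix to your argument.
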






We frequently use the following lemma, which easily follows from the Helly property for trees.  

\note{Throughout the paper we have added lemma numbers for results cited from the literature, as requested by the referee.}

\begin{lem}[{\protect\citep[Corollary~12.3.5]{Diestel5}}] 
\label{TreeDecompositionClique}
For every graph $G$ and every finite clique $X$ of $G$, every tree-decomposition of $G$ has a bag containing $X$.
\end{lem}

The next lemmas show the equivalence between simplicial decompositions and tree-decompositions, and their relationship with chordal graphs. We include the proofs for completeness.

\begin{lem}
\label{SimpDecTreeDec}
Let $H_1,H_2, \ldots$ be pairwise disjoint graphs, none containing an infinite complete subgraph or a finite separating complete subgraph. Then $H_1,H_2, \ldots$  form a simplicial decomposition of a graph $G$ if and only if there is a tree-decomposition $(B_x:x\in V(T))$ of $G$ for some rooted tree $T$, and a bijection $f:V(T)\to\NN$ such that $f(v)<f(w)$ whenever $v$ is the parent of $w$, and $G[B_x]\cong H_{f(x)}$ for every $x\in V(T)$, and $G[B_x \cap B_y]$ is a finite complete graph for each $xy\in E(T)$.
\end{lem}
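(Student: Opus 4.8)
\textbf{Proof plan for \cref{SimpDecTreeDec}.}

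The plan is to prove both directions by translating between the inductive construction of a simplicial decomposition and the structure of a rooted tree-decomposition, using the additional facts recorded at the end of \cref{Definitions} (minimal separating sets, chord behaviour) only lightly; the real bookkeeping is about how the two data structures encode the same gluings.

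For the forward direction, suppose $H_1,H_2,\dots$ form a simplicial decomposition of $G$, with the associated sequence $G_1=H_1$, $G_{n+1}$ obtained from $G_n$ by identifying a $K_p$ in $G_n$ with a $K_p$ in $H_{n+1}$. I would build the rooted tree $T$ and the map $f$ inductively alongside the $G_n$: put a node $x_1$ with $f(x_1)=1$ and bag $B_{x_1}=V(H_1)$; having built a tree-decomposition of $G_n$ with nodes labelled $1,\dots,n$ so that each bag induces the corresponding $H_i$ and each edge-bag-intersection is a finite complete graph, when forming $G_{n+1}$ we attach $H_{n+1}$ along a clique $Q\cong K_p$ of $G_n$. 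The key claim is that $Q$ is contained in a single bag $B_{x}$ of the current tree-decomposition: this is the standard ``a clique lies in a bag'' fact, which holds because the subtrees $\{x:v\in B_x\}$ for $v\in V(Q)$ pairwise intersect (each pair $v,w$ is an edge of $G$ hence shares a bag) and subtrees of a tree with the Helly property have a common node. Then I add a new node $x_{n+1}$ with $f(x_{n+1})=n+1$, make it a child of that $x$, and set $B_{x_{n+1}}=V(H_{n+1})$ (with the identification, so $B_{x_{n+1}}\cap B_x = V(Q)$). One checks the tree-decomposition axioms survive: axiom (a) is immediate since every edge of $G_{n+1}$ is an edge of some $H_i$; axiom (b) (connectedness of the vertex-subtrees) holds because the only vertices shared between the new bag and the old decomposition are those of $Q$, all of which already live in $B_x$. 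Taking the union over all $n$ gives a tree-decomposition of $G=\bigcup_n G_n$; the map $f$ is a bijection onto $\NN$ with $f(\text{parent})<f(\text{child})$ by construction, $G[B_x]\cong H_{f(x)}$, and each $G[B_x\cap B_y]$ is the finite complete graph $K_p$ used in that gluing step.

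For the converse, suppose we are given such a tree-decomposition $(B_x:x\in V(T))$, the bijection $f$, and the stated properties. Since $f(\text{parent})<f(\text{child})$, listing the nodes as $x_1,x_2,\dots$ in increasing order of $f$ gives an enumeration in which each $x_{n+1}$ ($n\ge1$) has its parent among $x_1,\dots,x_n$; set $H_i:=G[B_{x_i}]$ and $G_n:=G[B_{x_1}\cup\dots\cup B_{x_n}]$. By hypothesis each $H_i$ contains no infinite complete subgraph and no finite separating complete subgraph. I claim $G_{n+1}$ arises from $G_n$ by identifying a clique $K_p:=G[B_{x_{n+1}}\cap B_{x_j}]$ (where $x_j$ is the parent of $x_{n+1}$) in $G_n$ with the corresponding clique in $H_{n+1}$: the only vertices of $B_{x_{n+1}}$ that appear in $B_{x_1}\cup\dots\cup B_{x_n}$ are exactly those in $B_{x_{n+1}}\cap B_{x_j}$ — this is where connectedness axiom (b) is essential, since if $v\in B_{x_{n+1}}\cap B_{x_i}$ for some $i\le n$ with $x_i\ne x_j$, the $v$-subtree of $T$ would have to pass through $x_j$, forcing $v\in B_{x_j}$, hence $v\in B_{x_{n+1}}\cap B_{x_j}$. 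And there are no edges of $G$ between $V(H_{n+1})\setminus V(K_p)$ and $V(G_n)\setminus V(K_p)$ because every edge lies in some bag, and a bag containing a vertex of $B_{x_{n+1}}\setminus B_{x_j}$ must be $x_{n+1}$ or a descendant thereof. Thus $G=\bigcup_n G_n$ is built by exactly the simplicial-decomposition recipe; since $G$ contains no $K_{\aleph_0}$ (as $G$ is one of our countable graphs — or, if one wants this purely formally, because an infinite clique would by the Helly argument lie in one bag $B_x\cong H_{f(x)}$, contradicting that $H_{f(x)}$ has no infinite complete subgraph), the sequence $H_1,H_2,\dots$ is a genuine simplicial decomposition of $G$.

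The main obstacle, and the step deserving the most care, is the ``a clique lies in a single bag'' argument in the forward direction together with its mirror ``new vertices only touch the parent bag'' argument in the converse: both rest on the Helly property for subtrees of a tree and on the connectedness requirement (b) of tree-decompositions, and in the infinite setting one must make sure the relevant subtrees and the tree $T$ itself are handled correctly (e.g.\ that the pairwise-intersecting family of vertex-subtrees for a clique indeed has a common node even when the clique is finite but $T$ is infinite — which is fine, since finitely many pairwise-intersecting subtrees of any tree have a common node). Everything else is routine verification of the tree-decomposition axioms and the defining properties of a simplicial decomposition.
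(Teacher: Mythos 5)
Your proof follows essentially the same route as the paper's: both directions are the same inductive bookkeeping, building the rooted tree and bags alongside the graphs $G_n$ in the forward direction (your direct Helly argument for a finite clique plays exactly the role of \cref{TreeDecompositionClique}), and reading off the gluing steps from the $f$-ordering of the nodes in the converse. Your explicit check that a new bag meets the old graph only inside the parent adhesion set, and that no edges leave the new part except through that clique, is a detail the paper leaves implicit, and it is correct.

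One caveat. Your parenthetical claim that $G$ contains no $K_{\aleph_0}$ ``because an infinite clique would by the Helly argument lie in one bag'' is false: the Helly property holds only for finite families of subtrees, and the paper's own example following \cref{TreeDecompositionClique} (the complete graph on $\NN$ with bags $B_n=[n]$ along a ray) is precisely a tree-decomposition satisfying the right-hand hypotheses of \cref{SimpDecTreeDec} in which no bag contains the infinite clique. Likewise, ``$G$ is one of our countable graphs'' does not help, since a countable graph may well contain $K_{\aleph_0}$. So the stated hypotheses do not by themselves rule out $K_{\aleph_0}\subseteq G$; this point must be carried as a standing assumption on $G$ (as the paper does implicitly -- the lemma is only invoked for graphs with no $K_{\aleph_0}$, e.g.\ in \cref{ChordalCharacterisation}) rather than derived. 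Apart from this side remark, which the paper's own proof also passes over silently, your argument is sound and matches the paper's.
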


\begin{proof}
Suppose $H_1,H_2, \ldots$ form a simplicial decomposition of $G$.
We claim that for each $n\in\NN$, the graph $G_n$ defined above has a tree-decomposition $(B_x:x\in V(T_n))$ for some  $n$-vertex tree $T_n$ rooted at node $r$, and there is a bijection $f:V(T)\to[n]$ such that $f(r)=1$ and $f(v)<f(w)$ whenever $v$ is the parent of $w$, and
$G[B_x]\cong H_{f(x)}$ for every $x\in V(T)$. 
For $n=1$, let $T_1$ be a 1-node tree $T_1$ with vertex-set $\{r\}$, let $f:V(T_1)\to[1]$ be a bijection, and let $B_x:=V(H_1)$. Consider $T_1$ to be rooted at $r$. Assume we have the claimed tree-decomposition and bijection for some $n\in\NN$. 
Then $G_{n+1}$ is obtained from $G_n$ by identifying, for some $p\in\NN$, a $K_p$ subgraph in $G_n$ with a $K_p$ subgraph in $H_{n+1}$. The first $K_p$ subgraph is in some bag $B_x$ by \cref{TreeDecompositionClique}. Let $T_{n+1}$ be the tree obtained from $T_n$ by adding one new node $y$ adjacent to $x$, where $f(y):=n+1$ and $B_y:=V(H_{n+1})$. Consider $T_{n+1}$ to be rooted at $r$. We obtain the claimed tree-decomposition and bijection for $n+1$. Let $T:=\bigcup_{n\in\NN} T_n$. Then $(B_x:x\in V(T))$ is the desired tree-decomposition of $G$ and $f$ is the desired bijection.

Conversely, suppose there is a tree-decomposition $(B_x:x\in V(T))$ of $G$ for some rooted tree $T$, and a bijection $f:V(T)\to\NN$ such that 
$f(v)<f(w)$ whenever $v$ is the parent of $w$, and
$G[B_x] \cong H_{f(x)}$ for every $x\in V(T)$, and
$G[B_x \cap B_y]$ is a finite complete graph for every $xy\in E(T)$.
We prove by induction on $n\in\NN$ that $H_1,\dots,H_n$ form a simplicial decomposition of $G_n:=G[ \cup_{x\in V(T),f(x)\in[n]} B_x ]$. This trivially holds for $n=1$. Assume that $H_1,\dots,H_n$ form a simplicial decomposition of $G_n$. Let $y:= f^{-1}(n+1)$. Let $x$ be the parent of $y$ in $T$. So $f(x)<f(y)$, implying $f(x)\in[n]$. By assumption, $G[B_y]\cong H_{n+1}$. Let $S:=B_x \cap B_y$. By assumption, $G[S]\cong K_p$ for some $p\in\NN$. Thus $G_{n+1}$ is obtained from $G_n$ and $H_{n+1}$ by identifying $S$ (in $G_n$) with the image of $S$ under the assumed isomorphism from $G[B_y]$ to $H_{n+1}$. Thus $H_1,\dots,H_n,H_{n+1}$ form a simplicial decomposition of $G_{n+1}$. And $H_1,H_2,\dots$ form a simplicial decomposition of $G$. 
\end{proof}

Let $G$ be a graph.  A \defn{simplicial orientation} of $G$ is an acyclic orientation of $G$ such that the in-neighbourhood of every vertex is a clique.  An orientation of $G$ is \defn{well-founded} if $G$ contains no infinite 1-way backward path. This terminology is inspired by the analogous concept from posets.

\note{Added the well-founded definition to deal with an error found by the referee.}

\begin{lem}
\label{ChordalCharacterisation}
The following are equivalent for a graph $G$ containing no $K_{\aleph_0}$:
\begin{enumerate}[(a)] 
\item\label{ChordalCharacterisationChordal} $G$ is chordal;
\item\label{ChordalCharacterisationSimpDec} $G$ has a simplicial decomposition in which each simplicial summand is a finite clique;  
\item\label{ChordalCharacterisationTreeDec} $G$ has a tree-decomposition in which each bag is a finite clique; 
\item\label{ChordalCharacterisationSimplicialOrientation} $G$ has a simplicial and well-founded orientation; 
\item\label{ChordalCharacterisationSimplicial} $G$ has a simplicial orientation; 
\item\label{ChordalCharacterisationMinimalSep} every minimal separating set in $G$ is a clique.
\end{enumerate}
\end{lem}

\begin{proof}
\cref{ChordalCharacterisationChordal} $\Longrightarrow$ \cref{ChordalCharacterisationMinimalSep}: Assume that $G$ is chordal. Let $S$ be a minimal separating set in $G$. Let $X$ and $Y$ be distinct components of $G-S$. Suppose that $v$ and $w$ are non-adjacent vertices in $S$. Since $S$ is minimal, 
there is a shortest $vw$-path $P$ in $G$ with every internal vertex in $X$, and
there is a shortest $vw$-path $Q$ in $G$ with every internal vertex in $Y$. 
Thus $P\cup Q$ is a chordless cycle in $G$. This contradiction shows that $S$ is a clique. 

\cref{ChordalCharacterisationChordal} $\Longrightarrow$ \cref{ChordalCharacterisationSimpDec}: Assume that $G$ is chordal. By \cref{HalinsTheorem}, there is a simplicial decomposition $H_1,H_2,\dots$ of $G$. Every induced subgraph of a chordal graph is chordal, so each $H_i$ is chordal.  Since we have already established \cref{ChordalCharacterisationChordal} $\Longrightarrow$ \cref{ChordalCharacterisationMinimalSep}, 
every minimal separator in $H_i$ induces a complete subgraph. By the definition of a simplicial decomposition, each $H_i$ has no separating complete subgraph. Thus, $H_i$ has no minimal separator, implying $H_i$ is complete. Finally, $H_i$ is finite since $G$ contains no $K_{\aleph_0}$.

\cref{ChordalCharacterisationSimpDec}
$\Longleftrightarrow$ \cref{ChordalCharacterisationTreeDec} follows immediately from \cref{SimpDecTreeDec}.

\note{The following proof has been added.} 

\cref{ChordalCharacterisationTreeDec} $\Longrightarrow$ \cref{ChordalCharacterisationSimplicialOrientation}: 
Assume that $G$ has a tree-decomposition $(B_x:x\in V(T))$ in which each bag is a clique. Root $T$ at an arbitrary node $r$. For each vertex $v\in V(G)$, let $x_v$ be the node of $T$ closest to $r$ such that $v\in B_{x_v}$. For each node $x\in V(T)$, let $B'_x:=\{v\in V(G):x_v=x\}$.  Note that $B'_{x}$ and $B'_y$ are disjoint for all distinct $x,y$.  For each $x\in V(T)$, acyclically orient the clique on $B'_x$. Consider an edge $e=vw$ of $G$. 
If $x_v=x_w$ then we have already oriented $e$. If $x_v$ is closer to $r$ than $x_w$, then orient $e$ from $v$ to $w$. If $x_w$ is closer to $r$ than $x_v$, then orient $e$ from $w$ to $v$. Now $G$ is acyclically oriented, and for every vertex $v\in V(G)$, the in-neighbourhood of $v$ is a subset of $B_{x_v}$ and is therefore a clique.  We now show that this orientation is well-founded.  Suppose not.  For each infinite 1-way backward path $P$, let $r(P)$ be the unique vertex of $P$ with out-degree 0 in $P$.  Among all infinite 1-way backward paths $P$ of the orientation, choose $P$ to minimise the distance from $r$ to $x_{r(P)}$ in $T$.  Let $v$ be the vertex closest to $r(P)$ in $P$ such that $v \notin B_{r(P)}$ (which exists since $|B_{r(P)}|$ is finite).  By the choice of $v$, we have that $w \in B_{r(P)}$ where $w$ is the unique out-neighbour of $v$ in $P$. By the definition of our orientation, $x_v$ is closer to $r$ than $x_w$.  But now the subpath of $P$ ending at $v$ contradicts the choice of $P$.  Thus the orientation of $G$ is well-founded.

\cref{ChordalCharacterisationSimplicialOrientation} $\Longrightarrow$ 
\cref{ChordalCharacterisationSimplicial} is immediate.

\cref{ChordalCharacterisationSimplicial} $\Longrightarrow$ \cref{ChordalCharacterisationChordal}: Assume that $G$ has a simplicial orientation. Let $C$ be a cycle of length at least 4 in $G$. Since the orientation is acyclic, $C$ has a vertex $v$ with in-degree 2 in $C$. Thus, the neighbours of $v$ in $C$ are adjacent in $G$. So $C$ has a chord. Thus $G$ is chordal.

\cref{ChordalCharacterisationMinimalSep} $\Longrightarrow$ 
\cref{ChordalCharacterisationChordal}: Assume that every minimal separating set in $G$ is a clique. Let $C$ be a cycle in $G$ of length at least 4. Let $v,w$ be non-adjacent vertices in $C$. Let $S$ be a minimal separating set in $G$ such that $v$ and $w$ are in distinct components of $G-S$. So $S$ is a clique. Each of the two $vw$-paths in $C$ have a vertex in $S$, implying $C$ has a chord. Hence $G$ is chordal. 
\end{proof}

A \defn{simplicial $k$-orientation} of a chordal graph $G$ is an acyclic orientation of $G$ such that the in-neighbourhood of every vertex is a clique of size at most $k$. For brevity, a simplicial $k$-orientation is henceforth called a \hdefn{$k$}{orientation} (which matches the definition of 1-orientation when $k=1$). A $k$-orientation is \defn{rooted} if each connected component of $G$ has a vertex of in-degree 0. 

\note{The next lemma has been added.}  

\begin{lem}
\label{WellFoundedRooted}
For every well-founded acyclic orientation of a graph $G$, there is a root vertex. Moreover, for every rooted simplicial orientation of a connected chordal graph, there is a unique root. 
\end{lem}

\begin{proof}
Let $G$ be an acyclically oriented graph such that each vertex has indegree at least 1. Let $v_1,v_2,\dots $ be a maximal backward oriented path in $G$. Given any vertex $v_i$ let $v_{i+1}$ be any neighbour of $v_i$ where the edge $v_iv_{i+1}$ is directed from $v_{i+1}$ to $v_i$. Since the orientation is acyclic, $v_{i+1}\neq v_j$ for all $j\in[i]$. So $v_1,v_2,\dots$ is a 1-way infinite backward oriented path. This proves the first claim. 

Now assume that $G$ is a connected chordal graph with a rooted simplicial orientation. Say $r$ is a vertex of in-degree 0. Let $v$ be any vertex of $G$. Let $P$ be a shortest $rv$-path in $G$ (ignoring the edge orientation). If there is a vertex $x$ in $P$ incident with two incoming edges $yx,zx$ in $P$, then $yz$ is an edge, implying that $P$ is not a shortest $rv$-path. So no vertex in $P$ has two incoming edges in $P$. The edge incident to $r$ in $P$ is outgoing at $r$. So $P$ is oriented from $r$ to $v$. In particular, $v$ has in-degree at least 1. Thus $G$ has exactly one vertex of in-degree 0.
\end{proof}

The next lemma follows from \cref{ChordalCharacterisation,TreeDecompositionClique}. 

\begin{lem}
\label{ChordalCharacterisation2}
The following are equivalent for a graph $G$ and $k\in\NN$:
\begin{itemize}
\item $G$ is chordal containing no $K_{k+2}$ subgraph;
\item $G$ is chordal and is $(k+1)$-colourable; 
\item $G$ has a tree-decomposition in which each bag is a clique of size at most $k+1$;
\item $G$ has a $k$-orientation. 
\end{itemize}
\end{lem}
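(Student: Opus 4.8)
The plan is to establish the four equivalences by a cycle of implications, leaning heavily on \cref{ChordalCharacterisation} and \cref{TreeDecompositionClique} as the excerpt suggests. First note that a graph $G$ with a $k$-orientation has no $K_{k+2}$ subgraph and no $K_{\aleph_0}$: in any acyclic orientation of a clique $X$ there is a sink, whose in-neighbourhood contains $X\setminus\{v\}$, so $|X|-1\le k$; in particular $G$ contains no infinite clique, so \cref{ChordalCharacterisation} applies to $G$ throughout. Given this, the implication ``$G$ has a $k$-orientation $\Rightarrow$ $G$ is chordal with no $K_{k+2}$'' follows: the orientation is in particular a simplicial orientation, so \cref{ChordalCharacterisation}\cref{ChordalCharacterisationSimplicialOrientation}$\Rightarrow$\cref{ChordalCharacterisationChordal} gives chordality, and the clique-size bound was just observed.

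Next, ``$G$ chordal with no $K_{k+2}$ $\Rightarrow$ $G$ chordal and $(k+1)$-colourable'': since $G$ is chordal with no $K_{\aleph_0}$, \cref{ChordalCharacterisation}\cref{ChordalCharacterisationChordal}$\Rightarrow$\cref{ChordalCharacterisationTreeDec} gives a tree-decomposition in which every bag is a finite clique; each bag has size at most $k+1$ since $G$ has no $K_{k+2}$. A tree-decomposition into cliques of bounded size yields a $k$-orientation by exactly the argument proving \cref{ChordalCharacterisation}\cref{ChordalCharacterisationTreeDec}$\Rightarrow$\cref{ChordalCharacterisationSimplicialOrientation} (root $T$, let $x_v$ be the top bag containing $v$, orient inside each $B'_x$ and down the tree); the in-neighbourhood of $v$ lies in $B_{x_v}$, a clique of size at most $k+1$, so its in-degree is at most $k$. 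Thus ``chordal with no $K_{k+2}$ $\Rightarrow$ $k$-orientation''. Also a $(k+1)$-colouring forbids $K_{k+2}$, so ``chordal and $(k+1)$-colourable $\Rightarrow$ chordal with no $K_{k+2}$'' is immediate, and likewise ``tree-decomposition into cliques of size $\le k+1$ $\Rightarrow$ chordal with no $K_{k+2}$'' follows from \cref{TreeDecompositionClique} (any clique sits in a bag) together with \cref{ChordalCharacterisation} for chordality. This closes the loop: chordal-with-no-$K_{k+2}$ $\Rightarrow$ $k$-orientation $\Rightarrow$ chordal-with-no-$K_{k+2}$, and both of the middle conditions are sandwiched in between.

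I expect no serious obstacle here; the only point that needs a little care is verifying that every hypothesis under which we invoke \cref{ChordalCharacterisation} or \cref{TreeDecompositionClique} indeed guarantees $G$ contains no $K_{\aleph_0}$. This is clear for the three conditions that explicitly bound clique size (``no $K_{k+2}$'', ``$(k+1)$-colourable'', ``cliques of size $\le k+1$'') and was checked above for the existence of a $k$-orientation. With that observation recorded once, each implication is a one-line appeal to the already-proved lemmas, so the proof is short.
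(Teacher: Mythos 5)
Your route is the same as the paper's: the paper offers no separate proof, saying only that the lemma follows from \cref{ChordalCharacterisation} and \cref{TreeDecompositionClique}, and your implications ``$k$-orientation $\Rightarrow$ chordal with no $K_{k+2}$'', ``chordal with no $K_{k+2}$ $\Rightarrow$ tree-decomposition into cliques of size at most $k+1$'', and ``such a tree-decomposition $\Rightarrow$ $k$-orientation'' are exactly the intended fleshing-out, including the checks that the relevant hypotheses exclude $K_{\aleph_0}$.

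There is, however, one condition you never actually derive: the $(k+1)$-colourability. The paragraph announcing ``chordal with no $K_{k+2}$ $\Rightarrow$ chordal and $(k+1)$-colourable'' in fact only produces the tree-decomposition and then the $k$-orientation; no colouring is ever constructed, and in your closing ``sandwich'' the only implication touching the colourability condition is the easy direction ($(k+1)$-colourable $\Rightarrow$ no $K_{k+2}$). So, as written, the second bullet is not shown to follow from the other three. The missing step is easy but must be stated: given the $k$-orientation, every finite subgraph has a sink, whose degree equals its in-degree and is therefore at most $k$; hence $G$ is $k$-degenerate, every finite subgraph is greedily $(k+1)$-colourable, and the de~Bruijn--Erd\H{o}s Theorem gives a $(k+1)$-colouring of $G$ (alternatively, quote the final colouring clause of \cref{StandardTreewidth}). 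A lesser point: your remark that the tree-decomposition condition ``clearly'' forbids $K_{\aleph_0}$ before applying \cref{TreeDecompositionClique} is circular as phrased, since bounding the cliques of $G$ by the bag size is precisely that lemma's content; either argue directly via the Helly property for the finitely many subtrees of a $K_{k+2}$, or drop that detour altogether, as your chain through the $k$-orientation already yields ``no $K_{k+2}$'' without it.
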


The next lemma (which is well known when $G$ is finite) says that every graph has a `normalised' tree-decomposition.

\begin{lem}
\label{StandardTreewidth}
Let $G$ be a graph with treewidth at most $k\in\NN$. Then 
for every vertex $r$ in $G$, there is a tree $T$ with $V(T)=V(G)$ and there is a tree-decomposition $(B_x:x\in V(T))$ of $G$ with width at most $k$ such that:
\begin{itemize}
\item $T$ is rooted at $r$ and $B_r=\{r\}$,
\item $v\in B_v$ for each $v\in V(G)$, 
\item for every edge $vw$ of $G$, $v$ is an ancestor of $w$ or $w$ is an ancestor of $v$ in $T$, and
\item for every non-root node $y\in V(T)$, if $x$ is the parent of $y$ in $T$, then $|B_y\setminus B_x|=1$. 
\end{itemize}
Moreover, $G$ has a $(k+1)$-colouring such that distinct vertices in the same bag are assigned distinct colours. 
\end{lem}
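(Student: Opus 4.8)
The plan is to start from an arbitrary tree-decomposition of $G$ of width at most $k$ and massage it in two stages: first make it ``small'' and connected in the sense that each bag adds exactly one vertex over its parent bag, then re-index the nodes so that $V(T)=V(G)$. First I would take a tree-decomposition $(C_x : x\in V(S))$ of $G$ of width at most $k$ with $S$ a rooted tree. I would make it \emph{smooth}/\emph{small} by the standard argument: subdivide edges of $S$ and insert intermediate bags so that adjacent bags differ, and contract edges $xy$ of $S$ whenever $C_x\subseteq C_y$, so that along any parent--child edge the child bag gains exactly one new vertex and loses some (possibly zero) old vertices; then further subdivide so it loses at most one at a time as well. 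A cleaner route for the ``$|B_w\setminus B_v|=1$'' condition is: after making the decomposition small, add for each vertex $v$ a node whose bag is exactly the first bag (closest to the root) containing $v$, and argue these can be arranged in a path hanging appropriately; but in fact the textbook construction already gives a decomposition in which every parent--child step introduces exactly one vertex once we also allow ``forgetting'' to happen at separate nodes. I will cite \citep{Diestel5} for the existence of such a smooth tree-decomposition in the finite case and note that the same subdivision/contraction argument works verbatim for countable $G$ since it is entirely local.

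Next I would pass to the tree $T$ with $V(T)=V(G)$. Root $S$ at a node $r_0$; by smoothness $|C_{r_0}|=1$, say $C_{r_0}=\{r\}$. For each vertex $v\in V(G)$ let $x_v$ be the node of $S$ closest to the root with $v\in C_{x_v}$; by condition (b) of tree-decompositions and smoothness, $v\mapsto x_v$ is well-defined and the map is such that each node $x$ of $S$ with nonempty bag equals $x_v$ for exactly one $v$ (this is exactly where ``each step adds one vertex'' is used: the node introducing $v$ is unique, and after discarding nodes that introduce nothing we get a bijection). Define $T$ on vertex set $V(G)$ by making $v$ adjacent to $w$ iff $x_v$ is adjacent to $x_w$ in the (cleaned-up) $S$; set $B_v:=C_{x_v}$. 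Then $(B_v : v\in V(T))$ is a tree-decomposition of $G$ of width at most $k$, $T$ is rooted at $r$ with $|B_r|=1$, and $|B_w\setminus B_v|=1$ when $v$ is the parent of $w$. The ``ancestor'' condition follows because for an edge $vw\in E(G)$ some bag contains both $v$ and $w$, hence $x_v$ and $x_w$ lie on a common root-to-node path (one is an ancestor of the other in $S$), which translates to $v,w$ being comparable in $T$.

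For the final ``moreover'' clause, colour $V(G)$ greedily along $T$ from the root: when processing a non-root vertex $w$ with parent $v$, the already-coloured vertices in $B_w$ are exactly $B_w\cap B_v=B_w\setminus\{w\}$ (using $|B_w\setminus B_v|=1$ together with the fact that no vertex of $B_w$ other than $w$ can be a descendant of $w$), which has size at most $k$, so a colour from $[k+1]$ is available for $w$; and any two vertices in a common bag are comparable in $T$ by the ancestor property, so one is processed before the other and they receive distinct colours. By König-type / de~Bruijn--Erd\H{o}s considerations no compactness is needed here since the greedy procedure processes $V(G)$ in a well-founded order (the root-first order on the rooted tree $T$), giving a genuine $(k+1)$-colouring of the infinite graph directly.

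The main obstacle is getting the re-indexing step fully rigorous for countably infinite $G$: one must be careful that after ``smoothing'' the tree $S$ the map $v\mapsto x_v$ really is a bijection onto the node set (no node should introduce zero vertices, and none should introduce two), and that the smoothing operations — which may create infinitely many new nodes — still terminate in the sense of producing a well-defined tree-decomposition of the whole infinite graph. I expect this to be routine but it is the place where care is required; everything else (the ancestor property, $|B_r|=1$, the colouring) then falls out formally.
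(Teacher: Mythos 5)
Your proposal is correct and follows essentially the same route as the paper: normalise an arbitrary width-$k$ tree-decomposition so that the root bag is a singleton and each child bag introduces exactly one new vertex (contracting when nothing is introduced, subdividing when two or more vertices are), identify each node with the vertex it introduces, and colour greedily down the rooted tree. The only step worth spelling out is the last one: for two vertices $u,u'$ in a common bag with $u$ an ancestor of $u'$, you need $u\in B_{u'}\setminus\{u'\}$ (not merely that they share some bag) before the greedy rule forces distinct colours, and this follows from connectivity of the subtree of bags containing $u$ — exactly the short argument the paper gives.
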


\note{This proof has been re-written to avoid the ``repeat this operation forever'' step.} 

\begin{proof}
If $E(G)=\emptyset$ then the result is trivial. Now assume that $E(G)\neq\emptyset$. Let $(B_x:x\in V(T))$ be an arbitrary tree-decomposition of $G$ with width at most~$k$. If $r$ is isolated in $G$, then let $x$ be any node in $T$ with $|B_x|\geq 2$, which exists since $E(G)\neq\emptyset$. If $r$ is not isolated in $G$, then let $s$ be any neighbour of $r$ in $G$, and let $x$ be a node of $T$ with $r,s\in B_x$.  

Add a new node $\alpha$ to $T$ only adjacent to $x$, and let $B_\alpha:=\{r\}$. Consider $T$ to be rooted at $\alpha$. Note that $|B_x\setminus B_\alpha|\geq 1$. Consider each subtree $X$ of $T$ to be rooted at the vertex in $X$ closest to $\alpha$. If $x$ is the parent of $y$ in $T$, then denote the edge $xy$ by $\overrightarrow{xy}$.

Partition $V(T)$ into subtrees $T_1,T_2,\dots$ such that:
\begin{itemize}
    \item for each $i\in\NN$ and for each edge $\overrightarrow{xy}\in E(T_i)$, we have $B_y\subseteq B_x$, and 
    \item for each edge $\overrightarrow{xy}\in E(T)$, if $x\in V(T_i)$ and $y\in V(T_j)$ with $i\neq j$, then $B_y\not\subseteq B_x$. 
\end{itemize}
Note that $V(T_i)=\{\alpha\}$ for some $i\in\NN$. 
The first property implies that if $r_i$ is the root of $T_i$, then $B_x\subseteq B_{r_i}$ for each $x\in V(T_i)$. 

Let $T'$ be the tree obtained from $T$ as follows. For each $i\in\NN$, contract each $T_i$ into its root vertex $r_i$. Observe that $(B_x:x\in V(T'))$ is a tree-decomposition of $G$ with width at most $k$, and for every edge $\overrightarrow{xy}\in E(T')$, we have $B_y\not\subseteq B_x$ (implying $|B_y\setminus B_x|\geq 1$). 

Let $T''$ be the tree obtained from $T'$ as follows. For each edge $\overrightarrow{xy}\in E(T')$, if $|B_y\setminus B_x|=\ell\geq 2$ and $B_y\setminus B_x=\{w_1,\dots,w_\ell\}$, then replace the edge $xy$ in $T'$ by a path $x,z_1,z_2,\dots,z_{\ell-1},y$ in $T''$ (where $z_1,z_2,\dots,z_{\ell-1}$ are new vertices), and 
let $B_{z_i}:= (B_x\cap B_y)\cup\{w_1,w_2,\dots,w_i\}$ for each $i\in [\ell-1]$. So $B_{z_i}\subseteq B_y$ and thus $|B_{z_i}|\leq k+1$. 
In $T''$ the parent of $z_1$ is $x$, and $B_{z_1}\setminus B_x=\{w_1\}$. For $i\in[2,\ell-1]$, in $T''$ the parent of $z_i$ is $z_{i-1}$, and $B_{z_i}\setminus B_{z_{i-1}}=\{w_i\}$. In $T''$ the parent of $y$ is $z_{\ell-1}$, and $B_y\setminus B_{z_{\ell-1}}=\{w_\ell\}$. 

Now $(B_x:x\in V(T''))$ is a tree-decomposition of $G$ with width at most $k$, and for each edge $\overrightarrow{xy}\in E(T'')$, we have $|B_y\setminus B_x|=1$. Thus there is a bijection from $V(G)$ to $V(T'')$, where each vertex $w$ of $G$ is mapped to the node $x$ in $T''$ closest to $r$ with $w\in B_x$. Rename each vertex of $T''$ by the corresponding vertex of $G$. So $\alpha$ is renamed $r$, and $v\in B_v$ for each $v\in V(T'')=V(G)$.

For every edge $vw$ of $G$, since $v$ and $w$ appear in a common bag, $v$ is an ancestor of $w$ or $w$ is an ancestor of $v$ in $T''$. 

We now $(k+1)$-colour the vertices $w$ of $T''$ in order of their distance from $r$ (breaking ties arbitrarily). First, let $c(r):=1$. When we come to colour vertex $w$, if $v$ is the parent of $w$, then $B_w\setminus B_v=\{w\}$, implying all the vertices in $B_w\setminus \{w\}$ are already coloured. Let $c(w)$ be an element of $[k+1]$ not assigned to any vertex in $B_w\setminus \{w\}$. This is possible since $|B_w|\leq k+1$. We now prove that distinct vertices in the same bag are assigned distinct colours. This is true for the root bag since it only contains one vertex. Consider two vertices $v$ and $w$ in the same bag. Let $x$ be the node of $T''$ closest to the root, such that $v,w\in B_x$. Then $v=x$ or $w=x$. Without loss of generality, $w=x$, implying $v\in B_w\setminus\{w\}$. By construction, $c(w)\neq c(v)$. Hence distinct vertices in the same bag are assigned distinct colours. 
\end{proof}


\subsection{Balanced Separations}
\label{SeparatorsTreewidth}

A \defn{separation} in a graph $G$ is a pair $(G_1,G_2)$ of subgraphs of $G$ such that $G=G_1\cup G_2$, $E(G_1)\cap E(G_2)=\emptyset$, $V(G_1)\setminus V(G_2)\neq\emptyset$, and $V(G_2)\setminus V(G_1)\neq\emptyset$. The \defn{order} of $(G_1,G_2)$ is $|V(G_1) \cap V(G_2)|$. A \hdefn{$k$}{separation} is a separation of order $k$. A separation $(G_1,G_2)$ is \defn{balanced} if $|V(G_1)\setminus V(G_2)| \leq \frac{2}{3} |V(G)|$ and $|V(G_2)\setminus V(G_1)| \leq \frac{2}{3} |V(G)|$. 

A graph class $\GG$  admits \defn{strongly sublinear separators} if there exists $ c \in\mathbb{R}^+$ and $\beta\in[0,1)$ such that for every graph $G\in\GG$, every subgraph $H$ of $G$ has a balanced separation of order at most $c|V(H)|^\beta$. For example, \citet{LT79} proved that the class of planar graphs admits strongly sublinear separators (with $\beta=\frac12$). More generally, \citet{AST90} proved that every proper minor-closed class admits strongly sublinear separators (again with $\beta=\frac12$). 

\citet[(2.6)]{RS-II} established the following connection between treewidth and balanced separations: Every graph $G$ has a balanced separation of order at most $\tw(G)+1$. \citet{DN19} proved the following converse: If every finite subgraph of a graph $G$ has a balanced separation of order at most $s$, then $\tw(G) \leq 15s$.

\citet{DHJLW21} proved the following strongly sublinear bound on the treewidth of graph products, which is used in \cref{TreewidthPathStructure}.

\begin{lem}[{\protect\citep[Lemma~18]{DHJLW21}}]
\label{ProductTreewidth}
Let $G$ be an $n$-vertex subgraph of $H \boxtimes P$ for some graph $H$ and path $P$. Then $\tw(G) \leq 2  \sqrt{ (\tw(H)+1) n } -  1$, 
and $G$ has a balanced separation of order at most $2  \sqrt{ (\tw(H)+1) n }$.
\end{lem}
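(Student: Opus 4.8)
The plan is to bound $\tw(G)$ and then appeal to \citet[(2.6)]{RS-II}, which gives a balanced separation of order at most $\tw(G)+1$; so everything reduces to showing $\tw(G)\le 2\sqrt{(\tw(H)+1)n}-1$. Write $k:=\tw(H)$, and assume $k$ is finite (otherwise the bound is vacuous). Since $G$ is finite, only finitely many layers $V(H)\times\{p\}$ ($p\in V(P)$) meet $V(G)$; deleting the empty layers and the vertices outside $V(G)$ exhibits $G$ as a subgraph of $H\boxtimes P'$ for a finite path $P'$ with $q\le n$ vertices, all of whose layers are non-empty, and where every edge of $G$ joins two vertices in the same layer or in consecutive layers. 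If $n\le 4(k+1)$ then $\tw(G)\le n-1\le 2\sqrt{(k+1)n}-1$ and we are done, so assume $n>4(k+1)$ and fix an integer $\ell$ of order $\sqrt{n/(k+1)}$.

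First I would handle a single block of layers. Partition $\{1,\dots,q\}$ into consecutive intervals of fewer than $\ell$ layers, with consecutive intervals overlapping in exactly one layer, so that every edge of $G$ lies inside the block $G_I:=G\big[\,\bigcup_{i\in I}(\text{layer }i)\,\big]$ of some interval $I$. Each $G_I$ is a subgraph of $H\boxtimes K_\ell$, and thickening a width-$k$ tree-decomposition $(B_x:x\in V(T))$ of $H$ by $K_\ell$ — that is, using the bags $B_x\times V(K_\ell)$ over the same tree $T$ — is a tree-decomposition of $H\boxtimes K_\ell$ of width $(k+1)\ell-1$; hence $\tw(G_I)\le(k+1)\ell-1$. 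It remains to assemble tree-decompositions of the blocks into one of $G$. Consecutive blocks share exactly one layer $L$, and I would glue their tree-decompositions by forcing $L$ into a single bag of each (which costs at most $|L|$ extra vertices per bag, a standard operation) and then joining those two bags by an edge; the result is a valid tree-decomposition of $G$ because every edge of $G$ lies in a block and every vertex lies in one or two consecutive blocks. Choosing the overlap layers to be \emph{light} — among any $\lceil\ell/2\rceil$ consecutive layers some layer contains $O(n/\ell)$ vertices of $G$, so the overlaps can be taken about $\ell/2$ apart and each of size $O(n/\ell)$ — each block decomposition is enlarged by only $O(n/\ell)$, giving a tree-decomposition of $G$ of width $O\big((k+1)\ell+n/\ell\big)$. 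Taking $\ell\asymp\sqrt{n/(k+1)}$ balances the two terms and yields $\tw(G)=O\big(\sqrt{(k+1)n}\big)$; a careful accounting of which bags absorb each overlap layer sharpens the constant to the claimed $\tw(G)\le 2\sqrt{(k+1)n}-1$, and then \citet[(2.6)]{RS-II} gives the balanced separation of order at most $2\sqrt{(k+1)n}$.

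The main obstacle is precisely the gluing step. A tree-decomposition of a block $G_I$ need not have any bag containing an entire overlap layer $L$, since $L$ need not be a clique of $G_I$, so merging two block decompositions along a shared layer genuinely costs up to $|L|$ in the width. This is why the overlap layers must be chosen light, and it is the resulting trade-off between the per-block cost $(k+1)\ell$ and the per-overlap cost $\Theta(n/\ell)$ that forces the block length $\ell\asymp\sqrt{n/(k+1)}$ and produces the $\sqrt{(k+1)n}$ bound; pinning the leading constant down to exactly $2$ is the delicate part, where one relies on the sharp analysis of \citet{DHJLW21}.
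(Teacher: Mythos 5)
The paper itself does not prove this lemma --- it quotes it from \citet{DHJLW21} --- so the comparison is with the proof there. Your overall strategy (cut the layering induced by $P$ into short blocks, bound each block as a subgraph of $H\boxtimes K_\ell$ via the blown-up tree-decomposition of $H$, pay for small ``boundary'' layers, and optimise $\ell\approx\sqrt{n/(\tw(H)+1)}$) is sound and does yield $\tw(G)=O\bigl(\sqrt{(\tw(H)+1)n}\bigr)$, which is all the present paper actually needs. But there is a genuine gap exactly where you flag it: the constant. In your scheme each block must absorb \emph{two} overlap layers, and the lightness guarantee you can extract (a layer of size at most $2n/\ell$ in every window of $\lceil\ell/2\rceil$ layers, or $n/\ell$ in every window of $\ell$ layers at the price of longer blocks) leads, after optimising, to roughly $2\sqrt{2}\cdot\sqrt{(\tw(H)+1)n}$, not $2\sqrt{(\tw(H)+1)n}-1$; and your closing appeal to ``the sharp analysis of \citet{DHJLW21}'' is circular, since that sharp analysis \emph{is} the statement to be proved. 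A smaller point: ``forcing $L$ into a single bag'' of a block decomposition is not by itself a legal operation --- to preserve the connectivity condition you must add $L$ to every bag (which is what your ``per bag'' cost suggests you intend); said that way the gluing is fine, but it does not rescue the constant.

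The argument in \citet{DHJLW21} avoids gluing altogether and gets the constant for free. Set $k:=\tw(H)+1$ and $\ell:=\lceil\sqrt{n/k}\,\rceil$. The $\ell$ sets $S_r$ ($r\in\{0,\dots,\ell-1\}$), where $S_r$ is the union of the layers whose index is congruent to $r$ modulo $\ell$, partition $V(G)$, so some $S:=S_r$ has at most $n/\ell\le\sqrt{kn}$ vertices. Now $G-S$ is a \emph{disjoint} union of subgraphs, each contained in at most $\ell-1$ consecutive layers and hence of treewidth at most $k(\ell-1)-1\le\sqrt{kn}-1$ (blow up a width-$(k-1)$ tree-decomposition of $H$ by those layers). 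Adding $S$ to every bag gives $\tw(G)\le\tw(G-S)+|S|\le 2\sqrt{kn}-1$, and then \citet[(2.6)]{RS-II} gives the balanced separation of order at most $2\sqrt{kn}$, exactly as in your final step. So the fix is to replace your overlapping blocks glued along light layers by the deletion of a single lightest residue class of layers, which both removes the gluing subtleties and produces the stated constant.
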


For $k\in\NN$, a tree-decomposition $(B_x:x\in V(T))$ of a graph $G$ has \defn{adhesion} $k$ if $|B_x\cap B_y|\leq k$ for every edge $xy\in E(T)$. Given a tree-decomposition $(B_x:x\in V(T))$ of a graph $G$, the \defn{torso} of $x$ is the graph obtained from $G[B_x]$ by adding an edge $vw$ whenever $v,w\in B_x\cap B_y$ for some edge $xy\in E(T)$ incident to $x$. If $\Gamma$ is a class of graphs, then a tree-decomposition is said to be \defn{over} $\Gamma$ if every torso is in $\Gamma$. If $\Gamma$ is a graph, then a tree-decomposition is said to be \defn{over $\Gamma$} if every torso is contained in $\Gamma$. For a graph $U$, let $\DD(U)$ be the class of graphs that have a tree-decomposition over $U$. For $k\in\NN$, let $\DD_k(U)$ be the class of graphs that have a tree-decomposition over $U$ with adhesion $k$. 

The following simple lemma will be used in the proof of \cref{MinorUniversal}.

\begin{lem}
\label{TreewidthOverTreeDecomposition}
For every graph class $\Gamma$ and $n\in\NN$, the maximum treewidth of a graph in $\DD(\Gamma)$ with at most $n$ vertices equals the maximum treewidth of a graph in $\Gamma$ with at most $n$ vertices. 
\end{lem}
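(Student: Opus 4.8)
The plan is to prove the two inequalities separately. For the easy direction, observe that $\Gamma \subseteq \DD(\Gamma)$: every graph $H \in \Gamma$ admits the trivial tree-decomposition with a single bag equal to $V(H)$, whose unique torso is $H$ itself. Hence any graph in $\Gamma$ with at most $n$ vertices is also a graph in $\DD(\Gamma)$ with at most $n$ vertices, so the maximum treewidth over $\DD(\Gamma)$ is at least the maximum treewidth over $\Gamma$.

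For the reverse direction, let $G \in \DD(\Gamma)$ have at most $n$ vertices, and fix a tree-decomposition $(B_x : x \in V(T))$ of $G$ whose every torso $G_x$ lies in $\Gamma$. Each torso $G_x$ has vertex set $B_x \subseteq V(G)$, so $|V(G_x)| \le n$; thus $\tw(G_x) \le k^*$, where $k^*$ denotes the maximum treewidth of a graph in $\Gamma$ with at most $n$ vertices. The key step is then to combine tree-decompositions of the individual torsos into a single tree-decomposition of $G$. For each $x$, take a tree-decomposition $(C^x_y : y \in V(S_x))$ of $G_x$ of width at most $k^*$. Since $G[B_x]$ is a subgraph of $G_x$, this is also a tree-decomposition of $G[B_x]$ of width at most $k^*$. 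Moreover, for each edge $xx'$ of $T$, the set $B_x \cap B_{x'}$ induces a clique in the torso $G_x$ (by the definition of torso, all its pairs are joined by edges), so by \cref{TreeDecompositionClique} some bag $C^x_{y}$ of the decomposition of $G_x$ contains $B_x \cap B_{x'}$; pick one such node and call it $g(x,x')$. Now build the tree $T'$: take the disjoint union of the trees $S_x$ over all $x \in V(T)$, and for each edge $xx' \in E(T)$ add an edge joining $g(x,x')$ (in $S_x$) to $g(x',x)$ (in $S_{x'}$). Since $T$ is a tree and each $S_x$ is a tree, $T'$ is a tree. Define the bags of $T'$ to be the bags $C^x_y$ inherited from the $S_x$'s.

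It remains to verify that $(C^x_y)$ is a tree-decomposition of $G$ of width at most $k^*$, which completes the proof since then $\tw(G) \le k^*$ and hence the maximum treewidth over $\DD(\Gamma)$ graphs with $\le n$ vertices is at most $k^*$. The width bound is immediate since every bag is a bag of some $(C^x_y : y \in V(S_x))$ of width $\le k^*$. For the edge condition: every edge of $G$ either lies inside some $G[B_x]$ (and is then covered by a bag of $S_x$), or — there are no such other edges, since every edge of $G$ has both endpoints in a common bag $B_x$ of the original decomposition. For the connectivity/subtree condition on a vertex $v \in V(G)$: within each $S_x$ with $v \in B_x$, the nodes whose bag contains $v$ form a subtree $R_x$ of $S_x$; we must check these subtrees glue along a subtree of $T'$. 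The set $\{x \in V(T) : v \in B_x\}$ induces a subtree $T_v$ of $T$, and for each edge $xx'$ of $T_v$ we have $v \in B_x \cap B_{x'} \subseteq C^x_{g(x,x')}$, so $g(x,x') \in R_x$ and $g(x',x) \in R_{x'}$; thus the glued-together union $\bigcup_{x \in V(T_v)} R_x$, together with the connecting edges, is connected, and it is easily seen to be acyclic (being a union of subtrees of the $S_x$'s connected in the tree pattern $T_v$). The only mild subtlety — and the step I expect to require the most care — is confirming that the connecting edges do not create a cycle and that a vertex $v$ does not appear in two different $S_x$, $S_{x'}$ bags for non-adjacent $x,x'$ in $T_v$ in a way that breaks connectivity; this follows from the fact that $T_v$ is itself a subtree of $T$ and the construction adds exactly one connecting edge per edge of $T$. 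All of this is routine bookkeeping of the standard ``tree-decompositions compose along tree-decompositions'' type.
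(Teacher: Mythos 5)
Your proposal is correct and follows essentially the same route as the paper: decompose each torso (which has at most $n$ vertices, hence treewidth at most the maximum over $\Gamma$), use \cref{TreeDecompositionClique} to find a bag containing each adhesion set $B_x\cap B_y$ (a clique in the torso), and glue the torso decompositions along the edges of $T$ to get a tree-decomposition of $G$ of the same width. The only difference is that you spell out the trivial direction $\Gamma\subseteq\DD(\Gamma)$ and the verification of the tree-decomposition axioms, which the paper leaves implicit.
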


\begin{proof}
Let $f(n)$ be the maximum treewidth of a graph in $\Gamma$ with at most $n$ vertices. It suffices to prove that if $G$ is a graph in $\DD(\Gamma)$ with at most $n$ vertices, then $\tw(G)\leq f(n)$. Let $(B_x:x\in V(T))$ be a tree-decomposition of $G$ such that each torso is in $\Gamma$. For each node $x\in V(T)$, since $|B_x|\leq n$, the torso of $x$ has a tree-decomposition $(C^x_z:z\in V(T^x))$ of width at most $f(n)$. Let $T^*$ be the tree obtained from the disjoint union of $T^x$ taken over all $x\in V(T)$, where for each edge $xy\in T$, we add an edge between a node $\alpha\in V(T^x)$ and a node $\beta\in V(T^y)$ such that $B_x\cap B_y \subseteq C^x_\alpha\cap C^y_\beta$, which exist since $B_x\cap B_y$ is a clique in the torso of $x$ and in the torso of $y$. We obtain a tree-decomposition of $G$ with width at most $f(n)$. 
\end{proof}

\subsection{Generalised Colouring Numbers}
\label{GeneralisedColouringNumbers}

\citet{KY03} introduced the following definition. For a graph $G$, total order $\preceq$ of $V(G)$, vertex $v\in V(G)$, and  $r\in\NN$, let $\sreach_r(G,\preceq,v)$ be the set of vertices $w\in V(G)$ for which there is a path $v=w_0,w_1,\dots,w_{r'}=w$ of length $r'\in[0,r]$ such that $w\preceq v$ and $v\prec w_i$ for all $i\in[r'-1]$. For a graph $G$ and integer $r\in\NN$, the \hdefn{$r$}{colouring number} $\col_r(G)$ is the minimum integer $k$ such that there is a total order~$\preceq$ of $V(G)$ with $|\sreach_r(G,\preceq,v)|\leq k$ for every vertex $v$ of $G$. If no such $k$ exists then we say $\col_r(G)=\infty$. 

An attractive aspect of generalised colouring numbers is that they interpolate between degeneracy and treewidth \citep{KPRS16}. Indeed, it follows from the definition that $\col_1(G)$ equals the degeneracy of $G$ plus 1, implying $\chi(G)\leq \col_1(G)$. At the other extreme, \citet{GKRSS18} showed that $\col_r(G)\leq \tw(G)+1$ for all $r\in\NN$, and indeed $$\lim_{r\to\infty}\col_r(G)=\tw(G)+1.$$

Generalised colouring numbers provide upper bounds on several graph parameters of interest. For example, a graph colouring is \defn{acyclic} if the union of any two colour classes induces a forest; that is, every cycle is assigned at least three colours. The \defn{acyclic chromatic number} $\chi_\text{a}(G)$ of a graph $G$ is the minimum integer $k$ such that $G$ has an acyclic $k$-colouring. Acyclic colourings are qualitatively different from colourings, since every finite graph with bounded acyclic chromatic number has bounded average degree. \citet{KY03} proved that $\chi_\text{a}(G)\leq \col_2(G)$ for every graph $G$. Other examples include game chromatic number \citep{KT94,KY03}, Ramsey numbers \citep{CS93}, oriented chromatic number \citep{KSZ-JGT97}, arrangeability~\citep{CS93}, etc. 

Generalised colouring numbers are important also because they characterise bounded expansion classes \citep{Zhu09}, they characterise nowhere dense classes \citep{GKRSS18}, and have several algorithmic applications such as the constant-factor approximation algorithm for domination number by \citet{Dvorak13}, and the almost linear-time model-checking algorithm of \citet{GKS17}. 

A graph class $\GG$ has \defn{linear colouring numbers} if there is a constant $c$ such that $\col_r(G)\leq cr$  for every $G\in\GG$ and for every $r\in \NN$. An infinite graph $G$ has \defn{linear colouring numbers} if there is a constant $c$ such that $\col_r(G)\leq cr$ for every $r\in \NN$. 

For example, van~den~Heuvel~et~al.~\citep{HOQRS17} proved the following results (always for every $r\in\NN$): Every finite planar graph $G$ satisfies $\col_r(G) \leq 5r+1$. More generally,  every finite graph $G$ with Euler genus $g$ satisfies $\col_r(G) \leq (4g+5)r + 2g+1$. Even more generally, for $t\geq 4$, every finite $K_t$-minor-free graph $G$ satisfies $\col_r(G) \leq \binom{t-1}{2} (2r+1)$. The proofs of these results depend on the following lemma, which we use in the proof of \cref{KtMinorFreeNoSubdiv}.

\begin{lem}[{\protect\citep[Lemmas~3.2~and~3.4]{HOQRS17}}]
\label{PartitionGenColNum}
Let $G$ be a finite graph that has a connected partition $\PART=\{A_1,A_2,\dots,A_n\}$ such that for each part $A_i\in\PART$, there are at most $d$ neighbouring parts $A_j\in N_{G/\PART}(A_i)$ with $j<i$, and $V(A_i)$ is the union of the vertex-sets of $p$ geodesic paths in $G-(A_1\cup\dots\cup A_{i-1})$. Then $\col_r(G) \leq p(d+1)(2r+1)$.
\end{lem}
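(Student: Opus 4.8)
The plan is to exhibit an explicit vertex order $\preceq$ of $V(G)$ and bound $|\sreach_r(G,\preceq,v)|$ for each vertex $v$. First I would fix, for each part $A_i$, a collection of $p$ geodesic paths $Q^i_1,\dots,Q^i_p$ in $G_i := G-(A_1\cup\dots\cup A_{i-1})$ whose vertex-sets cover $V(A_i)$. I order $V(G)$ by first ordering the parts $A_1 \prec A_2 \prec \dots \prec A_n$ (all of $A_i$ before all of $A_{i+1}$), and within $A_i$ I concatenate the $p$ geodesics in the order $Q^i_1,Q^i_2,\dots,Q^i_p$, each geodesic traversed from one endpoint to the other (so that the order within a single geodesic agrees with walking along the path). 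Vertices lying on several of these geodesics are placed at their first occurrence.

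Now take any vertex $v$, say $v\in A_i$, and any $w\in\sreach_r(G,\preceq,v)$ witnessed by a path $P = v=w_0,w_1,\dots,w_{r'}=w$ with $w\preceq v$ and $v\prec w_s$ for $1\le s\le r'-1$. The key observation is that since every internal vertex of $P$ is $\succ v$, and $w\preceq v$, the path $P$ starts in $A_i$, its internal vertices lie in parts $A_j$ with $j\ge i$, and it ends in some part $A_j$ with $j\le i$; moreover an internal vertex in $A_i$ must come after $v$ in the within-$A_i$ order. I would argue that $P$ can visit at most $d+1$ parts "relevant to $v$": the terminal part $A_j$ ($j\le i$) is one of the at most $d$ parts $A_j\in N_{G/\PART}(A_i)$ with $j<i$ together with $A_i$ itself — because to leave the region $A_i\cup A_{i+1}\cup\dots$ the path must use an edge of $G/\PART$ from $A_i$ (or from $A_i$ back to a lower part), and the structure of the order forces the "return" to happen through a neighbour of $A_i$. (This is where one uses that $v\prec w_s$ for all internal $s$: the walk stays within $\{A_i,A_{i+1},\dots\}$ until the final step into $A_j$, so the endpoint $A_j$ is a neighbour of $A_i$ in $G/\PART$ with $j\le i$, giving at most $d+1$ choices for $A_j$.)

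Having pinned down the part $A_j$ containing $w$, I bound how many vertices of $A_j$ can be reached. Inside $A_j$, the vertex $w$ lies on one of the $p$ geodesics $Q^j_1,\dots,Q^j_p$. Fix one such geodesic $Q=Q^j_\ell$ and I claim $w$ is within distance $r$ along $Q$ from the first vertex of $Q$ that $P$ enters — more precisely, among the vertices of $A_j$ on a fixed geodesic $Q$ that are $\preceq v$ and reachable by such a path of length $\le r$, there are at most $2r+1$: if $P$ first meets $Q$ at a vertex $u$, then $u$ and $w$ are both on the geodesic $Q$ and the subpath of $P$ from $u$ to $w$ has length $\le r$, so $\dist_G(u,w)\le r$; since $Q$ is geodesic in $G_j$ and $u,w\in V(G_j)$ we get $\dist_Q(u,w)=\dist_{G_j}(u,w)\le \dist_G(u,w)\le r$, hence $w$ lies among the $2r+1$ vertices of $Q$ within $Q$-distance $r$ of $u$; and $u$ itself is constrained to be the endpoint of the geodesic closest (in $\preceq$, equivalently along $Q$) to where $P$ enters, so there are at most $2r+1$ possibilities for $w$ on $Q$. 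Multiplying: at most $d+1$ choices for $A_j$, times $p$ choices for the geodesic, times $2r+1$ positions on it, gives $|\sreach_r(G,\preceq,v)|\le p(d+1)(2r+1)$, as required.

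The main obstacle is the middle step: rigorously controlling which part $A_j$ the reachable vertex $w$ lives in, and in particular showing that the path $P$ — which may wander through many parts $A_{i'}$ with $i'>i$ — can only terminate in $A_i$ or in one of the $\le d$ lower-indexed neighbours of $A_i$. This needs a careful argument that, because every internal vertex of $P$ exceeds $v$ in $\preceq$ and hence lies in $A_i$ (after $v$) or in some $A_{i'}$ with $i'>i$, the walk $w_0,w_1,\dots$ in the quotient $G/\PART$ stays inside $\{A_i\}\cup\{A_{i'}:i'>i\}$ until its very last step, which must then land in a part $A_j\in N_{G/\PART}(A_i)$ with $j\le i$ — using connectedness of the parts so that consecutive $w_s,w_{s+1}$ in different parts give a $G/\PART$-edge. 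I would also double-check the boundary cases $r'=0$ (then $w=v$) and paths with no internal vertices, and confirm the within-$A_i$ ordering really does make every internal vertex of $P$ in $A_i$ come after $v$, which is immediate from concatenating the geodesics but worth stating.
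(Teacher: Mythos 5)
Your ordering and the per\nobreakdash-geodesic counting are the right ingredients, but the step you yourself flag as the main obstacle is a genuine gap, and it cannot be closed from the hypothesis as you use it. If $P=(v=w_0,w_1,\dots,w_{r'}=w)$ witnesses $w\in\sreach_r(G,\preceq,v)$ with $v\in A_i$ and $w\in A_j$, $j<i$, the last internal vertex $w_{r'-1}$ need not lie in $A_i$: it may lie in any part $A_{i'}$ with $i'>i$, and then the final edge only shows that $A_j$ is a quotient-neighbour of $A_{i'}$, not of $A_i$. So your conclusion $A_j\in N_{G/\PART}(A_i)$ does not follow, and the factor $d+1$ is unjustified. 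In fact, with the hypothesis read literally (bounding only the earlier quotient-neighbours of each $A_i$) the stated bound is false: let $G$ be the $1$-subdivision of $K_n$, let every vertex be its own part, and order all branch vertices before all subdivision vertices. Each part is the vertex set of one (trivial) geodesic, so $p=1$, and each part has at most two earlier neighbours, so $d=2$; yet for the $\preceq$-last branch vertex $b$, every other branch vertex $a$ contributes either $a$ itself (via the path $b,s_{ab},a$ when $s_{ab}\succ b$) or the subdivision vertex $s_{ab}$ (when $s_{ab}\prec b$) to $\sreach_2(G,\preceq,b)$, so $\col_2(G)\ge n-1$, far exceeding $p(d+1)(2r+1)=15$.

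What the argument really needs---and what \citep{HOQRS17} assume, and what the chordal-partition construction in the proof of \cref{KtMinorFreeNoSubdiv} actually supplies---is the stronger condition that for each $i$ at most $d$ parts $A_j$ with $j<i$ have a neighbour in the connected component $C_i$ of $G-(A_1\cup\dots\cup A_{i-1})$ containing $A_i$. With that hypothesis your plan goes through: $v$ and all internal vertices of $P$ avoid $A_1\cup\dots\cup A_{i-1}$ and are connected to $v$, hence lie in $C_i$, so the final edge shows that $A_j$ is one of those at most $d$ parts (or $A_j=A_i$). Two smaller repairs are also needed in the last step. First, your inequality $\dist_{G_j}(u,w)\le\dist_G(u,w)$ is backwards (deleting vertices can only increase distances); instead observe that every vertex of $P$ lies in a part of index at least $j$, so $P$ is contained in $G-(A_1\cup\dots\cup A_{j-1})$, which gives the distance bound in that subgraph directly. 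Second, anchor the $2r+1$ count at the fixed vertex $v$ rather than at the entry vertex $u$, which varies with $w$: every reachable $w$ on a fixed geodesic $Q$ of $A_j$ is within distance $r$ of $v$ in $G-(A_1\cup\dots\cup A_{j-1})$, so any two such $w$ are within distance $2r$ there, and since $Q$ is a geodesic of that subgraph they span a subpath of $Q$ with at most $2r+1$ vertices.
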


We need the following rooted variant of $r$-colouring number. For a graph $G$ and $r\in\NN$, the \defn{rooted $r$-colouring number} $\col^*_r(G)$ of $G$ is the minimum integer $k$ such that for every ordered clique $C$ of $G$ there is a total order $\preceq$ of $V(G)$ with $C$ at the start, such that $|\sreach_r(G,\preceq,v)| \leq k$ for every vertex $v$ of $G$. If no such $k$ exists then we say $\col^*_r(G)=\infty$. 
The next lemma is the motivation for this definition. 

\begin{lem}
\label{GenColourTreeDecomp}
For every graph $U$ and graph $G\in\DD(U)$ and $r\in\NN$, 
$$\col^*_r(G)\leq \col^*_r(U).$$
\end{lem}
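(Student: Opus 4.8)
The plan is to take a graph $G$ with a tree-decomposition over $U$ and, from a good vertex ordering of $U$, build a good vertex ordering of $G$ witnessing $\col^*_r(G) \le \col^*_r(U)$. First I would fix an ordered clique $C$ of $G$ and a tree-decomposition $(B_x : x \in V(T))$ of $G$ in which every torso is isomorphic to a subgraph of $U$. By \cref{TreeDecompositionClique}, $C$ lies in some bag $B_{x_0}$; root $T$ at $x_0$. For each node $x$, the torso $G_x^+$ of $x$ embeds into $U$, so since $\col^*_r(U)$ bounds the rooted $r$-colouring number for any ordered clique of $U$ — and $B_x \cap B_{\text{parent}(x)}$ (for $x\ne x_0$, extended by $C$ for $x = x_0$) is a clique in $G_x^+$ — there is a total order $\preceq_x$ of $B_x$ with that clique at the start and $|\sreach_r(G_x^+, \preceq_x, v)| \le \col^*_r(U)$ for every $v \in B_x$.

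Next I would assemble these local orders into a global order $\preceq$ of $V(G)$. As in \cref{StandardTreewidth}, assign each vertex $v$ to the node $x_v$ closest to the root with $v \in B_{x_v}$, so the sets $B'_x := \{v : x_v = x\}$ partition $V(G)$. Order $V(G)$ by: first compare the nodes $x_v, x_w$ in a BFS/DFS order of $T$ that respects the rooted structure (ancestors before descendants), placing $C$ first within $B'_{x_0}$; and within each $B'_x$ use the restriction of $\preceq_x$. The key point is that this order puts, for each node $x$, all vertices of $B_x$ that are not in $B'_x$ (these lie in strict ancestor bags) before every vertex of $B'_x$, consistent with $\preceq_x$ having the adhesion clique at the start.

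The heart of the argument is then to show $|\sreach_r(G, \preceq, v)| \le \col^*_r(U)$ for every $v$. Take a path $P = v w_1 \cdots w_{r'}$ ($r' \le r$) in $G$ with $w_{r'} \preceq v$ and all internal vertices $\succ v$; I must show its endpoint $w_{r'}$ is $\sreach$-reachable within the single torso $G_{x_v}^+$ under $\preceq_{x_v}$, so that the count is controlled by the local bound. Because every edge of $G$ joins two vertices in a common bag, and by the standard interaction of paths with tree-decompositions, one can replace $P$ by a walk inside $G_{x_v}^+$: whenever $P$ descends from $x_v$ into a subtree hanging off a node $y$ (below or at $x_v$), it must re-enter through the adhesion clique $B_y \cap B_{\text{parent}(y)}$, which is a clique in the torso, so the detour can be short-circuited by a single edge of $G_{x_v}^+$ — and crucially the short-cutting only involves vertices that are $\succeq$ the entry/exit vertices, hence still $\succ v$ as required, since anything in a proper descendant subtree of $x_v$ that is $\prec v$ would contradict $v \in B_{x_v}$ being the minimal bag and the ancestor-before-descendant ordering. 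This reduction step — verifying that the short-cut walk in the torso is genuinely an $\sreach_r$-witness for the same endpoint with respect to $\preceq_{x_v}$, in particular that its length stays $\le r$ and its internal vertices stay $\succ v$ — is the main obstacle, and is exactly where the rooted variant (clique at the start of each local order, aligned along adhesion sets) is needed rather than the plain $\col_r$. Once that is established, $\sreach_r(G, \preceq, v) \subseteq \sreach_r(G_{x_v}^+, \preceq_{x_v}, v)$, giving the bound, and the proof is complete.
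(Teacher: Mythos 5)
Your plan is correct and follows essentially the same route as the paper's proof: root $T$ at a bag containing $C$, order each torso via $\col^*_r(U)$ with the adhesion clique placed first, concatenate these local orders along an ancestors-before-descendants order of $T$ with each vertex assigned to its highest bag, and shortcut any excursion of a reachability path into a child subtree through the adhesion clique to get a witness path of length at most $r$ inside a single torso. The only (cosmetic) difference is that the paper additionally orders each adhesion clique within $\preceq_x$ according to the parent's order, so that $\preceq$ restricted to a torso is literally $\preceq_x$; your weaker setup suffices because the endpoint of the shortcut path either has home node $x$ (where the global order agrees with $\preceq_x$) or lies in the adhesion clique, which sits at the start of $\preceq_x$.
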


\begin{proof}
If $G$ has an infinite clique, then 
$\col^*_r(G)= \col^*_r(U)=\infty$ and we are done. Now assume that $G$ has no infinite clique. By assumption, $G$ has a tree-decomposition $(B_x:x\in V(T))$ such that for each node $x\in V(T)$, the torso $G_x$ of $x$ is contained in $U$. Let $C$ be a clique of $G$, which is finite. Root $T$ at a node $s$ such that $C\subseteq B_s$. Such a node exists by \cref{TreeDecompositionClique}. For each vertex $v\in V(G)$, let $h(v)$ be the node of $T$ closest to $s$ and with $v\in B_{h(v)}$. Let $\preceq_T$ be any total order on $V(T)$, where $x\prec_T y$ whenever $y$ is a descendant of $x$. Since $G_x\subseteq U$, we have $\col^*_r(G_x)\leq\col^*_r(U)$. 
Let $\preceq_s$ be a total order on $V(G_s)$ with $C$ at the start, and $|\sreach_r(G_x,\preceq_s,v)| \leq \col^*_r(G_x) \leq \col^*_r(U)$ for every vertex $v\in V(G_s)$. We now define a total order $\preceq_x$ on $V(G_x)$ for each non-root node $x\in V(T)$. Consider each non-root node $x$ of $T$ in non-decreasing order of distance from $s$. Let $y$ be the parent of $x$. So we may assume that $\preceq_y$ is already defined. Since $G_x$ is contained in $U$ and $B_x\cap B_y$ is a clique of $G_x$, there is a total order $\preceq_x$ of $V(G_x)$ with $B_x\cap B_y$ at the start of $\preceq_x$ and ordered according to $\preceq_y$, such that $|\sreach_r(G_x,\preceq_x,v)| \leq \col^*_r(G_x) \leq \col^*_r(U)$ for every vertex $v$ of $G_x$. Finally, define a total order $\preceq$ on $V(G)$, where $v \preceq w$ if and only if $h(v) \prec_T h(w)$, or $h(v)=h(w)$ and $v\preceq_{h(v)} w$. Clearly $\preceq$ is a total order on $V(G)$.

Consider $w\in\sreach_r(G,\preceq,v)$ for some $v\in V(G)$. Thus $G$ contains a path $P=(v,w_1,\dots,w_k,w)$ of length at most~$r$ with $w\preceq v\preceq w_i$ for each $i\in[k]$. Let $x:=h(v)$. Since $v\preceq w_i$ we have $x \preceq_T h(w_i)$, and $h(w_i)$ is not an ancestor of $x$. Since $v,w_1,\dots,w_k$ is a path in $G$, we have $h(w_1),\dots,h(w_k)$ are all in the subtree of $T$ rooted at $x$. 
By the definition of tree-decomposition, $w$ and $w_k$ appear in a common bag. Thus $h(w)$ is an ancestor of $h(w_k)$ or vice versa. Since $w\preceq w_k$, we have $h(w) \preceq_T h(w_k)$. That is, $h(w)$ is an ancestor of $h(w_k)$, or $h(w)=h(w_k)$. Thus $w\in B_{h(w_k)}$. Since $w\preceq v$, we have $h(w) \preceq_T x$ and $x$ is not an ancestor of $h(w)$. Hence, $x$ is on the path from $h(w)$ to $h(w_k)$ in $T$. Since $w\in B_{h(w_k)}$, by the definition of tree-decomposition,  $w$ is in every bag in the path from $h(w)$ to $h(w_k)$ in~$T$. In particular, $w$ is in $B_x$. This says that both ends of $P$ are in $B_x$. If some vertex in $P$ is not in $B_x$, then for some child node $y$ of $x$, $P$ contains a subpath $w_i,\dots,w_j$, where $w_i,w_j$ are distinct vertices in $B_x\cap B_y$. Replace the subpath $w_i,\dots,w_j$ of $P$ by the edge $w_iw_j$, which is in the torso $G_x$. Do this whenever $P$ contains a vertex not in $B_x$. We obtain a path from $v$ to $w$ of length at most $r$ in $G_x$. Since $G_x$ is ordered by $\preceq_x$ in $\preceq$, we have $w\in \sreach_r(G_x,\preceq_x,v)$. Hence
$\sreach_r(G,\preceq,v) \subseteq  \sreach_r(G_x,\preceq_x,v)$ and
$|\sreach_r(G,\preceq,v)| \leq |\sreach_r(G_x,\preceq_x,v)| \leq \col^*_r(G_x) \leq \col^*_r(U)$. Therefore $\col^*_r(G)\leq \col^*_r(U)$.
\end{proof}

Note that \cref{GenColourTreeDecomp} with $U=K_{\tw(G)+1}$ shows 
that $\col^*_r(G) \leq \tw(G)+1$. This implies that $\col_r(G)\leq \tw(G)+1$, as proved by \citet{GKRSS18}. 

\begin{lem}
\label{MakeRootedColNum}
For every graph $G$ and $r\in\NN$, 
$$\col^*_r(G) \leq \col_r(G)+ \omega(G).$$ 
\end{lem}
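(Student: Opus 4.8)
The plan is to take an optimal ordering $\preceq_0$ witnessing $\col_r(G)$ and modify it to a ``rooted'' ordering by moving a prescribed clique $C$ to the front. Let $C$ be an arbitrary ordered clique of $G$, say with vertices listed as $c_1 \prec c_2 \prec \cdots$ in the given order of $C$. Define a new total order $\preceq$ on $V(G)$ by placing the vertices of $C$ first (in the prescribed order of $C$), and then listing the remaining vertices $V(G) \setminus C$ in the order induced by $\preceq_0$. We must bound $|\sreach_r(G,\preceq,v)|$ for every vertex $v$.

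First I would handle the two types of vertices. If $v \in C$, then every $w \in \sreach_r(G,\preceq,v)$ satisfies $w \preceq v$, so $w \in C$ (the only vertices before $v$ under $\preceq$ are in $C$), giving $|\sreach_r(G,\preceq,v)| \leq |C| \leq \omega(G)$. Now suppose $v \notin C$. Split $\sreach_r(G,\preceq,v)$ into the part inside $C$ and the part outside: the part inside $C$ has size at most $|C| \leq \omega(G)$. For the part outside $C$, I claim each such vertex $w$ lies in $\sreach_r(G,\preceq_0,v)$: indeed, if $v = w_0, w_1, \dots, w_{r'} = w$ is the witnessing path under $\preceq$, with $w \preceq v$ and $v \prec w_i$ for all internal $i$, then since $w \notin C$ and $v \notin C$, the relations $w \preceq v$ and $v \prec w_i$ are inherited from $\preceq_0$ for those $w_i$ that also lie outside $C$; and any internal $w_i \in C$ still satisfies $v \prec_0 w_i$ is \emph{not} automatic — so here I need to be slightly careful. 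The key observation is that the path restricted to the condition is about comparisons between $v$ and the $w_i$, and between $v$ and $w$; the only comparisons that could flip when passing from $\preceq_0$ to $\preceq$ are those involving a member of $C$. So the clean argument is: the internal vertices that lie in $C$ contribute, together with $w$ if $w \in C$, at most $\omega(G)$ distinct vertices total to $\sreach_r$, and the remaining reachable vertices (those $w \notin C$ reached by a path whose relevant order relations are all inherited from $\preceq_0$) form a subset of $\sreach_r(G,\preceq_0,v)$, hence number at most $\col_r(G)$.

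The main obstacle, and the step to get exactly right, is the path-rerouting bookkeeping: when we move $C$ to the front, an internal vertex $w_i$ that was $\prec_0 v$ may become $\succ v$ (if $w_i \in C$), which could create new reachable vertices not counted by $\col_r(G)$. The resolution is that all such ``new'' contributions come through a vertex of $C$ on the path, and since $C$ is a clique of size at most $\omega(G)$, these contribute at most $\omega(G)$ vertices in total, regardless of $v$; so we never need more than $\col_r(G) + \omega(G)$. Assembling the two cases, $|\sreach_r(G,\preceq,v)| \leq \col_r(G) + \omega(G)$ for every $v$ and every choice of ordered clique $C$, which gives $\col^*_r(G) \leq \col_r(G) + \omega(G)$ as claimed. (If $\omega(G) = \infty$ the inequality is trivial, so we may assume $G$ has finite clique number, ensuring $|C| \leq \omega(G)$ makes sense.)
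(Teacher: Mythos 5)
Your construction is the same as the paper's: place the prescribed ordered clique $C$ at the front of an ordering $\preceq_0$ witnessing $\col_r(G)$, and bound the reach sets of the new order $\preceq$. The case $v\in C$ and the final counting are fine, but the step you yourself flag as delicate is where the argument breaks. You need that every $w\in \sreach_r(G,\preceq,v)\setminus C$ already lies in $\sreach_r(G,\preceq_0,v)$, and your ``resolution'' --- that the ``new'' endpoints reached via paths passing through $C$ contribute at most $\omega(G)$ vertices in total --- is not a proof. Internal vertices of a witnessing path are not themselves elements of $\sreach_r$; the danger is that a path with an internal vertex in $C$ could end at a vertex $w\notin C$ that is not reachable under $\preceq_0$, and a single vertex of $C$ could a priori appear as an internal vertex on many such paths with many distinct endpoints, so nothing bounds the number of these endpoints by $|C|$. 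As written, this step does not go through.

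The missing observation, which is exactly what the paper's one-line proof rests on, is that such paths do not exist at all: if $v\notin C$, then under $\preceq$ every vertex of $C$ precedes $v$, whereas every internal vertex $w_i$ of a witnessing path must satisfy $v\prec w_i$; hence no internal vertex of the path lies in $C$. All comparisons along the path are therefore between vertices outside $C$, where $\preceq$ agrees with $\preceq_0$, so $w\in \sreach_r(G,\preceq_0,v)$. This yields the clean containment $\sreach_r(G,\preceq,v)\subseteq \sreach_r(G,\preceq_0,v)\cup C$ for every vertex $v$ (for $v\in C$ the left-hand side is even contained in $C$), and hence $\col^*_r(G)\le \col_r(G)+\omega(G)$. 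With that one sentence inserted in place of your heuristic accounting, your proof coincides with the paper's.
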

 
\begin{proof}
Let $\preceq$ be a total order of $V(G)$ such that $|\sreach_r(G,\preceq,v)| \leq \col_r(G)$ for every vertex $v$ of $G$. Let $C$ be an ordered clique of $G$. Let $\preceq'$ be the total order of $V(G)$ obtained from $\preceq$ by placing $C$ at the start in the given order. For every vertex $v$ of $G$, we have $\sreach_r(G,\preceq',v) \subseteq \sreach_r(G,\preceq,v) \cup C$, which has cardinality at most $\col_r(G)+\omega(G)$.
\end{proof}

We now apply K\"onig's Lemma to show that $\col_r$ is extendable.

\begin{lem}
\label{colExtendable}
$\col_r$ is extendable for each $r\in\NN$.
\end{lem}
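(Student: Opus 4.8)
The plan is to use K\"onig's Lemma (\cref{Konig}) in the same spirit as the standard proof of the de~Bruijn--Erd\H{o}s Theorem. Fix $r\in\NN$ and $k\in\NN$, and let $G$ be a countable graph such that every finite subgraph has $r$-colouring number at most $k$; we must show $\col_r(G)\leq k$. Enumerate $V(G)=\{x_1,x_2,\dots\}$ (if $G$ is finite there is nothing to prove), and for each $n$ let $G_n:=G[\{x_1,\dots,x_n\}]$. Each $G_n$ has $\col_r(G_n)\leq k$, so there is a total order $\preceq_n$ of $V(G_n)$ witnessing this, i.e.\ $|\sreach_r(G_n,\preceq_n,v)|\leq k$ for every $v\in V(G_n)$. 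The goal is to extract from this sequence of finite orders a single total order $\preceq$ of $V(G)$ that works for all of $G$.

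First I would set up the tree for K\"onig's Lemma. Let $V_n$ be the (finite, non-empty) set of all total orders of $\{x_1,\dots,x_n\}$ that witness $\col_r(G_n)\leq k$; this is non-empty for every $n$ by hypothesis. Define a bipartite-like adjacency between $V_{n+1}$ and $V_n$: an order $\sigma\in V_{n+1}$ is adjacent to its restriction $\sigma{\restriction}\{x_1,\dots,x_n\}$ in $V_n$, \emph{provided} that restriction lies in $V_n$. The key monotonicity fact to verify here is that restricting a witnessing order of $G_{n+1}$ to $G_n$ again witnesses $\col_r(G_n)\leq k$: indeed for $v\in V(G_n)$, any path in $G_n$ certifying membership in $\sreach_r(G_n,\sigma{\restriction},v)$ is also a path in $G_{n+1}$ with the same order relations, so $\sreach_r(G_n,\sigma{\restriction},v)\subseteq \sreach_r(G_{n+1},\sigma,v)$ (the extra hypotheses $w\prec v\prec w_i$ are inherited), hence has size at most $k$. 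Thus every vertex of $V_{n+1}$ has a neighbour in $V_n$, and K\"onig's Lemma yields a sequence $\sigma_1,\sigma_2,\dots$ with $\sigma_n\in V_n$ and $\sigma_n = \sigma_{n+1}{\restriction}\{x_1,\dots,x_n\}$ for all $n$.

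The compatible orders $\sigma_1,\sigma_2,\dots$ glue to a single total order $\preceq$ on $V(G)=\bigcup_n\{x_1,\dots,x_n\}$: for $x_i,x_j$ put $x_i\preceq x_j$ iff $x_i\preceq_{\sigma_n}x_j$ for $n=\max\{i,j\}$ (well-defined and transitive by compatibility). It remains to check $|\sreach_r(G,\preceq,v)|\leq k$ for every $v\in V(G)$. Take $v=x_i$ and suppose $w\in\sreach_r(G,\preceq,v)$, witnessed by a path $v=w_0,w_1,\dots,w_{r'}=w$ in $G$ of length $r'\le r$ with $w\preceq v\prec w_\ell$ for the intermediate vertices. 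All of $v,w_1,\dots,w,$ and $w$ lie in $\{x_1,\dots,x_n\}$ for $n$ large enough, so this is a path in $G_n$ with the same order relations under $\sigma_n$ (since $\preceq$ restricted to $\{x_1,\dots,x_n\}$ is $\sigma_n$); hence $w\in\sreach_r(G_n,\sigma_n,v)$. Therefore $\sreach_r(G,\preceq,v)\subseteq\sreach_r(G_n,\sigma_n,v)$ for all sufficiently large $n$, which has size at most $k$, so $|\sreach_r(G,\preceq,v)|\leq k$. This shows $\col_r(G)\leq k$, so $\col_r$ is extendable.

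The only mildly delicate point—and the thing I expect to be the main obstacle—is the bookkeeping that makes K\"onig's Lemma applicable: one must ensure each $V_n$ is \emph{finite} (true, since there are only finitely many orders of an $n$-element set) and \emph{non-empty} (exactly the hypothesis $\col_r(G_n)\le k$), and one must confirm the ``restriction of a witness is a witness'' monotonicity so that the K\"onig adjacency condition ``every vertex of $V_{n+1}$ has a neighbour in $V_n$'' holds. Everything else is routine verification that the $\sreach_r$ sets stabilise. No issue arises from $G$ being finite, since then the statement is vacuous for that instance or one simply takes the order directly.
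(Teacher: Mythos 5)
Your proof is correct and takes essentially the same approach as the paper's: finite sets of witnessing orders on the prefixes $G[\{x_1,\dots,x_n\}]$, K\"onig's Lemma to extract a compatible sequence, gluing to a total order $\preceq$ on $V(G)$, and verifying the bound on $|\sreach_r(G,\preceq,v)|$ (you even make explicit the ``restriction of a witness is a witness'' monotonicity that the paper leaves implicit). The only difference is cosmetic: in the last step you observe that each certifying path lies in a finite prefix, whereas the paper restricts attention to the set of vertices at distance at most $r$ from $v$; both serve the same purpose.
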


\begin{proof}
Let $G$ be a graph, such that for some $c\in\NN$ for every finite subgraph $H$ of $G$, we have $\col_r(H)\leq c$. Since $G$ is countable, we may assume that $V(G)=\{v_1,v_2,\dots\}$. Let $H_n:=G[\{v_1,\dots,v_n\}]$ for $n\in\NN$. For $n\in\NN$, let $V_n$ be the set of total orders $\preceq$ of $V(H_n)$ such that $|\sreach_r(H_n,\preceq,v)|\leq c$ for every $v\in V(H_n)$. By assumption, $V_n\neq\emptyset$. By construction, $V_n$ is finite. Let $Q$ be the graph with vertex-set $\bigcup_{n\in\NN} V_n$ where $\preceq_n$ in $V_n$ is adjacent to $\preceq_{n-1}$ in $V_{n-1}$ if $x\preceq_{n-1} y$ if and only if $x\preceq_n y$ for all vertices $x,y\in V(H_{n-1})$. So every vertex in $V_n$ has a neighbour in $V_{n-1}$. By \cref{Konig}, there exist $\preceq_1,\preceq_2,\dots$ where $\preceq_n$ is in $V_n$ for each $n\in \NN$, and $\preceq_n$ is adjacent to $\preceq_{n-1}$ for each $n\geq 2$. Define the relation $\preceq$ on $V(G)$, where $x\preceq y$ whenever $x\preceq_n y$ for some $n\in\NN$. By the definition of adjacency in $Q$, we have $x\preceq y$ if and only if $x\preceq_n y$ for all $n\in \NN$ with $x,y\in V(H_n)$. Call this property $(\star)$. 

We now show that $\preceq$ is a total order of $V(G)$. 
Say $x\preceq y$ and $y\preceq z$, where $x=v_i$ and $y=v_j$ and $z=v_k$. %
Let $n:=\max\{i,j,k\}$. So $x,y,z\in V(H_n)$. 
By property $(\star)$, we have $x\preceq_n y$ and $y\preceq_n z$.
By the transitivity of $\preceq_n$, we have $x \preceq_n z$. 
By property $(\star)$, we have $x \preceq z$.
Thus $\preceq$ is transitive. 
Say $x\preceq y$ and $y\preceq x$, where $x=v_i$ and $y=v_j$. 
Let $n:=\max\{i,j\}$. So $x,y\in V(H_n)$. 
By property $(\star)$, we have $x\preceq_n y$ and $y\preceq_n x$.
By the antisymmetry of $\preceq_n$, we have $x =y$. 
Hence $\preceq$ is antisymmetric. 
Consider vertices $x,y\in V(G)$, where $x=v_i$ and $y=v_j$. 
Let $n:=\max\{i,j\}$. So $x,y\in V(H_n)$. 
By the connexity of $\preceq_n$, we have $x\preceq_n y$ or $y\preceq_n x$.
By property $(\star)$, we have $x\preceq y$ or $y\preceq x$.
Hence $\preceq$ is connex. 
Therefore $\preceq$ is a total order of $V(G)$.

\note{Revised here in response to referee's comment.}

It remains to show that $|\sreach_r(G,\preceq,v)| \leq c$ for each vertex $v\in V(G)$. For the sake of contradiction, suppose that $|\sreach_r(G,\preceq,v)|>c$ for some $v\in V(G)$. 
Let $W$ be a set of $c+1$ vertices in $\sreach_r(G,\preceq,v)$. 
For each $w\in W$ let $P_w$ be a path of length at most $r$ witnessing that $w\in \sreach_r(G,\preceq,v)$. Let $n$ be minimum such that $\bigcup_{w\in W} V(P_w) \subseteq \{v_1,\dots,v_n\}$, which is well-defined since $\bigcup_{w\in W} V(P_w)$ is finite. Thus $\sreach_r(G,\preceq,v)= \sreach_r(G,\preceq_n,v)$, and $|\sreach_r(G,\preceq,v)|= |\sreach_r(G,\preceq_n,v)| \leq c$, which is the desired contradiction. 
\end{proof}
 
\begin{lem}
\label{GenColourProduct}
For every graph $H$ and path $P$, and for every $r\in\NN$ and $a\in\NN_0$, 
\begin{equation*}
\col_r(( H\boxtimes P) + K_a) \leq (\tw(H)+1)(2r+1) + a.
\end{equation*}
\end{lem}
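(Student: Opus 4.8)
The plan is to reduce this to a statement about ordinary $r$-colouring numbers via the normalised tree-decomposition machinery and the rooted variant $\col^*_r$. Since $H \boxtimes P$ has a natural tree-decomposition structure, the cleanest route is: first bound $\col^*_r(H \boxtimes P)$, then add the $a$ vertices of the join $K_a$, then relate everything back to $\col_r$.

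First I would observe that $H \boxtimes P$ has a tree-decomposition (indeed path-decomposition) over $K_{\tw(H)+1} \boxtimes K_2$, or more directly: take a tree-decomposition $(B_x : x \in V(T))$ of $H$ of width $\tw(H)$, and for the path $P = p_1 p_2 \dots$ form the path-decomposition of $H \boxtimes P$ whose $i$-th bag is $V(H) \times \{p_i, p_{i+1}\}$ restricted appropriately — but to get the treewidth bound right, instead combine the tree-decomposition of $H$ with the path: the product $H \boxtimes P$ has treewidth at most $(\tw(H)+1) \cdot 2 - 1$, and more usefully $H \boxtimes P \in \DD(K_{2(\tw(H)+1)})$ via bags of the form $B_x \times \{p_i, p_{i+1}\}$. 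Then \cref{GenColourTreeDecomp} gives $\col^*_r(H \boxtimes P) \le \col^*_r(K_{2(\tw(H)+1)}) = 2(\tw(H)+1)$, which is too weak — the target is $(\tw(H)+1)(2r+1)$. So the honest approach must actually exploit the path structure of $P$ directly to get the linear-in-$r$ bound, not just treat $\tw(H \boxtimes P)$ as a constant.

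Here is the better plan. Use \cref{StandardTreewidth} to get a normalised tree-decomposition $(B_x : x \in V(T))$ of $H$ with $V(T) = V(H)$, rooted, where along root-to-leaf paths the bags grow by one vertex at a time, and a proper $(\tw(H)+1)$-colouring $c$ of $H$ with distinct colours in each bag. Now order $V(H \boxtimes P) = V(H) \times \NN$ as follows: process the copies of $H$ in the order $\{p_1\}, \{p_2\}, \dots$ (so all vertices in the $p_i$-copy precede all vertices in the $p_{i+1}$-copy), and within the $p_i$-copy order $V(H)$ by the ancestor order in $T$ (ancestors first), breaking ties by $c$. For a vertex $v = (u, p_i)$, a vertex $w = (u', p_{i'})$ lies in $\sreach_r(H\boxtimes P, \preceq, v)$ only if $w \preceq v$ and there is a path of length $\le r$ from $v$ to $w$ through vertices $\succ v$. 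Since edges of $H \boxtimes P$ only change the $P$-coordinate by at most $1$ per step, and once we leave copy $p_i$ in the increasing direction all vertices are $\succ v$ until we return, the reachable $w$ with $w \preceq v$ must lie in copy $p_{i'}$ with $i' \le i$, and in fact $i - i' \le r$; moreover the $H$-coordinate of $w$ must be an ancestor (in $T$) of $u$ or closely related. I would then count: for each of the $\le 2r+1$ relevant values of $i'$ (actually $i' \in \{i-r, \dots, i\}$ but using that the path has length $\le r$ and must return, roughly $r$ choices, combined with the within-copy strong-reachability in $H$ which is controlled by $B_{x_u}$ and hence by $\tw(H)+1$), the number of choices of $w$ is at most $\tw(H)+1$. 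Carefully done, the bookkeeping gives $|\sreach_r(H \boxtimes P, \preceq, v)| \le (\tw(H)+1)(2r+1)$. Finally, adding the join $K_a$: each of the $a$ new apex vertices is adjacent to everything, so placing them at the very start of $\preceq$ adds at most $a$ to every strong-reachability set (an apex vertex $w$ in $\sreach_r$ of $v$ contributes $1$, and the apex vertices cannot be internal vertices of a witnessing path that need to be $\succ v$ once $v$ is a non-apex vertex, since they are $\prec v$), yielding $\col_r((H \boxtimes P)+K_a) \le (\tw(H)+1)(2r+1) + a$.

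The main obstacle I expect is the combinatorial counting in the middle step: correctly arguing that a strongly $r$-reachable vertex $w \preceq v = (u,p_i)$ must lie within $r$ copies below $i$ \emph{and} that within those copies the set of possible $H$-coordinates of $w$ is confined to (a set of size at most $\tw(H)+1$ governed by) the bag $B_{x_u}$ together with its ancestor chain — this requires using both properties of the normalised decomposition (every edge of $H$ goes between ancestor/descendant, and bag-growth is one vertex at a time) simultaneously with the path structure, and getting the factor exactly $(2r+1)$ rather than something slightly larger. A clean way to organise it is to show directly that $H \boxtimes P$ (and its apex-augmentation) admits a connected partition whose quotient is chordal with the structure required by \cref{PartitionGenColNum} or by a direct order argument; but most likely the intended proof simply writes down the order $\preceq$ above and verifies the inequality $|\sreach_r| \le (\tw(H)+1)(2r+1)+a$ by the case analysis on the $P$-coordinate, which is routine once set up correctly.
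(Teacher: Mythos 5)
Your main plan --- the layer-major order (all of the copy of $H$ above $p_i$ before the copy above $p_{i+1}$, within each copy an ancestor-first order coming from \cref{StandardTreewidth}) --- does not work, and the failure is not a matter of ``getting the factor exactly $(2r+1)$''. The problem is the detour through the next layer. For $v=(u,p_i)$ and any vertex $u'$ of $H$ with $\dist_H(u,u')\le r-2$, the walk $(u,p_i),(u,p_{i+1}),\dots,(u',p_{i+1}),(u',p_i)$ following a shortest $uu'$-path of $H$ inside the copy above $p_{i+1}$ has length at most $r$, and all of its internal vertices lie in that next copy, so they are automatically $\succ v$ in any layer-major order. Hence every $(u',p_i)$ with $u'\prec u$ in the within-copy order and $\dist_H(u,u')\le r-2$ lies in $\sreach_r(H\boxtimes \PP,\preceq,v)$, no matter which tree-decomposition of $H$ was used; your claim that the $H$-coordinate of a reachable vertex is confined to the bag of $u$ (or its ancestors) is false. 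This is fatal for \emph{every} layer-major order: take $H$ to be a complete binary tree, so $\tw(H)=1$. In any total order of $V(H)$, the $2^{\lfloor (r-2)/2\rfloor}$ leaves below a common ancestor at height $\lfloor (r-2)/2\rfloor$ are pairwise at distance at most $r-2$, so the last of them in the order has at least $2^{\lfloor (r-2)/2\rfloor}-1$ earlier vertices within distance $r-2$, and all of the corresponding vertices of the copy above $p_i$ are strongly $r$-reachable from $v$ by the detour above. Thus $|\sreach_r|$ is exponential in $r$, while the claimed bound is $2(2r+1)+a$. (A side remark: your bound $i-i'\le r$ on the layer of a reachable vertex is also not what happens --- since internal vertices have layer at least $i$, in fact $i'\ge i-1$ --- but this is not where the trouble lies; the blow-up is in the $H$-coordinate.)

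The route you mention only in passing is the one that works. Partition $H\boxtimes P$ into the columns $\{u\}\times V(P)$: each column is a geodesic path in $H\boxtimes P$ (and remains one after deleting earlier columns, since each edge changes the $P$-coordinate by at most $1$), the quotient is $H$, and ordering the columns by a perfect elimination order of a chordal supergraph of $H$ with clique number $\tw(H)+1$ gives each part at most $\tw(H)$ earlier neighbouring parts; \cref{PartitionGenColNum} with $p=1$ and $d=\tw(H)$ then yields $\col_r(H\boxtimes P)\le(\tw(H)+1)(2r+1)$ for finite graphs. This is essentially the content of the result of van den Heuvel and Wood that the paper simply cites for the finite case; the paper then passes to infinite $H$ and $P$ via \cref{colExtendable} (a step your write-up does not address, although a correct direct ordering argument could avoid it), and finally places the $a$ join vertices at the start of the order --- that last step of yours is fine and matches the paper.
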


\begin{proof}
A result of van den Heuvel and Wood~\citep[Lemma~30, arXiv~version]{vdHW18} implies $\col_r(H\boxtimes P) \leq (\tw(H)+1)(2r+1)$ for finite $H$ and $P$. 
By \cref{colExtendable}, the result holds for infinite graphs. The claimed result follows by placing a copy of $K_a$ at the start of the corresponding vertex ordering of $H\boxtimes P$. 
\end{proof}

The next lemma will be used in \cref{ExcludedMinors} to show that our graph that contains every $X$-minor-free graph has linear colouring numbers. 

\begin{lem}
\label{GenColourProductTreeDecomp}
Fix $r\in\NN$ and $a\in\NN_0$. For every graph $H$ and path $P$, and for every graph $G$ in $\DD( (H\boxtimes P)+K_a )$, 
$$\col^*_r(G) \leq  (\tw(H)+1)(2r+3)+2a.$$
\end{lem}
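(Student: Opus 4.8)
The plan is to combine \cref{GenColourTreeDecomp} with \cref{GenColourProduct} and \cref{MakeRootedColNum}. First observe that \cref{GenColourTreeDecomp} applied with $U := (H\boxtimes P)+K_a$ gives $\col^*_r(G) \le \col^*_r\big((H\boxtimes P)+K_a\big)$ for every $G \in \DD((H\boxtimes P)+K_a)$, so it suffices to bound $\col^*_r\big((H\boxtimes P)+K_a\big)$. To do this I would pass from the rooted colouring number to the ordinary one via \cref{MakeRootedColNum}, which gives $\col^*_r(U') \le \col_r(U') + \omega(U')$ for any graph $U'$.

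Next I would bound the two terms for $U' := (H\boxtimes P)+K_a$. For the first term, \cref{GenColourProduct} gives directly $\col_r\big((H\boxtimes P)+K_a\big) \le (\tw(H)+1)(2r+1) + a$. For the second term, I need a bound on $\omega\big((H\boxtimes P)+K_a\big)$. Since adding a join with $K_a$ increases the clique number by exactly $a$, and $\omega(H\boxtimes P) \le \omega(H)\,\omega(P) \le (\tw(H)+1)\cdot 2$ (a clique in a strong product projects to a clique in each factor, and $\omega(P)\le 2$, $\omega(H)\le\tw(H)+1$), we get $\omega\big((H\boxtimes P)+K_a\big) \le 2(\tw(H)+1) + a$. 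Adding the two bounds:
\begin{equation*}
\col^*_r(G) \le (\tw(H)+1)(2r+1) + a + 2(\tw(H)+1) + a = (\tw(H)+1)(2r+3) + 2a,
\end{equation*}
which is exactly the claimed inequality.

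I expect the only mild subtlety to be the clique-number estimate $\omega(H\boxtimes P) \le 2(\tw(H)+1)$, since it uses that $P$ is a path (hence triangle-free, $\omega(P)\le 2$) together with the standard fact $\omega(H)\le\tw(H)+1$, and the behaviour of $\omega$ under the join and strong product. None of this is hard; everything else is a direct concatenation of lemmas already established in the excerpt, so there is no real obstacle — the proof is essentially a two-line chain of inequalities once the clique bound is recorded. A small point worth stating explicitly is that \cref{GenColourProduct} is stated for $\col_r$ of $(H\boxtimes P)+K_a$ directly, so no separate handling of the join is needed for that term; the $K_a$ only needs extra care in the $\omega$ term.
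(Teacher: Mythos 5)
Your proposal is correct and follows essentially the same route as the paper: the paper likewise notes $\omega((H\boxtimes P)+K_a)\leq a+2(\tw(H)+1)$, combines \cref{MakeRootedColNum} with \cref{GenColourProduct} to bound $\col^*_r((H\boxtimes P)+K_a)$ by $(\tw(H)+1)(2r+3)+2a$, and then concludes via \cref{GenColourTreeDecomp}. Your explicit justification of the clique bound via $\omega(H\boxtimes P)\leq \omega(H)\,\omega(P)\leq 2(\tw(H)+1)$ is exactly the implicit step in the paper's one-line computation.
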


\begin{proof}
Note that $\omega((H\boxtimes P)+K_a) = a + 2\omega(H) \leq a + 2(\tw(H)+1)$. By \cref{MakeRootedColNum,GenColourProduct}, $\col^*_r( (H\boxtimes P)+K_a) \leq (\tw(H)+1)(2r+3)+2a$. The claim then follows from \cref{GenColourTreeDecomp}.
\end{proof}

\subsection{Bounded Expansion}
\label{Expansion}

For $r\in \NN$, a graph $H$ is an \hdefn{$r$}{shallow minor} of a graph $G$ if there is a model $(X_v)_{v\in V(H)}$ of $H$ in $G$ such that each subgraph $X_v$ has radius at most $r$. \citet{Sparsity} introduced the following definition for finite graphs. \note{Revised here in response to referee's comment.} For any (possibly infinite) graph $G$, let 
$$\nabla_r(G):=\sup \left\{\frac{2|E(H)|}{|V(H)|} : \text{$H$ is a finite $r$-shallow minor of $G$ with $V(H)\neq\emptyset$}\right\}.$$
We only consider finite $H$ in this definition, since average degree is not well-defined for infinite graphs.

A graph class $\GG$ has \defn{bounded expansion} with \defn{bounding function} $f$ if $\nabla_r(G)\leq f(r)$ for each $G\in\GG$ and $r\in\NN$. We say $\GG$ has \defn{linear expansion} if, for some constant $c$, for all $r\in\NN$, every graph $G\in\GG$ satisfies $\nabla_r(G) \leq cr$. Similarly, $\GG$ has \defn{polynomial expansion} if, for some constant $c$, for all $r\in\NN$, every graph $G\in\GG$ satisfies $\nabla_r(G) \leq cr^c$. For example, when $f(r)$ is a constant, $\GG$ is contained in a proper minor-closed class. As $f(r)$ is allowed to grow with $r$ we obtain larger and larger graph classes.

An infinite graph $G$ has \defn{linear expansion} if, for some constant $c$, for all $r\in\NN$, every subgraph $H$ of $G$ satisfies $\nabla_r(H) \leq c\,\rad(H)$.

\citet{DN16} noted that a result of \citet{PRS94} implies that graph classes with polynomial expansion admit strongly sublinear separators. \citet{DN16} proved the converse:  A hereditary class of graphs admits strongly sublinear separators if and only if it has  polynomial expansion. See \citep{Dvorak16,Dvorak18,ER18} for more results on this theme. 

Sergey Norin observed the following connection between colouring numbers and expansion in finite graphs; see~\citep{ER18}. Since every finite $r$-shallow minor of a graph $G$ is a minor of a finite subgraph of $G$, the result for infinite graphs immediately follows. 

\begin{lem}
\label{SergeyCorollary}
For every graph $G$ and $r \in \NN$, 
$$\nabla_r(G) \leq 2 \col_{4r+1}(G).$$
\end{lem}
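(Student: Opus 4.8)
The plan is to bound the number of edges of an arbitrary finite $r$-shallow minor $H$ of $G$ by orienting $E(H)$ so that every vertex has small in-degree, and then to read off $\nabla_r(G)$ by dividing by $|V(H)|$. We may assume $\col_{4r+1}(G)<\infty$, as otherwise there is nothing to prove; fix a total order $\preceq$ of $V(G)$ with $|\sreach_{4r+1}(G,\preceq,u)|\le\col_{4r+1}(G)$ for every $u\in V(G)$. Let $H$ be a finite $r$-shallow minor with model $(X_v)_{v\in V(H)}$. For each $v$, let $y_v$ be the $\preceq$-minimum vertex of $X_v$; since the branch sets are pairwise disjoint, the vertices $y_v$ are distinct, so $\preceq$ induces a total order $\prec_H$ on $V(H)$ by setting $v\prec_H w$ whenever $y_v\prec y_w$. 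Orient each edge of $H$ from its $\prec_H$-smaller endpoint to its $\prec_H$-larger endpoint. Since $|E(H)|$ equals the sum of the in-degrees, it suffices to show that each $w\in V(H)$ has in-degree at most $\col_{4r+1}(G)$.

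The core of the argument is an injection from the in-neighbourhood of $w$ into $\sreach_{4r+1}(G,\preceq,y_w)$. Fix an in-neighbour $v$ of $w$, so $y_v\prec y_w$, and fix an edge $ab$ of $G$ with $a\in V(X_v)$ and $b\in V(X_w)$ witnessing $vw\in E(H)$. Each $X_v$ has radius at most $r$, so any two of its vertices lie at distance at most $2r$ in $X_v$; hence concatenating a path in $X_w$ from $y_w$ to $b$, the edge $ba$, and a path in $X_v$ from $a$ to $y_v$ gives a walk of length at most $4r+1$ from $y_w$ to $y_v$ whose vertices all lie in $V(X_v)\cup V(X_w)$. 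Extract from it a genuine path $Q$ from $y_w$ to $y_v$, still of length at most $4r+1$ and with $V(Q)\subseteq V(X_v)\cup V(X_w)$, and let $p(v)$ be the first vertex of $Q$ (traversing from $y_w$) with $p(v)\prec y_w$; this exists because $y_v\prec y_w$. The initial segment of $Q$ from $y_w$ to $p(v)$ has length at most $4r+1$ and, by the minimality in the choice of $p(v)$, all of its internal vertices are $\succ y_w$; hence $p(v)\in\sreach_{4r+1}(G,\preceq,y_w)$.

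It remains to check injectivity of $v\mapsto p(v)$. Since $y_w$ is the $\preceq$-minimum vertex of $X_w$ and $p(v)\prec y_w$, we have $p(v)\notin V(X_w)$, so $p(v)\in V(X_v)$. Thus if $p(v)=p(v')$ for distinct in-neighbours $v,v'$ of $w$, then $p(v)\in V(X_v)\cap V(X_{v'})$, contradicting disjointness of the branch sets. Hence the in-degree of $w$ is at most $|\sreach_{4r+1}(G,\preceq,y_w)|\le\col_{4r+1}(G)$, so $|E(H)|\le\col_{4r+1}(G)\,|V(H)|$ and $2|E(H)|/|V(H)|\le 2\col_{4r+1}(G)$; taking the supremum over all finite $r$-shallow minors $H$ with $V(H)\neq\emptyset$ yields the claim. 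The one point requiring care is the choice of representative: taking a radius-centre of each branch set would give paths of length only $2r+1$ but would wreck the injectivity step (an internal vertex of $Q$ lying in $X_v$ could be $\prec$ the centre of $X_w$), so one is forced to use the $\preceq$-minimum vertex of the branch set, and this is exactly what produces the parameter $4r+1$; the remaining distance bookkeeping is routine.
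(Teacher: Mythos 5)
Your proof is correct, but it is not the paper's argument: the paper does not prove this lemma at all. It attributes the finite-graph statement to Norin, citing \citep{ER18}, and disposes of the infinite case with the one-line remark that every finite $r$-shallow minor of $G$ is a minor of a finite subgraph of $G$. What you have written is essentially a self-contained reproof of Norin's observation, carried out directly in the (possibly infinite) graph $G$: order the vertices of $H$ by the $\preceq$-minimum representatives of their branch sets, orient each edge of $H$ towards its larger endpoint, and injectively charge each in-neighbour $v$ of $w$ to a vertex of $\sreach_{4r+1}(G,\preceq,y_w)$, namely the first vertex below $y_w$ on a path of length at most $2r+1+2r$ running through $X_w$, one witnessing edge, and $X_v$. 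The bookkeeping is sound: $p(v)\prec y_w$ forces $p(v)\notin V(X_w)$, hence $p(v)\in V(X_v)$, and disjointness of branch sets gives injectivity, so each in-degree is at most $\col_{4r+1}(G)$ and the density bound follows. Your closing remark about why minimum representatives rather than centres must be used is exactly the right diagnosis of where the parameter $4r+1$ comes from. What the paper's route buys is brevity; what yours buys is a proof readable without consulting the cited source.

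One small patch is needed. You take $y_v$ to be the $\preceq$-minimum vertex of $X_v$, but a branch set of radius at most $r$ may be infinite (an infinite star has radius $1$), and an infinite subset of $V(G)$ under an arbitrary total order need not have a minimum element. Before choosing representatives you should prune the model: since $H$ is finite, fix for each edge $vw\in E(H)$ one witnessing edge of $G$, and replace each $X_v$ by the union of shortest paths in $X_v$ from a centre of $X_v$ to the finitely many endpoints of witnessing edges lying in $X_v$ (or just the centre if $v$ is isolated in $H$). The pruned branch sets are finite, connected, still of radius at most $r$, still pairwise disjoint, and still form a model of $H$, so the minima $y_v$ exist and the rest of your argument goes through verbatim. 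This is the same finiteness reduction that the paper itself invokes when passing from finite to infinite $G$.
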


\cref{SergeyCorollary} says that an infinite graph that has linear colouring numbers also has linear expansion. 

\subsection{Planar Triangulations}
\label{Triangulations}

This subsection introduces some elementary notions regarding planar graphs that form a foundation for the results in \cref{ForcingMinor}.

A \defn{plane graph} is a graph embedded in the plane (that is, drawn without crossings).  See \cite{Mo88} for a more formal treatment of embeddings of possibly infinite graphs on surfaces. A \defn{face} of a plane graph $G$ is an arcwise  connected component of the set of points in the plane not in the embedding of $G$. A plane graph $G$ is \defn{outerplane} if there is a face $f$ of $G$ such that every vertex of $G$ is on the boundary of $f$. A graph is \defn{outerplanar} if it is isomorphic to an outerplane graph. 

 A \defn{plane triangulation} is a connected locally Hamiltonian plane graph $G$ such that for each vertex $v\in V(G)$, there is a Hamiltonian cycle $C_v$ of $G[N_G(v)]$ where $v$ is the only vertex of $G$ inside $C_v$ or $v$ is the only vertex of $G$ outside $C_v$. The subgraph of $G$ consisting of $v$, $C_v$, and the edges from $v$ to $C_v$ form a \defn{wheel centred at $v$}. A \defn{planar triangulation} is a planar graph that can be embedded in the plane as a plane triangulation.

\begin{lem}
\label{LocallyHamiltonianTriangulation}
Every connected locally Hamiltonian plane graph $G$ is a plane triangulation.
\end{lem}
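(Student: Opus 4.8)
The plan is to exploit the local Hamiltonicity hypothesis to show that every face of $G$ is a triangle, and that $G$ arranges itself into wheels, which is exactly what a plane triangulation requires. First I would fix a vertex $v\in V(G)$ and let $C$ be a Hamiltonian cycle of the finite graph $G[N_G(v)]$, which exists by hypothesis. Together with $v$ and the edges from $v$ to $N_G(v)$, the cycle $C$ forms a wheel $W_v$ as a subgraph of $G$. Since $G$ is drawn in the plane, the cycle $C$ is a closed curve that separates the plane into an inside and an outside; because $v$ is joined to every vertex of $C$, the vertex $v$ lies in one of these two regions, say the inside, and all the spoke-edges $vu$ ($u\in V(C)$) are drawn in that region. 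The key claim is that $v$ is then the \emph{only} vertex of $G$ inside $C$: any other vertex $w$ inside $C$ would have to have all of its neighbours inside $\overline{C}$ (the closed disc bounded by $C$) or be separated from the outside, and since $N_G(w)$ is Hamiltonian and in particular $N_G(w)$ must consist of vertices cofacial with $w$ in the wheel structure at $w$, one derives a contradiction with planarity — essentially because the spokes at $v$ already triangulate the inside disc, leaving no room for another vertex.

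The heart of the argument is therefore the following: once we know $v$ is the unique vertex inside $C$, the region inside $C$ is subdivided by the spokes $vu$ into $|V(C)|$ triangular faces, each bounded by $v$, a vertex $u\in V(C)$, and its successor $u'$ on $C$ (note $uu'\in E(G)$ since $C\subseteq G[N_G(v)]$). These are faces of $G$ bounded by $3$-cycles. Running this over all $v\in V(G)$, every face of $G$ that "sees" some vertex on its boundary from the interior side is a triangle; since every face of a connected plane graph with at least one edge has a vertex incident to it, and every vertex is interior to its own wheel's faces, I would argue that every bounded face of $G$ is a triangle and the structure at each $v$ is precisely the wheel $W_v$ demanded by the definition of plane triangulation. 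The connectedness and local finiteness of $G$ (the latter automatic from local Hamiltonicity) let me assemble these local pictures into the global statement: $G$ is locally Hamiltonian, connected, and for each $v$ the Hamiltonian cycle $C_v$ of $G[N_G(v)]$ has $v$ as its unique vertex on one side — which is exactly the definition of a plane triangulation given in the excerpt.

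The main obstacle I anticipate is the uniqueness claim: proving that no vertex other than $v$ lies strictly inside the Hamiltonian cycle $C$ of $G[N_G(v)]$. The naive worry is a "nested" configuration where some vertex $w$ inside $C$ has all its neighbours among $N_G(v)\cup\{v\}$ arranged so that $w$'s own Hamiltonian cycle is small and concentric. To rule this out I would use a minimal-counterexample or innermost-vertex argument: among all vertices strictly inside $C$, pick one, say $w$, whose own Hamiltonian neighbourhood cycle $C_w$ encloses no other vertex of $G$ inside $C$ (such a $w$ exists since $G$ is finite in the relevant region — the disc bounded by $C$ contains finitely many vertices as $G$ is locally finite and... actually this needs care for infinite $G$, so I would instead observe that $C$ bounds a closed disc meeting $G$ in a finite plane subgraph). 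Then $w$ together with $C_w$ forms a wheel whose outer cycle $C_w$ has all vertices in $\overline{C}$; one then checks that $v$ cannot lie inside $C_w$ (since $v$'s spokes reach all of $C$, which lies outside or on $C_w$ — a planarity contradiction) nor outside it in a consistent way, forcing $|V(C_w)|$ and the local picture to collapse. Carefully, the cleanest route is probably: the spokes at $v$ triangulate $\overline{C}$ into triangles $vuu'$; any additional vertex $w\in\overline{C}\setminus\{v\}$ lies in the closed region of one such triangle $T=vuu'$, but then $N_G(w)\subseteq\{v,u,u'\}$ has size at most $3$ and $G[N_G(w)]$ being Hamiltonian forces $N_G(w)=\{v,u,u'\}$ with $w$ inside triangle $T$; this is a $K_4$ drawn in $\overline T$, whose interior now has two vertices ($v$ as apex is on the boundary of $T$, so actually only $w$) — iterating, one sees the configuration is impossible because it would make $vuu'$ not a face yet no room remains, contradicting that $G$ is a plane graph with these as faces. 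Getting this disc-decomposition bookkeeping exactly right, and handling the one "unbounded/outer" face correctly, is where the real work lies; everything else is routine planar-graph topology.
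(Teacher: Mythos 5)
Your overall plan coincides with the paper's: fix $v$, take the Hamiltonian cycle $C=C_v$ of $G[N_G(v)]$, assume (after a harmless normalisation) that $v$ lies inside $C$, and show $v$ is the only vertex inside. The gap is in your proof of that uniqueness claim. When you place an extra vertex $w$ inside a triangle $T=vuu'$ and conclude $N_G(w)\subseteq\{v,u,u'\}$, you are implicitly assuming that $w$ is the \emph{only} vertex inside $T$ -- which is exactly what is in question. If several vertices lie inside $T$, then $w$ may be adjacent to other interior vertices and no bound on its degree follows; the ``iterating'' you invoke is not an argument, and your fallback (an innermost choice of $w$, justified by the claim that the closed disc bounded by $C$ meets $G$ in a finite subgraph) is unjustified: a plane drawing of an infinite graph may have vertices accumulating inside a bounded disc, so no innermost vertex or finite induction is available. (Incidentally, $w$ cannot be adjacent to $v$ at all, since $N_G(v)=V(C)$ and $w\notin V(C)$, so even your single-interior-vertex picture of a $K_4$ is off, though that case is harmless.) A second, smaller gap: your assertion that the spokes cut the disc into the triangles $vuu'$ with $u,u'$ consecutive on $C$ silently assumes that the rotation of edges at $v$ in the embedding agrees with the cyclic order of $C$; this needs proof, which the paper supplies by observing that $C-p-q$ is connected, hence lies entirely on one side of the triangle $vpq$, making $vp$ and $vq$ consecutive at $v$.

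The paper closes the main gap with an idea your proposal never uses: local Hamiltonicity at the \emph{rim} vertices. Having shown for every vertex $u$ that the order of $C_u$ coincides with the rotation at $u$, it supposes some vertex lies inside a triangle $vpq$ and picks such a vertex $x$ minimising the graph distance to $\{v,p,q\}$ -- this exists by connectedness alone, with no finiteness of the drawing needed. Since all neighbours of $v$ lie on $C$, $v$ has no neighbour inside $vpq$, so the minimality forces (without loss of generality) $px\in E(G)$. Now let $y$ be the neighbour of $p$ inside the triangle that is consecutive with $v$ in the rotation at $p$; the coincidence of $C_p$ with that rotation makes $v$ and $y$ consecutive on $C_p$, hence $vy\in E(G)$, contradicting that $v$ has no neighbour inside the triangle. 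Without this use of the hypothesis at $p$ (or some substitute for it), the uniqueness claim does not follow from your wheel picture alone.
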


\begin{proof}
For each vertex $v\in V(G)$, let $C_v$ be a Hamiltonian cycle through $N_G(v)$. Let $v\in V(G)$ and let $pq\in E(C_v)$. So $vpq$ is a triangle in $G$. Since $C_v-p-q$ is connected, every vertex in $C_v-p-q$ is inside $vpq$ or every vertex in $C_v-p-q$ is outside $vpq$. In both cases, $vp$ and $vq$ are consecutive in the cyclic ordering of edges incident to $v$ defined by the embedding. Thus the cyclic ordering of $N_G(v)$ defined by $C_v$ coincides with the cyclic ordering of $N_G(v)$ defined by the embedding. 
Without loss of generality, $v$ is inside $C_v$. Suppose for a contradiction that some other vertex $x$ is inside $C_v$. So $x$ is inside the triangle $vpq$ for some $pq\in E(C_v)$. Choose such a vertex $x$ to minimise the distance in $G$ from $x$ to $\{v,p,q\}$. Since $vp$ and $vq$ are consecutive in the cyclic ordering of edges incident to $v$, $v$ has no neighbour inside $vpq$. In particular, $x\not\in N_G(v)$. Since $G$ is connected and by the choice of $x$, without loss of generality, $px\in E(G)$. Let $y$ be the neighbour of $p$ inside $vpq$, such that $pv$ and $py$ are consecutive in the cyclic ordering of edges incident to $p$ (which is well-defined since $x$ is a candidate). Since $C_p$ coincides with the cyclic ordering of neighbours of $p$, we have $vy\in E(G)$, which contradicts the fact that $v$ has no neighbour in $vpq$. Hence, $v$ is the only vertex inside $C_v$. And $G$ is a plane triangulation. 
\end{proof}

By definition, a planar triangulation is locally Hamiltonian and is thus 3-connected  by \cref{LocallyHam3Conn}. This can be strengthened as follows. 

\begin{lem}
\label{Extended3Connected}
Every finite subgraph of a planar triangulation $G$ can be extended to a finite 3-connected subgraph of $G$.
\end{lem}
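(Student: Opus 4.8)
The plan is to take a finite subgraph $H$ of the planar triangulation $G$ and enlarge it, vertex by vertex and face by face, until it becomes finite and $3$-connected, using the wheel structure around each vertex that the definition of a plane triangulation supplies. First I would fix a plane embedding of $G$ as a plane triangulation, so that for each $v\in V(G)$ we have the Hamiltonian cycle $C_v$ of $G[N_G(v)]$ together with the wheel centred at $v$ (the subgraph on $v$, $C_v$ and the spokes $vu$ for $u\in V(C_v)$), and by \cref{LocallyHamiltonianTriangulation} this wheel sits in $G$ exactly as a wheel, with $v$ the only vertex of $G$ strictly inside (or outside) $C_v$. The key observation is that a wheel is $3$-connected, and more usefully that if we already have a $3$-connected plane subgraph $G'$ of $G$ and $v$ is a vertex of $G'$, then adding to $G'$ the entire wheel centred at $v$ keeps it $3$-connected: the hub $v$ has degree $\ge 3$ in the wheel, the rim $C_v$ is a cycle through all neighbours of $v$, and gluing a wheel to a $3$-connected graph along $v$ cannot create a cut vertex or $2$-cut, since any separating set of size $\le 2$ would have to avoid some rim edge and some spoke, leaving a connected subgraph after deletion.

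Concretely I would proceed as follows. Step 1: start with $G_0 := H$, which we may assume is connected (if not, work within each component, or first add paths in $G$ joining the components — $G$ is connected — which only makes things larger). Step 2: for each vertex $v$ in turn of the current graph, replace the current graph $G_i$ by $G_i$ together with the wheel centred at $v$; since each wheel is finite (local finiteness, which holds because locally Hamiltonian graphs are locally finite) and $H$ has finitely many vertices, after finitely many such additions we reach a finite plane subgraph $G^*$ of $G$ in which every original vertex of $H$ is the hub of a full wheel and every vertex of $G^*$ has all of its $G$-neighbours that lie "inside" the relevant wheel included — in particular $G^*$ contains $H$. Step 3: verify $G^*$ is $3$-connected. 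For this I would argue by induction on the wheels added: adding the first wheel gives a $3$-connected graph (a wheel, assuming $H$ nonempty and $|V(H)|\ge 1$; if $|V(H)|\le 1$ the statement is trivial via any single wheel, and $3$-connectivity forces $|V(G)|\ge 4$ so a wheel of size $\ge 4$ exists), and the inductive step uses the gluing lemma above, noting that each newly added wheel shares its hub $v$ with the current $3$-connected graph, and that the new rim vertices and rim/spoke edges are added in a way consistent with the plane embedding so that no small separator is created.

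I expect the main obstacle to be Step 3, specifically making the "gluing a wheel onto a $3$-connected plane graph along its hub preserves $3$-connectivity" argument airtight: one has to handle the case where some rim vertices of the new wheel already belonged to the current graph (so the wheel is being glued along more than just the hub), and rule out that a pair of old vertices becomes a $2$-cut because of how the new wheel attaches. The plane embedding is essential here — the wheel centred at $v$ occupies the union of the triangular faces around $v$, so the "new" part of the wheel is attached along a contiguous arc of the rim, and a careful face/rotation-system argument shows any potential $2$-cut in $G^*$ would already have been a $2$-cut of $G_i$ or would separate the hub from its own rim, which is impossible. A cleaner alternative, which I would fall back on if the direct argument gets messy, is to invoke Tutte's characterisation or a Menger-type argument: show directly that between any two vertices of $G^*$ there are three internally disjoint paths, routing through hubs and along rim cycles.
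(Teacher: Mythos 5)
Your construction is exactly the paper's (join the components of the finite subgraph by paths to get a finite connected $H_1$, then add the wheel centred at each vertex), but Step 3 --- the only non-trivial step --- has a genuine gap. The ``gluing lemma'' you lean on is false as stated: if the wheel $W_v$ centred at a vertex $v$ of a $3$-connected graph $G'$ meets $G'$ only in its hub $v$, then $v$ is a cut vertex of $G'\cup W_v$, since in the union there is no edge between $V(W_v)\setminus\{v\}$ and $V(G')\setminus\{v\}$; the union is not even $2$-connected. Your stated worry (``handle the case where some rim vertices of the new wheel already belonged to the current graph'') is exactly backwards: shared rim vertices are what rescue the argument, and the dangerous case is the one your lemma silently assumes away, namely that only the hub is shared. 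The induction as you set it up also does not start or proceed correctly: the current graph after adding the first wheel is $H_1\cup W_{v_1}$, not a bare wheel, and with an arbitrary order of hubs the intermediate graphs genuinely can have cut vertices --- for instance if $H_1$ is a path $a\,b\,c$ in which $b$ is the only common neighbour of $a$ and $c$, then after adding $W_a$ and $W_c$ but not yet $W_b$, the vertex $b$ is a cut vertex. Finally, wheels must be added at \emph{every} vertex of $H_1$, including the internal vertices of the connecting paths (your ``every original vertex of $H$'' would leave those with degree $2$).

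The repair is short and needs no plane-embedding or rotation-system argument. Order the vertices of $H_1$ as $v_1,\dots,v_n$ so that each $v_{i+1}$ is adjacent in $H_1$ to some earlier $v_j$ (possible since $H_1$ is connected). If $v_{i+1}$ is adjacent to $v_j$, then $v_{i+1}$ lies on $C_{v_j}$, the hub $v_j$ lies on $C_{v_{i+1}}$, and the two neighbours of $v_{i+1}$ along the cycle $C_{v_j}$ are adjacent both to $v_j$ and to $v_{i+1}$, hence lie on both rims; so $W_{v_j}$ and $W_{v_{i+1}}$ share at least three (in fact four) vertices. Now use the standard fact that the union of two $3$-connected graphs sharing at least three vertices is $3$-connected: deleting any two vertices leaves each of the two graphs connected and leaves them a common vertex. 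By induction $\bigcup_{i\le n} W_{v_i}$ is $3$-connected; every edge of $H_1$ is a spoke of one of these wheels, so this union contains $H_1$ and hence the original subgraph, and it is finite because $G$ is locally finite. This is the argument the paper's one-line proof is implicitly invoking; your fallback suggestion of a direct Menger-type verification would also work but is not carried out in your proposal.
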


\begin{proof}
Let $H_0$ be a finite subgraph of $G$. Add shortest paths between the components of $H_0$ to obtain a finite connected subgraph $H_1$ of $G$. Let $H$ be obtained from $H_1$ by adding the wheel in $G$ centred at each vertex in $H_1$. Then $H$ is a finite 3-connected subgraph of $G$.
\end{proof}

This lemma is in sharp contrast to the fact that there exist planar graphs of arbitrarily large (finite) connectivity such that no finite subgraph is 3-connected. For example, let $G_k$ be a planar graph containing cycles $C_0,C_1, \ldots$ drawn as concentric circles in the plane, such that for $i\in\NN_0$ each vertex in $C_i$ is adjacent to at least $k$ vertices in $C_{i+1}$, and for $i\in\NN$ each vertex in $C_i$ is adjacent to at most one vertex in $C_{i-1}$ and some vertex in $C_i$ is adjacent to no vertex in $C_{i-1}$. Then $G_k$ is $(k+2)$-connected but contains no 3-connected finite subgraph.

%
%

\note{The following claim was added in response to a comment from the referee.} 

\begin{lem}
\label{TriangularFaces}
Each face $F$ of a plane triangulation $G$ either has no vertices of $G$ in the boundary of $F$, or $F$ is bounded by a triangle of $G$.
\end{lem}

\begin{proof}
Let $p$ be a point in $F$. We may assume there is a vertex $v$ on the boundary of $F$. The point $p$ is also in a face $F'$ of the wheel centred at $v$, and clearly $F'$ is bounded by a triangle. Also, $F \subseteq F'$. By the definition of plane triangulation, $F=F'$.  
\end{proof}

\begin{lem}
\label{4ConnPlaneTriangProperSubgraph}
No 4-connected plane triangulation contains a plane triangulation as a proper subgraph.
\end{lem}

\begin{proof}
Suppose for contradiction that some 4-connected plane triangulation $T$ contains a  plane triangulation $T'$ as a proper subgraph. First suppose that $V(T')\subsetneq V(T)$. So $T$ has a vertex $v$ that is a point in a face $F$ of $T'$. Let $S$ be the set of vertices of $T'$ on the boundary of $F$. By \cref{TriangularFaces}, $|S|\leq 3$. Since $T'$ is a plane triangulation, there is a vertex $w\in V(T')\setminus S \subseteq V(T)\setminus S$. So $S$ separates $v$ and $w$ in $T$, which contradicts the 4-connectivity of $T$. Now assume that $V(T')= V(T)$. Since $T'$ is a proper subgraph of $T$, there is an edge $vw\in E(T)\setminus E(T')$. Since $T'$ is a plane triangulation, there is a  Hamiltonian cycle $C_v$ of $T'[N_{T'}(v)]$ such that $v$ is the only vertex of $G'$ inside $C_v$ or $v$ is the only vertex of $T'$ outside $C_v$. Since $vw\not\in E(T')$, in both cases, $C_v$ separates $v$ and $w$ in the embedding of $T'$ and thus in the embedding of $T$, which is a contradiction since $vw\in E(T)$. 
\end{proof}

\begin{lem}
\label{4ConnTriangProperSubgraph}
No 4-connected planar triangulation contains a planar triangulation as a proper subgraph.
\end{lem}

\begin{proof}
Let $G$ be a 4-connected planar triangulation. Let $H$ be a proper subgraph of $G$. Fix a plane triangulation embedding of $G$. This determines a plane embedding of $H$. By \cref{4ConnPlaneTriangProperSubgraph}, $H$ is not a plane triangulation. By the uniqueness of embeddings of planar triangulations~\citep{Imrich75,RT97}, every plane embedding of a planar triangulation is a plane triangulation. So $H$ is not a planar triangulation. 
\end{proof}

\note{The next result was previously a footnote. For precision we made it a lemma.}

\begin{lem}
\label{MakeUnbounded}
Every infinite planar graph $G$ has an embedding in the plane so that $V(G)$ is unbounded (that is, no disc contains $V(G)$). 
\end{lem}

\begin{proof}
Consider a plane embedding of $G$. We may assume that $V(G)$ is bounded. So $G$ has an accumulation point $p$. If $p$ is a vertex or is a point on an edge of $G$, then $G$ can be redrawn vertex-by-vertex so that $p$ is not used and each vertex in the new embedding is close to the corresponding vertex in the old embedding, while maintaining the vertex-accumulation points. So we may assume that $p$ is not a vertex or a point on an edge of $G$. Consider $G$ to be drawn on the sphere with $p$ at the north pole. Deleting $p$ from the sphere results in $G$ being embedded in the plane. A sequence of vertices converging to $p$ on the sphere now becomes a sequence of vertices tending to infinity in the plane. So the embedding is unbounded. 
\end{proof}

\section{Forcing a Minor or Subdivision}
\label{ForcingMinor}

\note{Small details have been added throughout this section.}

This section proves our first main result, \cref{InfiniteCliqueMinor}, which says that if a (countable) graph $U$  contains every planar graph, then (a) the complete graph $K_{\aleph_0}$ is a minor of $U$, and (b) $U$ contains a subdivision of $K_t$ for every $t\in\NN$. The proof is split across three subsections. In \cref{Limits} we introduce the notion of a `limit', which may be of independent interest, and we prove the `Limit Lemma' (\cref{LimitLemma}), which shows that any graph that contains uncountably many planar graphs of a certain type contains a planar triangulation along with an infinite number of `jump' paths. Then \cref{Routing} presents a number of lemmas about routing paths in graphs obtained from planar triangulations by adding  jumps. All of these results are then combined in \cref{ProofInfiniteCliqueMinor}, where the proof of \cref{InfiniteCliqueMinor} is completed. In fact, we prove significant strengthenings of both parts of \cref{InfiniteCliqueMinor}. Finally, in \cref{ExcludingSubdivision} we show that numerous graph classes do \emph{not} have a universal element, including chordal graphs containing no $K_{\aleph_0}$, graphs with no $K_{\aleph_0}$ minor (which was proved in \cite{DHV85}), and graphs containing no $K_{\aleph_0}$ subdivision (which was left open in \cite{DHV85}).

\subsection{Limit Lemma}
\label{Limits}

Let $G$ be a (countable) graph. Let $\GG$ be an uncountable set of infinite, locally finite, connected subgraphs of $G$. A \hdefn{$\GG$}{limit} in $G$ is any subgraph of $G$ that can be obtained as follows.  Let $v_0$ be any vertex in $G$ that is contained in uncountably many distinct subgraphs in $\GG$ (which exists since $\GG$ is uncountable and $G$ is countable). Denote this subset of $\GG$ by $\GG_0$. Define an equivalence relation on $\GG_0$, where two graphs in $\GG_0$ are equivalent if they contain the same set of edges incident to $v_0$. By the choice of $v_0$ and since every graph in $\GG$ is connected, this set of edges is nonempty. The number of equivalence classes is countable since every graph in $\GG$ is locally finite. Since $\GG_0$ is uncountable, some equivalence class is uncountable. Call this equivalence class $\GG_1$. Let $v_1,v_2, \ldots$ be the remaining vertices of $G$. We now define a sequence of uncountable families $\GG_0\supseteq \GG_1\supseteq \GG_2\supseteq\dots$. Suppose that $\GG_n$ is defined for some $n\in\NN$. Define an equivalence relation on $\GG_n$, where two graphs in $\GG_n$ are equivalent if they contain the same set of edges incident to $v_n$ (which may be empty). Again, the number of equivalence classes is countable since every graph in $\GG$ is locally finite. Let $\GG_{n+1}$ be an uncountable equivalence class. Since every graph in $\GG$ is connected, $v_n$ is contained in all or none of the graphs in $\GG_{n+1}$. Let $H$ be the graph with vertex-set $V(G)$ such that for every $n\ge0$, the set of edges incident with $v_n$ in $H$ is the set of edges incident with $v_n$ in each graph in $\GG_{n+1}$. We call $H$ a \DefNoIndex{$\GG$-limit} in $G$. 

By construction of $\GG$-limits, each finite induced subgraph of $H$ is an induced subgraph of every graph in $\GG_n$ for some sufficiently large $n$. This implies that $H$ inherits many properties of $\GG$ (in the spirit of extendable properties introduced in \cref{Definitions}). For example, by \cref{PlanarityExtendable}, if every graph in $\GG$ is planar, then $H$ is planar. Every degree of a vertex in $H$ is also the degree of some vertex of a graph in $\GG$. However, there is one important property that need not be preserved, namely connectedness. To see this, let $G$ be obtained from the infinite ladder $K_2\CartProd \PPP$ by replacing each edge by two internally disjoint paths of length 2, and let $\GG$ be the set of 2-way infinite paths of $G$. Then the $\GG$-limits in $G$ are precisely those spanning subgraphs that are either a 2-way infinite path plus infinitely many isolated vertices, or the union of two disjoint 2-way infinite paths containing no rungs of the ladder. Therefore we shall focus on a connected component of a limit. Some components may consist of isolated vertices. It is easy to see that every finite component of a limit graph (if any) is trivial (that is, an isolated vertex). But, the component containing $v_0$ does contain edges and is therefore infinite. 


A locally finite graph $G$ is \hdefn{$k$}{ended} if $k$ is the maximum integer such that $G-S$ has $k$ infinite components for some finite set $S\subseteq V(G)$. The property of being 1-ended need not be preserved under taking limits. For example, if $G$ is the 2-way infinite path with all edges doubled and subdivided 
\note{Added subdivided edges to address referee comment that we only consider simple graphs.} and $\GG$ is  
the set of all 1-way infinite paths in $G$, then the $\GG$-limits in~$G$ are all 1-way infinite paths (together with isolated vertices) and all 2-way infinite paths, and the latter are 2-ended. Therefore we introduce the following stronger properties. 

A plane graph $G$ is \defn{isoperimetric} if every cycle $C$ in $G$ has finitely many vertices in the closed disc bounded by $C$.  For a function $f:\NN\to\NN$, a plane graph $G$ is \hdefn{$f$}{isoperimetric} if for every cycle $C$ of length $n$ there are at most $f(n)$ vertices of $G$ in the closed disc bounded by $C$. For example, the infinite grid $\PPP\CartProd \PPP$ is $f$-isoperimetric for some function $f$ with $f(n) \in  O(n^2)$; see \citep{Thomassen17}. Here is another example that will be useful later. A \defn{near-triangulation} is a 2-connected finite plane graph, in which each face, except possibly one, is bounded by a 3-cycle. The exceptional face is bounded by the \defn{outer cycle}. 

\begin{lem}
\label{IsoperimetricMinDegree}
Every isoperimetric plane triangulation $G$ with minimum degree at least 7 is $f$-isoperimetric with $f(n) := 3n$. 
\end{lem}

\begin{proof}
Let $C$ be a cycle in $G$ with $n$ vertices. Let $Q$ be the subgraph of $G$ induced by $C$ and the vertices in the interior of $C$. Since $G$ is isoperimetric, $Q$ is finite. In particular, $Q$ is a near-triangulation with outer cycle $C$. Say $Q$ has $n'$ internal vertices, each of which has degree at least 7. Let $Q'$ be the planar triangulation obtained from $Q$ by adding one vertex adjacent to all the vertices on the unbounded face of $Q$. So $Q'$ has $n+n'+1$ vertices and $3(n+n'+1)-6$ edges. Now 
$$6(n+n'+1)-12 \geq 2|E(Q')| = \sum_{v\in V(Q')} \deg(v) \geq n + 3n + 7n'.$$ 
So $n'< 2n$ and $|V(Q)|=n+n'<3n$. 
Hence $G$ is $f$-isoperimetric.
\end{proof}

A class $\GG$ of plane graphs is \DefNoIndex{$f$-isoperimetric} if every graph in $\GG$ is $f$-isoperimetric. A planar triangulation $G$ is \DefNoIndex{$f$-isoperimetric} if $G$ has an $f$-isoperimetric embedding in the plane. 

Note that every $f$-isoperimetric infinite planar triangulation $G$ is 1-ended. To see this, suppose for the sake of contradiction that $G-S$ has at least two infinite components for some finite $S\subseteq V(G)$. By \cref{Extended3Connected}, $G$ contains a finite 3-connected subgraph $H$ containing $S$. We may assume that $H$ is an induced subgraph. Since $H$ is induced, and $G$ is a planar triangulation (and hence every edge is contained in two triangles each bounding a face), it follows that distinct components of $G-V(H)$ belong to distinct faces of $H$. Since $G-V(H)$ has at least two infinite components, at least two faces of $H$ contain infinitely many vertices of $G$.  One of these faces is the interior of a cycle $C$ of length, say $n$, in $H$. But this contradicts that $G$ has at most $f(n)$ vertices inside $C$.


Let $H$ be a subgraph of a graph $G$. A \defn{jump} of $H$ in $G$ is a path $P$ in $G$  such that the ends of $P$ are non-adjacent distinct vertices in $H$ and no internal vertex of $P$ is in $H$. (A jump can be a single edge.)

\begin{lem}[Limit Lemma]
\label{LimitLemma}
Fix an arbitrary function $f:\NN\to\NN$. Let $G$ be a countably infinite graph. Let $\GG$ be an uncountable set of subgraphs of $G$, each of which is a 4-connected $f$-isoperimetric planar triangulation. Let $L$ be any non-trivial connected component of some $\GG$-limit in $G$.  Then $L$ is a 4-connected $f$-isoperimetric planar triangulation, and there are infinitely many pairwise disjoint jumps of $L$ in $G$. 
\end{lem}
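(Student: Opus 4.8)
The plan is to analyze the $\GG$-limit $H$ and its non-trivial component $L$ in two stages: first showing $L$ is a $4$-connected $f$-isoperimetric planar triangulation, and then extracting infinitely many disjoint jumps. For the first stage, I would use the key feature of $\GG$-limits noted in the text: every finite induced subgraph of $H$ is an induced subgraph of every graph in $\GG_n$ for $n$ sufficiently large. Planarity of $L$ then follows from \cref{PlanarityExtendable}. To get that $L$ is a planar triangulation, observe that being locally Hamiltonian is a "local" property: for each vertex $v$, a large enough finite neighbourhood of $v$ in $H$ embeds in some $G' \in \GG$, which pins down the cyclic ordering and the Hamiltonicity of $H[N_H(v)]$; by \cref{LocallyHamiltonianTriangulation}, a connected locally Hamiltonian plane graph is a plane triangulation, so $L$ (being connected, and locally Hamiltonian since degrees and local structure are inherited) is a plane triangulation. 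Similarly $4$-connectedness is finitely checkable (a separator of order $\leq 3$ would show up in some finite induced subgraph, hence in some $G' \in \GG$, contradicting $4$-connectivity of $G'$). The $f$-isoperimetric property also transfers: any cycle $C$ of length $n$ in $L$ together with any $f(n)+1$ further vertices forms a finite induced subgraph of $H$, hence sits inside some $G' \in \GG$; in $G'$ the cycle $C$ bounds a region with at most $f(n)$ vertices inside, and one must check the vertices "inside $C$ in $L$" are inside $C$ in $G'$ — this is a topological consistency argument using that a large enough finite piece determines the combinatorial embedding near $C$.

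For the second stage, the main idea is that $L$ is a \emph{proper} subgraph of $G$ in a strong sense — in fact $V(L)$ need not be all of $V(G)$, and even where $V(L) = V(G)$ the edge set of $L$ is strictly smaller than needed. The crucial point is: since each $G' \in \GG$ is a $4$-connected triangulation, it has minimum degree at least $4$ and is highly connected, whereas $L$, being a \emph{subgraph} of $G$ chosen by the limit process, cannot "use up" all edges at every vertex — indeed for each $n$, the edges incident to $v_n$ in $L$ are exactly those common to all graphs in $\GG_{n+1}$, which may be a strict subset of the edges incident to $v_n$ in any particular $G' \in \GG_{n+1}$. Concretely, I would argue: for each $G' \in \GG$, since $G'$ is a $4$-connected planar triangulation and $L$ is $f$-isoperimetric hence $1$-ended, $G'$ and $L$ cannot be equal as infinite triangulations unless... — and here one shows that in fact $G$ must contain extra edges not in $L$. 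The cleanest route: fix $G' \in \GG_{N}$ for large $N$ agreeing with $L$ on the first $N$ vertices; since $G'$ is a triangulation, every edge of $G'$ among the first $N$ vertices lies in $L$, but $G'$ has edges from $\{v_0,\dots,v_{N-1}\}$ to later vertices, and as $G'$ varies over the uncountable $\GG_N$ these "outgoing" configurations differ, forcing edges of $G$ not present in $L$. Such an edge $e = xy$ with $x,y \in V(L)$ but $e \notin E(L)$ is, if $x,y$ are non-adjacent in $L$, exactly a jump (of length $1$); if $x,y$ happen to be adjacent in $L$ one instead finds a short path through a vertex outside $L$ or through a vertex of $L$ whose limit-edge-set omits $e$.

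To produce \emph{infinitely many disjoint} jumps, I would iterate: having found jumps $P_1,\dots,P_k$, let $W$ be the (finite) set of all their vertices; restrict attention to the tail $v_M, v_{M+1}, \dots$ avoiding $W$ and to the uncountable family $\GG_M$; by the same argument applied to this tail, $G$ contains an edge or short path witnessing a jump of $L$ avoiding $W$, giving $P_{k+1}$ disjoint from $P_1,\dots,P_k$. Since at each step the "defect" between $L$ and the triangulations in $\GG$ persists uniformly far out (a consequence of $4$-connectivity and the uncountability being preserved in every $\GG_n$), this process never terminates, yielding infinitely many pairwise disjoint jumps.

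\textbf{Main obstacle.} The hard part will be the second stage — rigorously producing even one jump, and in particular ruling out the degenerate possibility that $L = G$ as graphs (in which case there are no jumps at all). One must genuinely exploit $4$-connectivity of the members of $\GG$ together with the fact that $\GG$ stays uncountable at every stage of the limit construction: intuitively, uncountably many distinct $4$-connected triangulations sharing an arbitrarily large initial induced subgraph must disagree somewhere, and that disagreement, transported into $G$, is the jump. Making "disagree somewhere, and the disagreement is a jump of $L$ rather than of $G$ itself" precise — handling the case distinction between a genuine new edge, a path through $V(G)\setminus V(L)$, and a path through a vertex of $L$ missing the relevant limit-edge — is where the real work lies. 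The $f$-isoperimetric/$1$-ended hypothesis is what prevents pathologies like the "doubled path" example in the text and should be the tool that forces the disagreement to be local and usable.
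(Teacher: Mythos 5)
Your first stage is essentially the paper's argument (limits inherit closed neighbourhoods, so $L$ is locally Hamiltonian and \cref{LocallyHamiltonianTriangulation} applies; planarity via \cref{PlanarityExtendable}; isoperimetry by embedding a finite witness into a 3-connected finite subgraph, cf.\ \cref{Extended3Connected}), with one genuine flaw: 4-connectedness is \emph{not} ``finitely checkable''. If $S$ is a 3-set with $L-S$ disconnected, no finite induced subgraph of $L$ witnesses the disconnection, and $S$ need not separate any member of $\GG$ -- the paper itself warns that even connectedness is not preserved by limits. The correct route (and the one the paper takes) is to deduce 4-connectedness \emph{after} isoperimetry: local Hamiltonicity forces every 3-separator of $L$ to induce a triangle, and $f(3)=0$ forces every triangle of $L$ to bound a face, so no 3-separator exists.

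The second stage is where the real gaps are, and you have correctly located but not closed them. To get even one jump, the paper picks $T''\in\GG_{n+1}$ distinct from $L$ and uses the fact that a 4-connected triangulation cannot properly contain a triangulation to extract an edge $uw\in E(T'')\setminus E(L)$ with $u\in V(L)$; your ``uncountably many triangulations must disagree'' gives a disagreement between members of $\GG$, not yet an edge of $G$ absent from $L$ and anchored on $L$. The genuinely hard case is $w\notin V(L)$: there one needs Menger's theorem and the 4-connectivity of $T''$ to route three internally disjoint paths from $w$ back to $L$, and then the fact that $L$ contains no $K_4$ to find two of the four attachment vertices that are non-adjacent in $L$, so that two of the paths concatenate into a jump; your ``short path through a vertex outside $L$'' names this case without a mechanism. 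Finally, your iteration for disjointness does not work as stated: restricting to a tail $v_M,v_{M+1},\dots$ and to $\GG_M$ controls the \emph{endpoints} (edges of $T''$ incident to previously used vertices of $L$ agree with $L$ and so cannot be new), but it does not prevent the connecting paths of $T''$ from running through the internal vertices $W\subseteq V(G)\setminus V(L)$ of earlier jumps, since members of $\GG_M$ still contain those vertices. The paper's device for this -- enclosing the relevant components of the limit in ``protecting near-triangulations'' whose wheels are frozen by the limit, and rerouting the Menger paths through the 3-connected annulus between the two outer cycles -- is exactly the missing ingredient, and without it (or a substitute) the induction producing $J_{p+1}$ disjoint from $J_1,\dots,J_p$ does not go through.
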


\begin{proof}
Let $L_0$ be a $\GG$-limit in $G$ defined with respect to a sequence of vertices $v_0,v_1,\dots$ and a sequence of families $\GG_0 \supseteq \GG_1\supseteq \dots$ such that $L$ is a non-trivial connected component of~$L_0$. By construction, every finite induced subgraph of $L$ is an induced subgraph of uncountably many graphs in $\GG$. Since every graph in $\GG$ is planar, every finite subgraph of $L$ is planar, and $L$ is planar by \cref{PlanarityExtendable}. By the construction of a $\GG$-limit, for every vertex $v$ of $L$, for some $G'\in\GG$, the subgraph of $L$ induced by $N_L[v]$ is equal to the subgraph of $G'$ induced by $N_{G'}[v]$. So $L$ is locally Hamiltonian. 

By \cref{MakeUnbounded}, there is a plane embedding of $L$ with $V(L)$ unbounded. By \cref{LocallyHamiltonianTriangulation}, $L$ is a plane triangulation. We now show that $L$ is $f$-isoperimetric. Suppose that $C$ is a cycle of length $k$ in $L$, and $L$ has more than $f(k)$ vertices inside $C$. Let $C'$ consist of $C$ along with $f(k)+1$ vertices inside $C$ and also $f(k)+1$ vertices outside $C$ (which exist since $L$ is drawn so that the vertex-set is unbounded). By \cref{Extended3Connected}, $C'$ is contained in a finite 3-connected subgraph $C''$ of $L$, which is a subgraph of some planar triangulation $T$ in $\GG$. 
Since $C''$ is 3-connected, it has a unique embedding in the plane up to the choice of the unbounded face. 
Regardless of the choice of the unbounded face of $C''$,  every plane embedding of $T$ that respects $C''$ is a plane triangulation 
with more than $f(k)$ vertices inside $C$, which contradicts that $T$ is $f$-isoperimetric. Hence $L$ is $f$-isoperimetric. 
Since every graph in $\GG$ is 4-connected, we may assume that $f(3)=3$. Since $L$ is locally Hamiltonian, every separating set with three vertices in $L$ induces a triangle. Since $L$ is $f$-isoperimetric and $f(3)=3$, every triangle in $L$ is a face, implying that $L$ is 4-connected. 
\note{The referee made a comment about using the connectedness of the things inside any face. Actually  we only use the uniqueness of the embedding of $H$. We have added a remark to emphasize this.}

It remains to prove that $L$ has infinitely many pairwise disjoint jumps in $G$.  It suffices to show that for any integer $p\geq 0$ for all  pairwise disjoint jumps $J_1, \dots ,J_p$ of $L$ in $G$, there exists $J_{p+1}$ such that $J_1, \dots, J_{p+1}$ are pairwise disjoint jumps of $L$ in $G$. Let $H$ be the subgraph of $L$ induced by the ends of $J_1,\dots,J_p$. 
Let $W$ be the set of internal vertices in $J_1,\dots,J_p$. So $W\subseteq V(G)\setminus V(L)$. 

Define the following finite subgraph $H'$ of $G$ disjoint from $L$. Consider each component $T'$ of $L_0$ that intersects $W$. If 
$|V(T')|=1$, then add this vertex as an isolated vertex to $H'$.  Otherwise, $T'$ is a planar triangulation. \note{More details added here.}
By \cref{MakeUnbounded}, there is a plane embedding of $T'$ with $V(T')$ unbounded.  By the argument proving that $L$ is a 4-connected $f$-isoperimetric plane triangulation, $T'$ is also a 4-connected $f$-isoperimetric plane triangulation.   
Let $C_0$ and $C_1$ be cycles of $T'$ such that the open disk bounded by $C_0$ contains $W\cap V(T')$, and the open disk bounded by $C_1$ contains $C_0$.  Let $N$ be the subgraph of $T'$ contained inside the closed disk bounded by $C_1$.  Note that $N$ is a near-triangulation with outer cycle $C_1$.  We refer to $N$ as the \defn{protecting near-triangulation} of $T'$ (as it, intuitively, ``protects'' those vertices of $W$ that are in $T'$). Add each such protecting near-triangulation as a component of $H'$.

Since $H\cup H'$ is finite, $V(H\cup H') \subseteq \{v_1,v_2, \dots ,v_n\}$ for some sufficiently large $n$. Since $\GG_{n+1}$ has uncountably many planar triangulations, by \cref{4ConnTriangProperSubgraph}, $\GG_{n+1}$ contains at least one planar triangulation, say $T''$, distinct from $L$. 

We claim that $T''$ contains the required additional jump $J_{p+1}$. To prove the claim, note that $T''$ contains an edge not in $L$, since by \cref{4ConnTriangProperSubgraph}, no 4-connected planar triangulation contains a planar triangulation as a proper subgraph. Since $T''$ is connected and intersects $L$, $T''$ has an edge $uw$ not in $L$ such that $u$ is in $L$. Since $T''$ and $L$ agree on the set of edges in $G$ incident with each vertex in $H$, we have that $u$ is not in $H$. Also, $w$ is not in $H$ because $L$ and $T''$ agree on the edges incident to  vertices in $H$. 

If  $w$ is in $L$, then we may take $J_{p+1}=uw$ (since neither $u$ nor $w$ is in $H$), as claimed. Now assume that $w$ is not in $L$. 

Note that $|V(L \cap T'')| \geq 4$, since $L$ and $T''$ agree on the closed neighbourhood of some vertex in $T''$. 
Since $T''$ is 4-connected, by Menger's theorem, $T''-u$ has three paths $P_1,P_2,P_3$ from $w$ to $L$ which are pairwise disjoint except that they all contain $w$.  Let $P_4$ be the edge $wu$. Each of $P_1,\dots,P_4$ have exactly one end in $L$. Observe that $P_1 \cup P_2 \cup P_3 \cup P_4$ does not intersect $H$ by the same argument that $u$ is not in $H$.  In particular, no end of $J_1, \dots, J_p$ is in $P_1 \cup P_2 \cup P_3 \cup P_4$.

\begin{claim}
\label{OnOuterCycle}
If $xy$ is an edge in $P_1\cup P_2 \cup P_3 \cup P_4$ and $x$ is in some protecting near-triangulation $N$ and $y$ is not in $N$, then $x$ is on the outer cycle of $N$.
\end{claim}

\begin{proof}
Towards a contradiction suppose $x$ is not on the outer cycle of $N$. Then the wheel centred at $x$ in $N$ is also the wheel in $T''$ centred at $x$, by the choice of $n$. In particular, $y$ is also a neighbour of $x$ in $N$, which is a contradiction.   
\end{proof}

Note that if $x$ is an isolated vertex in $H'$, then $x$ is isolated in 
$L_0$ and in $T''$ (since $L_0$ and $T''$ agree on edges incident to vertices in $H'$). Since $P_1,P_2,P_3,P_4$ are in $T''$, no vertex in $P_1\cup P_2 \cup P_3 \cup P_4$ is isolated in $T''$. Thus no vertex in $P_1\cup P_2 \cup P_3 \cup P_4$ is isolated in $H'$. In particular, $w$ is not isolated in $H'$. It is possible that $w$ is in $H'$, in which case $w$ is on the outer cycle of one of the protecting near-triangulations by \cref{OnOuterCycle}.

Let $N$ be a protecting near-triangulation such that $V(N) \cap W \cap V(P_1 \cup P_2 \cup P_3) \neq \emptyset$. For each $i \in [3]$, let $x_i$ be the first vertex of $P_i$ (beginning from $w$) in $N$, and let $y_i$ be the last vertex of $P_i$ that is in $N$.
Note that $x_i \neq y_j$ whenever $i \neq j$ because $P_1,P_2,P_3,P_4$ are pairwise disjoint except that they have $w$ in common. Also note that if $|V(P_i) \cap V(N)|=1$, then $x_i=y_i$, and if $V(P_i) \cap V(N)=\emptyset$, then both $x_i$ and $y_i$ are undefined.  Let $X:=\{x_1, x_2, x_3\}$ and $Y:=\{y_1, y_2, y_3\}$.  Let $C$ be the outer cycle of $N$, $D$ be the outer cycle of $N -C$, and $N':=N[V(C) \cup V(D)]$.   By \cref{OnOuterCycle}, $X \cup Y \subseteq V(C)$.  Moreover, by construction, $W \cap V(N)$ is disjoint from $V(N')$. 

We claim that $N'$ is $3$-connected. To prove this, observe that $N'$ consist of two disjoint cycles $C,D$ that are both face boundaries of $N'$ and all other faces are bounded by triangles. Moreover, by construction, each vertex on one of the cycles has a neighbour on the other cycle. If we delete two vertices on the same cycle, the resulting graph is clearly connected. Also, if we delete one vertex of each cycle, then the resulting paths are joined by at least one edge, proving that $N'$ is $3$-connected. By Menger's Theorem, there are three vertex-disjoint paths from $X$ to $Y$ in $N'$. Therefore, by rerouting, we may assume that $V(N) \cap W \cap V(P_1 \cup P_2 \cup P_3) = \emptyset$. 

Repeating the above argument for each protecting near-triangulation $N$ such that $V(N) \cap W \cap V(P_1 \cup P_2 \cup P_3) \neq \emptyset$, we may assume that $P_1 \cup P_2 \cup P_3 \cup P_4$ is disjoint from $W$. Since $L$ is 4-connected, it contains no $K_4$. So there are distinct $i,j\in [4]$ such that the ends of $P_i$ and $P_j$ in $L$ are non-adjacent in $L$. Thus, we may take $J_{p+1}=P_i\cup P_j$, which completes the proof.
\end{proof}

\note{The referee writes, ``3-connectedness would be enough for this argument. Instead of no $K_4$, it would be enough to say...'' Possibly some of our arguments are valid under the weaker assumption of 3-connectedness. However, the Limit Lemma itself becomes false if "4-connected" is replaced by "3-connected" even if we also exclude $K_4$.}

\subsection{Routing with Jumps}
\label{Routing}

This subsection proves a series of lemmas about routing paths in graphs obtained by adding jumps to planar triangulations. These results are used in the proofs of \cref{InfiniteCliqueMinorStrongerStronger,thm:finitesubdivision} below. 

Consider the grid graph $G=P_\ell\CartProd P_m$, where $P_\ell$ is the path $x_1,\dots,x_\ell$ and $P_m$ is the path $y_1,\dots,y_m$. For $i\in[\ell]$, the subpath $C^i:=(x_i,y_1),\dots,(x_i,y_m)$ is called the \DefNoIndex{$i$-th column}\index{column} of $G$. 

\begin{lem} \label{lem:gridrouting}
Let $G=P_\ell\CartProd P_m$ for some $\ell,m\in\NN$ and $k\in\NN_0$ with $\ell\geq k+2$ and $m\geq k$. Let $a_1, \dots, a_k$ be vertices in the first column $C^1$ of $G$, and let $b_1, \dots, b_k$ be vertices in the last column $C^\ell$ of $G$, where $a_1,\dots,a_k$ and $b_1,\dots,b_k$ appear in the same order with respect to $P_m$. Then there exist pairwise vertex-disjoint paths $P^1, \dots, P^k$ in $G$ such that each $P^i$ is an $a_ib_i$-path and $V(P^i)\cap V(C^1)=\{a_i\}$ and $V(P^i)\cap V(C^\ell)=\{b_i\}$. 
\end{lem}

\begin{proof}
We proceed by induction on $k\geq 0$.  The case $k=0$ is vacuous. 
Assume $P_\ell$ is the path $(1,2,\dots,\ell)$, and $P_m$ is the path $(1,2,\dots,m)$.
Refer to \cref{GridCylinderRouting}(a). 
Say $a_i=(1,y_i)$ and $b_i=(\ell,z_i)$ for each $i\in[k]$. 
By symmetry, we may assume that $y_1\leq\dots \leq y_k$ and $z_1\leq\dots\leq z_k$ and $y_1\leq z_1$. 
Let $P^1$ be the path 
\[a_1=(1,y_1),(2,y_1),\dots,(\ell-1,y_1),(\ell-1,y_1+1),\dots,(\ell-1,z_1),(\ell,z_1)=b_1.\]
Let $c_i:=(\ell-1,z_i)$ for each $i\in[2,k]$. Note that $c_ib_i$ is an edge. 
Consider the subgraph $G'$ of $G$ induced by $\{1,\dots,\ell-1\}\times\{y_1+1,\dots,m\}$, which is isomorphic to $P_{\ell-1}\square P_{m-y_1}$. 
Note that 
$a_2,\dots,a_k$ are in the first column of $G'$ and
$c_2,\dots,c_k$ are in the last column of $G'$, both in increasing order. 
Thus $m-y_1\geq k-1$. 
By induction, 
there are pairwise disjoint paths $Q^2,\dots,Q^k$ in $G'$, where for $i\in[2,k]$, 
$Q^i$ is an $a_ic_i$-path, such that $a_i$ is the only vertex of $Q^i$ in the first column of $G'$, and  $c_i$ is the only vertex of $Q^i$ in the last column of $G'$.
Let $P^i$ be obtained from $Q^i$ by appending the edge $(\ell-1,z_i)(\ell,z_i)$. 
So $P^1,\dots,P^k$ are pairwise disjoint paths in $G$, where for each $i\in[k]$, 
$P^i$ is an $a_ib_i$-path, such that $a_i$ is the only vertex of $P^i$ in the first column of $G$, and  $b_i$ is the only vertex of $P^i$ in the last column of $G$.
\end{proof}

\begin{figure}
    (a)\includegraphics{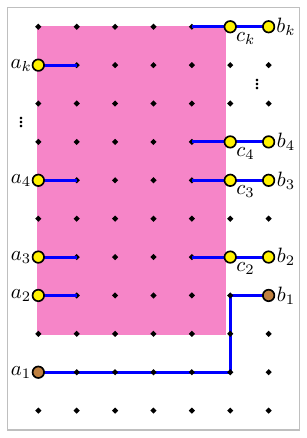}\quad
    (b)\includegraphics{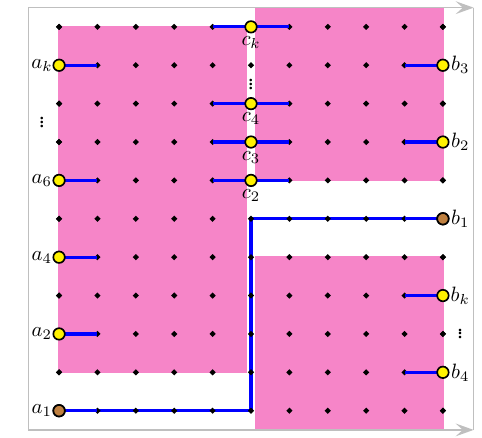}
    \caption{Routing disjoint paths in  (a) a grid (\cref{lem:gridrouting})  and (b)  a cylindrical grid (\cref{lem:cylinderrouting}).}
    \label{GridCylinderRouting}
\end{figure}

\note{The next lemma has been added to address a referee comment in the proof of~\cref{claim:switchpaths}}

Consider the \defn{cylindrical grid} graph $P_\ell\CartProd C_m$, where $P_\ell$ is the path $x_1,\dots,x_\ell$ and $C_m$ is the cycle $(y_1,\dots,y_m)$. For $i\in[\ell]$, the cycle $C^i:=((x_i,y_1),\dots,(x_i,y_m))$ is called the \splitdefn{$i$-th}{cycle} of $P_\ell\CartProd C_m$. 

\begin{lem} 
\label{lem:cylinderrouting}
Let $G:=P_\ell\CartProd C_m$ for some $\ell,m\in\NN$ and $k\in\NN_0$ with $\ell\geq 2k+1$ and $m\geq 2k-1$. Let $a_1, \dots, a_k$ be distinct vertices in the first cycle $C^1$ of $G$, and let $b_1, \dots, b_k$ be distinct vertices in the last cycle $C^\ell$ of $G$, where $a_1,\dots,a_k$ and $b_1,\dots,b_k$ appear in the cyclic order. Then there exist pairwise vertex-disjoint paths $P^1, \dots, P^k$ in $G$ such that each $P^i$ is an $a_ib_i$-path and $V(P^i)\cap V(C^1)=\{a_i\}$ and $V(P^i)\cap V(C^\ell)=\{b_i\}$. 
\end{lem}

\begin{proof}
Refer to \cref{GridCylinderRouting}(b). 
Say $P_\ell$ is the path $(1,2,\dots,\ell)$ and $C_m$ is the cycle $(1,2,\dots,m)$. 
By symmetry, we may assume that $a_1=(1,1)$ and $b_1=(\ell,y_1)$ where $y_1\in[1,\lfloor{\frac{m}{2}}\rfloor+1]$, implying $m-y_1\geq k-1$ (since $m\geq 2k-1$). Let $P^1$ be the path 
\[
a_1=(1,1),(2,1),\dots,(k+1,1),(k+1,2),\dots,(k+1,y_1),(k+2,y_1),\dots,(\ell,y_1)=b_1.
\]
So $P^1$ is the desired $a_1b_1$-path.
For each $i\in[2,k]$, let $c_i:=(k+1,y_1+i-1)$. 
Note that this y-coordinate is at least $y_1+1$ and at most $y_1+k-1\leq m$. 
Let $G_1$ be the cylindrical grid induced by $\{1,\dots,k+1\}\times V(C_m)$. 
Let $G_2$ be the cylindrical grid induced by $\{k+1,\dots,\ell\}\times V(C_m)$. 
Note that $P^1$ only intersects $G_1$ in the horizontal path $(1,1),(2,1),\dots,(k+1,1)$, 
and $P^1$ only intersects $G_2$ in the horizontal path $(k+1,y_1),(k+2,y_1),\dots,(\ell,y_1)$. 
Let $G'_1:=G_1-V(P^1)$, which is isomorphic to $P_{k+1} \square P_{m-1}$. 
Let $G'_2:=G_2-V(P^1)$, which is isomorphic to $P_{\ell-k} \square P_{m-1}$. 
The vertices $a_2,\dots,a_k$ are in the first column of $G'_1$.
The vertices $c_2,\dots,c_k$ are in the last column of $G'_1$ and the first column of $G'_2$.
The vertices $b_2,\dots,b_k$ are in the last column of $G'_2$.
By \cref{lem:gridrouting}, 
there exist pairwise vertex-disjoint paths $Q^2, \dots, Q^k$ in $G'_1$ 
such that for each $i\in[2,k]$, 
$Q^i$ is an $a_ic_i$-path, $a_i$ is the only vertex in $Q^i$ in the first column of $G'_1$,
and $c_i$ is the only vertex in $Q^i$ in the last column of $G'_1$. 
Similarly, since $\ell-k\geq (k-1)+2$, by \cref{lem:gridrouting}, 
there exist pairwise vertex-disjoint paths $R^2, \dots, R^k$ in $G'_2$ 
such that for each $i\in[2,k]$, 
$R^i$ is a $c_ib_i$-path, $c_i$ is the only vertex in $R^i$ in the first column of $G'_2$,
and $b_i$ is the only vertex in $R^i$ in the last column of $G'_2$. For $i\in[2,k]$, let $P^i$ be the $a_ib_i$-path obtained by concatenating $Q^i$ and $R^i$. 
By construction, $P^1,\dots,P^k$ are pairwise disjoint, and for each $i\in[k]$, 
 $a_i$ is the only vertex in $P^i$ in the first column of $G$, 
and $b_i$ is the only vertex in $P^i$ in the last column of $G$. 
\end{proof}

The next lemma is a special case of the 2-linkage theorem, independently due to \citet{Thomassen80} and \citet{Seymour80}. For a graph $G$ and $u,v \in V(G)$,  let \defn{$G+uv$} be the graph obtained from $G$ by adding the edge $uv$ if it does not already exist. 

\begin{lem} \label{lem:special2linkage}
Let $G$ be a near-triangulation with outer cycle $C$, let $p_1,q_1,p_2,q_2$ be distinct vertices on $C$, and let $u,v$ be non-adjacent vertices of $G$ such that neither $u$ nor $v$ is in $C$.  If $G$ does not contain a $(\leq 3)$-separation $(G_1, G_2)$ such that $\{p_1, q_1, p_2, q_2\} \subseteq V(G_1)$ and $G_1$ is planar, then there are vertex-disjoint paths $P$ and $Q$ in $G + uv$ such that the ends of $P$ are $p_1$ and $p_2$ and the ends of $Q$ are $q_1$ and $q_2$. 
\end{lem}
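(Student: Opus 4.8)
The statement is a planar-graph specialization of the 2-linkage theorem, so the natural approach is to reduce it to the Seymour--Thomassen characterization of when two disjoint linking paths exist, and to use planarity to rule out all the bad cases except the one we have explicitly excluded by hypothesis. Recall the 2-linkage theorem says that for terminals $p_1,p_2,q_1,q_2$ in a graph $G$, there are no disjoint $p_1p_2$- and $q_1q_2$-paths if and only if $G$ admits a decomposition into at most four parts glued along a small ``planar'' skeleton in which the four terminals lie on a common face with the cyclic order $p_1,q_1,p_2,q_2$ (the precise statement distinguishes small separations and a planar core). Here we are applying this to $G+uv$, where $G$ is a plane near-triangulation. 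So first I would set $G' := G+uv$ and ask for disjoint $p_1p_2$- and $q_1q_2$-paths in $G'$; suppose for contradiction none exist.

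First I would invoke the 2-linkage characterization for $G'$. This produces a small separation structure of $G'$; the key point is that $G'$ is obtained from a plane near-triangulation by adding a single edge between two interior vertices, so $G'$ is ``almost planar.'' I would argue that, because $G$ is a near-triangulation (hence highly connected locally, and 2-connected with a well-understood face structure), the only obstruction the theorem can return is a $(\le 3)$-separation $(G_1,G_2)$ with $\{p_1,q_1,p_2,q_2\}\subseteq V(G_1)$. The subtlety is to show $G_1$ can be taken to be \emph{planar}: the non-planarity of $G'$ is entirely ``located'' at the edge $uv$, so I would show that the separation can be chosen with $uv$ on the $G_2$ side (equivalently, $u$ or $v$ separated off into $G_2$), which makes $G_1$ a subgraph of the planar graph $G$ and hence planar. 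This is exactly the configuration forbidden by the hypothesis, giving the contradiction.

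**Handling the obstruction cases.** More concretely: the 2-linkage theorem gives that $G'$ has a vertex-set $Z$ with $|Z|\le 3$ and a partition of $G'-Z$ such that each ``piece'' together with $Z$ meets the terminals in a controlled way, and such that contracting the pieces yields a planar graph with all four terminals on one face in order $p_1,q_1,p_2,q_2$. Since the terminals lie on the outer cycle $C$ of the near-triangulation $G$ and $u,v$ are interior, I would use the planarity and the face structure of $G$ to show that the planar quotient forces a genuine $(\le 3)$-separation $(G_1,G_2)$ of $G'$ with the terminals on the $G_1$ side. Then I would check that $uv$ cannot cross this separation in a way that destroys planarity of $G_1$: if both $u,v$ lay in $G_1$, then using that $u,v$ are interior and not on $C$, and that $G[V(G_1)]$ would then be a planar near-triangulation-like piece containing all four terminals plus a planar drawing, one re-derives the forbidden separation directly; if at most one of $u,v$ is in $G_1$, then $G_1$ is a subgraph of $G$, hence planar, again the forbidden configuration. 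Either way we contradict the hypothesis, so the disjoint paths exist.

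**Main obstacle.** The delicate step is the second one: extracting from the (somewhat technical) statement of the 2-linkage theorem exactly a $(\le 3)$-separation with $G_1$ \emph{planar} and containing all four terminals, rather than some messier obstruction involving the added edge $uv$. This requires carefully tracking where $uv$ sits relative to the separation the theorem hands back, and using that $G$ itself is planar and that $u,v$ are interior to localize all ``non-planarity'' to the $G_2$ side. I expect the rest (the reduction, and deducing the contradiction from the hypothesis) to be routine once this localization is done cleanly; it may be cleanest to phrase the argument as: ``apply the 2-linkage theorem to $G+uv$; the only obstruction not excluded by hypothesis is ruled out because $G$ is planar and $u,v\notin V(C)$.''
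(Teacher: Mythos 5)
Your overall route is the one the paper intends: the paper gives no proof of this lemma at all, stating only that it is a special case of the 2-linkage theorem of Thomassen and Seymour, and your plan is exactly that derivation, namely apply the linkage characterisation to $G+uv$ and argue that the only obstruction not excluded by the hypothesis cannot occur. One half of your case analysis is sound: the characterisation returns pairwise disjoint sets $Z_1,\dots,Z_k$ avoiding the terminals with $|N(Z_i)|\le 3$ whose deletion (adding cliques on their neighbourhoods) leaves a planar frame with the four terminals on a common face in interleaved order; if some $Z_i$ contains $u$ or $v$, then $(G+uv)-Z_i$ omits the edge $uv$, is a subgraph of $G$ and hence planar, and together with $(G+uv)[Z_i\cup N(Z_i)]$ it is precisely the forbidden $(\le 3)$-separation.

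The genuine gap is the complementary case, which your sketch does not resolve. If no $Z_i$ meets $\{u,v\}$ (in particular if $k=0$, i.e.\ $G+uv$ is itself planar with the terminals on a common face in the bad cyclic order), then every separation the theorem hands you has the edge $uv$ on the terminal side, so the planarity of that side is exactly what is in question; your claim that in this situation one ``re-derives the forbidden separation directly'' is circular. This is where the hypotheses on $G$ must actually do work: since $G$ is a near-triangulation with $u,v$ interior and non-adjacent, a 3-connected $G$ has an essentially unique embedding whose faces are the inner triangles and $C$, so $u$ and $v$ cannot share a face and the edge $uv$ cannot survive into a planar frame; when $G$ is not 3-connected one has to analyse its $2$-cuts (which in a near-triangulation lie on $C$, because interior vertices have wheel neighbourhoods) and show that any re-embedding that brings $u$ and $v$ onto a common face while keeping all four terminals on one face in interleaved order forces a $(\le 3)$-separation of $G+uv$ with all four terminals in a planar part. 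Relatedly, the statement never fixes the cyclic order of $p_1,q_1,p_2,q_2$ on $C$, so relating the ``bad'' face of the frame to $C$ is also part of the argument. Until this case is carried out, what you have is a correct plan rather than a proof---though, in fairness, the paper itself supplies no more detail than the citation.
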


For a cycle $C$ in a plane graph $G$, let $\Delta(C) \subseteq \mathbb{R}^2$ be the closed disk bounded by $C$.  We say that $C$ is \defn{tight} if there is no cycle $D$ in $G$ such that $\Delta(C) \subseteq \Delta(D)$ and $|V(D)| < |V(C)|$. A cycle $D$ in $G$ \defn{surrounds} a cycle $C$ in $G$ if $V(C) \cap V(D)=\emptyset$ and $\Delta(C) \subseteq \Delta(D)$.

\begin{lem} \label{FindDisjointTightCycle}
For every cycle $C$ of an infinite isoperimetric plane triangulation $G$, there is a tight cycle that surrounds $C$.
\end{lem}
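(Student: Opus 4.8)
The goal is to find, for a given cycle $C$ in an infinite $f$-isoperimetric plane triangulation $G$, a tight cycle that surrounds $C$. The plan is to work entirely inside the infinite side of $G-C$ (the side containing all but finitely many vertices), produce \emph{some} cycle $D$ there that surrounds $C$, and then among all surrounding cycles of minimum length choose one that is tight. The key point that makes this work is 1-endedness: since $G$ is an $f$-isoperimetric infinite plane triangulation, it is 1-ended (as established in the discussion preceding the Limit Lemma), so exactly one of the two faces of $C$ contains infinitely many vertices of $G$; call its closure the \emph{outer side} and write $U$ for the (finite) set of vertices of $G$ strictly inside $C$ together with $V(C)$.

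\textbf{Step 1: produce one surrounding cycle.} First I would show there is at least one cycle $D_0$ in $G$ with $V(D_0)\cap V(C)=\emptyset$ and $\Delta(C)\subseteq\Delta(D_0)$. Take the finite set $U$ of vertices in $\Delta(C)$. By \cref{Extended3Connected}, extend $G[U]$ (after first adding shortest paths to connect it, exactly as in that lemma's proof, plus the wheels at each vertex) to a finite 3-connected subgraph $H$ of $G$ that contains $U$ and lies, apart from one controlled ``collar'' of new vertices, just outside $C$. Since $G$ is 1-ended and $H$ is finite, $G-V(H)$ has a unique infinite component, so exactly one face of $H$ contains infinitely many vertices of $G$; the boundary walk of that face contains a cycle $D_0$ (a 3-connected plane graph has facial cycles). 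Because $U\subseteq V(H)$ and $D_0$ bounds the infinite face, $D_0$ is vertex-disjoint from $V(C)$ and $\Delta(C)\subseteq\Delta(D_0)$: it surrounds $C$.

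\textbf{Step 2: minimise and argue tightness.} Now let $D$ be a cycle surrounding $C$ with $|V(D)|$ minimum; this exists since the set of surrounding cycles is nonempty (Step 1) and lengths are positive integers. I claim $D$ is tight. Suppose not: there is a cycle $E$ with $\Delta(D)\subseteq\Delta(E)$ and $|V(E)|<|V(D)|$. Then $\Delta(C)\subseteq\Delta(D)\subseteq\Delta(E)$, so $\Delta(C)\subseteq\Delta(E)$. The only thing to check is that $E$ is \emph{disjoint} from $C$, which is immediate since $V(C)$ lies in the interior of $\Delta(D)\subseteq\Delta(E)$ (indeed $V(C)\subseteq U$ and all of $U$ is strictly inside $D$, as $D$ surrounds $C$; and $E$'s vertices lie on or outside $D$). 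Hence $E$ surrounds $C$ and is shorter than $D$, contradicting minimality. Therefore $D$ is tight, and it surrounds $C$, as required.

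\textbf{Main obstacle.} The delicate point is Step 1 — making precise that the surrounding cycle $D_0$ truly exists and is disjoint from $C$, rather than merely ``hugging'' $C$ from outside or sharing vertices with it. The construction in \cref{Extended3Connected} adds wheels at vertices of $H_1$, which already buys a collar of vertices strictly outside $C$, so $C$ ends up in the interior of the 3-connected graph $H$ and hence strictly inside the outer facial cycle $D_0$; the $f$-isoperimetric hypothesis is what guarantees $U$ is finite so that \cref{Extended3Connected} applies and 1-endedness holds, and it is otherwise not needed in the argument. I would take care to phrase the face/ordering argument in terms of the unique unbounded region of the plane drawing (using that $G$ may be drawn with unbounded vertex set, per the footnote to the Limit Lemma), so that ``the face of $H$ containing infinitely many vertices'' is literally the unbounded face.
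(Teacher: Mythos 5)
Your proof is correct, and it differs from the paper's argument in both halves, in instructive ways. For the initial surrounding cycle the paper does not use \cref{Extended3Connected}: it lets $V$ be the set of vertices of $G$ lying in $\bigcup_{D\in\CC}\Delta(D)$, where $\CC$ is the (finite) set of cycles with vertex set $V(C)$, notes $V$ is finite by $f$-isoperimetry, and takes $C^1$ to be the facial cycle of $G-V$ bounding $V$; your route through a finite $3$-connected subgraph $H$ containing $C$ and its interior, with the wheel ``collar'' guaranteeing that $V(C)$ avoids the cycle $D_0$ bounding the unbounded face of $H$, is a legitimate alternative of comparable rigour. For tightness the paper does not take a minimum-length surrounding cycle; it iterates: if $C^i$ is not tight, it is replaced by a shorter cycle $C^{i+1}$ with $\Delta(C^i)\subseteq\Delta(C^{i+1})$, which (exactly as in your Step 2) still surrounds $C$, and the iteration is terminated by an isoperimetric count of vertices inside the cycles. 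Your one-shot extremal argument is cleaner and makes visible that the isoperimetric hypothesis is really only needed to make the interior of $C$ finite (indeed, in the paper's iteration the lengths strictly decrease, so the counting there is also dispensable). Both proofs rest on the same key observation, which you verify correctly: if $D$ surrounds $C$ and $\Delta(D)\subseteq\Delta(E)$ with $|V(E)|<|V(D)|$, then $E$ cannot meet the open disk bounded by $D$, hence is disjoint from $C$ and surrounds it.

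Two refinements to your Step 1. First, you do not need $1$-endedness or any count of vertices in faces: since $H$ is $3$-connected, the boundary of its unbounded face is a cycle $D_0$, and everything of $H$ off $D_0$ (in particular $C$) lies strictly inside $\Delta(D_0)$, which is all you use. Second, be slightly careful with re-embedding: ``tight'' and ``surrounds'' are defined relative to the given plane embedding, so passing to a drawing with unbounded vertex set would, strictly speaking, change the statement being proved. Fortunately no redrawing is needed: in an $f$-isoperimetric embedding no vertex $v$ can be the unique vertex outside its link cycle $C_v$ (that would place infinitely many vertices inside the bounded cycle $C_v$), so every $v\in V(H_1)$ is enclosed by $C_v$ and your collar argument runs in the given drawing.
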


\begin{proof}
Suppose for the sake of contradiction that no tight cycle surrounds $C$. Let $\CC$ be the set of facial cycles of $G$ that intersect $\Delta(C)$ (recall that $\Delta(C)$ is a \emph{closed} disk).  Since $G$ is locally finite and isoperimetric, $\CC$ is finite.  Let $C^1$ be the symmetric difference of all cycles in $\CC$.  We claim that $C^1$ is a cycle surrounding $C$.
To see this, let $ p \in \Delta(C)$ and consider any curve from $p$ to infinity containing no vertex of $G$. The last point on the curve which intersects a cycle in $\CC$ is a point on an edge belonging to precisely one cycle in $\CC$, and that edge is in $C^1$. 
Since $C^1$ is not tight, there is a cycle $C^2$ with $\Delta(C^1)\subseteq \Delta(C^2)$ and $|V(C^2)|<|V(C^1)|$. So $C^2$ surrounds $C$.  Repeating this argument we obtain an infinite sequence $C^1,C^2,\dots$ of non-tight cycles with $\Delta(C^i) \subseteq \Delta(C^{i+1})$ and $|V(C^{i+1})|<|V(C^i)|$, where each $C^i$ surrounds $C$. This is a contradiction since $|V(C^1)|$ is finite.
\end{proof}

\begin{lem} \label{claim:tightcycle}
Let $G$ be an isoperimetric plane triangulation, and  let $D$ be a cycle in $G$ that surrounds a tight cycle $C$ in $G$.
Then there are $|V(C)|$ vertex-disjoint paths in $G$ from $V(C)$ to $V(D)$.
\end{lem}

\begin{proof}
\note{The referee said to prove it. We have added a proof, and added the isoperimetric assumption to simplify the proof.}
Suppose for the sake of contradiction that the desired paths do not exist. By Menger's Theorem, there exists a minimal set $X \subseteq V(G)$ with $|X| < |V(C)|$ such that there is no path in $G-X$ between $V(C)$ and $V(D)$, and there are vertex-disjoint $CD$-paths $P_1,\dots,P_{|X|}$ in $G$, each containing precisely one vertex of $X$. 
Each path $P_i$ has exactly one vertex in $C$ and exactly one vertex in $D$, so $P_i$ avoids the interior of $C$ and the exterior of $D$.
Let $G'$ be the plane triangulation obtained from $G$ by deleting all vertices in the interior of $C$, deleting all vertices in the exterior of $D$, adding a vertex $v$ in the interior of $C$ adjacent to every vertex in $C$, and adding a vertex $w$ in the exterior of $D$ adjacent to every vertex in $D$. 
Since $G$ is isoperimetric, $G'$ is finite. 
By construction, $P_i$ is a path of $G'-v-w$, and $X\subseteq V(G'-v-w)$. 

We claim that $G'-X$ has exactly two components, one containing $v$ and one containing $w$. Suppose for the sake of contradiction that some component $A$ of $G'-X$ avoids $v$ and $w$. If $V(A \cap P_i)\neq\emptyset$ for some $i$, say $x\in V(A\cap P_i)$, then since $|V(P_i)\cap X|=1$, there is a path in $G'-X$ from $x$ to $v$ or $w$, and thus $v$ or $w$ is in $A$. Now assume that $V(A\cap P_i)=\emptyset$ for each $i$. Thus $A$ lies between two paths $P_i$ and $P_j$ in the plane embedding of $G'$. Thus $V(P_i\cup P_j)\cap X$ is a set of two vertices that separates $A$ from $\{v,w\}$ in $G'$, contradicting the 3-connectivity of $G'$. So $G'-X$ has exactly two components, one containing $v$ and one containing $w$. 

For each vertex $x\in X$ there is a $CD$-path in $G-(X\setminus\{x\})$, implying $G'-(X\setminus\{x\})$ is connected. Thus, $X$ is a minimal separating set in $G'$. (Note that in general, minimality with respect to two specific vertex sets does not necessarily imply that the set is a minimal separating set.) Since $G'$ is a finite plane triangulation, by Proposition 8.2.3 in \citep{MoharThom}, $X$ induces a cycle in $G'$ such that the two components of $G'-X$ are the interior and exterior of $G'[X]$, respectively. Hence $G[X]$ is a cycle such that $\Delta(C)\subseteq \Delta(G[X])$ and $|X|<|V(C)|$. 
This contradicts the assumption that $C$ is tight.
\end{proof}

Repeatedly applying \cref{claim:tightcycle} we obtain the following.

\begin{lem} 
\label{claim:cylindricalgrid}
\note{Added the isoperimetric assumption here.}
Let $C^1, \dots, C^\ell$ be tight cycles in an isoperimetric plane triangulation $G$ such that $|V(C^1)| \geq m$, and $C^j$ surrounds $C^i$ for all $1 \leq i < j \leq \ell$. Then $G$ contains a subdivision of $P_\ell \CartProd C_m$, where the branch vertices of the $i$-th cycle of $P_\ell \CartProd C_m$ are in $C^i$.
\end{lem}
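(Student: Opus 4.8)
The plan is to apply \cref{claim:tightcycle} exactly once and then extract the grid by a routing argument. Since $C^1$ is tight and $C^m$ surrounds $C^1$ (so $\Delta(C^1)\subseteq\Delta(C^m)$), \cref{claim:tightcycle} with $C:=C^1$ and $D:=C^m$ yields $|V(C^1)|=\ell$ pairwise vertex-disjoint paths $R^1,\dots,R^\ell$ from $V(C^1)$ to $V(C^m)$. Replacing each $R^k$ by the subpath from its last vertex on $C^1$ to the first vertex on $C^m$ after that, I may assume that $R^k$ meets $C^1$ only at one endpoint and meets $C^m$ only at the other endpoint (and the truncated paths remain pairwise disjoint). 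Because the $C^i$ are pairwise vertex-disjoint cycles of a plane graph with $\Delta(C^1)\subseteq\Delta(C^2)\subseteq\dots\subseteq\Delta(C^m)$, each $C^i$ (for $1<i<m$) is a Jordan curve having $C^1$ in its open interior and $C^m$ in its open exterior, and an edge of $R^k$ cannot cross an edge of $C^i$; hence each $R^k$ passes through a vertex of $C^i$, for every $i\in[m]$. Along the way it is worth noting, though not strictly needed, that tightness of each $C^i$ forces $\ell=|V(C^1)|\le|V(C^2)|\le\dots\le|V(C^m)|$.

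Next, for each $k$ I would traverse $R^k$ from its endpoint on $C^1$ and select contact vertices $z^k_1,z^k_2,\dots,z^k_m$ with $z^k_i\in V(C^i)$, occurring in this order along $R^k$, together with subpaths $V^k_i$ of $R^k$ from $z^k_i$ to $z^k_{i+1}$ ($i\in[m-1]$) that are pairwise internally disjoint and whose interiors avoid $\bigcup_j V(C^j)$. Whenever $R^k$ leaves some $C^j$ and later returns to it, the offending detour can be shortcut along an arc of $C^j$; using the nesting of the $\Delta(C^j)$ one checks that after cleaning up in this way the contact vertices can indeed be ordered $C^1,C^2,\dots,C^m$ with clean segments in between. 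For each fixed $i$, the vertices $z^1_i,\dots,z^\ell_i$ are $\ell$ distinct vertices of $C^i$ (distinct because the $R^k$ are disjoint), so in their cyclic order around $C^i$ they split $C^i$ into $\ell$ arcs $H^1_i,\dots,H^\ell_i$. I then declare the $z^k_i$ to be the branch vertices, the arcs $H^k_i$ to be the horizontal paths, and the segments $V^k_i$ to be the vertical paths. The $H$-paths are internally disjoint from each other --- within a common $C^i$ because consecutive arcs meet only at a shared $z^k_i$, and across different $C^i$ because those cycles are vertex-disjoint --- the $V$-paths are internally disjoint from each other since distinct $R^k$ are disjoint and the segments within one $R^k$ are consecutive and non-overlapping, and an $H$-path and a $V$-path are internally disjoint because the interior of each $V^k_i$ misses every $C^j$. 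This is precisely a subdivision of $C_\ell\CartProd P_m$ in which the branch vertices of the $i$-th cycle lie on $C^i$, as required.

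The main obstacle is the middle step: converting the $\ell$ disjoint $C^1$--$C^m$ paths into a system of contact vertices and vertical segments that is simultaneously monotone (ordered $C^1,\dots,C^m$ along each $R^k$) and clean (segment interiors disjoint from every $C^j$). A naive choice such as ``first vertex of $R^k$ on $C^i$'' does not work, since $R^k$ may oscillate in and out of the nested disks, so one needs the reroute-along-a-cycle-arc trick together with a planarity argument --- in the spirit of the rerouting arguments in the proof of \cref{LimitLemma} --- that pins down where such oscillations can occur and shows they can always be absorbed. Everything else, namely the single application of \cref{claim:tightcycle}, the Jordan-curve crossing observation, and the final disjointness bookkeeping, is routine.
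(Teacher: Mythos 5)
There is a genuine gap, and it sits exactly where you flag ``the main obstacle'': the monotonization/cleanup step. First, a clean selection cannot in general be made inside the paths $R^k$ themselves: if the contacts of $R^k$ with the cycles are listed in order along $R^k$, requiring every segment $V^k_i$ to have interior disjoint from $\bigcup_j V(C^j)$ forces consecutive chosen contacts to be consecutive contacts of $R^k$, and an oscillating path (e.g.\ one that touches $C^2$ twice before ever reaching $C^3$) admits no such choice. So your proof must modify the paths, and the proposed modification --- shortcut an offending detour along an arc of $C^j$ --- is not justified: other paths $R^{k'}$ may touch the shortcut arc from the outer side (their later returns to $C^j$ from the annulus above are not blocked by the detour), so the rerouted $R^k$ can collide with them; moreover the rerouted vertical path now runs along a stretch of $C^j$, which you simultaneously use (in its entirety, as the union of the arcs $H^1_j,\dots,H^\ell_j$) as the $j$-th horizontal cycle, so internal disjointness of the subdivision is no longer established. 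These cleanups also cascade (shortcutting one path creates or exposes oscillations relative to the others and constrains which vertices can serve as $z^{k'}_j$), and no argument is given that the process terminates in a consistent system. A telling symptom is that your argument never uses the tightness of $C^2,\dots,C^m$ (you even note it is ``not strictly needed''), whereas that hypothesis is precisely what the lemma supplies to make the linking problem disappear.

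The paper's intended proof applies \cref{claim:tightcycle} to \emph{each} consecutive pair $(C^i,C^{i+1})$, not once to $(C^1,C^m)$. Since $C^i$ is tight, this gives $|V(C^i)|$ disjoint $C^i$--$C^{i+1}$ paths; as there are exactly $|V(C^i)|$ of them, every vertex of $C^i$ is the unique $C^i$-vertex of one such path, and after trimming each path its interior lies in the open annulus between $C^i$ and $C^{i+1}$. One then starts from the $\ell$ vertices of $C^1$, follows the level-$1$ paths to $\ell$ vertices of $C^2$, and continues outward from exactly those vertices using the level-$2$ system (possible because a path leaves \emph{every} vertex of $C^2$), and so on; disjoint paths in an annulus preserve the cyclic order of their endpoints, so the chosen branch vertices and the arcs of each $C^i$ between consecutive ones form a subdivision of $C_\ell \CartProd P_m$ with no rerouting and no oscillation analysis. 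If you want to keep your single-application route, you would have to prove the nontrivial extraction statement you are implicitly assuming (nested cycles plus $\ell$ disjoint crossing paths yield the cylindrical grid), which is substantially harder than the lemma itself and is not what the hypotheses are designed for.
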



\begin{lem} \label{claim:switchpaths}
Let $G':=G \cup M$, where $G$ is a $4$-connected $f$-isoperimetric plane triangulation and $M$ is an infinite matching of edges not in $G$. Let $C$ be a tight cycle in $G$ of length $m \geq 2k+4$, and $x_1, \dots, x_k$ be vertices in clockwise order around $C$.  Then there is a tight cycle $D$ surrounding $C$, there are vertices $w_1,\dots,w_k$ in $D$, and there are vertex-disjoint paths $Q^1, \dots, Q^k$ from $C$ to $D$ such that:
\begin{itemize}
\item $Q^i$ is an $x_iw_i$-path for each $i\in[k]$,  
\item $V(Q^i)\cap V(C)=\{x_i\}$ for each $i \in [k]$,
\item $V(Q^i) \cap V(D):=\{w_i\}$ for each $i \in [k]$, 
\item $w_2, w_1, w_3, \dots, w_k$ appear in this clockwise order on $D$, 
\item $Q^1 \cup \dots \cup Q^k$ is contained in the subgraph of $G$ between $C$ and $D$ together with one edge $e\in M$.
\end{itemize}
\end{lem}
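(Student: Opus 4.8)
The plan is to use the $f$-isoperimetric structure of $G$ to build a ``cylindrical grid'' outside $C$ in which we have enough room to route the $k$ paths while performing a single crossing of the strands carrying $x_1$ and $x_2$; this crossing is where the single extra matching edge $e$ is used. First I would apply \cref{FindDisjointTightCycle} repeatedly to obtain a nested sequence of tight cycles $C=C^0, C^1, C^2, \ldots, C^N$, each surrounding all the earlier ones, for $N$ large enough (in terms of $k$; $N=2k+5$ or so should suffice). By \cref{claim:cylindricalgrid}, $G$ contains a subdivision of $C_\ell \CartProd P_N$ whose $i$-th cycle has its branch vertices on $C^i$ (in particular the $0$-th cycle is $C$ itself). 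Since $\ell\geq 2k+4$, this cylindrical grid is wide enough to accommodate $k$ strands plus the detour needed for one swap.

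Next I would do the routing inside this cylindrical subdivision. Starting from $x_1,\dots,x_k$ on $C$, send each $x_i$ radially outward one grid-layer along the appropriate column of the subdivision. Now I want to interchange the circular positions of the strands through $x_1$ and $x_2$ (so that on the outer boundary they appear in the order $y_2,y_1,y_3,\dots,y_k$). Because $x_1$ and $x_2$ are consecutive among $x_1,\ldots,x_k$ in the clockwise order on $C$ (note: if they are not literally consecutive around $C$, the strands of $x_3,\dots,x_k$ lying between them still need to be slid past, which costs a little more width — this is accounted for in the bound $\ell\geq 2k+4$), a single ``X-crossing'' of the two strands is not realizable inside the planar grid; this is precisely why we are allowed one edge $e\in M$. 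I would invoke the hypothesis (implicit from the surrounding text, where $M$ is a matching in $G$ whose edges are non-grid ``jumps'') that there is an edge $e\in M$ available between two appropriately placed vertices in the annular region between $C$ and the outer tight cycle; routing one strand across $e$ and the other strand around it realizes the transposition while keeping all $k$ paths vertex-disjoint. Using \cref{lem:cylinderrouting} (applied in the flat grid obtained by cutting the cylinder open along a column disjoint from all strands) I can then complete each strand monotonically outward to a common outer layer, which I take to be the tight cycle $D:=C^N$; the endpoints $y_1,\dots,y_k$ land on $D$ in the required clockwise order $y_2,y_1,y_3,\dots,y_k$, and by construction $x_i$ is the unique vertex of $Q^i$ on $C$. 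Finally, since the entire construction lives in the subgraph of $G$ strictly between $C$ and $D$ except for the single jump $e$, the third bullet holds.

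The main obstacle I expect is the bookkeeping around the swap: making precise that (i) a suitable edge $e\in M$ lies in the annulus between $C$ and $D$ (which presumably follows from how $C$ and $M$ were chosen in the lemma that invokes this one, or from an averaging argument over the many tight cycles $C^i$ produced above, since $M$ is infinite and the cycles are finite and nested), and (ii) after routing strand $1$ across $e$ and detouring strand $2$ (and any of $x_3,\dots,x_k$ sitting between them) around it, the paths remain pairwise vertex-disjoint and each $Q^i$ still meets $C$ only in $x_i$. The width bound $\ell\geq 2k+4$ is exactly what gives the slack to push up to $k-1$ strands past one another near the crossing; verifying that this slack is enough, and that $4$-connectedness of $G$ prevents any obstructing small separation (so that \cref{lem:special2linkage} or a direct Menger argument can be applied if a local $2$-linkage is needed to install the crossing), will be the technical heart of the argument.
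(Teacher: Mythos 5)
Your outline follows the same strategy as the paper's proof: nested tight cycles via \cref{FindDisjointTightCycle}, a subdivision of $C_\ell \CartProd P_N$ via \cref{claim:cylindricalgrid}, order-preserving routing in the cut-open cylinder via \cref{lem:cylinderrouting}, and a single swap of two strands realised with one edge of $M$ via \cref{lem:special2linkage}. But the two steps you defer as ``bookkeeping'' are in fact the substance of the proof, and as written they constitute a genuine gap. First, the existence of a usable edge $e=vw\in M$ is not a hypothesis to be invoked: it must be manufactured. The paper does this by noting that $f$-isoperimetry forces only finitely many vertices of $G$ inside $C^{2k+1}$, so the infinite matching $M$ has an edge with both ends outside $\Delta(C^{2k+1})$; one then chooses further tight cycles $C^{2k+2},C^{2k+3}$ to capture $v$ and then $w$, and takes $C^{2k+4}$ (your $D$) with $\Delta(C^{2k+4})$ inclusion-minimal, so that $vw$ sits inside a controlled annular window between two consecutive columns of the grid. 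Your ``averaging argument over the nested cycles'' gestures at this but does not pin the edge between specific layers, and without that you cannot even set up the window in which the crossing is to be installed.

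Second, and more seriously, ``routing one strand across $e$ and the other strand around it'' is not an argument: a single extra edge does not automatically yield the transposed linkage. The paper applies \cref{lem:special2linkage} to the window $G(a-1,b+1)+vw$ with terminals $s_{a-1},t_{a-1},s_{b+1},t_{b+1}$, and the bulk of its proof is the verification that this graph has no planar $(\leq 3)$-separation containing all four terminals on the small side. That verification is where everything you have set up is actually used: $4$-connectedness rules out $(\leq 1)$-cuts and forces a potential $3$-cut to be a path $xyz$ on the boundary; tightness of $C^{2k+1}$ kills cuts along the inner cycle; minimality of $|V(H)|$ (the grid subdivision must be chosen vertex-minimal, a choice absent from your outline) kills chords along the two bounding columns; and minimality of $\Delta(C^{2k+4})$ kills chords along the outer cycle; finally, the placement of $v,w$ strictly inside the window makes $G_1+vw$ non-planar in the remaining cases. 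None of this case analysis, nor the minimality choices it relies on, appears in your proposal, so the crossing step --- the whole point of the lemma, since it is what lets $y_2$ precede $y_1$ on $D$ --- is unsupported. (A smaller inaccuracy: the order-preserving routing of \cref{lem:cylinderrouting} cannot ``slide strands past one another,'' so the swap must be confined entirely to the outer window as in the paper; your remark about pushing $x_3,\dots,x_k$ past the crossing suggests a mechanism the grid lemma does not provide.)
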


\begin{proof}
By \cref{FindDisjointTightCycle}, there are tight cycles $C=C^1, C^2, \dots, C^{2k+2}$ such that $C^j$ surrounds $C^i$ for all $1 \leq i < j \leq 2k+2$; see \cref{SwitchPaths}.  Since $G$ is $f$-isoperimetric, there are only finitely many vertices of $G$ inside $C^{2k+2}$.  By \cref{FindDisjointTightCycle} and since $M$ is infinite, there is a tight cycle $C^{2k+3}$ surrounding $C^{2k+2}$ and there is an edge $vw \in M$ such that $v \in \Delta(C^{2k+3})$, and $\{v,w\} \cap \Delta(C^{2k+3})=\emptyset$. By \cref{FindDisjointTightCycle}, there exists a tight cycle $C^{2k+4}$ surrounding $C^{2k+3}$ such that $w \in \Delta(C^{2k+4})$ and $w \notin V(C^{2k+4})$.  Let $C^{2k+5}$ be a tight cycle surrounding $C^{2k+4}$ with $\Delta(C^{2k+5})$ minimal (under inclusion). Finally, let $C^{2k+6}$ be a tight cycle surrounding $C^{2k+5}$.

\begin{figure}[!ht]
\centering
\includegraphics[width=\textwidth]{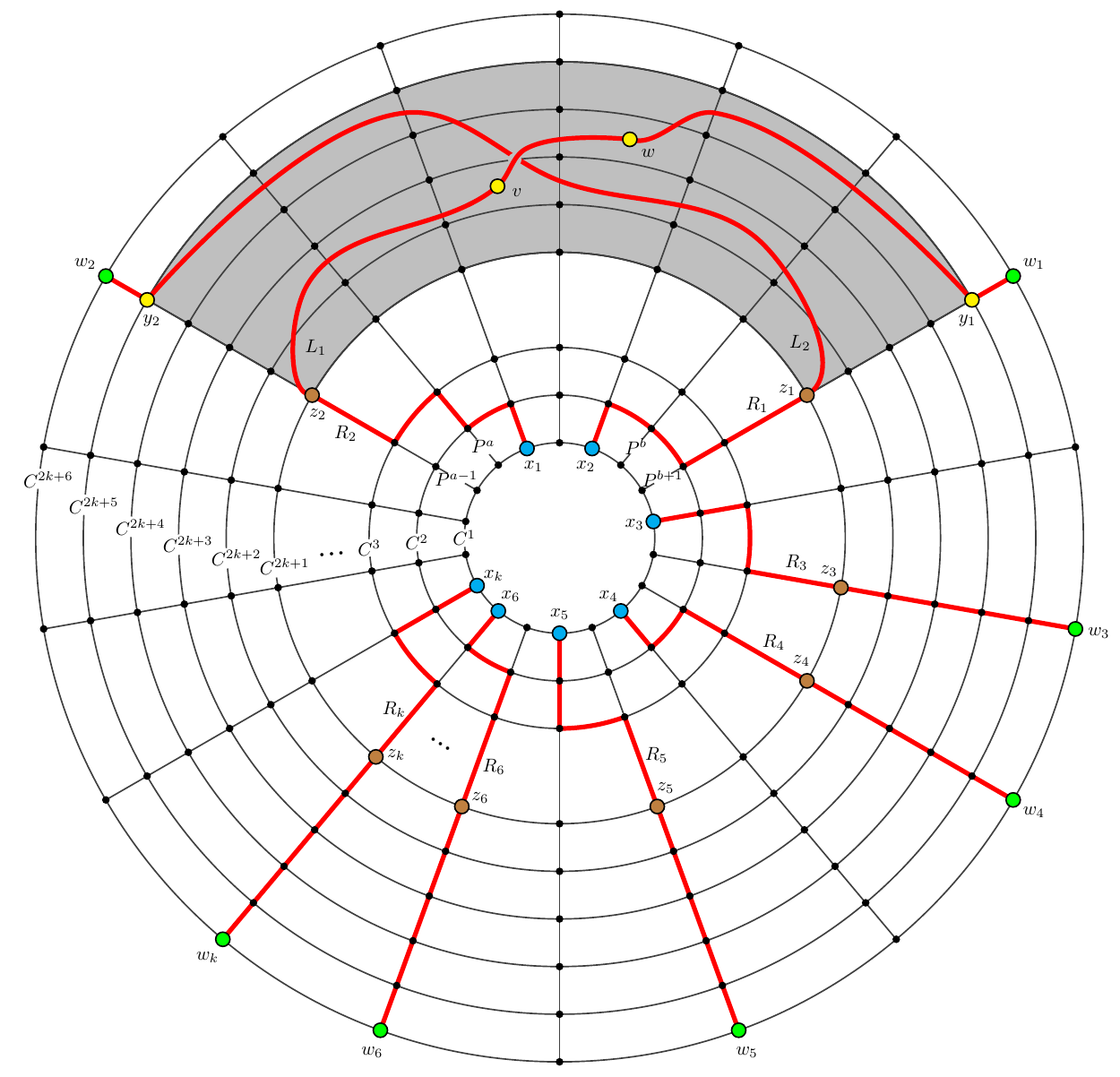} 
\caption{Illustration for the proof of \cref{claim:switchpaths}.
\label{SwitchPaths}}
\end{figure}

By \cref{claim:cylindricalgrid}, $G$ contains a subgraph $H$ that is a subdivision of $P_{2k+6} \CartProd C_m$, where the branch vertices of the $i$-th cycle of $P_{2k+6} \CartProd C_m$ are contained in $C^i$. Choose such an $H$ with $|V(H)|$ minimum. Let $P^1, \dots, P^m$ be the $C^1C^{2k+6}$-paths of the subdivision (ordered clockwise). For $i,j \in [m]$, let $G(i,j)$ be the subgraph of $G$ induced by the vertices between $C^{2k+1}$ and $C^{2k+5}$ (inclusive) and between $P^i$ and $P^j$ (inclusive and clockwise from $P^i$ to $P^j$). Since $m \geq 2k+4$, there exist $a,b \in [m]$ such that $v$ and $w$ are both in $G(a,b)$, and there are at least $k$  branch vertices of $H$ in $V(C^{2k+1}) \setminus V(G(a,b))$. By symmetry, we may assume $1<a<b<m$.  For each $i \in [m]$, let $s_i$ and $t_i$ be the unique vertices of $P^i$ on $C^{2k+1}$ and $C^{2k+5}$, respectively.  Let $z_1, z_2, \dots, z_k$ be ordered clockwise on $C^{2k+1}$ where $z_1:=s_{a-1}$, $z_2:=s_{b+1}$, and $z_3, \dots, z_k$ are branch vertices of $H$ contained in $V(C^{2k+1}) \setminus V(G(a-1,b+1))$. 
Let $H'$ be the subdivision of $P_{2k+1} \CartProd C_m$ contained in $H-\bigcup_{i=2k+2}^{2k+6} V(C^{i})$.  Therefore,  by \cref{lem:cylinderrouting} there exist vertex-disjoint $C^1C^{2k+1}$-paths $R^1, \dots, R^k$ in $H'$ such that the ends of $R^i$ are $x_i$ and $z_i$ for each $i \in [k]$.

\note{The referee wrote ``For this we need that the paths $P_j$ containing $x_1,\dots,x_k$ are in increasing order, and that the same is true of the $z_i$. But that doesn't follow from the construction.'' The addition of \cref{lem:cylinderrouting} fixes this.} 

Let $y_1:=t_{b+1}$ and $y_2:=t_{a-1}$. We now prove that there exist vertex-disjoint paths $L^1$ and $L^2$ in $G(a-1,b+1) +vw$ such that the ends of $L^1$ are $z_1$ and $y_1$ and the ends of $L^2$ are $z_2$ and $y_2$.

By construction, neither $v$ nor $w$ is on the outer cycle $O$ of $G(a-1,b+1)$.  We claim that $G(a-1, b+1)+vw$ does not contain a $(\leq 3)$-separation $(G_1, G_2)$, such that $\{s_{a-1}, t_{a-1}, s_{b+1}, t_{b+1}\} \subseteq V(G_1)$ and $G_1$ is planar.  Suppose for the sake of contradiction that $(G_1, G_2)$ is such a separation.  Since $G$ is $4$-connected, $(G_1, G_2)$ is a $2$- or $3$-separation.  First suppose it is a $2$-separation, say $V(G_1) \cap V(G_2)=:\{x,y\}$.  Since $G$ is a plane triangulation, $xy$ is a chord of $O$. Since $\{s_{a-1}, t_{a-1}, s_{b+1}, t_{b+1}\} \subseteq V(G_1)$, we have $\{x,y\} \subseteq V(C^{2k+5}), \{x,y\} \subseteq V(P^{a-1}), \{x,y\} \subseteq V(P^{b+1})$, or $\{x,y\} \subseteq V(C^{2k+1})$.  Since $C^{2k+1}$ is tight, $\{x,y\} \not\subseteq V(C^{2k+1})$.  By the minimality of $|V(H)|$, $\{x,y\} \not \subseteq V(P^{a-1})$ and $\{x,y\} \not \subseteq V(P^{b+1})$.  By the minimality of $\Delta(C^{2k+5})$, $\{x,y\} \not \subseteq V(C^{2k+5})$.
Thus, we may assume that $(G_1, G_2)$ is a $3$-separation, say $V(G_1) \cap V(G_2)=:\{x,y,z\}$.  Since $G$ is $4$-connected, every component of $(G(a-1, b+1)+vw)-\{x,y,z\}$ must contain a vertex of $O$.  It follows that $|\{x,y,z\} \cap V(O)| \geq 2$.  By symmetry, we may assume that $\{x,z\} \subseteq V(O)$.  If $xz$ is a chord of $O$, then $G_1$ cannot contain all of $s_{a-1}, t_{a-1}, s_{b+1}, t_{b+1}$ by the previous case. Thus, $xz$ is not a chord of $O$.  If $y$ is not adjacent to both $x$ and $z$, then $G-\{x,y,z\}$ is connected. Thus, $xyz$ is a path of $G$. 

Since $\{s_{a-1}, t_{a-1}, s_{b+1}, t_{b+1}\} \subseteq V(G_1)$, there is an $xz$-subpath $P$ of $O$ such that no internal vertex of $P$ is in $\{s_{a-1}, t_{a-1}, s_{b+1}, t_{b+1}\}$.  Let $T$ be the near-triangulation bounded by $P \cup \{xy, yz\}$.  If $\{x,z\} \subseteq V(C^{2k+5})$, then $\{v,w\} \cap V(T)=\emptyset$ since $\{v,w\} \subseteq \Delta(C_{2k+7}) \setminus V(C_{2k+4})$.  It follows that $G_2=T$, and $G_1$ is non-planar since it is of the form $G_1'+vw$, where $G_1'$ is a near-triangulation and neither  $v$ nor $w$ is on the outer cycle of $G_1'$. If $\{x,z\} \subseteq V(C^{2k+1})$, then $\{v,w\} \cap V(T) =\emptyset$, since $\{v,w\} \cap \Delta(C^{2k+2})=\emptyset$.  Thus, as above, $G_1$ is non-planar.  The remaining cases are  
$\{x,z\} \subseteq V(P^{a-1})$ or $\{x,z\} \subseteq V(P^{b+1})$.  In either case, $\{v,w\} \cap V(T) \subseteq \{y\}$, since $\{v,w\} \subseteq V(G(a, b))$.  It follows that $G_2=T$, and $G_1$ is non-planar since it is of the form $G_1'+vw$, where $G_1'$ is a near-triangulation and at least one of $v$ or $w$ is not on the outer cycle of $G_1'$.    
\note{The referee said to flesh out this argument. We have added more details.}

Therefore, by \cref{lem:special2linkage}, there are vertex-disjoint paths $L^1$ and $L^2$ in $G(a-1,b+1) +vw$ such that the ends of $L^1$ are $z_1$ and $y_1$ and the ends of $L^2$ are $z_2$ and $y_2$, as claimed. 

Use $L^1$ to extend $R^1$ to a path $S^1$ with $y_1$ as an end, and use $L^2$ to extend $R^2$ to a path $S^2$ with $y_2$ as an end. Then use $H$ to extend $S^1$ and $S^2$ to paths $Q^1$ and $Q^2$ such that for each $i \in [2]$, $Q^i$ has an end $w_{i}$ in $D$, and $w_{i}$ is the only vertex of $Q^i$ on $D$. For $i \in [3,k]$, use $H$ to extend $R^i$ to a path $Q^i$ with an end $w_i$ in $C^{2k+6}$, where $w_i$ is the only vertex of $Q^i$ on $C^{2k+6}$. This completes the proof, with $D:=C^{2k+6}$ and $e:=vw$.
\end{proof}

The next lemma is a key ingredient in the proof of \cref{InfiniteCliqueMinor}(a).

\begin{lem}
\label{StrongerConstructMinor}
Let $G':=G \cup M$, where $G$ is a $4$-connected $f$-isoperimetric plane triangulation and $M$ is an infinite matching of edges not in $G$. Then $K_{\aleph_0}$ is a minor of $G'$. 
\end{lem}

\begin{proof}
\note{This proof has been revised according to the referee's suggestions.}
Using \cref{claim:switchpaths} it is straightforward, although tedious, to introduce and extend  more and more paths and join any pair of them by paths so that we obtain the desired minor. We present a formal proof below. For this we use the lexicographic ordering of pairs. More precisely, say $(a,b)\leq (c,d)$ if $a<c$, or $a=c$ and $b\leq d$.  We claim that for all $1 \leq a \leq b$,  there exists a path $P_a^b$ in $G'$ and a tight cycle $D_b^a$ in $G$ satisfying the following:
\begin{itemize}
    \item $D_b^a$ surrounds $D_d^c$  if $(d,c) \leq (b,a)$,
    \item $P_a^b$ is a $D_a^1$-$D_b^b$ path,
    \item $|V(P_a^b) \cap V(D_d^c)|=1$ for all $a \leq d \leq b$ and $c \leq d$,  
    \item $P_a^c$ is a subpath of $P_a^b$ for all $a \leq c \leq b$.  
    \item The paths $P_1^b, P_2^b, \dots, P_b^b$ are vertex-disjoint,
    \item 
    For all $a < b$, there exists $\ell \leq b$ such that the two vertices of $P_a^b \cup P_b^b$ on $D_b^\ell$ are consecutive in the cyclic ordering of the $b$ vertices of $P_1^b \cup \dots \cup P_b^b$ on $D_b^\ell$.
\end{itemize}

We proceed by induction on $b$.  For the base case, let $D_1^1$ be any tight cycle in $G$, and $P_1^1=v$, where $v$ is any vertex of $D_1^1$.  Fix $b \geq 2$, and suppose that for all $1 \leq i \leq j \leq b-1$, $P_i^j$ and $D_j^i$ have already been defined to satisfy the above properties.

Let $D_b^1$ be any tight cycle such that $D_b^1$ surrounds $D_{b-1}^{b-1}$ and $|V(D_b^1)| \geq 2b+4$.  Note that $D_b^1$ exists by repeatedly applying \cref{FindDisjointTightCycle} and using the fact that $G$ is $f$-isoperimetric. 

By \cref{claim:tightcycle}, we can extend $P_1^{b-1}, \dots, P_{b-1}^{b-1}$ to vertex-disjoint paths
$Q_1^{b-1}, \dots, Q_{b-1}^{b-1}$ such that each path has one end in $D_b^1$. Let $X$ be the set of ends of $Q_1^{b-1}, \dots, Q_{b-1}^{b-1}$ on $D_b^1$. Let $y$ be an arbitrary vertex of $V(D_b^1) \setminus X$.  Suppose the cyclic ordering of $X \cup \{y\}$ on $D_b^1$ is $y, x_1, x_2, \dots, x_{b-1}$.

By \cref{claim:switchpaths}, there is a tight cycle $D$ such that $D$ surrounds $D_b^1$, and we can extend $y, Q_1^{b-1}, \dots, Q_{b-1}^{b-1}$ so that each path has one end in $D$, and the 
cyclic ordering of these ends on $D$ is $x_1', y', x_2', \dots, x_{b-1}'$ (where $y'$ is end of the path containing $y$, and $x_i'$ is end of the path containing $x_i$).
Define $D_b^2:=D$.  Repeat this argument to define $D_b^3, \dots, D_b^b$.  We let $P_1^b, \dots, P_b^b$ be the set of extended paths. 
By construction, for all $1 \leq i \leq j \leq b$, $P_i^j$ and $D_j^i$ satisfy all of the above properties, as required. 

For each $i \in \NN$ let $P_i=\bigcup_{j \geq i} P_i^j$.  Note that each $P_i$ is an infinite one-way path in $G$ and the paths $P_1, P_2, \dots$ are all vertex-disjoint.  
Moreover, for all $i < j$ there is a $P_iP_j$-path $P_{i,j}$ in $G$ such that all these paths are pairwise vertex-disjoint.  Therefore, we obtain a $K_{\aleph_0}$ minor in $G'$ by contracting each $P_i$ to a vertex and contracting all but one edge from each $P_{i,j}$. 
\end{proof}

\subsection{\texorpdfstring{Proof of \cref{InfiniteCliqueMinor}}{Proof of Theorem~1.1}}
\label{ProofInfiniteCliqueMinor}

We now combine the lemmas from the previous subsections to prove our first main result, \cref{InfiniteCliqueMinor}.

\begin{lem}
\label{InfiniteCliqueMinorStrongerStronger}
Let $U$ be a countable graph containing an uncountable family $\GG$ of subgraphs, each of which is a 4-connected $f$-isoperimetric planar triangulation, for an arbitrary function $f:\NN\to\NN$. Then the complete graph $K_{\aleph_0}$ is a minor of $U$.
\end{lem}

\begin{proof}
Let $L$ be any non-trivial connected component of any $\GG$-limit in $U$. By \cref{LimitLemma}, $L$ is a 4-connected $f$-isoperimetric planar triangulation, and there exists an infinite set $\PART$ of pairwise-disjoint jumps of $L$ in $U$.  \note{Revised here according to referee suggestion.} Let $Q$ be the graph obtained from $L$ together with all paths in $\PART$ by contracting each path in $\PART$ to an edge. Observe that $Q$ is isomorphic to a graph obtained from $L$ by adding an infinite matching of edges not in $L$. By \cref{StrongerConstructMinor}, $K_{\aleph_0}$ is a minor of $Q$. The result follows since $Q$ is a minor of $U$.
\end{proof}

\note{We originally claimed  \cref{thm:uncountabledegree7} with 7 replaced by 6. The referee pointed out an error in the proof, now fixed.}

\begin{lem} 
\label{thm:uncountabledegree7}
There exist uncountably many pairwise non-isomorphic planar triangulations of maximum degree 7 obtained by adding edges to the infinite grid.
\end{lem}

\begin{proof}
Let $G$ be the infinite grid with vertex set $\ZZ \times \ZZ$.  Let $\mathcal{T}$ be the family of graphs obtained from $G$ as follows.  For each face $f$ of $G$, let $b(f)$ be the bottom-left vertex of $f$.  For each face $f$ of $G$, add a diagonal edge across $f$ with slope $1$ or $-1$ if $b(f) \in \{(9,0), (15,0), (21,0), \dots \}$, with slope $-1$ if $b(f) \in \{(0,0), (6,0), (12,0), \dots \}$, and with slope $1$ otherwise.  Observe that $\mathcal{T}$ is an uncountable family of planar  triangulations, and each $T \in \mathcal{T}$ has maximum degree 7.  Thus, it suffices to show that the graphs in $\mathcal{T}$ are pairwise nonisomorphic.  Let $T_1, T_2 \in \mathcal{T}$ and let $\phi: \ZZ \times \ZZ \to \ZZ \times \ZZ$ be an isomorphism from $T_1$ to $T_2$. Since $\phi$ is an isomorphism, we also regard $\phi$ as a bijection from $E(T_1)$ to $E(T_2)$.

For each $j \in [2]$, let $D_j$ be the subgraph of $T_j$ consisting of the diagonal edges of $T_j$ with slope $-1$ and their ends. Observe that $V(D_j)$ is the set of degree-$7$ vertices of $T_j$, and $D_j$ is an induced subgraph of $T_j$.
Thus, $\phi(E(D_1))=E(D_2)$.   Order $E(D_1)$ as $d_1, d_2, \dots, $ according to the end $(x_i, 1)$ of $d_i$ on the line $y=1$.   

We claim that $\phi((x_i,1))=(x_i,1)$ for all $i$.  Note that $x_1=0$ and for each $j \in [2]$, $(0,1)$ is the unique vertex $v_j$ of $T_j$ such that there are at most two other vertices in $V(D_j)$ at distance at most $6$ from $v_j$ in $T_j$. Thus, $\phi((x_1,1))=(x_1,1)$. Now, inductively assume that $\phi((x_i,1))=(x_i,1)$ for all $i \in [k]$. For each $j \in [2]$, $(x_{k+1}, 1)$ is the unique vertex $v_j \in V(D_j)$ such that $\dist_{T_j}(u,v_j)=\dist_{T_j}(u,(x_{k+1}, 1))$ for all $u \in \{(x_1,1), \dots, (x_k, 1)\}$.  Thus, $\phi((x_{k+1},1))=(x_{k+1},1)$.  By induction, $\phi((x_i,1))=(x_i,1)$ for all $i \in \NN$.  Since $\phi(E(D_1))=E(D_2)$, we also have $\phi((x_i+1,0))=(x_i+1,0)$ for all $i \in \NN$.  That is $\phi(v)=v$ for all $v \in V(D_1)$.  Let $V_i:=\{v \in V(T_1) \mid \dist_{T_1}(v, V(D_1)) \leq i\}$.   We have already established that $\phi(v)=v$ for all $v \in V_0$.  So, inductively assume that $\phi(v)=v$ for all $v \in V_k$ for some $k \geq 0$.  Let $v' \in V_{k+1} \setminus V_k$.  For each $j \in [2]$, $v'$ is the unique vertex $v \in V_{k+1} \setminus V_k$ such that $\dist_{T_j}(v, u)=\dist_{T_j}(v',u)$ for all $u \in V_k$.  Thus, $\phi(v')=v'$. By induction, for each $i \in \NN$, we have $\phi(v)=v$ for all $v \in V_i$.  Thus, $\phi(v)=v$ for all $v \in V(T_1)$, and so $T_1=T_2$, as required.  
\end{proof}

The next result implies and strengthens \cref{InfiniteCliqueMinor}(a).  

\begin{thm}
\label{InfiniteCliqueMinorDegree7}
Let $\GG$ be the class of planar triangulations of maximum degree 7 obtained by adding edges to the infinite grid. If a graph $U$ contains every graph in $\GG$,  then 
 $K_{\aleph_0}$  is a minor of $U$.
\end{thm}

\begin{proof}
Each planar triangulation in $\GG$ is $4$-connected and $f$-isoperimetric for $f(n) \in O(n^2)$.  By~\Cref{thm:uncountabledegree7}, there are uncountably many pairwise non-isomorphic planar triangulations in $\GG$. By~\Cref{InfiniteCliqueMinorStrongerStronger}, $K_{\aleph_0}$  is a minor of $U$.
\end{proof}

We now prove \cref{InfiniteCliqueMinor}(b).

\begin{thm} 
\label{thm:finitesubdivision}
If a graph $U$ contains every planar graph, then $U$ contains a subdivision of $K_t$ for every $t \in \NN$.
\end{thm}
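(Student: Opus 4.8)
Assume $t\ge 5$: for $t\le 4$ the multigraph $K_t^t$ is planar, so $U$ already contains it (even as a subgraph). The plan is to mimic the proof of \cref{InfinteCliqueMinorStrongerStronger}. Apply the Limit Lemma (\cref{LimitLemma}) to obtain a $4$-connected $f$-isoperimetric planar triangulation $L\subseteq U$ together with an infinite set of pairwise disjoint jumps of $L$ in $U$, contract the jumps so as to work in the graph $Q$ consisting of $L$ plus an infinite matching $M$ of edges not in $L$ (exactly as in the proof of \cref{InfinteCliqueMinorStrongerStronger}, using \cref{StrongerConstructMinor}), and then route the desired subdivision in $Q$ using the routing lemmas of \cref{Routing} and the edges of $M$. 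A $K_t^t$-subdivision in $Q$ un-contracts to one in $U$, so it suffices to find a $K_t^t$-subdivision in $Q$. Two features are new compared with \cref{InfinteCliqueMinorStrongerStronger}: a subdivision of $K_t^t$ needs honest branch vertices of degree $t(t-1)$, which a bounded-degree triangulation like a triangulated grid cannot supply; and between each pair of branch vertices one needs $t$ internally disjoint paths rather than one.

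To create the branch vertices, first fix a finite ``gadget'' near-triangulation $K$ having $t$ pairwise non-adjacent interior vertices $u_1,\dots,u_t$, each of degree exactly $t(t-1)$ in $K$, such that $K-\{u_1,\dots,u_t\}$ contains $t^2(t-1)$ pairwise disjoint paths from $\bigcup_i N_K(u_i)$ to distinct vertices of the outer cycle of $K$; insist moreover that $|V(K)|>f_0$, a constant fixed below. Such a $K$ is a routine planar construction (glue $t$ wheels with $t(t-1)$ spokes into a large, suitably connected triangulated disc). Let $\GG$ be the set of all $4$-connected $f$-isoperimetric planar triangulations, for a fixed $f$ with $f(n)\in O(n^2)$, obtained by embedding $K$ as a patch inside a triangulation of $\PPP\CartProd\PPP$ with the $u_i$ interior; varying the diagonals far from $K$ gives uncountably many pairwise non-isomorphic such triangulations, each a subgraph of $U$. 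Apply \cref{LimitLemma} to $\GG$, taking $L$ to be the (non-trivial) component of the $\GG$-limit that contains $K$; this is legitimate since every member of $\GG$ contains $K$ and the $u_i$ are interior, so all edges of $K$ survive in the limit and $\deg_L(u_i)=t(t-1)$ for every $i$. These $u_i$ will be the branch vertices of the subdivision.

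Next, route $t^2(t-1)$ ``stubs''. By the defining property of $K$, and Menger's theorem applied in $L$, there are $t^2(t-1)$ pairwise disjoint paths in $L-\{u_1,\dots,u_t\}$ from the $t^2(t-1)$ edge-ends at $u_1,\dots,u_t$ to distinct vertices of a tight cycle $C$ surrounding $K$ with $|V(C)|\ge 2t^2(t-1)+4$; such a $C$ exists by repeatedly applying \cref{FindDisjointTightCycle} and using that $L$ is $f$-isoperimetric. Indeed, any cut of size less than $t^2(t-1)$ separating $\bigcup_i N_L(u_i)$ from $C$ would, in a triangulation and after being made cycle-like using $4$-connectivity, enclose all of $K$ in a bounded region, so $f$-isoperimetricity would bound the number of enclosed vertices by $f_0:=f(t^2(t-1))$, contradicting $|V(K)|>f_0$. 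Label each stub by a pair $(i,j)$ so that, for each $i$ and each $j\neq i$, exactly $t$ of the edges at $u_i$ receive the label $(i,j)$; the target is, for every $i<j$, to join the $t$ stubs labelled $(i,j)$ that start at $u_i$ to the $t$ stubs labelled $(i,j)$ that start at $u_j$. Now pair the stubs using $M$: processing the $t\binom{t}{2}$ required pairs one at a time, apply \cref{claim:switchpaths} repeatedly — one matching edge of $M$ and one surrounding tight cycle per cyclic adjacent transposition — to bring the two stubs of the current pair into consecutive positions on the current outermost tight cycle, then join them by the short arc of that cycle between their ends; this finishes one $u_i$--$u_j$ path, which we set aside, before extending the remaining stubs further out (via \cref{claim:tightcycle}) and continuing with fresh matching edges (infinitely many remain) and fresh tight cycles. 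Since any perfect matching on cyclically ordered points is reachable from any other by a finite sequence of adjacent transpositions, this terminates and yields, for each $i<j$, exactly $t$ internally disjoint $u_i$--$u_j$ paths, all pairwise internally disjoint and internally avoiding $\{u_1,\dots,u_t\}$: a $K_t^t$-subdivision in $Q$, hence in $U$.

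The main obstacle is the routing. Building the gadget $K$ with the required internal linkage while keeping $\GG$ uncountable and uniformly $f$-isoperimetric is fiddly but routine; the genuinely delicate points are the isoperimetric cut argument that produces the $t^2(t-1)$ disjoint stubs reaching $C$, and the bookkeeping in the switching phase, where one must check that switching, arc-joining, and setting completed pairs aside can be carried out so that the $t^2(t-1)$ stubs and the $t\binom{t}{2}$ finished paths remain pairwise internally disjoint.
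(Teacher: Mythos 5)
Your overall architecture is the same as the paper's: build an uncountable family of $4$-connected $f$-isoperimetric triangulations with prefabricated degree-$t(t-1)$ branch vertices, take a limit via \cref{LimitLemma}, and then use \cref{claim:switchpaths} to spend one jump per crossing/transposition. However, your application of the Limit Lemma has a gap. \cref{LimitLemma} requires an uncountable family of \emph{subgraphs of $U$}, whereas your $\GG$ is a family of abstract triangulations, each merely isomorphic to some subgraph of $U$; distinct embeddings may place the gadget $K$ on completely different vertex sets of $U$, so ``the component of the $\GG$-limit that contains $K$'' is not defined and the claim $\deg_L(u_i)=t(t-1)$ does not follow. You must first fix embeddings $\phi_F$ and pigeonhole (finitely many vertices of $K$, countably many vertices of $U$) to get an uncountable subfamily whose images agree on the copy of $K$, and take the limit with respect to an enumeration beginning with that fixed copy --- this is precisely the step the paper performs with $Y$, $\FF_0$ and $X$. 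It is fixable, but it is missing.

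The more serious gap is the Menger/isoperimetry argument producing the $t^2(t-1)$ disjoint stubs. A cut of size less than $t^2(t-1)$ separating $\bigcup_i N_L(u_i)$ from the tight cycle $C$ yields, at best, a short cycle enclosing the closed neighbourhoods of $u_1,\dots,u_t$, i.e.\ roughly $t^2(t-1)+t$ vertices; nothing forces this cycle to enclose \emph{all} of $K$, so insisting $|V(K)|>f_0=f(t^2(t-1))$ gives no contradiction. Moreover that requirement is in tension with $f$-isoperimetry itself: $f$ must be chosen so that every member of $\GG$ is $f$-isoperimetric, and a cycle hugging the gadget forces $f$, evaluated at roughly the length of the gadget's boundary, to be at least $|V(K)|$; unless you additionally arrange that every cycle surrounding $K$ is much longer than $t^2(t-1)$, the two demands conflict. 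This is exactly the difficulty the paper engineers away: by attaching each branch vertex $w_i$ directly to $t(t-1)$ vertices of the first cycle of the cylinder $C_m\CartProd P_\infty$ with $m\ge 2(t-1)t^2+4$, and choosing $\ell$ with $\ell\ge 2m$ and $\ell m\ge 2f(m-1)$, the neighbours of the branch vertices already lie on a cycle that is \emph{proved} tight, so \cref{claim:tightcycle} and \cref{claim:switchpaths} apply immediately and no separate cut argument is needed. To repair your version you would have to either prove tightness of a long cycle through your stub starting points, or redesign the cut argument so that the separating cycle provably encloses more than $f(\text{its length})$ vertices. Your subsequent routing (sorting stubs by adjacent transpositions, one matching edge each, closing pairs along arcs of tight cycles) is in spirit the same as the paper's ``one jump per crossing of a fixed drawing of $K_t^t$'' and looks workable, though the disjointness bookkeeping for completed paths would still need to be written out.
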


\note{This proof has been substantially rewritten to address the referee comments.} 

\begin{proof}
For a plane graph $H$, let $\FF_H$ be the collection of plane triangulations that contain $H$ as a spanning subgraph (preserving the given embedding of $H$). We begin by constructing a plane graph $H$ with vertices of arbitrarily large degree, such that there are uncountably many plane triangulations in $\mathcal{F}_H$ that are pairwise non-isomorphic, 4-connected, and $f$-isoperimetric, where $f(k):=4k$.

Let $d_0, d_1,d_2,\dots$ and $\ell_0,\ell_1,\ell_2,\dots$ be recursively defined sequences where $d_0:=1$, $\ell_0:=1$ and for $i\geq 0$, 
\[d_{i+1}:=d_i+\ell_i \text{\quad and \quad} \ell_{i+1}:=4d_i\ell_i + 2\ell_i(\ell_i+1).\] 
Let $T$ be any infinite tree rooted at a vertex $v_{0,1}$, with $\ell_i$ vertices $v_{i,1},\dots,v_{i,\ell_i}$ in $T$ at distance $i$ from $v_{0,1}$, where each vertex $v_{i,j}$ has $4d_i+4j$ children in $T$. 
This is well-defined since 
$$\ell_{i+1}= 4d_i\ell_i + 2\ell_i(\ell_i+1) = \sum_{j=1}^{\ell_i}(4d_i+4j).$$
Let $H$ be the plane graph obtained from $T$ by adding, for each $i\geq 1$, a cycle $C_i$ on the vertices in $T$ at distance $i$ from the root. Let $\FF:=\FF_H$. 

Note that the root vertex $v_{0,1}$ has degree $\ell_1=8$ in $T$ and in $H$. For each non-root vertex $v_{i,j}$, 
$$15\leq 4d_i+7 \leq \deg_H(v_{i,j})=4d_i+4j+3 \leq 4d_i+4\ell_i+3.$$
Since $4d_{i+1}+7 = 4d_i+4\ell_i+3+4$, 
for any distinct vertices $v,w\in V(H)$ we have $|\deg_H(v)-\deg_H(w)|\geq 4$. 
By construction, every face of $H$ has size 3 or 4, and each vertex is incident to at most three faces of size 4. 
Thus, for any $H' \in\FF$, every vertex $v$ satisfies 
$\deg_H(v)\leq \deg_{H'}(v)\leq \deg_H(v)+3$.
Hence, if $H_1,H_2\in \FF$, then 
$\deg_{H_1}(v)\neq \deg_{H_2}(w)$ for any distinct vertices $v,w\in V(H)$. Thus, if $H_1$ and $H_2$ are isomorphic, then the isomorphism is the identity, implying $H_1=H_2$. 
That is, no two distinct plane triangulations in $\FF$ are isomorphic. On the other hand, there are infinitely many faces of $H$ with size 4. So there are uncountably many plane triangulations in $\FF$ (since at each face of size 4 there is a choice of two edges to add). Finally, every plane triangulation $H'$ in $\FF$ is 4-connected (since every triangle is a face) and has minimum degree 8, implying $H'$ is $f$-isoperimetric with $f(k):=4k$ by \cref{IsoperimetricMinDegree}.

We now show that $U$ contains a subdivision of $K_t$ for every $t \in \NN$.  Choose $\lambda \in \NN$ such that $\ell_{\lambda-1} \ge t$ and $4d_{\lambda-1} \ge t$,
 where $(\ell_i)_{i=0}^\infty$ and $(d_i)_{i=0}^\infty$  are the sequences used to define $H$. 
Let $Y$ be the set of vertices of $H$ contained in the disk bounded by the cycle $C_\lambda$ of $H$.  
Since $U$ contains all planar graphs, for each $F \in \FF$, there is an injective homomorphism $\phi_F: V(F) \to V(U)$. 
Since $\FF$ is uncountable, $Y$ is finite, and $V(U)$ is countable, there exists an uncountable subset $\FF_0$ of $\FF$ such that $\phi_F(y)=\phi_{F'}(y)$ for all $F,F' \in \FF_0$ and all $y \in Y$.  Let $\GG$ be the collection of subgraphs $G$ of $U$ such that there exists an $F \in \FF_0$ such that $\phi_F^{-1}$ is an isomorphism from $G$ to $F$.  

Choose $F \in \FF_0$ and let $X:=\phi_F(Y)$ (note that $X$ is independent of the choice of $F$).  Let $L'$ be a $\GG$-limit in $U$, with respect to an enumeration $x_1, x_2, \dots$ of $V(U)$, where $X=\{x_1,\dots,x_{|X|}\}$.  Let $L$ be the component of $L'$ which contains $X$. By \cref{LimitLemma}, $L$ is a $4$-connected $f$-isoperimetric planar triangulation, and there is an infinite set $\JJ$ of pairwise disjoint jumps of $L$ in $U$. Moreover, by the choice of the enumeration, $L[X]$ is isomorphic to a graph obtained from $H[Y]$ by adding an edge across each face of length $4$ of $H[Y]$. 

 Let $s \in \NN$ be the minimum number of steps required to transform the cyclic sequence 
$$(1,2), \dots ,(1,t), (2,1), (2,3), \dots, (2,t), (3,1), \dots, (3,t), \dots, (t,1), \dots, (t,t-1)$$ into
$$(1,2), (2,1), (1,3), (3,1), \dots, (1,t), (t,1), (2,3), (3,2), \dots, (t-1, t), (t,t-1)$$
by swapping adjacent elements.  For each $i \in [s]$, let $(a_i,b_i)$ and $ (c_i,d_i)$ be the adjacent elements swapped at step $i$.



Let $C^0$ and $C^{-1}$ be the cycles in $L$ corresponding to the cycles $C_{\lambda}$ and $C_{\lambda-1}$ in $H$, respectively. Note that $C^0$ is a tight cycle of $L$.  Since $\ell_{\lambda-1} \ge t$ and $4d_{\lambda-1} \ge t$, there exist distinct vertices $v_1, \dots, v_t \in V(C^{-1})$ such that each $v_i$ has at least $t-1$ neighbours in $V(C^0)$.  
Let $V^0:=\{v_{i,j}^0 : i,j \in [t], i \neq j\} \subseteq V(C^0)$ be such that for all distinct $i,j \in [t]$, $v_i$ is adjacent to $v_{i,j}^0$ in $L$, and 
the cyclic order of $V^0$ on $C^0$ is $$v_{1,2}^0, \dots ,v_{1,t}^0, v_{2,1}^0, v_{2,3}^0, \dots, v_{2,t}^0, v_{3,1}^0, \dots, v_{3,t}^0, \dots, v_{t,1}^0, \dots, v_{t,t-1}^0.$$

For each $k \in [s]$, let $L^k$ be the subgraph of $L$ between $C^{k-1}$ and $C^k$. Note that $|V(C^0)| =\ell_\lambda \geq 2t(t-1)+4$.  Therefore, by~\Cref{claim:switchpaths}, for each $k\in [s]$, there exist $V^k:=\{v_{i,j}^k : i,j \in [t], i \neq j\} \subseteq V(C^k)$, a jump $J^k$ of $L$, and a collection $\mathcal{P}^k$ of $t(t-1)$ $C^{k-1}$-$C^{k}$ paths in $L^k \cup J^k$ such that 
\begin{itemize}
    \item both ends of $J^k$ are contained in $L^k$,
    \item for all $v_{i,j}^{k-1} \in V^{k-1} \setminus \{v_{a_i,b_i}^{k-1}, v_{c_i,d_i}^{k-1}\}$, there exists a path in $\mathcal{P}^k$ with ends $v_{i,j}^{k-1}$ and $v_{i,j}^{k}$,  
    \item there is a path in $\mathcal{P}^k$ with ends $v_{a_i,b_i}^{k-1}$ and $v_{c_i,d_i}^{k}$, and
    \item there is a path in $\mathcal{P}^k$ with ends $v_{c_i,d_i}^{k-1}$ and $v_{a_i,b_i}^{k}$.
\end{itemize}

Letting $A:=L[V^0 \cup \{v_i : i \in [t]\}]$, we have that $A \cup \bigcup_{i \in [s]} \mathcal{P}^i \cup C^s$ contains a subdivision of $K_t$ with branch vertices $v_1, \dots, v_t$.  
\end{proof}

\note{Material about $k$-apex graphs was removed here, in response to referee comment.}

\subsection{Non-Existence of Universal Graphs}
\label{ExcludingSubdivision}

This section gives a general lemma that can be used to show that various graph classes have no universal element. For a graph $G$, let \defn{$G^{+v}$} be the graph obtained from $G$ by adding a new vertex $v$ adjacent to every vertex in $G$. 

\note{We previously proved a lemma here about excluded subgraphs. But it did not imply our claim about chordal graphs. We now prove a slightly different lemma about hereditary classes that works for all our examples.}

\begin{lem}
\label{NonUniversal}
Let $\FF$ be a hereditary class of graphs such that $K_{\aleph_0}\not\in\FF$, and $G^{+v}\in\FF$ for every $G\in\FF$. Then there is no universal graph for $\FF$. 
\end{lem}

\begin{proof}
Suppose for the sake of contradiction that $U$ is a universal graph for $\FF$. 
So $U\in\FF$ and $U^{+v}\in\FF$ by assumption. Since $U$ is universal for $\FF$, there is a subgraph $U_1$ of $U$ isomorphic to $U^{+v}$. Let $v_1$ be the vertex in $U_1$ corresponding to $v$. Then $U_1-v_1$ contains a subgraph
isomorphic to $U$. Since $U$ is universal for $\FF$, 
$U_1-v_1$ contains a subgraph $U_2$ isomorphic to $U^{+v}$. 
Let $v_2$ be the vertex in $U_2$ corresponding to $v$. 
Repeat this argument to obtain a sequence $v_1,v_2, \ldots$ of vertices in $U$ inducing $K_{\aleph_0}$. 
Since $\FF$ is hereditary and $U\in\FF$, we have
$K_{\aleph_0}\in\FF$, which is the desired contradiction.
\end{proof}

\cref{NonUniversal} implies the following classes have no universal element:
\begin{itemize}
\item Chordal graphs containing no $K_{\aleph_0}$.
\item Graphs with no $K_{\aleph_0}$ minor (this was proved in \cite{DHV85}).
\item Graphs containing no $K_{\aleph_0}$ subdivision (this was left open in \cite{DHV85}).
\item Graphs in which each subgraph has a vertex of finite degree.
\item Graphs that do not contain infinitely many pairwise disjoint one-way infinite paths.
\item For any fixed $d\in\NN$, graphs that do not contain $d$ pairwise disjoint one-way infinite paths.
\item Graphs containing no tree where all the vertices, except possibly one, have degree $d$.
\item Graphs containing no subdivision of the $d$-regular tree:

\begin{proof}
Say $G$ is such a graph, but $G^{+v}$ contains a subdivision $S$ of the $d$-regular tree $T$. So $v\in V(S)$ and $S-v$ has a subgraph  $T_0$ that is a subdivision of $T$, except for one vertex $u$ of degree $d-1$. Let $x,y$ be distinct neighbours of $u$ in $T_0$. Taking the $xy$-path in $S$ through $u$ we obtain a subdivision of $T$ in $T-v$, which is in $G$. This contradiction shows that $G^{+v}$ is in our class.
\end{proof}
\end{itemize}

\section{Universality for Trees and Treewidth}
\label{TreesTreewidth}

This section proves universality results for trees and graphs of given treewidth. While such results are well known, our construction is novel and leads to strengthenings in terms of orientation- and labelling-preserving isomorphisms, which are a key tool used in our constructions for graphs defined by a tree-decomposition (\cref{DefinedByTreeDecomposition}), for $K_t$-minor-free graphs (\cref{ExcludedMinorNoSubdiv}), and for locally finite graphs (\cref{LocallyFiniteGraphs}).

\subsection{Universal Trees}
\label{UniversalTrees}

The following universal graph for (countable) trees is folklore. Let $\T$ be the graph with 
\begin{align*}
V(\T) &:=\{ (x_1,\dots,x_n) : n\in\NN_0, x_1,\dots,x_n \in\NN \} \text{ and}\\
E(\T) &:= \{ (x_1,\dots,x_n)(x_1,\dots,x_n,x_{n+1}) : n\in\NN_0, x_1,\dots,x_n,x_{n+1}\in\NN \}.
\end{align*}

\begin{thm}
\label{UniversalTree}
$\T$ is universal for the class of trees.
\end{thm}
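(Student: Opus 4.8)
The plan is to verify first that $\T$ is itself a countable tree, and then to show that an arbitrary countable tree $T$ embeds into $\T$ via a depth-layered labelling of $V(T)$ by finite integer sequences.

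First I would observe that $V(\T)$, being the set of all finite sequences of positive integers, is countable, and that $\T$ is connected since every vertex $(x_1,\dots,x_n)$ reaches the empty sequence $()$ by successively deleting its last coordinate. Acyclicity I would obtain from the length function $\lambda(x_1,\dots,x_n):=n$: each non-root vertex has a unique neighbour of strictly smaller length, namely its parent $(x_1,\dots,x_{n-1})$, so a cycle would force a vertex of maximum length along it to have two strictly shorter neighbours on the cycle, which is impossible. Hence $\T$ is a tree and lies in the class, so it only remains to show universality.

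The core step is the embedding. Given a (non-empty, hence by definition connected and countable) tree $T$, I would fix a root $r$ and invoke \cref{TreeOrientation} to orient $T$ away from $r$, so that every non-root vertex has a unique parent. Then I would define $\phi\colon V(T)\to V(\T)$ by induction on $\dist_T(r,\cdot)$: set $\phi(r):=()$; and whenever $\phi(u)$ has been defined, enumerate the (countably many, possibly finitely many) children of $u$ as $v_1,v_2,\dots$ without repetition and set $\phi(v_i):=(\phi(u),i)$, the sequence obtained from $\phi(u)$ by appending the coordinate $i$. By construction $\phi$ sends each edge $uv$ of $T$, with $u$ the parent of $v$, to the edge $\phi(u)\phi(v)$ of $\T$; so, once $\phi$ is shown to be injective, $T$ is isomorphic to the subgraph of $\T$ with vertex set $\phi(V(T))$ and edge set $\{\phi(u)\phi(v):uv\in E(T)\}$, as required. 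Injectivity I would prove by induction on depth: two vertices with equal $\phi$-images have labels of equal length, hence equal distance from $r$; stripping the last coordinate shows their parents have equal labels, which forces the parents to coincide by the inductive hypothesis, and equality of the last coordinates then forces the two vertices to be the same child of that common parent.

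This is largely bookkeeping, and the only point that genuinely needs attention — the ``main obstacle'', such as it is — is maintaining injectivity across all branches of $T$ simultaneously: this is exactly what the inductive argument above handles, using that the children of any single vertex receive pairwise distinct final coordinates while, by the inductive hypothesis, vertices lying in different subtrees already carry distinct labels. The countability of $T$ is used essentially here, as it is what permits enumerating the children of each vertex by positive integers.
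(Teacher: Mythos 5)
Your proof is correct and follows essentially the same route as the paper: root the given tree and map each vertex to the integer sequence recording its path from the root, after verifying directly that $\T$ is a countable tree. The only cosmetic difference is that the paper uses a single globally injective labelling $\ell\colon V(T)\to\NN$ of the vertices (so injectivity of the embedding is immediate from the last coordinate), whereas your per-parent enumeration of children requires the short induction on depth that you supply.
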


\begin{proof}
Consider $\T$ to be rooted at vertex $()$. 
From each vertex $(x_1,\dots,x_n)$ in $\T$ there is a unique path 
$(x_1,\dots,x_n),(x_1,\dots,x_{n-1}),(x_1,\dots,x_{n-2}),\dots,(x_1),()$ to the root. 
Thus $\T$ is a tree. 
Since $V(\T)$ is a countable union of countable sets, $\T$ is countable. 
We now show that every countable tree $X$ is isomorphic to a subtree of $\T$.
Root $X$ at an arbitrary vertex $r$. Let $\ell:V(X)\to\NN$ be an injective
function, which exists since $X$ is countable. For each non-root vertex $w$ of
$X$, if $(r,w_1,w_2,\dots,w_n)$ is the path from $r$ to $w=w_n$ in $X$, then  let
$\phi(w):=(\ell(w_1),\ell(w_2),\dots,\ell(w_n))$. Finally, let $\phi(r):=()$.
Then $\phi$ is an isomorphism from $X$ to a subtree of $\TT$. \end{proof}

A \defn{labelling} of a graph $G$ is a function $f:V(G)\cup E(G)\to\NN$.
We now prove a strengthening of \cref{UniversalTree}, which will be used in \cref{DefinedByTreeDecomposition}. 

\begin{lem}
\label{UniversalTreePreserving}
There is a labelling of $\TT$ such that for every 1-orientation of $\TT$, for every rooted tree $T$, and for every labelling of $T$ there is a label-preserving orientation-preserving isomorphism from $T$ to a subtree of $\TT$. 
\end{lem}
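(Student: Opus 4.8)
The idea is to build a single, highly symmetric labelled tree $\TT^\star$ that simultaneously encodes, at every vertex, all possible local data: a choice of ``which incident edge points towards the parent'' (to match a prescribed $1$-orientation) and a pair of labels for each vertex and edge. So I would take as vertex set the set of all finite sequences whose entries are triples $(i,j,k)\in\NN^3$, where $i$ plays the role previously played by a single coordinate in $V(\TT)$ (it ensures there are countably infinitely many children and lets us realise any countable tree), while $j$ records the intended vertex-label of that node and $k$ records the intended label of the edge joining it to its parent. Adjacency is as in $\TT$: $(t_1,\dots,t_n)$ is adjacent to $(t_1,\dots,t_n,t_{n+1})$. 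This $\TT^\star$ is again a countable tree and is in fact isomorphic to $\TT$ (each vertex still has countably infinitely many children and there is a unique path to the root $()$), so after fixing such an isomorphism it suffices to prove the statement with $\TT$ replaced by $\TT^\star$ and with the ``obvious'' labelling: give the node $(\dots,(i,j,k))$ the vertex-label $j$, give the edge from $(\dots,(i,j,k))$ to its parent the label $k$, and (say) give the root vertex-label $1$.

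Next I must handle the $1$-orientation. A $1$-orientation of a tree is either rooted (\cref{TreeOrientation}, first part: orient away from a root $r$) or unrooted, in which case by \cref{TreeOrientation} it contains a $1$-way infinite anti-directed path and every vertex has a parent. In the rooted case the orientation is determined by the choice of root, and $\TT^\star$ rooted at $()$ with edges oriented away from $()$ is one such $1$-orientation; any other arises by re-rooting, and since $\TT^\star$ is vertex-transitive-enough (every vertex has infinitely many children of each ``type'' and the structure below any vertex is again a copy of $\TT^\star$) we can realise an orientation-preserving embedding for any rooted source tree $T$ regardless of which root $\TT^\star$ is given. Concretely, given the rooted labelled tree $T$ with root $\rho$, I define $\phi$ recursively: set $\phi(\rho)$ to be the root of $\TT^\star$ if the given $1$-orientation of $\TT^\star$ is rooted there (and use the re-rooting remark otherwise); and for a non-root vertex $w$ of $T$ with parent $v$ and path $(\rho=w_0,w_1,\dots,w_n=w)$ from the root, set
\[
\phi(w):=\bigl(\,(\ell(w_1),f_T(w_1),f_T(w_0w_1)),\ \dots,\ (\ell(w_n),f_T(w_n),f_T(w_{n-1}w_n))\,\bigr),
\]
where $\ell$ is any injection $V(T)\to\NN$ (exists since $T$ is countable) and $f_T$ is the given labelling of $T$. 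Injectivity of $\phi$ follows from injectivity of $\ell$ on first coordinates; adjacency is preserved because $\phi(w)$ extends $\phi(v)$ by one triple; and by construction the vertex-label of $\phi(w)$ is $f_T(w)$ and the edge-label of $\phi(v)\phi(w)$ is $f_T(vw)$, so $\phi$ is label-preserving. That $\phi$ is orientation-preserving is immediate in the rooted case (parents map to parents); for the unrooted case one uses the structure from \cref{TreeOrientation}: pick the $1$-way infinite anti-directed path, map it to a corresponding ray in $\TT^\star$ going ``up'' towards $()$ (possible because $\TT^\star$ has such rays), and then extend to the rest of $T$ hanging off it exactly as above.

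\textbf{Main obstacle.} The genuinely delicate point is the interplay between the prescribed $1$-orientation of $\TT^\star$ and orientation-preservation of the embedding. A $1$-orientation of the \emph{target} $\TT^\star$ need not be the ``standard'' root-at-$()$ one; it can be any acyclic orientation with in-degree $\le 1$, possibly unrooted. I will need to argue that \emph{every} $1$-orientation of $\TT^\star$ still admits, into it, an orientation-preserving copy of every rooted (or unrooted-$1$-oriented) labelled source tree — essentially because $\TT^\star$ is ``self-similar'' below every vertex and, along any ray, looks the same. The cleanest way to make this rigorous is: (i) show $\TT^\star\cong\TT$; (ii) observe that for any $1$-orientation of $\TT^\star$ and any vertex $u$ whose in-degree is realised by parent $p(u)$, the set of out-neighbours of $u$ is countably infinite and the ``descendant'' subtree under each is again a copy of $\TT^\star$ with the induced orientation being the standard rooted one; (iii) if the source $T$ is rooted, embed its root at any chosen vertex of $\TT^\star$ and proceed downward greedily using (ii), which automatically preserves the parent relation; (iv) if $T$'s $1$-orientation is unrooted, first embed the anti-directed ray of \cref{TreeOrientation} onto an ``upward'' ray of $\TT^\star$ (such a ray exists in any $1$-orientation because from any vertex one can repeatedly pass to the parent unless one hits the root, and if there is a root we may instead use that the orientation is rooted and fall back to case (iii)), then hang the rest of $T$ off the ray using (ii). Handling $T$-side vertices of infinite degree is harmless since each has only countably many children and $\TT^\star$ has infinitely many available out-neighbours at each step. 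Wrapping up, I'd state that $\phi$ is the desired label-preserving, orientation-preserving isomorphism onto its image, which is a subtree of $\TT^\star\cong\TT$, completing the proof.
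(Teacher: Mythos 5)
Your core argument is essentially the paper's: you fix a labelling of $\TT$ (given explicitly by encoding a vertex-label $j$ and an edge-label $k$ into each coordinate, with a free index $i$) having the property that every vertex has, for every pair $(\alpha,\beta)\in\NN^2$, infinitely many neighbours with vertex-label $\alpha$ and incident edge labelled $\beta$; then, since in any 1-orientation each vertex has at most one in-neighbour, infinitely many suitably labelled out-neighbours remain at every vertex, and a top-down greedy embedding of the rooted labelled tree $T$ goes through. For the lemma as stated (rooted source $T$, arbitrary 1-orientation of the target), your steps (i)--(iii) are correct and coincide with the paper's proof, with your explicit $\TT^\star$ playing the role of the paper's abstractly chosen labelling.

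Two caveats. First, your displayed closed formula for $\phi$ (appending the triple $(\ell(w_n),f_T(w_n),f_T(w_{n-1}w_n))$) is orientation-preserving only for the standard orientation rooted at $()$; for a general 1-orientation it may send a child of $v$ to the one standard-child of $\phi(v)$ that happens to be the in-neighbour of $\phi(v)$. The correct repair is not the ``re-rooting/vertex-transitivity'' remark (re-rooting does not preserve the labelling, and ``along any ray $\TT^\star$ looks the same'' is false for labels, since labels along an upward ray are completely determined) but exactly the greedy choice in (iii); the fact that should be stated explicitly is that, in any 1-orientation, every vertex of $\TT^\star$ still has infinitely many out-neighbours of every (vertex-label, edge-label) type, because at most one neighbour is lost to being its parent. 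Second, your case (iv) --- embedding a source tree whose 1-orientation is unrooted --- is not required by \cref{UniversalTreePreserving} (which takes $T$ rooted) and is in fact false: under any fixed labelling and 1-orientation of $\TT$, the anti-directed ray starting at a given vertex is unique, so only countably many labelled anti-directed rays occur in $\TT$, whereas there are uncountably many labelled anti-directed rays; this is exactly the remark the paper makes immediately after the lemma. In particular, mapping the anti-directed ray of $T$ to an ``upward'' ray towards $()$ cannot be label-preserving in general, so this case should simply be deleted.
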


\begin{proof}
Since every vertex of $\TT$ has infinite degree, there is a labelling of $\TT$ such that for every vertex $v$ of $\TT$ and for all $\alpha,\beta\in\NN$ there are infinitely many neighbours $w$ of $v$ for which $w$ is labelled $\alpha$ and $vw$ is labelled $\beta$. Fix any 1-orientation of $\TT$. Now, for every vertex $v$ of $\TT$ and for all $\alpha,\beta\in\NN$ there are infinitely many children $w$ of $v$ for which $w$ is labelled $\alpha$ and $vw$ is labelled $\beta$. 

Let $T$ be a tree rooted at $r$, and let $f$ be any labelling of $T$. We now construct a label-preserving orientation-preserving isomorphism $\phi$ from $T$ to a subtree of $\TT$. Let $\phi(r)$ be any vertex in $\TT$ labelled $f(r)$. Consider the vertices of $T$ in order of non-decreasing distance from $r$. Let $v$ be a vertex of $T$ such that $\phi(v)$ is already defined, but $\phi(w)$ is undefined for every child $w$ of $v$. Let $W$ be the set of children of $v$ in $T$. For all $\alpha,\beta\in\NN$ there are infinitely many children $z$ of $\phi(v)$ for which $z$ is labelled $\alpha$ and $\phi(v)z$ is labelled $\beta$. So $\phi$ can be extended to $W$ so that for each $w\in W$, we have $f(w)$ equals the label assigned to $\phi(w)$ in $\TT$, and $f(vw)$ equals the label assigned to $\phi(vw)$ in $\TT$. Hence $\phi$ is a label-preserving orientation-preserving isomorphism from $T$ to a subtree of $\TT$.
\end{proof}

As an aside, we show that \cref{UniversalTreePreserving} is not true for (unrooted) 1-oriented trees $T$. Suppose that there is a labelling and 1-orientation of the universal tree $\TT$ such that for every labelled and 1-oriented tree $T$, there is a label-preserving orientation-preserving isomorphism from $T$ to a subtree of $\TT$. In particular, for every labelled backward 1-way infinite path $P$, there is a label-preserving orientation-preserving isomorphism from $P$ to $\TT$. There are uncountably many labelled backward 1-way infinite paths (since at each vertex there are at least two choices of label). However, for each vertex $v$ of $\TT$ there is exactly one backward 1-way infinite path starting at $v$. Hence some labelling of the backward 1-way infinite path does not appear in $\TT$. 

To complete this subsection, we define a universal tree of given maximum degree. Let $\TT^{(d)}$ be the graph with 
\begin{align*}
V(\TT^{(d)}) & := \{ (x_1,\dots,x_n): n\in\NN_0,\, x_1,\dots,x_n\in[d], \,
\forall i\in[n-1] \, x_i\neq x_{i+1} \}\\
E(\TT^{(d)}) & := \{ (x_1,\dots,x_n)(x_1,\dots,x_n,x_{n+1}):\\
& \qquad n\in\NN_0,\, x_1,\dots,x_n,x_{n+1}\in[d], \, \forall i\in[n]\, x_i\neq x_{i+1}\}.
\end{align*}
A proof analogous to that of \cref{UniversalTree} shows that $\TT^{(d)}$ is a tree, and by definition, $\TT^{(d)}$ is $d$-regular. Indeed, up to isomorphism, $\TT^{(d)}$ is the unique $d$-regular tree, sometimes called the Bethe lattice or infinite Cayley tree \citep{Bethe35,Ostilli12}. This tree is used in \cref{SimpleTreewidth} as the basis for the definition of a universal graph of given simple treewidth.

\subsection{Universality for Treewidth}
\label{Treewidth}

Chordal graphs and the theory of simplicial decompositions~\citep{Thom83,Diestel90,Halin84,Halin82,Halin78} can be used to define universal treewidth-$k$ graphs. We take a somewhat different approach that gives a new and explicit definition of a universal treewidth-$k$ graph $\TT_k$. This approach allows us to derive further properties of $\TT_k$, regarding label- and orientation-preserving isomorphisms (\cref{TreewidthUniversalPreservingInduced,TreewidthUniversalPreserving}), that are essential for results in \cref{ExcludedMinorNoSubdiv}. 

The following definitions lead to a new characterisation of graphs with given treewidth. Let $c$ be a colouring of a 1-oriented tree $T$. Let \defn{$\GGG{T}{c}$} be the graph with vertex-set $V(\GGG{T}{c})=V(T)$, where $vw\in E(\GGG{T}{c})$ if and only if $v$ is an ancestor of $w$ in $T$ and $v$ is the only vertex on the $vw$-path in $T$ coloured $c(v)$. 
Where it is clear from the context, we implicitly consider the edges $vw$ of $\GGG{T}{c}$ to be oriented from the ancestor $v$ to the descendent $w$. 

\note{Previously \cref{TcSimpOriented,FindSpanningTree} were combined.}

\begin{lem}
\label{TcSimpOriented}
For every 1-oriented tree $T$ and $(k+1)$-colouring $c$ of $T$, the graph $\GGG{T}{c}$ is chordal containing no $K_{k+2}$ subgraph, and is simplicially $k$-oriented. 
\end{lem}

\begin{proof}
By construction, each vertex $w$ has in-degree at most $k$ in $\GGG{T}{c}$, and the in-neighbourhood of $w$ in $\GGG{T}{c}$ lies on a single directed path in $T$ that ends at $w$. Consider two directed edges $uw$ and $vw$ in $\GGG{T}{c}$. So both $u$ and $v$ are ancestors of $w$ in $T$, and $c(u)\neq c(v)$. Without loss of generality, $u$ is an ancestor of $v$. Since $uw$ is an edge, $u$ is the only vertex on the $uw$-path in $T$ coloured $c(u)$. In particular, $u$ is the only vertex on the $uv$-path in $T$ coloured $c(u)$. So $uv$ is an edge of $\GGG{T}{c}$. Hence the in-neighbourhood of $w$ in $\GGG{T}{c}$ is a clique, and $\GGG{T}{c}$ is simplicially $k$-oriented. By \cref{ChordalCharacterisation}, $\GGG{T}{c}$ is chordal. Each vertex in $\GGG{T}{c}$ has in-degree at most $k$. Thus $\GGG{T}{c}$ contains no $K_{k+2}$. 
\end{proof}

The following converse to \cref{TcSimpOriented} is the key place where the well-founded property is used.

\note{Added the well-founded assumption to \cref{FindSpanningTree} and revised the proof.} 

\begin{lem}
\label{FindSpanningTree}
For every chordal graph $G$ containing no $K_{k+2}$ subgraph, and for every well-founded $k$-orientation of $G$, there is a 1-oriented rooted spanning tree $T$ of $G$, such that $G\subseteq \GGG{T}{c}$ for any $(k+1)$-colouring $c$ of $T$. 
\end{lem}

\begin{proof}
Let $G_1, G_2, \dots$ be the connected components of $G$.  By~\cref{WellFoundedRooted}, for each $i \in \mathbb{N}$, the well-founded $k$-orientation of $G_i$ has a root $r_i$.  For each $i \geq 2$ add an arc $(u,r_i)$, where $u$ is an arbitrary vertex of $V(G_{i-1})$. Note that this new graph is chordal, has no $K_{k+2}$ subgraph, and the newly added arcs extend the well-founded $k$-orientation of $G$.  Thus, we may assume that $G$ is connected. 

Let $Z$ be the set of oriented edges $vw$ of $G$ for which there is no vertex $x$ such that $vxw$ is an oriented path in $G$. Let $T$ be the spanning subgraph of $G$ with edge-set $Z$ (ignoring the edge orientation). We claim that $T$ is a 1-oriented rooted spanning tree $T$ of $G$.

Suppose that some vertex $v$ has in-degree at least 2 in $T$. Let $xv$ and $yv$ be two incoming edges incident to $v$ in $T$. Since the in-neighbourhood of $v$ is a clique in $G$, without loss of generality, $xy$ is an oriented edge in $G$. Thus $xyv$ is an oriented path in $G$, implying that $xv$ is not in $Z$. This contradiction shows that $T$ has in-degree at most 1. Since an acyclically oriented  cycle contains a vertex with in-degree 2, $T$ contains no cycle. 

For each vertex $w$ of $G$, let $G_w$ be the subgraph of $G$ induced by those vertices $v$ such that there is a directed $vw$-path in $G$. Since $G$ has no backward oriented 1-way infinite path, and since every vertex has indegree at most $k$, $G_w$ is finite by \cref{Konig}.

Now consider any directed edge $vw$ in $G$. Let $P$ be the longest $vw$-path in $G_w$, which is well-defined since $G_w$ is finite and $vw$ itself is such a path. If some directed edge $rs$ is in $P$, but not in $Z$, then there exists a vertex $x$ such that $rxs$ is a directed path in $G$. By acyclicity, $x\not\in V(P)$. Since $s$ is in $G_w$, so is $x$. Let $P'$ be the path obtained from $P$ by replacing the edge $rs$ by the path $rxs$. So $P'$ is a $vw$-path in $G_w$ that is longer than $P$, which contradicts the choice of $P$. Hence every edge of $P$ is in $Z$. In short, for every directed edge $vw$ in $G$, there is a $vw$-path in $Z$. 

We now show that $T$ is connected. Consider distinct vertices $a$ and $b$ in $G$. Since $G$ is connected, there is an $ab$-path $P$ in $G$, ignoring edge orientations. For each directed edge $vw\in E(P)$, there is a $vw$-path in $Z$. Hence $Z$ is connected (ignoring edge orientations), and $T$ is a spanning tree. By~\cref{WellFoundedRooted}, $G$ has a unique root $r$, so we may root $T$ at $r$.  

Suppose for the sake of contradiction that for some oriented edge $vw$ of $G$, there is no directed $vw$-path in $T$. Let $vw$ be such an edge that minimises the distance between $v$ and $w$ in $T$ (ignoring edge orientations). Let $P$ be the $vw$-path in $T$ (ignoring orientations). Let $x$ be the least common ancestor of $v$ and $w$ in $T$ (which exists since $T$ is 1-oriented). So $P$ is oriented from $x$ to $v$ and from $x$ to $w$ (and $x\neq v$ and $x\neq w$). Let $y$ be the neighbour of $w$ on the $vw$-path in $T$. Since the in-neighbourhood of $w$ is a clique, $vy$ or $yv$ is an edge of $G$, which contradicts the choice of $vw$. Thus for every oriented edge $vw$ of $G$, there is a directed $vw$-path in $T$. 

Consider any oriented path $v_1,v_2,\dots,v_p$ in $G$, where $v_1v_p$ is an edge of $G$. Since the in-neighbourhood of $v_p$ is a clique, $v_1v_{p-1}$ is an edge of $G$. Since the in-neighbourhood of $v_{p-1}$ is a clique, $v_1v_{p-2}$ is an edge of $G$. By induction, $v_1v_i$ is an edge of $G$, for each $i\in[2,p]$. 

Let $c$ be any $(k+1)$-colouring of $G$, which exists since chordal graphs are perfect and by the de~Bruijn--Erd\H{o}s Theorem~\citep{dBE51}. Consider an oriented edge $vw$ of $G$. As shown above there is a directed path $P$ from $v$ to $w$ in $T$, and $v$ is adjacent to every vertex in $P$. Thus $c(v)\neq c(x)$ for every vertex $x$ in $V(P) \setminus \{v\}$. By construction, $vw\in E(\GGG{T}{c})$. Hence $G\subseteq \GGG{T}{c}$.
\end{proof}

\note{We added the following paragraph.}

Note that \cref{FindSpanningTree} is false without the well-founded assumption, even in the $k=2$ case. As an example, let $G$ be obtained from a 2-way infinite path $P=(\dots,v_{-2},v_{-1},v_0,v_1,v_2,\dots)$ by adding one vertex $x$ adjacent to every vertex in $P$. So $G$ is chordal with no $K_4$ subgraph. Orient each edge $v_iv_{i+1}$ and $xv_i$. This orientation is rooted at $x$ but not well-founded. Note that $G$ has a unique 3-colouring (up to colour permutations), where $c(x)=2$ and $c(v_i)=i\bmod{2}$. Suppose there is a 1-oriented spanning tree $T$ of $G$ with $G\subseteq \GGG{T}{c}$. At least one edge $xv_i$ is in $T$. Thus $v_{i-1}v_i \not\in E(T)$, as otherwise $v_i$ would have indegree 2 in $T$. Since $v_{i-1}v_i$ is an edge of $\GGG{T}{c}$ but not of $T$, there is a directed path from $v_{i-1}$ to $v_i$ not using the edge $v_{i-1}v_i$, but in $G$ there is no such path. Hence there is no 1-oriented spanning tree $T$ of $G$ with $G\subseteq \GGG{T}{c}$.

\note{We slightly strengthened (b) to have a given root vertex (which we need later).}

\begin{lem}
\label{TreewidthCharacterisation}
The following are equivalent for a graph $G$ and $k\in\NN$:
\begin{enumerate}[(a)]
\item $G$ has treewidth at most $k$, 
\item for every vertex $v$ of $G$ there is a tree $T$ rooted at $v$ and there is a $(k+1)$-colouring $c$ of $T$ such that $V(T)=V(G)$ and $G\subseteq \GGG{T}{c}$, 
\item $G$ is a spanning subgraph of a chordal graph with no $K_{k+2}$ subgraph.
\end{enumerate}
\end{lem}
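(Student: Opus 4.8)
The plan is to prove $(a)\Leftrightarrow(c)$ and $(b)\Leftrightarrow(c)$, which together give all three equivalences. The two engines are \cref{ChordalCharacterisation2} — chordal with no $K_{k+2}$ subgraph $\Leftrightarrow$ having a tree-decomposition into cliques of size at most $k+1$ $\Leftrightarrow$ having a $k$-orientation $\Leftrightarrow$ being chordal and $(k+1)$-colourable — and \cref{FindSpanningTree}, which is the dictionary between $k$-oriented connected chordal graphs and graphs of the form $\GGG{T}{c}$.

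For $(a)\Leftrightarrow(c)$: if $G$ is a spanning subgraph of a chordal graph $H$ with no $K_{k+2}$ subgraph, then by \cref{ChordalCharacterisation2} $H$ has a tree-decomposition in which every bag is a clique of size at most $k+1$; this is also a tree-decomposition of $G$, since $V(G)=V(H)$ and $E(G)\subseteq E(H)$, so $\tw(G)\le k$. Conversely, starting from a tree-decomposition of $G$ of width at most $k$, let $H$ be the graph obtained by turning each bag into a clique; then $G$ is a spanning subgraph of $H$, the same decomposition exhibits $H$ with clique bags of size at most $k+1$, and \cref{ChordalCharacterisation2} makes $H$ chordal with no $K_{k+2}$ subgraph.

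For $(b)\Leftrightarrow(c)$: the direction $(b)\Rightarrow(c)$ is immediate from the first assertion of \cref{FindSpanningTree}, since $\GGG{T}{c}$ is chordal with no $K_{k+2}$ subgraph, has vertex set $V(T)=V(G)$, and contains $G$. For $(c)\Rightarrow(b)$ I would first reduce to the case where the chordal supergraph $H$ is connected: if it is not, add a path of new edges through one vertex of each component of $H$. Each added edge is a bridge, hence lies in no cycle (in particular no chordless cycle) and in no triangle, so $H$ remains chordal and $\omega(H)$ remains at most $\max\{k+1,2\}=k+1$ (using $k\ge1$), while no new vertices are introduced so $G$ is still a spanning subgraph. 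Now $H$ is connected chordal with no $K_{k+2}$ subgraph, so by \cref{ChordalCharacterisation2} it has a $k$-orientation and a $(k+1)$-colouring $c$; the second assertion of \cref{FindSpanningTree}, applied to this $k$-oriented $H$, produces a 1-oriented spanning tree $T$ of $H$ with $G\subseteq H\subseteq\GGG{T}{c}$. Since $T$ is a subgraph of $H$ the colouring $c$ is a proper $(k+1)$-colouring of $T$, and $V(T)=V(H)=V(G)$, so $(b)$ holds.

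The step needing the most care is $(c)\Rightarrow(b)$. The subtlety is that the colouring used in forming $\GGG{T}{c}$ must be a proper colouring of the tree $T$ itself, not just of $G$; one therefore cannot simply feed the normalised tree-decomposition of \cref{StandardTreewidth} into the definition, because its underlying tree need not be a subgraph of $G$ and its associated colouring need not be proper on that tree. Routing instead through a connected chordal supergraph and \cref{FindSpanningTree} — together with the bridge-path patch that upgrades $H$ to a connected graph without changing its chordality, clique number, or vertex set — is what resolves this.
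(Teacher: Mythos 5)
Your proof is correct, but it reaches the main implication by a different route than the paper. The paper proves the cycle (a)$\Rightarrow$(b)$\Rightarrow$(c)$\Rightarrow$(a): it obtains (a)$\Rightarrow$(b) directly from the normalised tree-decomposition of \cref{StandardTreewidth}, whose tree already has vertex set $V(G)$ and whose colouring is rainbow on bags, so that $G\subseteq\GGG{T}{c}$ can be verified in two lines; it then uses only the first half of \cref{FindSpanningTree} for (b)$\Rightarrow$(c), and the clique-bag tree-decomposition from \cref{ChordalCharacterisation} for (c)$\Rightarrow$(a). You instead prove (a)$\Leftrightarrow$(c) by completing the bags of a width-$k$ tree-decomposition (and reading off the clique-bag decomposition for the converse), and get (c)$\Rightarrow$(b) from the converse half of \cref{FindSpanningTree} after patching the chordal supergraph to be connected by a path of bridges; your checks there are sound --- the bridges lie on no cycles, so chordality and $\omega\le k+1$ survive (using $k\ge 1$), \cref{ChordalCharacterisation2} supplies the needed $k$-orientation and $(k+1)$-colouring, and that colouring is proper on the spanning tree $T$ because $T$ is a subgraph of the supergraph. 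What each route buys: the paper's is shorter and needs no connectivity reduction, since \cref{StandardTreewidth} handles disconnected $G$ automatically and its tree need not be a subgraph of $G$; its colouring is proper on $G$ and rainbow on bags but, as you observe, not necessarily proper on $T$ --- this is harmless for the definition of $\GGG{T}{c}$ and for every later use of the lemma (the first half of \cref{FindSpanningTree} never uses properness on $T$), but your detour through a connected chordal supergraph does deliver a colouring that is genuinely proper on $T$, at the cost of invoking the (independently proved) converse of \cref{FindSpanningTree} and the connectification step.
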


\begin{proof}
(a) $\Longrightarrow$ (b): 
We are given a graph $G$ with treewidth at most $k$, and a vertex $v\in V(G)$. By \cref{StandardTreewidth} there is a tree-decomposition $(B_x:x\in V(T))$ of $G$ such that:
\begin{itemize}
\item $V(T)=V(G)$, and $T$ is rooted at $v$ with $B_v=\{v\}$, 
\item $w\in B_w$ for each vertex $w\in V(T)=V(G)$,
\item  for every edge $vw$ of $G$, $v$ is an ancestor of $w$ or $w$ is an ancestor of $v$ in $T$, 
\item a tree-decomposition $(B_x:x\in V(T))$ of $G$ with width at most $k$, and 
\item a $(k+1)$-colouring $c$ of $G$ such that any two vertices in a common bag $B_x$ are assigned distinct colours. 
\end{itemize}
We now show that $G\subseteq \GGG{T}{c}$. By construction, $V(G) = V(\GGG{T}{c})$. Consider an edge $vw\in E(G)$. Then $c(v)\neq c(w)$. Without loss of generality, $w$ is a descendant of $v$. Consider some vertex $x$ on the $vw$-path in $T$. Since $v\in B_v\cap B_w$, it follows that $v$ is in $B_x$, implying $c(v)\neq c(x)$. Hence $vw\in E(\GGG{T}{c})$ by the definition of $E(\GGG{T}{c})$. Therefore $G\subseteq \GGG{T}{c}$. 

(b) $\Longrightarrow$ (c): Assume $T$ is a rooted tree and $c$ is a $(k+1)$-colouring of $T$ such that $V(T)=V(G)$ and $G\subseteq \GGG{T}{c}$. By \cref{TcSimpOriented}, $\GGG{T}{c}$ is a chordal graph with no $K_{k+2}$ subgraph containing $G$ as a spanning subgraph, as desired. (Note that this step does not require $T$ to be rooted, it only requires that $T$ is 1-oriented.) 

(c) $\Longrightarrow$ (a): Assume $G$ is a spanning subgraph of a chordal graph $G'$ with no $K_{k+2}$ subgraph. By \cref{ChordalCharacterisation}, $G'$ has a tree-decomposition in which each bag is a clique, and therefore of size at most $k+1$. The same tree-decomposition is a tree-decomposition of $G$. So $\tw(G)\leq k$.
\end{proof}

We now define a universal treewidth-$k$ graph. Let $\T$ be the universal tree defined in \cref{UniversalTree} rooted at vertex $()$. For $k\in\NN$, let $\TT_k := \GGG{\T}{c}$, where $c$ is a colouring of $\T$ with colour-set $[0,k]$, where every vertex coloured $i\in[0,k]$ has infinitely many children of each colour in $[0,k]\setminus\{i\}$. Note that the orientation of $\TT_k$ is well-founded. 

\begin{thm}
\label{TreewidthUniversal}
$\TT_k$ is universal for the class of graphs with treewidth at most $k$.
\end{thm}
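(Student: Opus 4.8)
The plan is to prove two things: that $\TT_k$ belongs to the class, i.e.\ $\tw(\TT_k)\le k$, and that every countable graph of treewidth at most $k$ is isomorphic to a subgraph of $\TT_k$. The first is immediate: $\T$ is a $1$-oriented tree, $c$ is a $(k+1)$-colouring of $\T$, $V(\T)=V(\GGG{\T}{c})$, and trivially $\GGG{\T}{c}\subseteq\GGG{\T}{c}$, so \cref{TreewidthCharacterisation} (the implication (b)$\Rightarrow$(a)) gives $\tw(\TT_k)\le k$; moreover $\TT_k$ is countable since $V(\TT_k)=V(\T)$ is (see the proof of \cref{UniversalTree}).

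For the main direction, fix $G$ with $\tw(G)\le k$. By \cref{TreewidthCharacterisation}, $G$ is a spanning subgraph of a chordal graph $G'$ with no $K_{k+2}$ subgraph, so it suffices to embed $G'$ into $\TT_k$. We may assume $G'$ is connected: otherwise attach to $G'$ a new path $z_1,z_2,\dots$ together with an edge from $z_i$ to one vertex of the $i$-th component of $G'$, obtaining a connected graph whose blocks are those of $G'$ together with single-edge blocks, hence chordal, with clique number at most $\max\{k+1,2\}=k+1$ (using $k\ge1$), and which contains $G'$. Now a connected chordal graph has a \emph{rooted} $k$-orientation: take a tree-decomposition with clique bags (\cref{ChordalCharacterisation}), root it at a node with nonempty bag, and orient $G'$ acyclically as in the proof of \cref{ChordalCharacterisation} so that each in-neighbourhood lies in a single clique bag, with the first vertex of the root bag chosen so that it has in-degree $0$. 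Fix such a rooted $k$-orientation with source $r$, and a proper $(k+1)$-colouring $c'$ of $G'$ using colours $[0,k]$ (which exists since chordal graphs are perfect, $\omega(G')\le k+1$, and by the de~Bruijn--Erd\H{o}s Theorem~\citep{dBE51}). By the converse part of \cref{FindSpanningTree}, $G'\subseteq\GGG{T}{c'}$ for some $1$-oriented spanning tree $T$ of $G'$; since $r$ has in-degree $0$ in $T\subseteq G'$, the tree $T$ is rooted at $r$, and since $T\subseteq G'$ the colouring $c'$ restricts to a \emph{proper} colouring of $T$.

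It remains to embed $\GGG{T}{c'}$ into $\TT_k=\GGG{\T}{c}$, which I do by building a colour-preserving, ancestor-preserving isomorphism $\phi$ from $T$ onto a subtree of $\T$. Since every colour in $[0,k]$ occurs in $\T$, choose $u_0\in V(\T)$ with $c(u_0)=c'(r)$, set $\phi(r):=u_0$, and keep $\phi(T)$ inside the subtree $\T_{u_0}$ of $\T$ rooted at $u_0$ (which inherits from $\T$ the property that each vertex of colour $i$ has infinitely many children of each colour in $[0,k]\setminus\{i\}$). Process the vertices of $T$ by non-decreasing distance from $r$: when $\phi(v)$ is defined with $c(\phi(v))=c'(v)$, each child $w$ of $v$ in $T$ has $c'(w)\neq c'(v)$ by properness, and $\phi(v)$ has infinitely many children in $\T_{u_0}$ of colour $c'(w)$, so the (at most countably many) children of $v$ can be sent injectively to distinct children of $\phi(v)$ of the matching colours. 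The resulting $\phi$ is injective (subtrees hanging off distinct children of $v$ go into disjoint subtrees of $\T$), maps parents to parents, and preserves colours, so it is an isomorphism from $T$ onto a subtree of $\T$ preserving the ancestor relation. Now take $vw\in E(\GGG{T}{c'})$ with $v$ an ancestor of $w$ in $T$: the $\T$-path between $\phi(v)$ and $\phi(w)$ lies in $\phi(T)$ and hence equals the image of the $v$--$w$ path of $T$, and since $\phi$ preserves colours, $\phi(v)$ is the unique vertex of this path of colour $c(\phi(v))$; as $\phi(v)$ is an ancestor of $\phi(w)$, this gives $\phi(v)\phi(w)\in E(\GGG{\T}{c})=E(\TT_k)$. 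Thus $\phi$ embeds $G\subseteq G'\subseteq\GGG{T}{c'}$ into $\TT_k$.

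The main obstacle is a subtle one. One is tempted to apply \cref{TreewidthCharacterisation} (the implication (a)$\Rightarrow$(b)) to write $G\subseteq\GGG{T}{c}$ directly and then embed $T$ into $\T$, but the colouring provided there (via \cref{StandardTreewidth}) is only required to separate vertices sharing a bag, and the parent of a node of $T$ need not lie in that node's bag, so adjacent vertices of $T$ may receive equal colours; since no vertex of $\T$ has a child of its own colour, such a $T$ admits no colour-preserving ancestor-preserving embedding into $\T$. Passing to the chordal supergraph $G'$ and invoking \cref{FindSpanningTree} is exactly what repairs this, because the spanning tree it produces is a subgraph of $G'$ and hence inherits a proper colouring; arranging that this spanning tree is rooted (by starting from a rooted $k$-orientation, which exists for connected chordal graphs) is the other point that needs a little care.
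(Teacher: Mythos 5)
Your proof is correct, and it takes a genuinely different route through the middle of the argument than the paper does. The paper's proof is two lines: apply \cref{TreewidthCharacterisation}(a)$\Rightarrow$(b) to obtain a rooted tree $T$ and a $(k+1)$-colouring $\alpha$ with $G\subseteq\GGG{T}{\alpha}$, then invoke \cref{UniversalTreePreserving} to get a colour-preserving orientation-preserving embedding of $(T,\alpha)$ into $(\T,c)$, whence $\GGG{T}{\alpha}$ embeds in $\GGG{\T}{c}=\TT_k$. You instead use \cref{TreewidthCharacterisation}(a)$\Rightarrow$(c): pass to a chordal supergraph $G'$ with no $K_{k+2}$, connectify it, build a rooted $k$-orientation via \cref{ChordalCharacterisation} (your construction of the root is fine), and apply \cref{FindSpanningTree}, whose spanning tree $T$ is a subgraph of $G'$ and therefore inherits a \emph{proper} colouring; you then construct the colour-preserving embedding of the properly coloured rooted tree into $(\T,c)$ by hand. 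That embedding step is exactly the proper-colouring analogue of \cref{UniversalTreePreserving}; note the lemma as stated does not literally apply here, since the colouring $c$ of $\T$ is proper and so fails the ``infinitely many children of every label'' property for a vertex's own colour, which is presumably why you rebuilt it.

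The subtlety you isolate is real and worth recording: the colouring delivered by \cref{StandardTreewidth} is only required to separate vertices sharing a bag, the parent of $w$ in $T$ need not lie in $B_w$, and so $\alpha$ need not be proper on $T$ (already for two isolated vertices and $k=1$ the construction can give a parent and child of $T$ equal colours); since no vertex of $\T$ has a child of its own colour, a colour-preserving ancestor-preserving embedding of such a pair $(T,\alpha)$ into $(\T,c)$ need not exist. Under the paper's convention that a colouring is proper, this is best viewed as a looseness in the proof of \cref{TreewidthCharacterisation}(a)$\Rightarrow$(b) rather than in its statement, but the paper's proof of the present theorem does tacitly rely on $\alpha$ being proper on $T$. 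Your detour through the chordal supergraph and \cref{FindSpanningTree} repairs this cleanly, at the modest cost of the connectification step and of re-deriving the embedding lemma; either route yields the theorem, but yours is self-contained on the one point where the paper's citation chain is loose.
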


\begin{proof}
$\GGG{\T}{c}$ has treewidth at most $k$ by \cref{TreewidthCharacterisation} and since $c$ uses $k+1$ colours. Conversely, let $G$ be a graph with treewidth at most $k$. By \cref{TreewidthCharacterisation}, there is a tree $T$ rooted at some vertex $r$, and there is a $(k+1)$-colouring $\alpha:V(T)\to[0,k]$ such that $V(T)=V(G)$ and $G\subseteq \GGG{T}{\alpha}$. By \cref{UniversalTreePreserving} there is a colour-preserving orientation-preserving isomorphism from $T$ (coloured by $\alpha$) to a subtree of $\TT$ (coloured by $c$). Thus $\GGG{T}{\alpha}$ is contained in $\GGG{\T}{c}$, and $G$ is contained in $\GGG{\T}{c}$.
\end{proof}

We now prove two strengthenings of \cref{TreewidthUniversal} that are used in \cref{ExcludedMinorNoSubdiv}. The first is an analogue of \cref{UniversalTreePreserving}. 

\note{The well-founded assumption has been introduced to the next two lemmas, and the first has been strengthened to give an induced subgraph, which we need later.}

\begin{lem}
\label{TreewidthUniversalPreservingInduced}
For each $k\in\NN$, there is a labelling and a well-founded $k$-orientation of $\TT_k$ such that for every rooted tree $T$  and every $(k+1)$-colouring $c$ of $T$, if $G=\GGG{T}{c}$ (which is implicitly $k$-oriented), then for every labelling of $G$, there is a label-preserving orientation-preserving isomorphism from $G$ to an induced subgraph of $\TT_k$. 
\end{lem}

\begin{proof}
Recall that $\TT$ is the universal tree rooted at vertex $()$, and that $\TT_k$ is the universal treewidth-$k$ graph with $V(\TT_k)=V(\TT)$. We may orient the edges of $E(\TT_k)$ such that for each oriented edge $vw$, $v$ is an ancestor of $w$ in $\TT$, and each vertex of $\TT_k$ has in-degree at most $k$. By definition, $\TT_k$ has a colouring with colours $[k+1]$ such that each vertex $v$ of $\TT$ has infinitely many children of each colour (distinct from the colour assigned to $v$). Label each vertex of $\TT_k$ such that for each vertex $v$ of $\TT$, for each colour $\alpha$ distinct from the colour assigned to $v$, and for each $\beta\in \NN$, there are infinitely many children of $v$ coloured $\alpha$ and labelled $\beta$. This is possible since $v$ has infinitely many children coloured $\alpha$. Consider each vertex $w$ of $\TT_k$. Let $v_1w,\dots,v_pw$ be the edges of $\TT_k$ incoming to $w$. So $p\leq k$. Let $W$ be the siblings of $w$ in $\TT$ assigned the same colour and the same label as $w$. So $W$ is infinite. Since every vertex $x\in W$ has the same colour as $w$ and the same parent as $w$ in $\TT$, by the construction of $\TT_k$, we have that $v_1x,\dots,v_px$ are the edges of $\TT_k$ incoming to $x$. Since $\NN^p$ is countable, there is a labelling of the edges $v_ix$ (where $i\in[p]$ and $x\in W$), such that for each $(\gamma_1,\dots,\gamma_p)\in \NN^p$ there are infinitely many vertices $x\in W$, such that the edge $v_ix$ of $\TT_k$ is labelled $\gamma_i$ for each $i\in[p]$. Call this property $(\star)$. 

Now consider the given tree $T$ rooted at $r$. Let $c$ be the given $(k+1)$-colouring of $T$. We may assume that the colour-set is $[k+1]$. The orientation of $T$ determines a $k$-orientation of $G:=\GGG{T}{c}$. Let $f$ be a given labelling of $G$. Our goal is to show that there is a label-preserving orientation-preserving colour-preserving isomorphism $\phi$ from $T$ to a subtree $\phi(T)$ of $\TT$, such that $\phi$ is also a label-preserving orientation-preserving colour-preserving isomorphism from $G$ to the induced subgraph $\TT_k[\phi(T)]$. We define $\phi(v)$ in order of non-decreasing $\dist_T(r,v)$. Let $\phi(r)$ be any vertex of $\TT_k$ labelled $f(r)$. Let $v$ be a vertex of $T$ for which $\phi(v)$ is already defined, but $\phi$ is defined for no child of $v$. For $\alpha\in[0,k]$ and $\beta\in\NN$, let $W_{\alpha,\beta}$ be the set of children $w$ of $v$ in $T$, such that $c(w)=\alpha$ and $f(w)=\beta$. Consider such a $w\in W_{\alpha,\beta}$. Let $v_1w,\dots,v_pw$ be the edges of $G$ incoming to $w$. So $p\leq k$. By the choice of $v$, we have that $\phi(v_1),\dots,\phi(v_p)$ are already defined. Since every vertex $x\in W_{\alpha,\beta}$ has the same colour as $w$ and the same parent as $w$ in $T$, by the construction of $G$, we have that $v_1x,\dots,v_px$ are the edges of $G$ incoming to $x$. For $(\gamma_1,\dots,\gamma_p)\in \NN^p$, let $W_{\alpha,\beta,\gamma_1,\dots,\gamma_p}$ be the set of vertices $x\in W_{\alpha,\beta}$ such that $f(v_ix)=\gamma_i$ for each $i\in[p]$. By property $(\star)$, there are infinitely many children $z$ of $\phi(v)$ in $\TT$, such that $z$ is coloured $\alpha$ and labelled $\beta$, and $\phi(v_i)z$ is labelled $\gamma_i$ for each $i\in[p]$. Injectively map $W_{\alpha,\beta,\gamma_1,\dots,\gamma_p}$ to these vertices $z$ under $\phi$. Repeating this step, $\phi$ is a label-preserving orientation-preserving colour-preserving isomorphism from $T$ to a subtree of $\TT$, such that $\phi$ is also a label-preserving orientation-preserving colour-preserving isomorphism from $G$ to the induced subgraph $\TT_k[\phi(T)]$.

It remains to show that $\phi(G)$ is an induced subgraph of $\TT_k$. Consider $\phi(v)\phi(w)\in E(\TT_k)$ where $v,w\in V(G)$. Without loss of generality, $\phi(v)$ is an ancestor of $\phi(w)$ in $\TT$, and $\phi(v)$ is the only vertex on the $\phi(v)\phi(w)$-path in $\TT$ coloured $c(v)$ (since $\phi$ is colour-preserving). The $vw$-path in $T$ is mapped by $\phi$ to the $\phi(v)\phi(w)$-path in $\TT$. So $v$ is the only vertex on the $vw$-path in $T$ coloured $c(v)$ (since $\phi$ is colour-preserving). Thus $vw\in E(G)$. Hence $\phi(G)$ is an induced subgraph of $\TT_k$. 
\end{proof}

\cref{FindSpanningTree,TreewidthUniversalPreservingInduced} immediately imply:

\begin{lem}
\label{TreewidthUniversalPreserving}
For each $k\in\NN$, there is a labelling and a well-founded $k$-orientation of $\TT_k$ such that for every chordal graph $G$ with no $K_{k+2}$ subgraph, for every labelling of $G$ and for every well-founded $k$-orientation of $G$, there is a label-preserving orientation-preserving isomorphism from $G$ to a subgraph of $\TT_k$. 
\end{lem}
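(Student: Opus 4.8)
The plan is to exhibit the rooted $k$-orientation of $\TT_k=\GGG{\T}{c}$ as the natural one, orienting each edge from its ancestor-endpoint in $\T$ to its descendant-endpoint. By \cref{FindSpanningTree} this is a simplicial $k$-orientation, and since $c$ is a proper colouring every tree-edge of $\T$ is an edge of $\TT_k$, so $\TT_k$ is connected with the root $()$ of $\T$ as its unique source; hence the orientation is rooted. To build the labelling $\lambda$, the key observation is that if $z$ is a child of a vertex $t$ in $\T$, then the in-neighbourhood of $z$ in $\TT_k$ depends only on $t$ and $c(z)$: it equals $N^-(t,i):=\{t\}\cup\{\,\text{the nearest ancestor of }t\text{ of colour }j:j\in[0,k]\setminus\{i,c(t)\},\text{ if one exists}\,\}$, where $i=c(z)$, a set of size at most $k$. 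Each edge of $\TT_k$ is an in-edge of a unique vertex (its descendant-endpoint), and for each $t$ and each $i\in[0,k]\setminus\{c(t)\}$ the children of $t$ of colour $i$ form a countably infinite set, which I split into countably many infinite classes indexed by tuples $(\alpha,(\beta_b)_{b\in N^-(t,i)})$ with entries in $\NN$; then I set $\lambda(z):=\alpha$ and $\lambda(bz):=\beta_b$ for every $b\in N^-(t,i)$ for each child $z$ in the class of $(\alpha,(\beta_b))$ (and $\lambda(()):=1$). This labels every vertex and every edge exactly once and yields the richness property $(\star)$: for every $t\in V(\T)$, every $i\in[0,k]\setminus\{c(t)\}$, every $\alpha\in\NN$ and every choice of $(\beta_b)_{b\in N^-(t,i)}$, there are infinitely many children $z$ of $t$ with $c(z)=i$, $\lambda(z)=\alpha$ and $\lambda(bz)=\beta_b$ for all $b$.

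Now let $G$ be chordal with no $K_{k+2}$ subgraph, equipped with a labelling $f$ and a rooted $k$-orientation; I first assume $G$ is connected, with source $r$. Fix a proper $(k+1)$-colouring $d$ of $G$ (it exists since $G$ is perfect with $\omega(G)\le k+1$) and apply \cref{FindSpanningTree} to the given orientation to obtain a $1$-oriented spanning tree $S$ with $G\subseteq\GGG{S}{d}$; from its proof, $x\to y$ in $G$ iff $x$ is an ancestor of $y$ in $S$, so $r$ is the source of $S$, $S$ is rooted at $r$, and the given orientation of $G$ is the restriction to $E(G)$ of the ancestor–descendant orientation of $\GGG{S}{d}$.

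For the embedding, I would enumerate $V(S)=\{s_0=r,s_1,s_2,\dots\}$ so that the parent of every vertex precedes it (possible since every vertex of a rooted tree has finitely many ancestors), and build an injection $\psi\colon V(S)\to V(\T)$ by recursion maintaining: $\psi(\{s_0,\dots,s_n\})$ is a subtree of $\T$ on which $\psi$ is a parent-preserving isomorphism with $c(\psi(x))=d(x)$ and $\lambda(\psi(x))=f(x)$, and $\lambda(\psi(x)\psi(y))=f(xy)$ for every edge $xy$ of $G$ with $x,y$ inside it. For $s_0=r$, take $\psi(r)$ to be any vertex of $\T$ of colour $d(r)$ and label $f(r)$, which exists by $(\star)$. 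For $s_n=w$ with parent $v$, write $t:=\psi(v)$ and $i:=d(w)$ (so $i\ne c(t)$ since $d$ is proper); unwinding the definition of $\GGG{S}{d}$ shows that the in-neighbours of $w$ in $\GGG{S}{d}$ are $v$ together with the colour-$j$ in-neighbour of $v$ for each $j\notin\{i,d(v)\}$, all of which already have a $\psi$-image, and — using that $\psi$ carries the $S$-path from any such in-neighbour $u$ to $v$ isomorphically, colour-preservingly, onto a path in $\T$ — each such $u$ satisfies $\psi(u)\in N^-(t,i)$, with $\psi(u)\psi(w)$ destined to be an edge of $\TT_k$ with $\psi(u)$ an ancestor of $\psi(w)$. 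So I apply $(\star)$ at $t$ with this $i$, with $\alpha:=f(w)$, and with $\beta_{\psi(u)}:=f(uw)$ for those in-neighbours $u$ of $w$ with $uw\in E(G)$ (arbitrary values on the remaining $b\in N^-(t,i)$), and let $\psi(w)$ be one of the infinitely many resulting children $z$ of $t$ not yet in the image of $\psi$; the invariants persist. At the end $\phi:=\psi$ is an injection with $\phi(E(G))\subseteq E(\TT_k)$, with $\lambda\circ\phi=f$ on $V(G)\cup E(G)$, and with $\phi$ ancestor-preserving on every oriented edge of $G$, hence a label- and orientation-preserving isomorphism from $G$ onto the subgraph $\phi(G)$ of $\TT_k$. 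For disconnected $G$, I would apply the above to each component $G_\iota$, choosing its starting vertex $t_0^\iota$ (again using $(\star)$) so that $t_0^1,t_0^2,\dots$ are pairwise incomparable in $\T$ (for instance, each lying below a distinct child of the root); the images $\phi_\iota(G_\iota)$ then lie in pairwise disjoint subtrees with no $\TT_k$-edge between them, so their union is the required isomorphism.

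The hard part will be isolating the correct richness property $(\star)$ — in particular recognising that a child's $\TT_k$-in-neighbourhood is determined by its parent together with its own colour, which is exactly what makes the edge-labelling $\lambda$ definable consistently (each edge being an in-edge of a unique vertex) — and then verifying, purely from the definition of $\GGG{\cdot}{\cdot}$, that at each step precisely the in-neighbours of $w$ carrying edges of $G$ get mapped into $N^-(t,i)$. Once that is in place, the recursive construction and the remaining bookkeeping (including that the given $k$-orientation of $G$ coincides with the one produced by \cref{FindSpanningTree}) are routine.
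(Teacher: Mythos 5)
Your proof is correct and follows essentially the same route as the paper's: the ancestor-to-descendant $k$-orientation of $\TT_k$, a labelling whose richness property rests on the fact that same-coloured siblings in $\T$ have identical in-neighbourhoods in $\TT_k$ (your $N^-(t,i)$), and a greedy embedding of the spanning tree supplied by \cref{FindSpanningTree} that matches colours, vertex labels and in-edge labels, with components handled separately. The differences are cosmetic: you map one vertex at a time along a parent-first enumeration where the paper maps whole sibling classes $W_{\alpha,\beta,\gamma_1,\dots,\gamma_p}$ at once, and you explicitly give arbitrary labels to edges of $\GGG{S}{d}$ not in $G$, a point the paper leaves implicit.
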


\subsection{Graphs Defined by a Tree-Decomposition}
\label{DefinedByTreeDecomposition}

Recall that $\DD(U)$ is the class of graphs that have a tree-decomposition over the graph $U$. We now construct a universal graph for $\DD(U)$.  This result is used to construct a universal graph for graphs of given simple treewidth (\cref{SimpleTreewidth}), as well as to construct a graph that contains every $X$-minor-free graph (\cref{ExcludedMinors}). This is the first place where we see the utility of our results about orientation and label-preserving isomorphisms developed above. 

\begin{lem}
\label{TreeDecompUniversal}
For every graph $U$ containing no $K_{\aleph_0}$ subgraph, there is a universal graph $\widehat{U}$ for $\DD(U)$.
\end{lem}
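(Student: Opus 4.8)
The plan is to build $\widehat{U}$ by combining the universal tree $\TT$ (more precisely, a suitably labelled version of it, as in \cref{UniversalTreePreserving}) with infinitely many copies of $U$ glued along the tree. Since every $G \in \DD(U)$ has, by definition, a tree-decomposition $(B_x : x \in V(T))$ whose torsos embed in $U$, and since $U$ contains no $K_{\aleph_0}$, \cref{TreeDecompositionClique} applies: every clique of a torso lies in a single bag, and in particular each adhesion set $B_x \cap B_y$ is a finite clique in the torso of $x$. The idea is to normalise such a decomposition so that $T$ is rooted, the bijection onto $\NN$ respects the parent relation (as in \cref{SimpDecTreeDec}), and each torso comes with a labelling recording (i) which vertices are "old" (shared with the parent bag) versus "new", and (ii) how the old vertices sit inside the torso. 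Then each node of the host tree $\TT$ will carry a copy of $U$, and the labels on $\TT$ (vertex-labels and edge-labels) will encode both the isomorphism type of the clique being glued and the identification of that clique with the corresponding clique one level up.

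Concretely, I would proceed as follows. First, fix once and for all an enumeration/labelling infrastructure: since $U$ is countable, fix a bijection $V(U) \to \NN$, and for every finite clique $C$ of $U$ fix an ordering; there are only countably many (finite clique, ordering) pairs, so these can be indexed by $\NN$. Second, define $\widehat{U}$: take the labelled universal tree $\TT$ from \cref{UniversalTreePreserving} with its fixed $1$-orientation, and replace each node $x$ of $\TT$ by a fresh copy $U_x$ of $U$; for each edge $xy$ of $\TT$ oriented from parent $x$ to child $y$, use the label on $xy$ and on $y$ to select a finite clique $C_x \subseteq U_x$ and a finite clique $C_y \subseteq U_y$ together with an ordered bijection between them, and identify $C_x$ with $C_y$ accordingly. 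One must check this identification process is consistent — a vertex of $U_x$ may be identified along several child-edges — which is fine because each identification only ever merges a vertex of the parent copy with a vertex of a child copy, never two vertices within the same copy, so the transitive closure of the identification relation has each class meeting each $U_x$ in at most one vertex. The resulting graph $\widehat{U}$ then carries a natural tree-decomposition indexed by $V(\TT)$ whose bag at $x$ is $V(U_x)$ and whose torso at $x$ is (a subgraph of) $U$, so $\widehat{U} \in \DD(U)$; and by construction every adhesion set is a finite clique.

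Third, for membership on the other side: given $G \in \DD(U)$, take a tree-decomposition $(B_x : x \in V(T))$ over $U$; by \cref{TreeDecompositionClique} each $B_x \cap B_y$ is a finite clique in the torso, and after the normalisation of \cref{SimpDecTreeDec}/\cref{StandardTreewidth}-style surgery we may assume $T$ is rooted and the parent relation is respected. Choose for each $x$ an embedding of the torso $G_x$ into $U$; this determines, for each non-root $x$ with parent $w$, a finite ordered clique $C \subseteq U$ that is the image of $B_w \cap B_x$, and this $C$ is also a clique in the embedding of $G_w$. Label the tree $T$ so that node $x$ records the index of $C$ (as a clique-with-ordering of $U$) and edge $wx$ records the matching identification data; label the root by whatever initial clique is needed. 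Apply \cref{UniversalTreePreserving} to get a label-preserving orientation-preserving embedding of (rooted, labelled) $T$ into $\TT$. Because the labels were chosen to carry exactly the gluing data, this tree embedding lifts to an embedding of $G$ into $\widehat{U}$: the copy $U_x$ of $U$ at the image node contains the embedded torso $G_x$, and the clique identifications in $\widehat{U}$ agree with the identifications forced by $G$'s tree-decomposition, so the pieces fit together into a copy of $G$.

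The main obstacle I anticipate is bookkeeping rather than conceptual: getting the labelling scheme rich enough that \cref{UniversalTreePreserving} can be invoked, while keeping the identification relation in $\widehat{U}$ well-defined (no accidental collapse of two vertices of a single copy, no clash between the clique a node must receive from its parent and the clique it must export to a child). The key structural facts that make it go through are that $U$ has no $K_{\aleph_0}$ (so \cref{TreeDecompositionClique} pins cliques — and hence adhesion sets — inside single bags) and that $\TT$ is vertex-branching-rich enough (each vertex has infinitely many children of each required label combination) that arbitrarily many sibling subtrees with prescribed gluing cliques can be accommodated simultaneously. One should also note in passing that $\widehat{U}$ contains no $K_{\aleph_0}$ either — this follows since it has a tree-decomposition with finite-clique adhesion sets and each bag is a copy of $U$, which has no $K_{\aleph_0}$ — so the construction stays within the intended universe.
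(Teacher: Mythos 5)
Your proposal is correct and follows essentially the same route as the paper: enumerate the (countably many) pairs of ordered finite cliques of $U$, glue a copy of $U$ onto each node of the labelled universal tree with edge-labels prescribing the clique identifications, and use \cref{UniversalTreePreserving} to embed the labelled decomposition tree of any $G\in\DD(U)$ and lift this to an embedding of $G$. One tiny quibble: the adhesion sets are cliques of the torsos by the definition of torso (not via \cref{TreeDecompositionClique}), and their finiteness comes directly from the torsos embedding in $U$, which has no $K_{\aleph_0}$ subgraph.
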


\begin{proof}
Let $\CC$ be the set of all pairs $((v_1,\dots,v_k),(w_1,\dots,w_k))$, where $\{v_1,\dots,v_k\}$  and $\{w_1,\dots,w_k\}$ are cliques in $U$ for any $k\in\NN_0$. Since $U$ is countable with no $K_{\aleph_0}$ subgraph, $\CC$ is countable. Enumerate $\CC=\{C_1,C_2,\dots\}$.  Let $\TT$ be the universal tree 1-oriented and labelled, such that every vertex is labelled 1, and for every vertex $v$ of $\TT$ and every $i\in\NN$ there are infinitely many children $w$ of $v$ with $vw$ labelled $i$. Let $\widehat{U}$ be the graph obtained from $\TT$ as follows. First, for each vertex $v$ of $\TT$, introduce a copy $U_v$ of $U$ in $\widehat{U}$, where $U_v$ and $U_w$ are disjoint for all distinct $v,w\in V(\TT)$. Second, for each edge $vw\in E(\TT)$, if $vw$ is labelled $i\in\NN$ and $C_i=((v_1,\dots,v_k),(w_1,\dots,w_k))$, then for each $j\in[k]$, identify vertex $v_j$ in $U_v$ with vertex $w_j$ in $U_w$. This defines $\widehat{U}$. 

For each vertex $v\in V(\TT)$, let $\widehat{U}_v$ be the subgraph of $\widehat{U}$ corresponding to the copy $U_v$ (after the above vertex identifications). Then $(V(\widehat{U}_v):v\in V(\TT))$ is a tree-decomposition of $\widehat{U}$, where each torso is isomorphic to $U$. Thus $\widehat{U}$ is in $\DD(U)$. 

We now show that $\widehat{U}$ contains every graph $G$ in $\DD(U)$. So $G$ has a tree-decomposition $(B_x:x\in V(T))$, such that for each node $x\in V(T)$, there is an isomorphism $\phi_x$ from the torso of $x$ to a subgraph of $U$. Root $T$ at an arbitrary node $r$. Label each vertex of $T$ by 1. Label each oriented edge $xy$ of $T$ as follows. Say $B_x\cap B_y=\{z_1,\dots,z_k\}$, which is a clique in the torsos of $x$ and $y$. For $j\in[k]$, let $v_j$ and $w_j$ be the vertices of $U$ such that $\phi_x(z_j)=v_j$ and $\phi_y(z_j)=w_j$. So $\{v_1,\dots,v_k\}$ and $\{w_1,\dots,w_k\}$ are cliques of $U$.  Label $xy$ by $i\in\NN$ so that $C_i= ((v_1,\dots,v_k),(w_1,\dots,w_k))$. By \cref{UniversalTreePreserving}, there is a label-preserving orientation-preserving isomorphism $\phi$ from $T$ to $\TT$. 
By construction, $\bigcup_{v\in V(T)}\widehat{U}_{\phi(v)}$ contains a subgraph isomorphic to $G$.
\end{proof}

\cref{TreeDecompUniversal} can be extended as follows to allow for tree-decompositions with adhesion $k$. The proof is identical to that of \cref{TreeDecompUniversal}, except that in the definition of $\CC$ we only consider cliques with size at most $k$.

\begin{lem}
\label{TreeDecompUniversalAdhesion}
For every graph $U$ containing no $K_{\aleph_0}$ subgraph and for every $k\in\NN$, there is a universal graph $\widehat{U}^k$ for the class $\DD_k(U)$.
\end{lem}

\subsection{Treewidth Extendability}
\label{TreewidthExtendability}

Recall that a graph class $\Gamma$ is \defn{extendable} if the following property holds for every graph $G$: if every finite subgraph of $G$ is in $\Gamma$, then $G$ is in $\Gamma$. (Recall that graphs are assumed to be countable.)

\citet{Thomas88} proved the following result. 

\begin{lem}[{\protect Countable Tree-Width Compactness Theorem~\citep{Thomas88}}]
\label{TreewidthExtendable}
Treewidth is extendable. 
\end{lem}

See \citep{KT91,Thomassen89} for simpler proofs of \cref{TreewidthExtendable}. The proof of \cref{TreewidthExtendable} due to \citet{Thomassen89} generalises as follows. Define a \defn{forbidden pattern} to be a sequence $\FF=(\FF_k:k\in\NN)$ where $\FF_k$ is any class of finite graphs, such that for every $k\in\NN$, every graph in $\FF_{k+1}$ has a subgraph in $\FF_k$. Define the \hdefn{$\FF$}{width} of a graph $G$ to be the minimum $k\in\NN$ such that $G$ is a subgraph of a chordal graph containing no subgraph in $\FF_k$. If there is no such $k$, then $G$ has \defn{infinite} $\FF$-width. Observe that if $\FF_k=\{K_{k+2}\}$ then $\FF$-width equals treewidth by \cref{TreewidthCharacterisation}. Thus 
\cref{TreewidthExtendable} is implied by the  following generalisation.

\begin{lem}
\label{FwidthExtendable}
$\FF$-width is extendable for every forbidden pattern $\FF$.
\end{lem}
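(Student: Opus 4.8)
The plan is to run a König's Lemma compactness argument, in the spirit of the proof of \cref{colExtendable}. Fix $k\in\NN$, and let $G$ be a (countable) graph every finite subgraph of which has $\FF$-width at most $k$; we must show that $\FF$-width$(G)\le k$. If $G$ is finite this is immediate, since $G$ is a finite subgraph of itself, so assume $V(G)=\{v_1,v_2,\dots\}$ is infinite and put $G_n:=G[\{v_1,\dots,v_n\}]$ for $n\in\NN$.

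The first step is the observation that passing to induced subgraphs does no harm: if $H'$ is a chordal graph that contains $G_n$ as a subgraph and has no subgraph in $\FF_k$, then $H'[\{v_1,\dots,v_n\}]$ is again chordal (induced subgraphs of chordal graphs are chordal), still contains $G_n$, and has no subgraph in $\FF_k$ (any such subgraph would be a subgraph of $H'$). So, letting $\mathcal{W}_n$ be the set of all chordal graphs on vertex set $\{v_1,\dots,v_n\}$ that contain $G_n$ and have no subgraph in $\FF_k$, the hypothesis applied to the finite subgraph $G_n$ gives $\mathcal{W}_n\ne\emptyset$, while $\mathcal{W}_n$ is finite simply because there are finitely many graphs on a fixed finite vertex set. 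The same observation shows that if $H\in\mathcal{W}_{n+1}$ then $H-v_{n+1}\in\mathcal{W}_n$.

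Next I would form the auxiliary graph $Q$ with vertex set $\bigcup_{n\in\NN}\mathcal{W}_n$ (a disjoint union, as each member of $\mathcal{W}_n$ records $n$ through its own vertex set), joining $H\in\mathcal{W}_{n+1}$ to $H-v_{n+1}\in\mathcal{W}_n$. Every vertex of $\mathcal{W}_{n+1}$ has a neighbour in $\mathcal{W}_n$, so \cref{Konig} produces graphs $H_n\in\mathcal{W}_n$ with $H_{n+1}-v_{n+1}=H_n$ for all $n\in\NN$. Then $H_1\subseteq H_2\subseteq\cdots$, so $H:=\bigcup_{n\in\NN}H_n$ has vertex set $V(G)$ and contains $G=\bigcup_{n\in\NN}G_n$ as a subgraph. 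Finally, since cycles and members of $\FF_k$ are finite, any cycle of length at least $4$ in $H$ is contained in some $H_n$ and hence has a chord there, so $H$ is chordal; and any subgraph of $H$ isomorphic to a member of $\FF_k$ would be contained in some $H_n$, contradicting $H_n\in\mathcal{W}_n$. Thus $H$ witnesses $\FF$-width$(G)\le k$, and since $k\in\NN$ was arbitrary, $\FF$-width is extendable.

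I do not expect a real obstacle: the argument rests on two finiteness facts — there are finitely many graphs on $\{v_1,\dots,v_n\}$, and cycles and members of $\FF_k$ are finite — and these are exactly what make the König tree finitely branching and the limit graph $H$ both chordal and free of $\FF_k$-subgraphs. The only point needing care is insisting that the chordal supergraphs live on the vertex set $\{v_1,\dots,v_n\}$ (rather than carrying extra vertices), which is what keeps each level $\mathcal{W}_n$ finite and what makes the map $H\mapsto H-v_{n+1}$ land inside $\mathcal{W}_n$. Note that the structural condition in the definition of a forbidden pattern (every graph in $\FF_{k+1}$ has a subgraph in $\FF_k$) is not used here; only the fact that each $\FF_k$ consists of finite graphs is needed.
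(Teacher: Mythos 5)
Your argument is correct, but it takes a genuinely different route from the paper. The paper proves \cref{FwidthExtendable} with Zorn's Lemma (\cref{Zorn}): it considers all spanning supergraphs of $G$ whose finite subgraphs have $\FF$-width at most $k$, extracts a maximal one $G_0$, and then shows that maximality forces $G_0$ to be chordal (a chordless cycle would allow adding a chord, and the finite obstructions this creates are assembled into a contradiction), after which $G_0$ itself is the witness since no member of $\FF_k$ can be a finite subgraph of $G_0$. You instead run a K\"onig's Lemma compactness argument in the style of the paper's proof of \cref{colExtendable}: you normalise the finite witnesses to live on the vertex sets $\{v_1,\dots,v_n\}$ (the restriction step via induced subgraphs is exactly the right fix to keep each level finite and to make deletion of $v_{n+1}$ map level $n+1$ into level $n$), apply \cref{Konig} to get a nested sequence of chordal witnesses, and take the union; chordality and $\FF_k$-freeness of the union follow because cycles and members of $\FF_k$ are finite and every finite subgraph of a nested union lies in some $H_n$. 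Both proofs use only the finiteness of the graphs in $\FF_k$, and you are right that the nesting condition in the definition of a forbidden pattern is not needed for either. Your approach is more constructive and mirrors the paper's other compactness proofs, at the cost of the explicit level-by-level bookkeeping; the paper's Zorn argument avoids that bookkeeping and directly produces an edge-maximal witness, a device it reuses elsewhere (compare \cref{Maximal}).
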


\begin{proof}
Let $G$ be an infinite graph such that every finite subgraph of $G$ has $\FF$-width at most $k$. Our goal is to show that $G$ has $\FF$-width at most $k$. Let $\GG$ be the class of graphs $G'$ such that $V(G')=V(G)$, $E(G)\subseteq E(G')$ and every finite subgraph of $G'$ has $\FF$-width at most $k$. Thus $G\in\GG$. Consider the partial order defined on $\GG$ by inclusion (of the corresponding edge-sets). 

Let $\XX$ be a chain in $\GG$. Let $G'':= \bigcup_{G'\in\XX} G'$. We claim that $G''\in\GG$. Certainly, $V(G'')=V(G)$ and $E(G)\subseteq E(G'')$. If $G''$ contains a finite subgraph $H$ with $\FF$-width greater than $k$, then since $\XX$ is totally ordered by inclusion and $H$ is finite, some graph $G'$ in $\XX$ would contain $H$, which contradicts the definition of $\GG$. Hence $G''$ is an upper bound on $\XX$ in $\GG$. 

By Zorn's Lemma (\cref{Zorn}), $\GG$ contains a maximal element $G_0$. We claim that $G_0$ is chordal. Suppose on the contrary that $G_0$ has a chordless cycle $C$ of length at least 4. For each pair of non-adjacent vertices $x,y\in V(C)$, let $G_{xy}$ be the graph obtained from $G_0$ by adding the edge $xy$. Since $G_0$ is maximal with respect to inclusion, $G_{xy}\not\in\GG$. Since $V(G_{xy})=V(G)$ and $E(G)\subseteq E(G_{xy})$, it must be that $G_{xy}$ contains a finite subgraph $H_{xy}$ with $\FF$-width greater than $k$. Let $H$ be the union of $C$ and all 
graphs $H_{xy}-xy$ taken over all pairs of non-adjacent vertices $x,y\in V(C)$. Since $|C|$ is
finite and each $H_{xy}$ is finite, $H$ is finite. By construction, $H$ is a
subgraph of $G_0$, which is in $\GG$. Thus $H$ has $\FF$-width at most $k$. By
the definition of $\FF$-width, $H$ is a subgraph of a chordal graph $H'$
containing no element of $\FF_k$. Since $H'$ is chordal and 
$C$ is a cycle of $H'$, we have that $H'$ contains some chord,
say $xy$, of $C$. Now $H_{xy}\subseteq H+xy\subseteq H'$. 
Since $H'$ has $\FF$-width at most $k$, so does $H_{xy}$, which is a contradiction. 
Hence $G_0$ is chordal. 

Since $G_0\in\GG$, every finite subgraph of $G_0$ has $\FF$-width at most $k$. In particular, no element of $\FF_k$ is a subgraph of $G_0$ (since elements of $\FF_k$ have $\FF$-width greater than $k$). Hence $G_0$ has $\FF$-width at most $k$. Therefore $G$ has $\FF$-width at most $k$ since $G\subseteq G_0$.
\end{proof}

See \cref{SimpleTreewidth} for another application of \cref{FwidthExtendable}. 

The following result generalises \cref{TreewidthExtendable}, and is a direct corollary of a result of \citet[Theorem~3.9]{KT90}.
 
\begin{lem}[\citep{KT90}] 
\label{TreeDecompExtendable}
For every hereditary and extendable class of graphs  $\Gamma$, the class $\DD(\Gamma)$ is extendable. 
\end{lem}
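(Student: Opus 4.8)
The plan is to derive this as an application of \cref{FwidthExtendable}, or, failing a direct reduction, to mimic the Zorn's-Lemma argument used there. Let $\Gamma$ be a hereditary and extendable graph class, and let $G$ be a countable graph such that every finite subgraph of $G$ lies in $\DD(\Gamma)$. We want $G\in\DD(\Gamma)$, i.e.\ $G$ has a tree-decomposition all of whose torsos lie in $\Gamma$. The key preliminary observation is that, because $\Gamma$ is hereditary, a graph lies in $\DD(\Gamma)$ if and only if it is a spanning subgraph of a chordal graph $C$ such that every maximal clique of $C$, together with the ``separator'' edges forced on it, has its torso in $\Gamma$ — more usefully, $G\in\DD(\Gamma)$ iff there is a chordal supergraph $H$ of $G$ on the same vertex set such that for the canonical clique-tree-decomposition of $H$, each torso (which here is just the bag, i.e.\ a maximal clique of $H$ with the adhesion-induced edges, all of which are already present since bags are cliques) lies in a suitable ``closure'' of $\Gamma$. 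I would first make this precise: define, for a clique $K$ and a family of subsets $\mathcal{A}$ of $K$ (the potential adhesion sets), the torso as $K$ with all pairs inside members of $\mathcal{A}$ made adjacent; since $K$ is already complete this torso is just the complete graph on $K$, so in fact $G\in\DD(\Gamma)$ iff $G$ is a spanning subgraph of a chordal graph whose every maximal clique, viewed as a complete graph, has a tree-decomposition over $\Gamma$ — and this last condition is exactly membership of $K_{|K|}$ in $\DD(\Gamma)$, which by \cref{TreewidthOverTreeDecomposition}-type reasoning reduces to a condition purely on $\Gamma$. So the real content is: which complete graphs $K_n$ lie in $\DD(\Gamma)$? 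This defines a ``forbidden pattern'' in the sense of \cref{FwidthExtendable}: set $\FF_k$ to be the set of finite graphs $F$ such that $K_{|V(F)|}\notin\DD(\Gamma)$ — wait, more carefully, set $\FF_k := \{K_{n+2} : K_{n+2}\notin\DD(\Gamma)\text{ and }n\ge k\}$, or rather arrange $\FF_k$ so that $\FF$-width $\le k$ iff $G$ embeds in a chordal graph all of whose cliques $K_m$ satisfy $K_m\in\DD(\Gamma)$; since $\DD(\Gamma)$ is monotone under subgraphs (a subgraph of a graph with a tree-decomposition over $\Gamma$ has one too, using that $\Gamma$ is hereditary so torsos of subgraphs stay in $\Gamma$), the sets $\FF_k$ will satisfy the nesting condition required of a forbidden pattern.

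Concretely, the steps I would carry out are: (1) Prove the monotonicity lemma: $\DD(\Gamma)$ is closed under taking subgraphs, using that $\Gamma$ is hereditary — given a tree-decomposition of $G$ over $\Gamma$ and a subgraph $G'\subseteq G$, restrict each bag to $V(G')$; each new torso is an induced subgraph of (a subgraph of) the old torso, hence in $\Gamma$. (2) Prove that $G\in\DD(\Gamma)$ iff $G$ is a spanning subgraph of some chordal graph $H$ (same vertex set) such that $K_{\omega(H)}\in\DD(\Gamma)$ — actually more precisely such that for every maximal clique $K$ of $H$, $K_{|K|}\in\DD(\Gamma)$: the forward direction follows from step (1) (a clique of $G\in\DD(\Gamma)$ lies in $\DD(\Gamma)$, and if $H$ is chordal then $H\in\DD(\Gamma)$ by decomposing along the clique-tree and, in each bag, plugging in the tree-decomposition of that clique over $\Gamma$, as in the proof of \cref{TreewidthOverTreeDecomposition}); the reverse direction takes the tree-decomposition of $G$ provided by $H$'s clique-tree and refines each bag. (3) Set $\FF_k$ to be the class of those complete graphs $K_{m}$ with $m> \kappa$, where $\kappa:=\sup\{m : K_m\in\DD(\Gamma)\}$ — but handle the case $\kappa=\infty$ separately, in which $\DD(\Gamma)$ contains all chordal graphs and the argument degenerates. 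More robustly, let $\FF_k := \{K_{m} : K_m\notin\DD(\Gamma),\ m\ge k+2\}$; check this is a forbidden pattern (nesting: each graph in $\FF_{k+1}$ contains one in $\FF_k$ — immediate for complete graphs). (4) Observe that by step (2), $\FF$-width of a graph $G$ is finite and at most $k$ iff $G\in\DD(\Gamma)$ restricted to a certain bound, and in particular: every finite subgraph of $G$ is in $\DD(\Gamma)$ iff every finite subgraph has $\FF$-width $<\infty$; combined with \cref{FwidthExtendable} applied at an appropriate level $k$ (namely $k$ large enough that all relevant finite pieces have $\FF$-width $\le k$, which may require $k$ to be taken as $\kappa$ or to argue uniformly), conclude $G$ has $\FF$-width $\le k$, hence $G\in\DD(\Gamma)$.

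The main obstacle I anticipate is the uniformity issue in step (4): \cref{FwidthExtendable} requires a single $k$ such that every finite subgraph has $\FF$-width $\le k$, whereas a priori the hypothesis only gives, for each finite subgraph $H$, \emph{some} tree-decomposition over $\Gamma$. This is resolved precisely by the reduction in step (2): ``$H\in\DD(\Gamma)$'' is equivalent to ``$H$ embeds in a chordal graph whose cliques lie in $\DD(\Gamma)$'', and the latter has \emph{no} width parameter — it is a uniform condition. So actually the cleanest route may bypass $\FF$-width's integer grading: I would directly run the Zorn's-Lemma argument of \cref{FwidthExtendable}'s proof verbatim, with ``$\FF$-width $\le k$'' replaced throughout by ``lies in $\DD(\Gamma)$'': let $\GG$ be the class of supergraphs $G'$ of $G$ on $V(G)$ with every finite subgraph in $\DD(\Gamma)$; a chain's union stays in $\GG$ because membership of a finite graph in $\DD(\Gamma)$ is a finitary condition; take a maximal element $G_0$ by Zorn; show $G_0$ is chordal by the same chord-adding contradiction (using that a finite graph in $\DD(\Gamma)$ embeds in a chordal graph with cliques in $\DD(\Gamma)$, and the latter chordal graph must supply a chord of any chordless cycle lying in it); finally, $G_0$ chordal plus every finite subgraph in $\DD(\Gamma)$ — and here invoke the extendability of $\Gamma$: every maximal clique $K$ of $G_0$ is a union of an increasing sequence of finite cliques, each in $\Gamma$ by hereditariness applied to... hmm, this needs care, as a finite clique of $G_0$ lies in $\DD(\Gamma)$ but we need it in $\Gamma$ — use that a clique's only sensible tree-decomposition over $\Gamma$ has $K$ itself embeddable in $\Gamma$, or more directly apply \cref{TreeDecompositionClique} to see the whole clique sits in one torso, hence in $\Gamma$; then extendability of $\Gamma$ gives $K\in\Gamma$, so $G_0\in\DD(\Gamma)$ via its clique-tree, and $G\subseteq G_0$ spanning gives $G\in\DD(\Gamma)$ by step (1). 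The delicate point throughout is conflating ``cliques of $G_0$ lie in $\Gamma$'' with the raw hypothesis, and it is exactly there that both hereditariness and extendability of $\Gamma$ get used.
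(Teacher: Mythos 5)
Your whole scheme rests on identifying membership in $\DD(\Gamma)$ with a chordal-supergraph (width-type) condition, and that identification is false. The torsos of a tree-decomposition over $\Gamma$ are arbitrary members of $\Gamma$, not cliques, so completing bags destroys membership, and there is no characterisation of $\DD(\Gamma)$ of the form ``spanning subgraph of a chordal graph whose maximal cliques are complete graphs in $\DD(\Gamma)$''. Concretely, take $\Gamma$ to be the class of planar graphs, which is hereditary and extendable (\cref{PlanarityExtendable}). By \cref{TreeDecompositionClique}, $K_n\in\DD(\Gamma)$ if and only if $n\le 4$, so your step (2) condition is equivalent to $\tw(G)\le 3$; yet $\DD(\Gamma)$ contains every planar graph (one bag), in particular arbitrarily large grids. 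So the forward direction of step (2) fails, and with it the $\FF$-width encoding of steps (3)--(4): the pattern you build out of complete graphs not in $\DD(\Gamma)$ captures only a treewidth-type condition, not $\DD(\Gamma)$.

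The fallback Zorn argument collapses for the same reason, and not merely in its justification: with $\Gamma$ planar and $G$ the infinite grid $\PPP\CartProd\PPP$ (every finite subgraph is planar, hence in $\DD(\Gamma)$), \emph{no} member of your poset $\GG$ is chordal. Indeed, every finite clique of a member of $\GG$ is a finite subgraph lying in $\DD(\Gamma)$, hence has at most $4$ vertices; a chordal graph with no $K_5$ has treewidth at most $3$ by \cref{ChordalCharacterisation2}, contradicting that it contains finite grids of unbounded treewidth. So the maximal $G_0$ is not chordal, the chord-adding step cannot be run (a finite graph in $\DD(\Gamma)$ need not embed in a chordal graph with cliques in $\DD(\Gamma)$), and the endgame via the clique-tree of $G_0$ is unreachable. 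The proof of \cref{FwidthExtendable} works precisely because $\FF$-width is \emph{defined} through chordal supergraphs avoiding forbidden subgraphs; $\DD(\Gamma)$ admits no such description, which is exactly the structural feature your argument needs and does not have. (A secondary overreach: in step (1), restricting a tree-decomposition to a subgraph makes each new torso a subgraph, not an induced subgraph, of the old torso, so heredity of $\Gamma$ alone does not justify the monotonicity of $\DD(\Gamma)$ you invoke at the end.) Note finally that the paper gives no proof of this lemma: it is quoted as a direct corollary of a compactness theorem of K\v{r}\'{\i}\v{z} and Thomas \citep[Theorem~3.9]{KT90}; a self-contained proof would have to apply compactness to the tree-decompositions of the finite subgraphs themselves (a K\"onig/inverse-limit style argument over suitably normalised decompositions), not to chordal completions of $G$.
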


For a graph $G$ and $c\in\NN_0$, let $\tw_c(G)$ be the minimum integer $k$ such that $\tw(G-S)\leq k$ for some $S\subseteq V(G)$ with $|S|\leq c$. Of course, $\tw(G)=\tw_0(G)$ and $\tw(G)\leq \tw_c(G)+c$. We need the following generalisation of \cref{TreewidthExtendable}.

\begin{lem}
\label{TreewidthApexExtendable}
$\tw_c$ is extendable for every $c\in\NN_0$.
\end{lem}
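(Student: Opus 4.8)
The plan mirrors the proof of \cref{colExtendable}: construct the required apex set for $G$ as a ``limit'' of apex sets for its finite subgraphs via \cref{Konig}, and then appeal to the already-known case $c=0$, namely \cref{TreewidthExtendable}.

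So let $G$ be a graph in which every finite subgraph has $\tw_c$ at most $k$; the goal is $\tw_c(G)\le k$. It suffices to produce a set $S\subseteq V(G)$ with $|S|\le c$ such that every finite subgraph of $G-S$ has treewidth at most $k$: then \cref{TreewidthExtendable}, applied to the countable graph $G-S$, gives $\tw(G-S)\le k$, hence $\tw_c(G)\le k$. Since $G$ is countable, write $V(G)=\{v_1,v_2,\dots\}$ and set $H_n:=G[\{v_1,\dots,v_n\}]$. For $n\in\NN$, let $\mathcal{A}_n$ be the (finite, non-empty by hypothesis) set of all $A\subseteq V(H_n)$ with $|A|\le c$ and $\tw(H_n-A)\le k$. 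The key observation is that restriction sends $\mathcal{A}_{n+1}$ into $\mathcal{A}_n$: for $A\in\mathcal{A}_{n+1}$ we have $|A\cap V(H_n)|\le c$, and $H_n-(A\cap V(H_n))=H_n[V(H_n)\setminus A]$ is a subgraph of $H_{n+1}-A$, so (treewidth being monotone under subgraphs) has treewidth at most $k$; thus $A\cap V(H_n)\in\mathcal{A}_n$.

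Now apply \cref{Konig} to the sets $\mathcal{A}_n$ (regarded as pairwise disjoint by tagging each with its index $n$), joining $A\in\mathcal{A}_{n+1}$ to $A\cap V(H_n)\in\mathcal{A}_n$; every element of $\mathcal{A}_{n+1}$ then has a neighbour in $\mathcal{A}_n$. This yields a sequence $A_1,A_2,\dots$ with $A_n\in\mathcal{A}_n$ and $A_n=A_{n+1}\cap V(H_n)$ for all $n$. Hence $A_1\subseteq A_2\subseteq\cdots$, and since each $|A_n|\le c$ this chain is eventually constant, so $S:=\bigcup_{n\in\NN}A_n$ has $|S|\le c$ and $S\cap V(H_n)=A_n$ for every $n$. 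Finally, if $F$ is any finite subgraph of $G-S$, then $V(F)\subseteq V(H_n)$ for some $n$, and since $V(F)\cap S=\emptyset$ we get that $F$ is a subgraph of $H_n[V(H_n)\setminus A_n]=H_n-A_n$, so $\tw(F)\le k$ because $A_n\in\mathcal{A}_n$. Thus every finite subgraph of $G-S$ has treewidth at most $k$, completing the argument.

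There is no serious obstacle here; the one point requiring care is making the finite apex sets $A_n$ cohere so that their union is still a set of size at most $c$ that ``works'' at every level, which is precisely what the \cref{Konig} argument provides, together with the reduction to the previously established case $c=0$.
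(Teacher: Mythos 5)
Your proof is correct, but it takes a different route from the paper. The paper deduces \cref{TreewidthApexExtendable} by induction on $c$ from an auxiliary lemma: for any monotone extendable class $\Gamma$, the class $\Gamma^+$ of graphs with a single deletable vertex is again monotone and extendable; each inductive step is its own \cref{Konig} argument in which the level-$i$ vertices record a choice of one vertex of $H_i$ to delete (or none), and the resulting infinite path eventually stabilises on a single vertex $v_i$, after which extendability of $\Gamma$ finishes. You instead run a single \cref{Konig} argument directly on the candidate apex sets $\mathcal{A}_n$ of size at most $c$, with adjacency given by restriction $A_{n}=A_{n+1}\cap V(H_n)$; the increasing chain of size-bounded sets stabilises to a set $S$ with $|S|\le c$ satisfying $S\cap V(H_n)=A_n$, and one application of \cref{TreewidthExtendable} to $G-S$ concludes. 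Your argument avoids the induction on $c$ and the intermediate $\Gamma^+$ lemma entirely, and it is just as general: replacing ``treewidth at most $k$'' by membership in any monotone extendable class $\Gamma$, the identical argument shows that the class of graphs admitting a deletion set of size at most $c$ into $\Gamma$ is extendable. What the paper's route buys is a reusable one-vertex lemma stated for arbitrary $\Gamma$; what yours buys is a shorter, one-shot compactness argument in which the only properties used are monotonicity of treewidth under subgraphs and its extendability. The points needing care (disjointness of the $\mathcal{A}_n$ for \cref{Konig}, handled by tagging, and the coherence $A_m\cap V(H_n)=A_n$ for all $m\ge n$, which follows by iterating the edge condition) are all addressed, so there is no gap.
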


\cref{TreewidthApexExtendable} follows from the next lemma by induction on $c$ (with \cref{TreewidthExtendable} in the base case). 

\begin{lem}
\label{MonoExtend}
Let $\Gamma$ be a monotone and extendable graph class. Let $\Gamma^+$ be the class of graphs $G$ such that $G\in\Gamma$ or $G-v\in\Gamma$ for some vertex $v$ of $G$. Then $\Gamma^+$ is monotone and extendable. 
\end{lem}

\note{Revised \cref{MonoExtend} according to referee's comment.}

\begin{proof}
Since $\Gamma$ is monotone, so too is $\Gamma^+$. It remains to show that $\Gamma^+$ is extendable. 	Let $G$ be a graph such that every finite subgraph of $G$ is in $\Gamma^+$. Our goal is to show that $G\in\Gamma^+$. Let $v_1,v_2,\dots$ be an arbitrary ordering of $V(G)$. For $i\in\NN$, initialise $$X_i:= \{ (i,j): j\in[i], G[\{v_1,\dots,v_{j-1},v_{j+1},\dots,v_i\}]  \in \Gamma \}.$$	Add $(i,0)$ to $X_i$ if $G[\{v_1,\dots,v_{i}\}]  \in \Gamma$. Let $X$ be the graph with vertex-set $V(X):= \bigcup_{i\in\NN} X_i$ and all edges of the form $(i,j)(i+1,j)$, $(i-1,0)(i,i)$, or $(i,0)(i+1,0)$. We now show that for $i\geq 2$, each vertex $(i,j)\in X_i$ has a neighbour in $X_{i-1}$. If $j=0$ then $(i-1,0)$ is a neighbour of $(i,j)$ in $X_{i-1}$. 	Now suppose that $j\in[i-1]$. Then $G[\{v_1,\dots,v_{j-1},v_{j+1},\dots,v_i\}]  \in \Gamma$. Since $\Gamma$ is monotone, $G[\{v_1,\dots,v_{j-1},v_{j+1},\dots,v_{i-1}\}] \in \Gamma$, implying that $(i-1,j)$ is a neighbour of $(i,j)$ in $X_{i-1}$. 	Finally, if $j=i$, then $(i-1,0)$ is a neighbour of $(i,j)$ in $X_{i-1}$. 	So every vertex in $X_i$ has a neighbour in $X_{i-1}$. By \cref{Konig}, $X$ contains an infinite path  $(1,x_1),(2,x_2),\dots$ where $(i,x_i)\in X_i$. First consider the case that $x_i=0$ for every $i\in\NN$. Then every finite subgraph of $G$ is in $\Gamma$. Since $\Gamma$ is extendable, $G\in\Gamma\subseteq \Gamma^+$, as desired. Now suppose that $x_i\neq 0$ for some $i\in\NN$. Thus, for some $i\in \NN$, we have  $x_1=\dots =x_{i-1}=0$ and $i=x_i=x_{i+1}=x_{i+2}=\cdots$. For every finite subgraph $G'$ of $G-v_i$, there is an integer $n$, such that $G'$ is a subgraph of $G[\{v_1,\dots,v_{i-1},v_{i+1},\dots,v_n\}]$, which is in $\Gamma$. Since $\Gamma$ is 	monotone, $G'\in\Gamma$. Since $\Gamma$ is extendable,  $G-v_i\in\Gamma$. Hence $G\in\Gamma^+$ and $\Gamma^+$ is extendable. 
\end{proof}

\subsection{Simple Treewidth}
\label{SimpleTreewidth}

A tree-decomposition $(B_x:x\in V(T))$ of a graph $G$ is \hdefn{$k$}{simple}, for some $k\in\NN$,  if it has  width  at most $k$, and for every set $S$ of $k$ vertices in $G$, we have $|\{x\in V(T): S\subseteq B_x\}|\leq 2$. The \defn{simple treewidth} of a graph $G$, denoted by \defn{$\stw(G)$}, is the minimum $k\in\NN$ such that $G$ has a $k$-simple tree-decomposition. Simple treewidth appears in several places in the literature under various guises \citep{KU12,KV12,MJP06,Wulf16}. The following facts are well known: A connected finite graph has simple treewidth 1 if and only if it is a path. A finite graph has simple treewidth at most 2 if and only if it is outerplanar. A finite graph has simple treewidth at most 3 if and only if it has treewidth 3 and is planar~\citep{KV12}. The edge-maximal finite  graphs with simple treewidth 3 are ubiquitous objects, called  \defn{planar 3-trees} in structural graph theory and graph drawing~\citep{AP-SJADM96,KV12}, called \defn{stacked polytopes} in polytope theory~\citep{Chen16}, and called \defn{Apollonian networks} in enumerative and random graph theory~\citep{FT14}. It is also known and easily proved that $\tw(G) \leq \stw(G)\leq \tw(G)+1$ for every finite graph $G$ (see \citep{KU12,Wulf16}). A similar proof shows this result for infinite graphs.

Simple treewidth can be characterised in terms of chordal supergraphs. The following lemma for finite graphs is due to \citet{Wulf16}. Let $W_k$ be the graph with vertex-set $\{u,v,w,x_1,\dots,x_k\}$, where each of $\{u,x_1,\dots,x_k\}$, $\{v,x_1,\dots,x_k\}$ and $\{w,x_1,\dots,x_k\}$ are cliques, and $\{u,v,w\}$ is an independent set.

\begin{lem}
\label{SimpleTreewidthChordal}
A graph $G$ has simple treewidth at most $k\in\NN$ if and only if $G$ is a subgraph of a chordal graph containing no $K_{k+2}$ or $W_k$ subgraph.
\end{lem}

\begin{proof}
By \cref{TreeDecompositionClique}, in every tree-decomposition of $W_k$ with width $k$, the cliques $\{u,x_1,\dots,x_k\}$, $\{v,x_1,\dots,x_k\}$ and $\{w,x_1,\dots,x_k\}$ are in distinct bags. Therefore, $\{x_1, \dots, x_k\}$ is contained in at least three bags, and such a tree-decomposition is not $k$-simple. Thus $\stw(W_k)>k$.

Now consider a graph $G$ with  simple treewidth at most $k$. Starting from a $k$-simple tree-decomposition of $G$, let $G'$ be the graph obtained from $G$ by adding an edge between any two non-adjacent vertices in a common bag. We obtain a $k$-simple tree-decomposition of $G'$, implying $\stw(G')\leq k$. By \cref{ChordalCharacterisation}, $G'$ is chordal, and $W_k\not\subseteq G'$ since $\stw(W_k)>k$. By \cref{TreeDecompositionClique}, $K_{k+2}\not\subseteq G'$. 

We now prove that if $G$ is a subgraph of a chordal graph with no $K_{k+2}$ or
$W_k$ subgraph, then $\stw(G)\leq k$. It suffices to prove the result when $G$
is chordal with no $K_{k+2}$ or $W_k$ subgraph. Let $\{X_i:i\in I\}$ be the
maximal cliques of $G$. Let $T$ be the graph with vertex-set $I$, where $ij\in
E(T)$ if and only if $X_i\cap X_j\neq\emptyset$ and $i\neq j$. Since $G$ is
chordal, by \cref{ChordalCharacterisation}, $G$ has a tree-decomposition $(B_a:a\in V(T))$ in which each bag $B_a$ is a clique. We may assume that $B_a\setminus B_b\neq\emptyset$ for all distinct $a,b\in V(T)$. 
Since $K_{k+2}\not\subseteq G$, the width is at most $k$. 
If some set $X$ of $k$ vertices appears in distinct bags $B_a,B_b,B_c$, then $B_a\cup B_b\cup B_c$
induce $W_k$. Thus $(B_a:a\in V(T))$ is a $k$-simple tree-decomposition of $G$, and $\stw(G)\leq k$. 
\end{proof}

By \cref{SimpleTreewidthChordal}, if $\FF_k=\{K_{k+2},W_k\}$ then $\FF$-width equals simple treewidth, and \cref{FwidthExtendable} implies:

\begin{lem}
\label{SimpleTreewidthExtendable}
Simple treewidth is extendable. 
\end{lem}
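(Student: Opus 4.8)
The plan is to deduce this immediately from \cref{FwidthExtendable}. Recall the observation already made in the excerpt: if one sets $\FF=(\FF_k:k\in\NN)$ with $\FF_k:=\{K_{k+2},W_k\}$, then by \cref{SimpleTreewidthChordal} the $\FF$-width of a graph equals its simple treewidth. So the only thing left to do is to check that this particular $\FF$ really is a \emph{forbidden pattern} in the sense defined just before \cref{FwidthExtendable}, and then quote that lemma.

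First I would verify the forbidden-pattern condition: for every $k\in\NN$, every graph in $\FF_{k+1}$ must contain a subgraph in $\FF_k$. There are two graphs to check. The complete graph $K_{k+3}$ trivially contains $K_{k+2}\in\FF_k$. For $W_{k+1}$, whose vertex set is $\{u,v,w,x_1,\dots,x_{k+1}\}$ with each of $\{u,x_1,\dots,x_{k+1}\}$, $\{v,x_1,\dots,x_{k+1}\}$, $\{w,x_1,\dots,x_{k+1}\}$ a clique, note that $\{u,x_1,\dots,x_{k+1}\}$ is a clique of size $k+2$, so $W_{k+1}$ contains $K_{k+2}\in\FF_k$. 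Hence $\FF$ is a forbidden pattern.

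Second, by \cref{SimpleTreewidthChordal}, $\stw(G)\le k$ if and only if $G$ is a subgraph of a chordal graph with no $K_{k+2}$ and no $W_k$ subgraph, which is precisely the statement that the $\FF$-width of $G$ is at most $k$. Thus $\stw$ and $\FF$-width are the same graph parameter for this choice of $\FF$, and since \cref{FwidthExtendable} asserts that $\FF$-width is extendable, it follows that simple treewidth is extendable, as required. There is essentially no obstacle here: all the real work is already done in \cref{FwidthExtendable} (whose proof uses Zorn's Lemma to pass to an edge-maximal chordal supergraph, for which the finite-subgraph hypothesis forbids every member of $\FF_k$) and in \cref{SimpleTreewidthChordal}; the only point needing a moment's care is the routine check that $W_{k+1}\supseteq K_{k+2}$, so that $(\{K_{k+2},W_k\})_{k\in\NN}$ genuinely satisfies the nesting requirement of a forbidden pattern.
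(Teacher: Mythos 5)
Your proposal is correct and matches the paper's own argument: the paper derives this lemma directly from \cref{FwidthExtendable} by taking $\FF_k=\{K_{k+2},W_k\}$ and invoking \cref{SimpleTreewidthChordal}. Your explicit verification that this $\FF$ satisfies the nesting condition of a forbidden pattern (via $K_{k+2}\subseteq K_{k+3}$ and $K_{k+2}\subseteq W_{k+1}$) is a correct detail the paper leaves implicit.
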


For $k\in\NN$, define the directed graph $\RR_k$ as follows. 
Recall that $\TT^{(k+1)}$ is the universal $(k+1)$-regular tree defined in \cref{UniversalTrees}. 
Fix an orientation of $\TT^{(k+1)}$ in which each vertex has in-degree 1 and out-degree $k$, which exists by \cref{TreeOrientation}. 

We now construct a particular colouring $c:V(\TT^{(k+1)})\to[0,k]$. Let $Q=(q_i:i\in\ZZ)$ be a 2-way infinite path in $\TT^{(k+1)}$ with every edge in $Q$ oriented from $q_i$ to $q_{i+1}$. We call $Q$ the \defn{central path} of $\TT^{(k+1)}$. Let $c(q_i):= i\bmod{(k+1)}$ for each $i\in\ZZ$. We say $Q$ is coloured `modulo $k+1$'. We now extend this colouring of $Q$ to all of $\TT^{(k+1)}$, such that: 
\begin{enumerate}[(i)] 
\item adjacent vertices are assigned distinct colours, 
\item sibling vertices (that is, with a common parent) are assigned distinct colours.
\end{enumerate}

To do so, consider the vertices $v\in V(\TT^{(k+1)})\setminus V(Q)$ in non-decreasing order of their distance from $Q$. 
Let $u$ be the parent of $v$ in $\TT^{(k+1)}$. 
So $u$ is already coloured. 
Let $w_1,\dots,w_p$ be the siblings of $v$ that are already coloured. 
No other neighbour of $v$ is already coloured. 
Choose $c(v) \in [0,k]\setminus\{c(u),c(w_1),\dots,c(w_p)\}$, which is non-empty since $p\leq k-1$ (since $u$ has out-degree $k$ in $\TT^{(k+1)}$). 
Colour all of $\TT^{(k+1)}$ by this procedure. 
Properties (i) and (ii) are immediate. 
This completes the construction of $c$.

Now define $\RR_k:=\GGG{\TT^{(k+1)}}{c}$. For example, \cref{UniversalOuterplanar,UniversalPlanar} illustrate $\RR_2$ and $\RR_3$. 

\begin{figure}[!b]
\centering
\includegraphics[width=\textwidth]{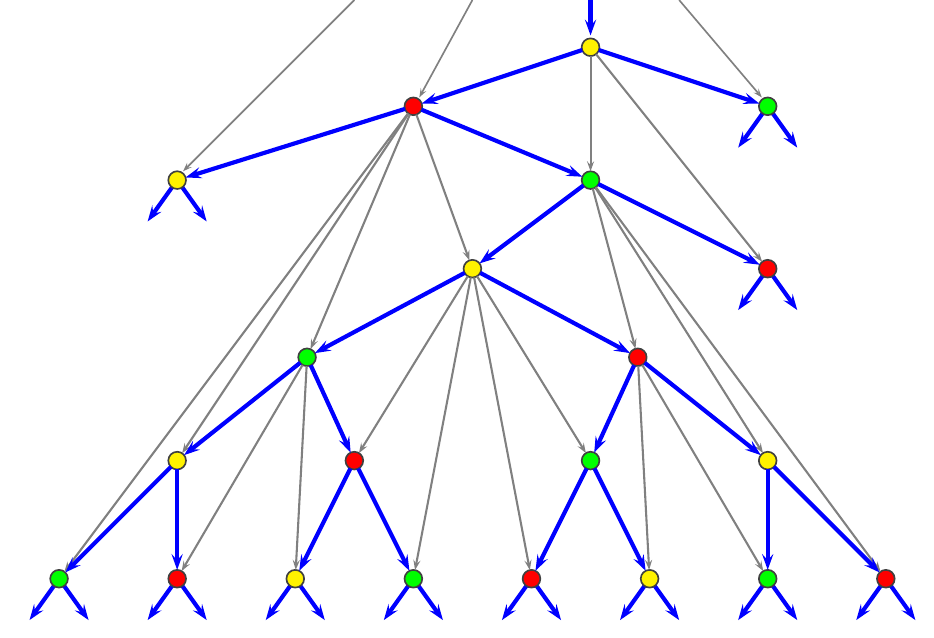}
\caption{Illustration of the outerplanar graph $\RR_2$ with the 1-oriented 3-regular universal tree highlighted.}
\label{UniversalOuterplanar}
\end{figure}

\begin{figure}[!t]
    \centering
    \includegraphics[width=\textwidth]{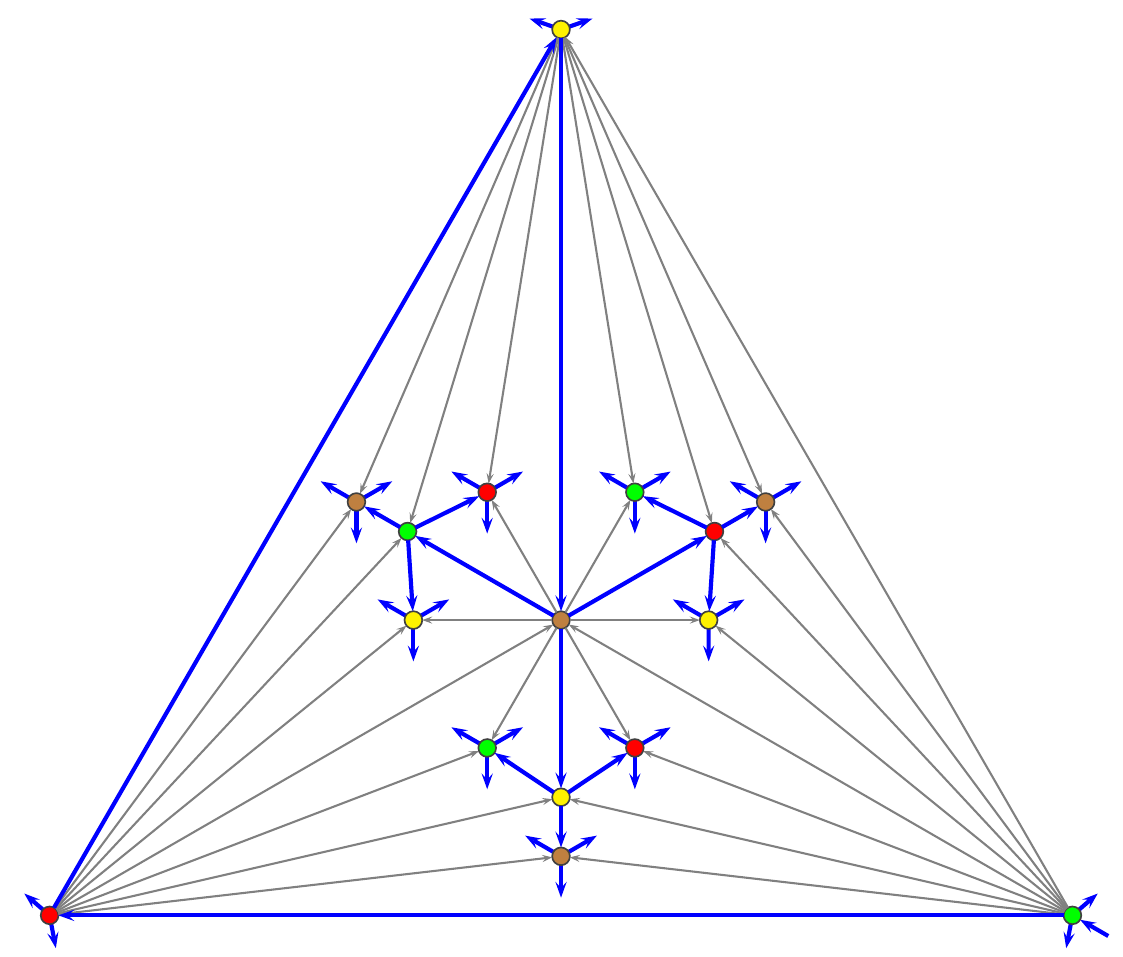}
    \caption{Illustration of the planar graph $\RR_3$ with the 1-oriented 4-regular universal tree highlighted. }
    \label{UniversalPlanar}
\end{figure}

\begin{lem}
\label{SimpleTreewidthRk}
$\RR_k$ has simple treewidth $k$.
\end{lem}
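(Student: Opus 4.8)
The plan is to prove $\stw(\RR_k)\le k$ and $\stw(\RR_k)\ge k$ separately. Throughout, write $T:=\TT^{(k+1)}$ with its fixed in-degree-$1$ orientation; then $T$ is a $1$-oriented tree and $c$ is a proper $(k+1)$-colouring of $T$ (adjacent vertices of $T$ lie in a common vertex-plus-out-neighbours set and so get distinct colours). By \cref{FindSpanningTree}, $\RR_k=\GGG{T}{c}$ is chordal with no $K_{k+2}$ subgraph and is simplicially $k$-oriented when each edge is oriented from its ancestor endpoint to its descendant endpoint; in particular the in-neighbourhood $N^-(v)$ of each vertex $v$ is a clique of size at most $k$, and every clique of $\RR_k$ is a chain in the ancestor order of $T$ (adjacency in $\GGG{T}{c}$ forces comparability).

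For the lower bound I would exhibit a $K_{k+1}$: greedily build a directed path $x_0\to x_1\to\cdots\to x_k$ in $T$ with $c(x_0),\dots,c(x_k)$ pairwise distinct. This is possible because the $k$ children of $x_i$ carry exactly the colours $[0,k]\setminus\{c(x_i)\}$, of which at most $i$ have already been used, leaving a free colour whenever $i<k$. Since all colours on the path differ, $x_i$ is the unique vertex coloured $c(x_i)$ on every subpath $x_i\cdots x_j$, so $x_ix_j\in E(\RR_k)$ for all $i<j$ and $\{x_0,\dots,x_k\}$ induces $K_{k+1}$. Hence $\tw(\RR_k)\ge\tw(K_{k+1})=k$, and therefore $\stw(\RR_k)\ge\tw(\RR_k)\ge k$.

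For the upper bound, by \cref{SimpleTreewidthChordal} it suffices to show $\RR_k$ contains no $W_k$ subgraph; this is the main obstacle. Suppose it does: there is a $k$-clique $S$ and three distinct vertices $u,v,w\notin S$ each adjacent to all of $S$. Order $S$ as $x_1\prec\cdots\prec x_k$ along a directed path of $T$. For $i<j$ the vertex $x_j$ lies on the $x_i$--$x_j$ path, so $c(x_i)\ne c(x_j)$; thus $c$ is injective on $S$ and $c(S)=[0,k]\setminus\{\gamma\}$ for a unique colour $\gamma$, and since each $S\cup\{z\}$ ($z\in\{u,v,w\}$) is a $(k+1)$-clique on which $c$ must likewise be injective, $c(u)=c(v)=c(w)=\gamma$. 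Classify $z$ by the deepest vertex (maximum in the ancestor chain) of the clique $S\cup\{z\}$, which is either $x_k$ or $z$ itself, the latter only when $z$ is a strict descendant of $x_k$. If it is $x_k$, then $z\in N^-(x_k)$ and $S\cup\{z\}\subseteq\{x_k\}\cup N^-(x_k)$, a set of size at most $k+1=|S\cup\{z\}|$, so $S\cup\{z\}=\{x_k\}\cup N^-(x_k)$ is the same set for all such $z$, forcing them to coincide. If it is $z$, then $S\subseteq N^-(z)$ and $|N^-(z)|\le k=|S|$ give $N^-(z)=S$; writing the directed $T$-path from $x_k$ to $z$ as $x_k=p_0\to p_1\to\cdots\to p_m=z$, if $m\ge 2$ then $p_{m-1}$ is a strict descendant of $x_k$, so $p_{m-1}\notin S=N^-(z)$, meaning $p_{m-1}$ is not the unique vertex coloured $c(p_{m-1})$ on the one-edge path $p_{m-1}p_m$, i.e.\ $c(p_{m-1})=c(p_m)$, contradicting $p_{m-1}p_m\in E(T)$ and properness of $c$; hence $m=1$, so $z$ is a child of $x_k$, and since $c(z)=\gamma\ne c(x_k)$ while the $k$ children of $x_k$ receive the distinct colours $[0,k]\setminus\{c(x_k)\}$, $z$ is forced to be the unique child of $x_k$ coloured $\gamma$. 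Each class thus contains at most one of $u,v,w$, contradicting that there are three of them. So $\RR_k$ has no $W_k$ subgraph, \cref{SimpleTreewidthChordal} gives $\stw(\RR_k)\le k$, and with the lower bound $\stw(\RR_k)=k$. I expect the colour bookkeeping (injectivity of $c$ on cliques, the missing colour $\gamma$) and the $m\ge 2$ case to need the most care.
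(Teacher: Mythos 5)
Your proof is correct, but it takes a genuinely different route from the paper. The paper proves \cref{SimpleTreewidthRk} by exhibiting an explicit $k$-simple tree-decomposition of $\RR_k$: the bags are the closed in-neighbourhoods $B_x$ indexed by $V(\TT^{(k+1)})$, and the heart of the argument is that any $k$-set lying in some bag is a clique whose colours miss exactly one colour $\alpha$, so it can only lie in the bag of its deepest vertex $u$ and in the bag of the unique child of $u$ coloured $\alpha$ --- essentially the same deepest-vertex/missing-colour bookkeeping you use, but deployed to bound the number of bags rather than the number of apices of a $W_k$. You instead combine \cref{FindSpanningTree} (chordality, no $K_{k+2}$, in-neighbourhoods are cliques of size at most $k$) with the forbidden-subgraph characterisation \cref{SimpleTreewidthChordal}, and rule out $W_k$ by showing that at most one apex $z$ can satisfy $S\cup\{z\}=\{x_k\}\cup N^-(x_k)$ and at most one can be the child of $x_k$ coloured $\gamma$; your case analysis is sound, including the $m\ge 2$ step, since the parent of $z$ is always adjacent to $z$ in $\GGG{T}{c}$ by properness of $c$ on $T$, yet would have to lie in $N^-(z)=S$. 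What the paper's route buys is an explicit decomposition (conceptually parallel to the decomposition it later builds in \cref{SimpleTreewidthTreeDecomposition}); what your route buys is that you avoid verifying the tree-decomposition axioms and, for the lower bound, your greedy rainbow descending path produces a $K_{k+1}$ for \emph{every} admissible colouring $c$, which is a little more robust than the paper's remark that each bag is a clique of size exactly $k+1$ (a claim that implicitly needs every vertex to have in-degree exactly $k$, whereas existence of a single $K_{k+1}$ is all the lemma requires).
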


\begin{proof}
Let $\TT^{(k+1)}$ be the oriented tree, let $Q$ be the 2-way infinite path in $\TT^{(k+1)}$,  and let $c$ be the $(k+1)$-colouring used in the definition of $\RR_k$. Note that $c$ is a $(k+1)$-colouring of $\RR_k$. The orientation of $\TT^{(k+1)}$ determines an acyclic orientation of $\RR_k$. For each vertex $x$ of $\TT^{(k+1)}$, let $P_x$ be the path induced by $x$ and its ancestors in $\TT^{(k+1)}$. Thus, $P_x$ and $P_x\cap Q$ are backward oriented 1-way infinite paths. Since $Q$ is coloured modulo $k+1$, all $k+1$ colours appear on $P_x \cap Q$. Thus $x$ has indegree $k$ in $\RR_k$. Let $B_x$ be the closed in-neighbourhood of $x$. So $|B_x|=k+1$.

We now show that $(B_x:x\in V(\TT^{(k+1)}))$ is a $k$-simple tree-decomposition of $\RR_k$. By construction, each edge $vw$ of $\RR_k$ has its ends in $B_w$. In fact, $B_w$ is a clique of size $k+1$ in $\RR_k$, and $c(x) \neq c(y)$ for all distinct $x,y \in B_w$. 
Consider the set of bags that contain a given vertex $u$.  Say $u$ is in the bag $B_w$ for some $w\neq u$. Then $uw\in E(\RR_k)$ and $w$ is a descendant of $u$. Let $v$ be any vertex on the $uw$-path in $\TT^{(k+1)}$. Since $uw\in E(\RR_k)$, we have  $uv\in E(\RR_k)$ and $u$ is also in $B_v$. Hence the sets of bags that contain $u$ form a connected subtree of $\TT^{(k+1)}$. Thus $(B_u:u\in V(\TT^{(k+1)}))$ is a tree-decomposition of $\RR_k$ of width at most $k$. 

We now show that this tree-decomposition is $k$-simple. Since every bag is a clique, it suffices to show that every clique $S$ of $k$ vertices is contained in at most two bags. Let $C$ be the set of colours assigned to vertices in $S$. Since $S$ is a clique, $|C|=k$. Let $\alpha$ be the element of $[0,k]\setminus C$. Let $u$ be the vertex in $S$ that is a sink in the acyclic orientation of $\RR_k[S]$. By construction, $S\subseteq B_u$. Consider a vertex $v\neq u$ such that $S\subseteq B_v$. For each vertex $w\in S\setminus\{u\}$, we have $u\not\in B_w$. Thus $v\not\in S$. Since $B_v$ is a clique, $v$ is coloured $\alpha$. Since $S\subseteq B_v$, $v$ is a descendant of $u$ in $\TT^{(k+1)}$. Suppose that $v$ is not a child of $u$ in $T$. Let $w$ be any internal vertex on the $uv$-path in $T$. Then the colour of $w$ is assigned to some vertex in $S$, which implies that $S\not\subseteq B_v$. Thus $v$ is a child of $u$. By property (ii) there is only one child of $u$ coloured $\alpha$. So $S$ is in at most two bags.

Therefore $(B_u:u\in V(\TT^{(k+1)}))$ is a $k$-simple tree-decomposition of $\RR_k$. Since $\RR_k$ contains $K_{k+1}$, it has simple treewidth $k$. \end{proof}

There are infinite graphs with simple treewidth $k$ that are not subgraphs of $\RR_k$. The first example is the disjoint union of countably many infinite 2-way paths, which has simple treewidth 1, but is not a subgraph of $\RR_1$. Examples for all $k$ are easily constructed. Nevertheless, 
\cref{SimpleTreewidthTreeDecomposition} below characterises graphs with simple treewidth $k$. The proof depends on the following `normalisation' lemma (which is trivial in the finite case). 
In a tree-decomposition $(B_x:x\in V(T))$ of a graph $G$, an  edge $xy\in E(T)$ is \defn{superfluous} if $B_x\subseteq B_y$ or $B_y\subseteq B_x$. 

\note{We introduced \cref{Superfluous} to avoid the `repeat-this-step-forever' argument in \cref{SimpleTreewidthTreeDecomposition}.}

\begin{lem}
\label{Superfluous}
For any tree-decomposition $\DD$ of a graph $G$ with width $k$, there is a tree-decomposition $\DD'$ of $G$ with width $k$ and with no superfluous edges. Moreover, if $\DD$ is $k$-simple, then $\DD'$ is $k$-simple. 
\end{lem}
\begin{proof}
Say $\DD=(B_x:x\in V(T))$.  Define a partition $\Pi$ of $V(T)$ to be \defn{valid} if for each $X \in \Pi$, $X$ induces a connected subtree of $T$ and there exists $x \in X$ such that $B_x=\bigcup_{y \in X} B_y$.    Let $(\mathcal{V}, \preceq)$ be the poset of valid partitions of $V(T)$, where $\Pi_1 \preceq \Pi_2$ if $\Pi_1$ is a refinement of $\Pi_2$. Note that $\mathcal{V}$ is non-empty since the singletons are a valid partition of $V(T)$. Let $\mathcal{C}$ be a chain in $\mathcal{V}$.  Define a partition $\Pi_{\mathcal{C}}$ where $x,y \in V(T)$ are in the same part if and only they are in the same part of some $\Pi \in \mathcal{C}$.  Note that $\Pi_{\mathcal{C}}$ is a valid partition and hence is an upperbound for $\mathcal{C}$.  By Zorn's lemma (\cref{Zorn}), $(\mathcal{V}, \preceq)$ has a maximal element $\mathcal{M}$.  Let $\DD'$ be the tree-decomposition obtained from $\DD$ by contracting each $P \in \mathcal{M}$ to a single vertex $x_P$ and defining $B_{x_P}:=\bigcup_{y \in P} B_y$. 

By the definition of `valid', for each $P \in \mathcal{M}$ there exists $p \in P$ such that $B_p=B_{x_P}$. Thus, every bag of $\DD'$ is a bag of $\DD$, and every largest bag of $\DD$ is a bag of $\DD'$, so the width of $\DD'$ equals $k$. Towards a contradiction, suppose $x_Px_Q$ is an edge of $\DD$ such that $B_{x_P} \subseteq B_{x_Q}$.  By the definition of `valid', there exists $q \in Q$ such that $B_q=B_{x_Q}$.  Thus, the partition of $V(T)$ obtained from $\mathcal{M}$ by merging $P$ and $Q$ contradicts the maximality of $\mathcal{M}$.  Finally, observe that the contraction operation used above maintains $k$-simplicity. 
\end{proof}

\begin{lem}
\label{SimpleTreewidthTreeDecomposition} 
For $k\in\NN$, a graph has simple treewidth at most $k$ if and only if it has a tree-decomposition over $\RR_k$ with adhesion at most $k-1$.
\end{lem}

\begin{proof}
Let $G$ be a graph with simple treewidth $k$. Let $(B_x:x\in V(T))$ be a $k$-simple tree-decomposition of $G$. We may assume that $B_x$ is a clique  
in $G$ for each node $x\in V(T)$. By \cref{Superfluous}, we may assume that no edge of $T$ is superfluous. 
Say an edge $xy\in E(T)$ is \defn{thick} if $|B_x\cap B_y|=k$, which implies $|B_x|=|B_y|=k+1$ since no edge is superfluous. For each vertex $x\in V(T)$ there are at most $k+1$ possible values for $B_x\cap B_y$ where $xy$ is a thick edge incident to $x$. So if $x$ is incident with $k+2$ thick edges, then $|B_x\cap B_y|=|B_x\cap B_z|$ for distinct $y,z\in N_T(x)$, implying three bags contain $B_x\cap B_y$, which contradicts the $k$-simplicity of the tree-decomposition. Hence each vertex $x\in V(T)$ is incident with at most $k+1$ thick edges. Let $F$ be the forest with $V(F):=V(T)$ consisting only of the thick edges. Hence $F$ has maximum degree at most $k+1$. 

\note{Minor revisions here, addressing referee comments.} 
Let $F_1,F_2,\dots$ be the connected components of $F$. Let $G_i := G [ \bigcup\{ B_x : x\in V(F_i)\}]$ for $i\in\NN$. 
For each vertex $v\in V(G_i)$, initialise $F_{i,v}$ to be the subtree $F_i[ \{ x\in V(F_i): v \in B_x \} ]$. Let $c:V(G)\to[0,k]$ be a $(k+1)$-colouring of $V(G)$ such that vertices in a common bag are assigned distinct colours (which exists since $\chi(G)\leq\tw(G)+1$ using the de~Bruijn--Erd\H{o}s~Theorem~\citep{dBE51}). 

We now show that $G_i$ is contained in $\RR_k$. To do so, we first manipulate $F_i$ so that $V(F_i)=V(G_i)$ and the colouring $c$ of $G_i$ applies to $F_i$. We then show that $G_i \subseteq \GGG{F_i}{c}$ and that $\GGG{F_i}{c}$ is contained in $\RR_k$, implying that $G_i$ is contained in $\RR_k$. We distinguish two cases. 

Case 1. $F_i$ has a node $s$ of degree at most $k$: 
\note{Added more detail here.} 
Orient every edge of $F_i$ away from $s$. Now $F_i$ is 1-oriented, and every node of $F_i$ has out-degree at most $k$. Say $B_s=\{v_1,\dots,v_p\}$ where $p\in[k]$. Add new nodes $s_1,\dots,s_{p-1}$ to $F_i$, where  $(s_1,\dots,s_{p-1},s)$ is a directed path, and let $B_{s_i}:=\{v_1,\dots,v_i\}$ for each $i\in[p-1]$. Observe that $B_x$ is still a clique for each node $x\in V(F)$, and vertices in a common bag are still assigned distinct colours under $c$. Consider $F_i$ to be rooted at $s_1$. Now every node of $F_i$ has out-degree at most $k$ and in-degree 1, except for $s_1$ which has in-degree 0. Note that (a) $|B_{s_1}|=1$, and (b) for every arc $xy$ of $F_i$, we have that $|B_y\setminus B_x|=1$ (since $xy$ is thick, or by construction for arcs $s_i s_{i+1}$). In case (a) rename $s_1$ by the element of $B_{s_1}$. 
In case (b), rename $y$ by the element of $B_y\setminus B_x$. 
For each vertex $v$ of $G_i$ there is a node $y$ with parent $x$ in $F_i$ such that $B_y\setminus B_x=\{v\}$. Thus, after this renaming, $V(F_i)=V(G_i)$ 
Consider the colouring $c$ of $G_i$ to also be a colouring of $F_i$. 
We now show that $G_i \subseteq \GGG{F_i}{c}$ (using a similar argument to the proof of \cref{TreewidthCharacterisation}). By construction, $V(G_i) = V(\GGG{F_i}{c})$. Consider an edge $vw\in E(G_i)$. Then $c(v)\neq c(w)$. Since $v$ and $w$ appear in a common bag, without loss of generality, $w$ is a descendant of $v$ in $F_i$. Consider some vertex $x$ on the $vw$-path in $F_i$. Since $v\in B_v\cap B_w$, it follows that $v$ is in $B_x$, implying $c(v)\neq c(x)$. Hence $vw\in E(\GGG{F_i}{c})$ by the definition of $E(\GGG{F_i}{c})$. Therefore $G_i\subseteq \GGG{F_i}{c}$. 

We now show that $G_i$ is contained in $\RR_k$. Since $G_i\subseteq \GGG{F_i}{c}$, it  suffices to show that there is a colour-preserving orientation-preserving isomorphism $\phi$ from $F_i$ to $\TT^{(k+1)}$, where the colouring of $V(F_i)$ is inherited from the colouring of $G_i$ (since $V(F_i)=V(G_i))$ and the colouring of $V(\TT^{(k+1)})$ is given in the definition of $\RR_k$ above. Map the root $s$ of $F_i$ to $\phi(s)$, where $\phi(s)$ is any vertex of $\TT^{(k+1)}$ with the same colour as $s$.  Now inductively assume that $\phi(t)$ has already been defined for all vertices $t$ of $F_i$ at distance at most $d$ from $s$.  Let $u$ be a vertex of $F_i$ at distance $d+1$ from $s$ and $w$ be the parent of $u$ in $F_i$.  By the definition of $\TT^{(k+1)}$, $\phi(w)$ has a child $u'$ of the same colour as $u$.  Thus, we may define $\phi(u)$ to be $u'$.

Case 2. $F_i$ is $(k+1)$-regular: 
\note{Added more detail here. } 
Assign each edge $xy$ of $F_i$ the unique colour not present on the vertices of $B_x\cap B_y$, which is well-defined since there are exactly $k$ vertices in $B_x\cap B_y$ and they are assigned distinct colours. Since the tree-decomposition is $k$-simple, $F_i$ is now properly $(k+1)$-edge coloured, and every colour is present at each node of $F_i$. Thus there exists an infinite 2-way path 
$P=(v_j:j\in\ZZ)$ in $F_i$, where each edge $v_jv_{j+1}$ in $P$ is coloured $j\bmod{(k+1)}$. Orient each edge $v_jv_{j+1}$ of $P$ from $v_j$ to $v_{j+1}$ and orient each edge of $F_i-E(P)$ away from $P$. By construction, each vertex of $F_i$ has in-degree 1 and out-degree $k$. 
For each vertex $v$ of $G_i$ coloured $\alpha\in[0,k]$, by construction, each edge $xy$ of $F_{i,v}$ is not coloured $\alpha$. Since $F_{i,v}$ intersects $P$ in a subpath, $F_{i,v}$ intersects at most $k$ edges in $P$. In particular, some edge of $P$ is not in $F_{i,v}$. 
If $F_{i,v}$ does not intersect $P$, then the vertex of $F_{i,v}$ closest to $P$ has in-degree 0 in $F_{i,v}$. 
If $F_{i,v}$ does intersect $P$, then the vertex $v_a$ in $P\cap F_{i,v}$ with $a$ minimum has in-degree 0 in $F_{i,v}$ since $v_{a-1}v_a$ is the incoming edge at $v_a$ and $v_{a-1}$ is not in $F_{i,v}$
We have shown that $F_{i,v}$ has a vertex $x_v$ of in-degree 0 for each vertex $v\in V(G_i)$. 
Suppose that $x_v=x_w$ for distinct vertices $v,w\in V(G_i)$. Let $x:=x_v$. 
Let $yx$ be the incoming arc at $x$. 
Since $x$ has in-degree 0 in both $F_{i,v}$ and $F_{i,w}$, we have $v,w\in B_x \setminus B_y$, which contradicts the fact that $xy$ is thick. 
Hence $x_v\neq x_w$ for distinct vertices $v,w\in V(G_i)$. 
Each node $x \in V(F_i)$ equals $x_v$ for some $v\in V(G_i)$. Rename each $x_v$ by $v$. Now $V(F_i)=V(G_i)$. 
Consider the colouring $c$ of $G_i$ to also be a colouring of $F_i$. 
We now apply the argument in the proof of \cref{TreewidthCharacterisation} to conclude that $G_i \subseteq \GGG{F_i}{c}$. By construction, $V(G_i) = V(\GGG{F_i}{c})$. Consider an edge $vw\in E(G_i)$. Then $c(v)\neq c(w)$. Since $v$ and $w$ appear in a common bag, without loss of generality, $w$ is a descendant of $v$. Consider some vertex $x$ on the $vw$-path in $F_i$. Since $v\in B_v\cap B_w$, it follows that $v$ is in $B_x$, implying $c(v)\neq c(x)$. Hence $vw\in E(\GGG{F_i}{c})$ by the definition of $E(\GGG{T}{c})$. Therefore $G_i\subseteq \GGG{F_i}{c}$. 

We now show that $\GGG{F_i}{c}$ is contained in $\RR_k$. It suffices to show that there is a colour-preserving orientation-preserving isomorphism $\phi$ from $F_i$ to $\TT^{(k+1)}$. Here `colour-preserving' is with respect to the vertex-colouring of $F_i$ (not the edge-colouring) and with respect to the vertex-colouring of $\TT^{(k+1)}$ given in the definition of $\RR_k$ above. Up to isomorphism, $\TT^{(k+1)}$ is the unique oriented tree with in-degree 1 and out-degree $k$ at every node. So $F_i$ is isomorphic to $\TT^{(k+1)}$. In the colouring of $\TT^{(k+1)}$ given in the definition of $\RR_k$, the central path of $\TT^{(k+1)}$ is coloured modulo $k+1$. The path $P$ in $F_i$ is also coloured modulo $k+1$. Thus there is colour-preserving orientation-preserving isomorphism from $P$ to the central path of $T^{(k+1)}$, which determines a colour-preserving orientation-preserving isomorphism from $F_i$ to $\TT^{(k+1)}$ (by  considering the vertices of $F_i$ in order of their distance from $P$). Hence $\GGG{F_i}{c}$ is contained in $\RR_k$. Since $G_i\subseteq \GGG{F_i}{c}$, $G_i$ is contained in $\RR_k$. (In fact, in this case, $G_i$, $\GGG{F_i}{c}$ and $\RR_k$ are pairwise isomorphic.)\ 

Let $T'$ be obtained from $T$ by contracting each subtree $F_i$ into a vertex $x_i$. Then $(V(G_i): x_i \in V(T'))$ is a tree-decomposition of $G$. Since each bag $B_x$ is a clique in $G$, the torso of each node $x_i$ is $G_i$ itself. Hence  $(V(G_i): x_i \in V(T'))$ is a tree-decomposition of $G$ over $\RR_k$. 

We now prove the converse. Let $(B_x:x\in V(T))$ be a tree-decomposition of a graph $G$ over $\RR_k$ with adhesion $k-1$. Let $G_x$ be the torso of $x\in V(T)$. So $G_x$ is contained in $\RR_k$. By \cref{SimpleTreewidthRk}, $G_x$ has a $k$-simple tree-decomposition $(B^x_y: y \in V(T^x))$. Initialise $F$ to be the disjoint union of $\{T^x:x\in V(T)\}$. Associate with each node $y$ of $F$ the corresponding bag $B_y:= B^x_y$.  
Then for each edge $x_1x_2\in V(T)$, let $B^{x_1}_{y_1}$ and $B^{x_2}_{y_2}$ 
be bags respectively in the tree-decomposition of $G_{x_1}$ and $G_{x_2}$ with $B_{x_1}\cap B_{x_2} \subseteq B^{x_1}_{y_1} \cap B^{x_2}_{y_2}$, which exist since $B_{x_1}\cap B_{x_2}$ is a clique in $G_{x_1}$ and in $G_{x_2}$ (by the definition of torso). Add an edge in $F$ between $y_1$ and $y_2$. We obtain a tree-decomposition $(B_y:y\in V(F))$ of $G$. Since the tree-decomposition of each $G_x$ is $k$-simple, and $|B_{x_1}\cap B_{x_2}|\leq k-1$ for each edge $x_1x_2$ of $T$, the tree-decomposition  $(B_y:y\in V(F))$ of $G$ is also $k$-simple. Hence $G$ has simple treewidth at most $k$. 
\end{proof}

Let $\SS_k:= \widehat{\RR_k}^{k-1}$ defined in \cref{TreeDecompUniversalAdhesion}. Then  \cref{SimpleTreewidthTreeDecomposition,TreeDecompUniversalAdhesion} imply:

\begin{thm}
\label{SimpleTreewidthUniversal}
For each $k\in\NN$, the graph $\SS_k$ is universal for the class of graphs with simple treewidth at most $k$. 
\end{thm}

We now focus on graphs with simple treewidth 2 or 3. 

\begin{thm}
$\SS_2$ is universal for the class of outerplanar graphs.
\end{thm}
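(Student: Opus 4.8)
The plan is to derive this from \cref{SimpleTreewidthUniversal} by establishing that a countable graph is outerplanar if and only if it has simple treewidth at most $2$; then $\SS_2$, being universal for the class of graphs of simple treewidth at most $2$, is precisely a universal graph for the class of outerplanar graphs. So the whole task reduces to proving this equivalence of graph classes.

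First I would record that outerplanarity is extendable. For a \emph{finite} graph $H$ it is standard that $H$ is outerplanar if and only if $H+K_1$ is planar. Hence, if every finite subgraph of a countable graph $G$ is outerplanar, then every finite subgraph of $G+K_1$ is planar (a finite subgraph of $G+K_1$ is contained in $H+K_1$ for some finite subgraph $H$ of $G$), so $G+K_1$ is planar by \cref{PlanarityExtendable}; deleting the apex vertex from a plane embedding of $G+K_1$ leaves all of $V(G)$ on a single face, so $G$ is outerplanar. Thus $G$ is outerplanar if and only if every finite subgraph of $G$ is outerplanar. Combined with the well-known characterisation that a finite graph has simple treewidth at most $2$ if and only if it is outerplanar \citep{KU12,KV12,Wulf16}, this already gives one direction: if $\stw(G)\le 2$ then every finite subgraph of $G$ has simple treewidth at most $2$ (restrict a $2$-simple tree-decomposition of $G$), hence is outerplanar, hence $G$ is outerplanar.

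For the converse I would produce a chordal supergraph with no $K_4$ or $W_2$ subgraph and invoke \cref{SimpleTreewidthChordal}. Given an outerplanar $G$, \cref{Zorn} together with the extendability above (a union of a chain of outerplanar graphs on $V(G)$ is outerplanar) yields an edge-maximal outerplanar graph $G'$ on $V(G)$ with $G\subseteq G'$. Fix an embedding of $G'$ in which every vertex lies on the outer face; then no vertex of $G'$ lies in the interior of any cycle of $G'$, and no edge of $G'$ lies in the interior of a chordless cycle of $G'$. Consequently, a chordless cycle of length at least $4$ in $G'$ could be given an interior chord without disturbing the outer face, contradicting maximality; so $G'$ is chordal. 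Since $K_4$ is not outerplanar, and $W_2$ contains $K_{2,3}$ and so is not outerplanar either, $G'$ has no $K_4$ and no $W_2$ subgraph, and \cref{SimpleTreewidthChordal} gives $\stw(G)\le 2$.

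The step I expect to need the most care is the converse direction in the infinite setting: one must check that Zorn's lemma really delivers an edge-maximal outerplanar \emph{countable} supergraph (which is exactly why it is worth establishing extendability of outerplanarity first), and that the topological facts used -- all vertices on a common face, no vertex inside a cycle, and the freedom to route an interior chord across a chordless long cycle -- carry over verbatim to countably infinite plane graphs. No separate extendability statement for ``simple treewidth at most $2$'' is needed, as it follows a posteriori from the equivalence with outerplanarity.
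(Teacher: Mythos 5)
Your proposal is correct, and it reaches the theorem by a partially different route than the paper. The common core is the reduction to \cref{SimpleTreewidthUniversal} together with the folklore fact that a finite graph is outerplanar if and only if it has simple treewidth at most $2$. Where you diverge is in handling the infinite case. The paper proves that $\SS_2$ contains every outerplanar graph by lifting the finite fact via extendability of simple treewidth (\cref{SimpleTreewidthExtendable}, a special case of the Zorn-based \cref{FwidthExtendable}), and it proves membership ($\SS_2$ is itself outerplanar) directly from the adhesion-$1$ tree-decomposition of $\SS_2$ over the outerplanar graph $\RR_2$. You instead establish the full class equivalence ``countable $G$ is outerplanar iff $\stw(G)\le 2$'': the easy direction via your extendability of outerplanarity (apex trick plus \cref{PlanarityExtendable}), and the harder direction by applying \cref{Zorn} to outerplanar supergraphs on $V(G)$, showing an edge-maximal one is chordal with no $K_4$ or $W_2$, and invoking \cref{SimpleTreewidthChordal}; membership of $\SS_2$ then falls out of the equivalence rather than the tree-decomposition argument. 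Your chordal-completion step is essentially a geometric re-proof, specialised to outerplanarity, of what \cref{FwidthExtendable} does purely combinatorially, and it buys you a slightly stronger statement (the equivalence of the two classes for countable graphs) without needing a separate extendability result for simple treewidth. The price is reliance on topological facts about countably infinite plane embeddings --- that an outerplanar $G'$ admits an embedding with all vertices on a common face (your own apex-deletion argument supplies this), that chordless cycles then have a vertex- and edge-free side, and that a chord can be inserted there preserving outerplanarity; these are true and at a level of rigour comparable to the paper's own unproved assertion that an adhesion-$1$ tree-decomposition with outerplanar torsos yields an outerplanar graph, but they are exactly the points you rightly flag as needing care, whereas the paper's universality direction avoids plane topology altogether.
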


\begin{proof}
If a graph $G$ has a tree-decomposition of adhesion 1 in which every torso is outerplanar, then $G$ is also outerplanar. By definition, $\SS_2$ has a tree-decomposition of adhesion 1 in which every torso is isomorphic to $\RR_2$ (which is outerplanar). Thus $\SS_2$ is outerplanar. 

We now prove the converse. That is, $\SS_2$ contains every outerplanar graph. By \cref{SimpleTreewidthUniversal}, it suffices to show that every outerplanar graph has simple treewidth 2. Since simple treewidth is extendable (\cref{SimpleTreewidthExtendable}), it suffices to show that every finite outerplanar graph has simple treewidth 2, which is a folklore and easily proved result \citep{MJP06,Wulf16,KU12}.
\end{proof}

\begin{thm}
$\SS_3$ is universal for the class of planar graphs with treewidth 3.
\end{thm}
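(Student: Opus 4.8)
The plan is to mirror the proof that $\SS_2$ is universal for outerplanar graphs, using that $\SS_3=\widehat{\RR_3}^{2}$ together with the known characterisation of graphs of simple treewidth at most $3$. (Throughout I read ``treewidth $3$'' as ``treewidth at most $3$'', matching the convention under which $\SS_2$ is universal for \emph{all} outerplanar graphs, including forests.) There are two things to prove: that $\SS_3$ is itself a planar graph of treewidth $3$, and that $\SS_3$ contains every planar graph of treewidth at most $3$.

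For the first part, I would avoid arguing about planarity of clique-sums directly. Instead, observe that by \cref{SimpleTreewidthUniversal} the graph $\SS_3$ lies in the class for which it is universal, so $\stw(\SS_3)\le 3$, and hence $\tw(\SS_3)\le 3$ since $\tw(G)\le\stw(G)$ for every graph $G$; also $\tw(\SS_3)\ge 3$ because $\RR_3\subseteq\SS_3$ contains $K_4$. Simple treewidth is monotone under subgraphs (a subgraph inherits any tree-decomposition), so every finite subgraph $H$ of $\SS_3$ satisfies $\stw(H)\le 3$; by the characterisation of \citet{KV12} quoted earlier, a finite graph has simple treewidth at most $3$ if and only if it is planar of treewidth at most $3$, so each such $H$ is planar. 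Since planarity is extendable (\cref{PlanarityExtendable}), $\SS_3$ is planar. Alternatively one can run the torso argument as in the $\SS_2$ proof: $\SS_3$ has a tree-decomposition over $\RR_3$ of adhesion $2$ (\cref{TreeDecompUniversalAdhesion}), $\RR_3$ is planar of treewidth $3$ (\cref{SimpleTreewidthRk}, with planarity of $\RR_3$ again via \cref{PlanarityExtendable} applied to its finite subgraphs), and a clique-sum of planar graphs along cliques of size at most $2$ is planar --- reducing the infinite case to finite subgraphs by \cref{PlanarityExtendable} once more.

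For the second part, by \cref{SimpleTreewidthUniversal} it suffices to prove that every planar graph $G$ of treewidth at most $3$ has $\stw(G)\le 3$. Since simple treewidth is extendable (\cref{SimpleTreewidthExtendable}), it is enough to check this for finite $G$, and there it is exactly the criterion of \citet{KV12}. Concretely, for an arbitrary (countable) planar $G$ with $\tw(G)\le 3$, every finite subgraph $H$ is planar with $\tw(H)\le\tw(G)\le 3$, hence $\stw(H)\le 3$ by \citet{KV12}, and extendability of simple treewidth yields $\stw(G)\le 3$. Combined with the first part, $\SS_3$ is universal for the class of planar graphs of treewidth $3$.

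I expect no genuine combinatorial obstacle here: everything reduces, through \cref{SimpleTreewidthUniversal} and the extendability of planarity and of simple treewidth, to the finite characterisation of \citet{KV12}. The one point that needs care is the reading of ``treewidth $3$'': a planar graph of treewidth at most $2$ (for instance $K_{2,3}$) must still lie in the target class, and this is consistent precisely because the \citet{KV12} criterion is ``simple treewidth $\le 3$ $\iff$ planar and treewidth $\le 3$'', so the class of planar graphs of treewidth at most $3$ coincides with the class of graphs of simple treewidth at most $3$, which is the class governed by \cref{SimpleTreewidthUniversal}.
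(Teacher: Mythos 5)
Your proposal is correct and follows essentially the same route as the paper: the containment direction is identical (reduce via \cref{SimpleTreewidthUniversal} and extendability of simple treewidth to the finite criterion of \citet{KV12}/\citet{KU12}), and for membership you give a mild variant (finite subgraphs have simple treewidth at most $3$, hence are planar by \citet{KV12}, then apply \cref{PlanarityExtendable}) alongside the paper's own adhesion-$2$ torso argument, which you also sketch. Both routes are fine, so there is nothing to fix.
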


\begin{proof}
If a graph $G$ has a tree-decomposition of adhesion 2 in which every torso is planar, then $G$ is also planar. If a graph $G$ has a tree-decomposition of adhesion 2 in which every torso has treewidth at most 3, then $G$ also has treewidth at most 3. Now $\SS_3$ has a tree-decomposition of adhesion 2 in which every torso is isomorphic to $\RR_3$ (which is planar with treewidth 3). Thus $\SS_3$ is planar with treewidth 3. 

We now prove the converse. That is, $\SS_3$ contains every planar graph with treewidth 3. By \cref{SimpleTreewidthUniversal}, it suffices to show that every planar graph with treewidth 3 has simple treewidth 3. Since simple treewidth is extendable (\cref{SimpleTreewidthExtendable}), it suffices to show that every finite planar graph with treewidth 3 has simple treewidth 3. This was proved by \citet{KU12} (also see \citep{Wulf16}) using the result of \citet{KV12}, who showed that for every finite planar graph $G$ of treewidth 3 there is a 3-tree (an edge-maximal graph of treewidth 3) that is planar and contains $G$ as a spanning subgraph. 
\end{proof}

\section{Treewidth--Path Product Structure}
\label{TreewidthPathStructure}

The primary result of this section describes a graph, the strong product of a bounded treewidth graph and a path, that contains every planar graph, and satisfies several of the key properties mentioned in \cref{Intro}. The result is extended in various ways for graphs embeddable on any fixed surface, for any proper minor-closed class, and for several non-minor-closed graph classes of interest.

\subsection{Layered Partitions}

A \defn{layering} of a graph $G$ is an ordered partition $(V_0,V_1,\dots)$ of $V(G)$ such that for every edge $vw\in E(G)$, if $v\in V_i$ and $w\in V_j$, then $|i-j| \leq 1$. Note that a layering is equivalent to a partition whose quotient is a path. \citet{DJMMUW20} defined the \defn{layered width} of  a partition $\PART$ of a graph $G$  to be the minimum $\ell\in\NN$ such that for some layering $(V_0,V_1,\dots)$ of $G$, each part in $\PART$ has at most $\ell$ vertices in each layer $V_i$. \citet{DJMMUW20} proved the following lemma in the finite case. The straightforward proof also works for infinite graphs.

\begin{lem}[{\protect\citep[Observation~35]{DJMMUW20}}]
\label{MakeProductGeneral} 
If a graph $G$ has a partition $\PART$ with layered width $\ell$, then $G$ is contained in $ (G/ \PART) \boxtimes \PP \boxtimes K_\ell$. Conversely, for every graph $H$ and  subgraph $G$ of $H \boxtimes \PP \boxtimes K_{\ell}$, there is a partition $\PART$ of $G$ with layered width at most $\ell$ such that $G/\PART$ is contained in $H$. 
\end{lem}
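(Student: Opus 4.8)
The plan is to prove the two directions separately, in each case simply unwinding the definitions of layered width, quotient, and iterated strong product; no finiteness of $G$ is used, which is why the finite argument of \citet{DJMMUW20} transfers verbatim.

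For the first direction, suppose $\PART$ is a partition of $G$ of layered width $\ell$, witnessed by a layering $(V_0,V_1,\dots)$ of $G$ in which every part of $\PART$ meets every layer in at most $\ell$ vertices; after shifting layer indices I may assume the layers are $V_1,V_2,\dots$ so that they are indexed by $\NN$. For each vertex $v\in V(G)$ let $A(v)\in\PART$ be the part containing $v$ and let $i(v)\in\NN$ be the index with $v\in V_{i(v)}$. Since $|A(v)\cap V_{i(v)}|\le\ell$, I can fix, for each pair $(A,i)$, an injection of $A\cap V_i$ into $[\ell]$, and let $c(v)\in[\ell]$ be the image of $v$ under the injection for $(A(v),i(v))$. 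Define $\phi\colon V(G)\to V\big((G/\PART)\boxtimes\PP\boxtimes K_\ell\big)$ by $\phi(v):=(A(v),i(v),c(v))$; this is injective by the choice of $c$. To see that $\phi$ preserves edges, take $vw\in E(G)$: then $\phi(v)\ne\phi(w)$, and coordinate by coordinate, $A(v)=A(w)$ or $A(v)A(w)\in E(G/\PART)$ (the edge $vw$ witnesses adjacency of the parts in the quotient), $|i(v)-i(w)|\le 1$ (definition of layering), and $c(v),c(w)\in[\ell]$ are equal or adjacent in $K_\ell$; hence $\phi(v)\phi(w)$ is an edge of the strong product. Thus $G$ is isomorphic to a subgraph of $(G/\PART)\boxtimes\PP\boxtimes K_\ell$.

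For the converse, assume $V(G)\subseteq V(H)\times\NN\times[\ell]$ and $E(G)\subseteq E(H\boxtimes\PP\boxtimes K_\ell)$. For $h\in V(H)$ set $A_h:=\{(h,i,c)\in V(G)\}$ and let $\PART:=\{A_h:A_h\neq\emptyset\}$ (discarding empty sets so that the parts are non-empty); this is a partition of $V(G)$. Put $V_i:=\{(h,i,c)\in V(G)\}$ for $i\in\NN$. Every edge of $G$ is an edge of the product, so its two ends agree or are adjacent in each of the three coordinates; the second-coordinate condition shows $(V_i)_{i\in\NN}$ is a layering of $G$, and $A_h\cap V_i\subseteq\{(h,i,c):c\in[\ell]\}$ has at most $\ell$ vertices, so $\PART$ has layered width at most $\ell$. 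Finally, $h\mapsto A_h$ is a bijection from $\{h:A_h\neq\emptyset\}$ onto $\PART$; if $A_h$ and $A_{h'}$ are adjacent in $G/\PART$ then some edge of $G$ joins them, and since its ends differ in the first coordinate the product-adjacency condition forces $hh'\in E(H)$. Hence $G/\PART$ is isomorphic to a subgraph of $H$.

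I do not anticipate a genuine obstacle: the only points needing care are the harmless reindexing of $\PP$ between $\NN$ and $\NN_0$, the convention that parts of a partition are non-empty (handled by discarding empty $A_h$), and a correct use of adjacency in the iterated strong product $G_1\boxtimes G_2\boxtimes G_3$ — two distinct vertices are adjacent precisely when, in each coordinate, they are equal or adjacent. All steps apply unchanged when $G$, $H$, and the number of layers are countably infinite.
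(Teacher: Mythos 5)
Your proof is correct and is exactly the straightforward definition-unwinding argument that the paper alludes to (it gives no proof itself, citing \citet{DJMMUW20} and noting the finite proof transfers to infinite graphs), and your two directions match that standard argument. The minor points you flag (index shift for $\PP$, discarding empty parts, iterated strong-product adjacency) are handled correctly.
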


For $a,c\in\NN_0$ and $k,\ell\in\NN$, let  $\Gamma_{k,c,\ell,a}$ be the class of graphs isomorphic to subgraphs of $( (\TT_k + K_c) \boxtimes \PP \boxtimes K_{\ell}) + K_a$. \cref{TreewidthUniversal,MakeProductGeneral} imply:

\begin{lem}
\label{ProductSubgraphLayeredWidth}
A graph $G$ is in $\Gamma_{k,c,\ell,a}$ if and only if there is a set $A\subseteq V(G)$ of size at most $a$, and a partition $\PART$ of $G-A$ with layered width $\ell$ such that $\tw_c( (G-A)/\PART) \leq k$. 
\end{lem}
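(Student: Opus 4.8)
The plan is to prove the two implications separately, each by a single application of \cref{MakeProductGeneral} in the appropriate direction, glued to the universality of $\TT_k$ from \cref{TreewidthUniversal}. Throughout I will lean on two trivialities: any subgraph of a join $X+Y$ is the union of a subgraph of $X$, a subgraph of $Y$, and a set of edges joining the two parts; and treewidth as well as the strong product are monotone under taking subgraphs. In particular, since $\TT_k$ is universal for treewidth at most $k$, we have $\tw(\TT_k)\le k$.

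For the forward direction, I will start by identifying $G$ with a subgraph of $((\TT_k + K_c)\boxtimes \PP \boxtimes K_{\ell}) + K_a$ and setting $A:=V(G)\cap V(K_a)$, so that $|A|\le a$ and $G-A$ is a subgraph of $(\TT_k + K_c)\boxtimes \PP \boxtimes K_{\ell}$. Applying the converse half of \cref{MakeProductGeneral} with $H:=\TT_k+K_c$ produces a partition $\PART$ of $G-A$ of layered width at most $\ell$ such that $(G-A)/\PART$ is isomorphic to a subgraph $H'$ of $\TT_k+K_c$. Taking $S:=V(H')\cap V(K_c)$ gives $|S|\le c$ and $H'-S\subseteq\TT_k$, hence $\tw(H'-S)\le\tw(\TT_k)\le k$ and so $\tw_c((G-A)/\PART)\le k$, as needed.

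For the reverse direction, given $A$ and $\PART$ as in the statement I will write $H:=(G-A)/\PART$, pick $S\subseteq V(H)$ with $|S|\le c$ and $\tw(H-S)\le k$, and use \cref{TreewidthUniversal} to embed $H-S$ as a subgraph of $\TT_k$. Sending $S$ injectively into $V(K_c)$ extends this to an embedding of $H$ into $\TT_k+K_c$, since every edge of $H$ meeting $S$ lands in the join. Then the forward half of \cref{MakeProductGeneral} realises $G-A$ as a subgraph of $H\boxtimes\PP\boxtimes K_{\ell}$, which by monotonicity of $\boxtimes$ sits inside $(\TT_k+K_c)\boxtimes\PP\boxtimes K_{\ell}$; finally putting the at most $a$ vertices of $A$ into $V(K_a)$ exhibits $G$ as a subgraph of $((\TT_k+K_c)\boxtimes\PP\boxtimes K_{\ell})+K_a$, i.e.\ $G\in\Gamma_{k,c,\ell,a}$.

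There is no real obstacle here; the proof is bookkeeping. The one thing that needs care is keeping straight which vertices are assigned to the apex cliques $K_a$ (for $G$) and $K_c$ (for the quotient) versus the product, and checking each time that those vertices can be sent into the relevant apex clique so that all their incident edges are automatically realised by the join. A purely cosmetic mismatch is that \cref{MakeProductGeneral} delivers layered width \emph{at most} $\ell$ rather than exactly $\ell$, which is the intended meaning of the phrase in the statement.
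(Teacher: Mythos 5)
Your argument is correct and is exactly the paper's route: the paper derives \cref{ProductSubgraphLayeredWidth} directly from \cref{MakeProductGeneral} together with \cref{TreewidthUniversal}, which is precisely the bookkeeping you carry out (splitting off the $K_a$ and $K_c$ apex vertices and applying each direction of \cref{MakeProductGeneral} with $H=\TT_k+K_c$). Your remark about ``layered width at most $\ell$'' matches the intended reading, so there is no gap.
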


The following notation will be helpful to prove that $\Gamma_{k,c,\ell,a}$ is extendable. If $\PART_1$ and $\PART_2$ are partitions of a set $X$, then \DefNoIndex{$\PART_1\subseteq \PART_2$} if for all $A_1\in\PART_1$ and $A_2\in\PART_2$, either $A_1\cap A_2=\emptyset$ or $A_1\subseteq A_2$. If $\PART_1,\PART_2,\dots$ are partitions of a set $X$ and $\PART_1\subseteq \PART_2 \subseteq\dots$, then $\bigcup_{n\in\NN} \PART_n$ is the partition of $X$, where $A$ is in $\bigcup_{n\in\NN} \PART_n$ if $A\in\PART_n$ for some $n\in\NN$, and for all $n\in\NN$ no strict superset of  $A$ is in $\PART_n$. 

\begin{lem}
\label{FiniteToInfiniteGamma}
$\Gamma_{k,c,\ell,a}$ is extendable
\end{lem}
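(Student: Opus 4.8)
\emph{Proof plan.} The plan is to apply K\"onig's Lemma (\cref{Konig}) twice: once to commit to a single apex set, and once to build a layered partition of the apex‑free part together with a tree‑decomposition of its quotient, both obtained as limits of mutually compatible finite witnesses. The class $\Gamma_{k,c,\ell,0}$ is monotone, which is what makes the first compactness step legitimate, and \cref{TreewidthApexExtendable} (extendability of $\tw_c$) is what closes the argument at the end; \cref{ProductSubgraphLayeredWidth} is used throughout to translate between membership in $\Gamma_{k,c,\ell,a}$ and the existence of a layered partition whose quotient has bounded $\tw_c$.

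\emph{Reducing to the apex‑free case.} Let $G$ be a graph all of whose finite subgraphs lie in $\Gamma_{k,c,\ell,a}$; we may assume $V(G)=\{v_1,v_2,\dots\}$ and put $H_n:=G[\{v_1,\dots,v_n\}]$. By \cref{ProductSubgraphLayeredWidth}, each $H_n$ has a set of at most $a$ vertices whose deletion lands in $\Gamma_{k,c,\ell,0}$, and $\Gamma_{k,c,\ell,0}$ is monotone. So the sets $A\subseteq\{v_1,\dots,v_n\}$ with $|A|\le a$ and $H_n-A\in\Gamma_{k,c,\ell,0}$, ordered by $A'\mapsto A'\cap\{v_1,\dots,v_n\}$, form a finitely branching tree with all levels nonempty; \cref{Konig} gives a nested sequence $A_1\subseteq A_2\subseteq\cdots$ with $|A_n|\le a$ for all $n$, and $A:=\bigcup_nA_n$ then has $|A|\le a$ while every finite subgraph of $G-A$, being a subgraph of some $H_n-A_n$, lies in $\Gamma_{k,c,\ell,0}$. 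Hence it suffices to prove that $\Gamma_{k,c,\ell,0}$ is extendable, and I write $G':=G-A$, $H_n:=G'[\{v_1,\dots,v_n\}]$.

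\emph{K\"onig over witnesses and passing to the limit.} By \cref{ProductSubgraphLayeredWidth} (with $a=0$), a \emph{witness} for a finite graph $H$ consists of a layering of $H$ and a partition $\PART$ of $V(H)$ in which each part has at most $\ell$ vertices per layer and with $\tw_c(H/\PART)\le k$. I would normalise witnesses of $H_n$ (compress empty layers, label the parts canonically, say by the least index of a member) so that, up to normalisation, each $H_n$ has only finitely many witnesses, and then declare a witness of $H_{n+1}$ adjacent to the witness of $H_n$ obtained by deleting $v_{n+1}$ and re‑normalising. Every witness of $H_{n+1}$ restricts in this way to a witness of $H_n$, so \cref{Konig} yields an infinite branch; reading off the branch produces a partition $\PART$ of $G'$ realised as $\bigcup_n\PART_n$ for a nested sequence $\PART_1\subseteq\PART_2\subseteq\cdots$ (in the sense defined just before the lemma), together with a layering of $G'$. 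Each part of $\PART$ still has at most $\ell$ vertices per layer because this holds for all its finite traces; and each finite subgraph of $G'/\PART$ is a subgraph of $H_N/\PART_N$ for a suitable $N$ (its vertices and a witnessing set of its edges lie in finitely many $v_i$'s), hence has $\tw_c$ at most $k$, so $\tw_c(G'/\PART)\le k$ by \cref{TreewidthApexExtendable}. Thus $G'\in\Gamma_{k,c,\ell,0}$ by \cref{ProductSubgraphLayeredWidth}, and restoring $A$ gives $G\in\Gamma_{k,c,\ell,a}$.

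\emph{The main obstacle.} The real work is the bookkeeping that makes the second K\"onig tree finitely branching while keeping the restriction relation well behaved: the normalisations that finitely describe a witness (compressing empty layers, recanonicalising parts) do not commute with deleting a vertex, so one must record only restriction‑stable data — essentially the trace of the partition and the layer of each vertex relative to a fixed reference vertex. A delicate point is that the resulting limiting layering is naturally indexed by these relative layers and is a priori $\ZZ$‑indexed, whereas a layering must be $\NN_0$‑indexed; I would therefore need to verify that the limit still yields a genuine $\NN_0$‑layered partition of $G'$ with the \emph{same} parameters $\ell,c,k$ — this is possible because $\TT_k$ contains a two‑way infinite path, so the extra direction can be folded into the $\TT_k$‑factor, but it has to be checked carefully (one should handle $G'$ one connected component at a time and exhaust each component by connected subgraphs). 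Once this, together with the claim that every finite subgraph of $G'/\PART$ sits inside some $H_N/\PART_N$, is made precise, the two applications of \cref{Konig} and the appeal to \cref{TreewidthApexExtendable} are routine.
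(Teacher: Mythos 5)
Your overall strategy is the same as the paper's: compactness via \cref{Konig} over the finite witnesses supplied by \cref{ProductSubgraphLayeredWidth}, with \cref{TreewidthApexExtendable} closing the argument on the quotient. (The paper runs a single application of \cref{Konig} over triples consisting of the apex set, the layering and the partition, rather than your two-stage version; your first stage, extracting the apex set via monotonicity of $\Gamma_{k,c,\ell,0}$, is fine and is only a cosmetic difference.)

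The gap sits exactly where you flag ``the main obstacle'', and your proposed repair does not work as stated. By recording layers \emph{relative} to a reference vertex you make the witness data restriction-stable and the levels finite, but the limit you obtain is then a $\ZZ$-indexed layering, and the claim that it ``can be folded into the $\TT_k$-factor with the same parameters $\ell,c,k$ because $\TT_k$ contains a two-way infinite path'' is a non sequitur: the path coordinate and the $\TT_k$ coordinate are independent factors of the product. The honest conversions of a $\ZZ$-layering into an $\NN_0$-layering degrade the parameters: folding the layering via $z\mapsto|z-c_0|$ keeps the same quotient but doubles the layered width ($\ell\to 2\ell$), while splitting each part into its positive and negative halves keeps $\ell$ but replaces the quotient by roughly its strong product with $K_2$, whose treewidth is $2k+1$; and one cannot fold ``inside $\TT_k$'' with the same $k$, since that would require two disjoint copies of $\TT_k$ in $\TT_k$ joined as in a product, i.e.\ a $\TT_k\boxtimes K_2$ subgraph of $\TT_k$, which is impossible on treewidth grounds. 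That a same-parameter conversion is not automatic is illustrated by the (excluded, but instructive) case $k=0$: $\PPP$ admits a $\ZZ$-layered witness with layered width $1$ and edgeless quotient, yet $\PPP$ is not a subgraph of $\TT_0\boxtimes\PP$. So as written your argument does not deliver membership in $\Gamma_{k,c,\ell,a}$; the conversion you need is an unproven (and at best delicate) claim.

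The fix is to avoid renormalisation altogether, which is what the paper does: for $G_n:=G[\{v_1,\dots,v_n\}]$ take as the vertices of the K\"onig tree the triples $(A_n,\LL_n,\PART_n)$ in which $\LL_n$ is an $\NN_0$-indexed layering of $G_n-A_n$ carrying its \emph{absolute} layer indices, and let the neighbour of a triple be its literal restriction $(A_n\setminus\{v_n\},\LL_n-v_n,\PART_n-v_n)$, with no compression or recanonicalisation. Along a branch the layer index of each vertex and the identity of its part never change, so the union is automatically an $\NN_0$-indexed layering and a partition of $G-A$ with layered width at most $\ell$; the remainder of your argument (every finite subgraph of $(G-A)/\PART$ sits inside some $(G_n-A_n)/\PART_n$, then \cref{TreewidthApexExtendable}, then \cref{ProductSubgraphLayeredWidth}) goes through unchanged. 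In short: record absolute $\NN_0$-layers, not layers relative to a reference vertex, and the $\ZZ$-versus-$\NN_0$ issue never arises.
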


\begin{proof} 
Let $G$ be a graph such that every finite subgraph of $G$ is in $\Gamma_{k,c,\ell,a}$. Let $(v_1,v_2,\dots)$ be a vertex-ordering of $G$. Let $G_n := G[ \{v_1,\dots,v_n\}]$ for $n\in\NN$. Let $X_n$ be the set of all triples $(A_n,\LL_n,\PART_n)$ such that $A_n$ is a set of at most $a$ vertices in $G_n$, $\LL_n$ is a layering of $G_n-A_n$, and $\PART_n$ is a partition of $G_n-A_n$ such that $|L\cap P|\leq \ell$ for all $L\in\LL_n$ and $P\in \PART_n$, and $\tw_c((G_n-A_n)/\PART_n)\leq k$. Since $G_n$ is in $\Gamma_{k,c,\ell,a}$, \cref{ProductSubgraphLayeredWidth} implies that $X_n\neq\emptyset$. And $X_n$ is finite since $G_n$ is finite. Let $Q$ be the graph with vertex-set $V(Q):= \bigcup_{n\in\NN} X_n$, where each $(A_n,\LL_n,\PART_n)\in X_n$ is adjacent to $(A_n\setminus \{v_n\},\LL_n-v_n,\PART_n-v_n)$, which is in $X_{n-1}$. By \cref{Konig}, there is a path $(A_1,\LL_1,\PART_1),(A_2,\LL_2,\PART_2),\dots$ in $Q$ with $(A_n,\LL_n,\PART_n)\in X_n$ for all $n\in\NN$.  Then  $A_1\subseteq A_2\subseteq\dots$  and $\LL_1\subseteq \LL_2\subseteq\dots$  and $\PART_1\subseteq \PART_2\subseteq\dots$. Let $A:=\cup_n A_n$ and $\LL:=\cup_n \LL_n$ and $\PART:= \bigcup_n \PART_n$. Then $A$ is a set of at most $a$ vertices in $G$, $\LL$ is a layering of $G-A$, and $\PART$ is a partition of $G-A$ such that $|L\cap P|\leq \ell$ for each $L\in\LL$ and $P\in\PART$. For every finite subgraph $G'$ of $(G-A)/\PART$, for some $n\in\NN$, $G'$ is a subgraph of $(G_n-A_n)/\PART_n$, implying that $\tw_c(G') \leq \tw_c((G_n-A_n)/\PART_n) \leq k$. By \cref{TreewidthApexExtendable}, $\tw_c((G-A)/\PART)\leq k$. Hence $G\in \Gamma_{k,c,\ell,a}$ by \cref{ProductSubgraphLayeredWidth}. Therefore $\Gamma_{k,c,\ell,a}$ is extendable.  
\end{proof}

\cref{TreewidthUniversal,FiniteToInfiniteGamma} imply:

\begin{lem}
\label{FiniteToInfiniteProduct}
Let $G$ be a graph such that every finite subgraph of $G$ is contained in $( (H+K_c) \boxtimes P \boxtimes K_\ell)+K_a$ for some graph $H$ with treewidth at most $k$ and for some path $P$. Then $( (\TT_k + K_c) \boxtimes \PP \boxtimes K_\ell)+K_a$ contains $G$. 
\end{lem}

An analogous proof using \cref{SimpleTreewidthUniversal} instead of 
\cref{TreewidthUniversal} and \cref{SimpleTreewidthExtendable} instead of \cref{TreewidthExtendable} gives:

\begin{lem}
\label{FiniteToInfiniteSimple}
Let $G$ be a graph such that every finite subgraph of $G$ is contained in $( (H+K_c) \boxtimes P \boxtimes K_\ell)+K_a$ for some graph $H$ with simple treewidth at most $k$ and for some path $P$. Then $( (\SS_k+K_c) \boxtimes \PP \boxtimes K_\ell)+K_a$ contains $G$. 
\end{lem}

\subsection{Planar Graphs}
\label{PlanarGraphs}

The starting point for the proof of \cref{InfinitePlanarStructure} is the result of \citet{PS21}, who showed that every planar graph has a partition into geodesic paths whose contraction gives a graph with treewidth at most 8. This result was refined by \citet{DJMMUW20} as follows. 

\begin{thm}[{\protect\citep[Theorem~36(a)]{DJMMUW20}}]
\label{FinitePlanarStructure}
Every finite planar graph $G$ is contained in $H\boxtimes P$, for some planar graph $H$ with  treewidth at most 8 and for some path $P$.
\end{thm}

\cref{FinitePlanarStructure} has been used to solve several open problems regarding queue layouts~\citep{DJMMUW20}, non-repetitive colourings~\citep{DEJWW20}, centred colourings~\citep{DFMS21}, clustered colourings~\citep{DEMWW22}, adjacency labellings (equivalently, strongly universal graphs)~\citep{BGP22,DEGJMM21,EJM23,DGJMMW}, twin-width~\citep{BKW26,JP22}, and vertex rankings~\citep{BDJM}.

\citet{UWY22} modified the proof of \cref{FinitePlanarStructure} to establish the following.

\begin{thm}[{\protect\citep[Theorem~2]{UWY22}}]
\label{FinitePlanarStructure6}
Every finite planar graph $G$ is contained in $H\boxtimes P$, for some planar graph $H$ with simple treewidth at most 6 and for some path $P$.
\end{thm}

\cref{FiniteToInfiniteProduct,FinitePlanarStructure6} imply the following theorem introduced in \cref{Intro}. 

\begin{thm}
\label{InfinitePlanarStructure6}
$\SS_6 \boxtimes \PP$ contains every planar graph.
\end{thm}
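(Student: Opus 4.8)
The plan is to combine the finite product structure theorem for planar graphs with the infinite universality machinery built in the preceding sections, together with an extendability argument that transports a statement about all finite subgraphs to the whole (countable) graph.

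First I would invoke \cref{FinitePlanarStructure6}: every finite planar graph $G$ is isomorphic to a subgraph of $H\boxtimes P$ for some planar graph $H$ with simple treewidth at most $6$ and some path $P$. Since $H$ has simple treewidth at most $6$, this says precisely that every finite planar graph is isomorphic to a subgraph of $(H'+K_0)\boxtimes P)+K_0$ for some graph $H'$ of simple treewidth at most $6$, i.e. it matches the hypothesis of \cref{FiniteToInfiniteSimple} with $c=a=0$. Next I would let $G$ be an arbitrary (countable) planar graph. By \cref{PlanarityExtendable}, planarity is extendable, so every finite subgraph of $G$ is planar; hence, by \cref{FinitePlanarStructure6}, every finite subgraph of $G$ is isomorphic to a subgraph of $(H+K_0)\boxtimes P)+K_0$ for some graph $H$ of simple treewidth at most $6$ and some path $P$. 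Applying \cref{FiniteToInfiniteSimple} with $k=6$, $c=0$, $a=0$, we conclude that $((\SS_6+K_0)\boxtimes\PP)+K_0$ contains $G$. Since $K_0$ is the empty graph, $(\SS_6+K_0)\boxtimes\PP)+K_0$ is just $\SS_6\boxtimes\PP$, so $\SS_6\boxtimes\PP$ contains $G$. As $G$ was an arbitrary countable planar graph, $\SS_6\boxtimes\PP$ contains every planar graph.

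The only genuine subtlety — and the step I would be most careful with — is the reduction to a form to which \cref{FiniteToInfiniteSimple} literally applies: one must observe that ``$H$ has simple treewidth at most $6$'' lets us write $H$ as a subgraph of the universal graph $\SS_6$ via \cref{SimpleTreewidthUniversal}, and that the $K_c$ and $K_a$ terms are vacuous here; everything else (the Kőnig's Lemma patching together finite layerings/partitions, and the extendability of $\tw_c$) is already packaged inside \cref{FiniteToInfiniteSimple} and its proof. There is essentially no obstacle beyond correctly matching the hypotheses. It is worth noting explicitly that this argument shows $\SS_6\boxtimes\PP$ is itself planar (being a tree-decomposition of adhesion $0$ over $\RR_6\boxtimes\PP$, or more simply a disjoint union of copies of $\RR_6\boxtimes\PP$, each planar since $\RR_6$ is planar and $\RR_6\boxtimes\PP$ admits a planar-like product embedding), although the theorem statement as phrased only asserts containment, not membership in the class. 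If one wants the stronger statement that $\SS_6\boxtimes\PP$ is planar, one would additionally check that $\SS_6$ is planar, which follows from \cref{SimpleTreewidthTreeDecomposition} and the planarity of $\RR_6$ together with the fact that adhesion-$0$ clique-sums preserve planarity.
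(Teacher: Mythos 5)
Your main argument is correct and is essentially the paper's proof: combine \cref{FinitePlanarStructure6} with the finite-to-infinite transfer lemma (the paper cites \cref{FiniteToInfiniteProduct}, but for $\SS_6$ the relevant statement is \cref{FiniteToInfiniteSimple}, exactly as you use it, with $c=a=0$). Two small corrections, neither affecting the theorem. First, you invoke \cref{PlanarityExtendable} to conclude that every finite subgraph of a planar graph is planar; that direction is trivial (planarity is subgraph-closed) and is not what extendability says -- extendability is the converse implication and is not needed here. Second, and more seriously, your closing aside claiming that $\SS_6\boxtimes\PP$ is itself planar is false: $\RR_6$ contains $K_7$ (its bags are cliques of size $7$), hence so do $\SS_6$ and $\SS_6\boxtimes\PP$, so this graph is very far from planar. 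Indeed, no correct argument could establish planarity, since by \citet{Pach81a} (and the paper's \cref{InfinteCliqueMinor}) no planar graph contains all planar graphs; this is precisely why the theorem asserts only containment and not membership in the class. The case where the universal product graph is planar is $\SS_3$ (see the discussion after \cref{InfinitePlanarStructure}), not $\SS_6$.
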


\citet{DJMMUW20} also proved the following variation on \cref{FinitePlanarStructure}. 

\begin{thm}[{\protect\citep[Theorem~36(b)]{DJMMUW20}}]
\label{SimpleFinitePlanarStructure}
Every finite planar graph $G$ is contained in $H\boxtimes P \boxtimes K_3$ for some planar graph $H$ with $\tw(H)\leq 3$ and for some path $P$. 
\end{thm}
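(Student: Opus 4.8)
The plan is to derive this from the layered-partition machinery already set up, following the argument of \citet{DJMMUW20}. First I would reduce to the case where $G$ is a planar triangulation: every planar graph is a spanning subgraph of some planar triangulation $G^{+}$ on the same vertex set, and every subgraph of $H\boxtimes P\boxtimes K_3$ is again a subgraph of $H\boxtimes P\boxtimes K_3$, so a decomposition for $G^{+}$ restricts to one for $G$ (the trivial cases $|V(G)|\le 2$ are handled separately). So assume $G$ is a plane triangulation, fix a vertex $r$, and let $T$ be a BFS spanning tree of $G$ rooted at $r$, with layering $(V_0,V_1,\dots)$ by distance from $r$ in $G$. Call a path of $T$ \emph{vertical} if one endpoint is an ancestor of the other; such a path contains exactly one vertex of each layer it meets, hence at most one vertex of each $V_i$.

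The core step is the ``tripod'' partition lemma of \citet{DJMMUW20} (refining \citet{PS18}): construct a partition $\PART$ of $V(G)$ in which every part — a \emph{tripod} — is a connected subtree of $T$ equal to the union of at most three vertical paths of $T$, such that $G/\PART$ admits a tree-decomposition of width at most $3$. I would build $\PART$ by recursively ``peeling'' tripods: maintain a family of near-triangulations, each bounded by a triangle of $G$ whose (at most three) vertices lie in at most three previously-constructed tripods, starting from $G$ bounded by any face. Given a current near-triangulation $R$ with bounding triangle $xyz$, peel a new tripod $\tau_R$ formed as the union of three vertical paths emanating from three suitably chosen vertices of $R$, amalgamated along shared ancestors so that $\tau_R$ is connected, and chosen (by a Sperner-type argument on the dual) so that each component of $R-\tau_R$ is again a near-triangulation whose bounding triangle has all vertices in $\{x,y,z\}\cup V(\tau_R)$, hence in at most three tripods. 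Iterating covers $V(G)$ and produces a ``peeling tree'' $T^{\ast}$; assigning to each node $R$ the bag consisting of $\tau_R$ together with the at most three tripods meeting the triangle bounding $R$ yields the required width-$3$ tree-decomposition of $G/\PART$.

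I would then assemble the conclusion. Since each part of $\PART$ is connected in $G$, the quotient $H:=G/\PART$ is planar (contracting connected subgraphs of a plane graph preserves planarity), and by the previous paragraph $\tw(H)\le 3$. Since each tripod is a union of at most three vertical paths, it has at most three vertices in each layer $V_i$, so $\PART$ has layered width at most $3$ with respect to this layering. By \cref{MakeProductGeneral}, $G$ is isomorphic to a subgraph of $H\boxtimes\PP\boxtimes K_3$; as $G$ is finite only finitely many layers are nonempty, so the relevant initial segment of $\PP$ is a finite path $P$, completing the proof. I expect the main obstacle to be the tripod-peeling lemma itself: choosing each peeled tripod so that it is simultaneously connected, a union of at most three vertical paths, and such that the components of $R-\tau_R$ inherit the controlled ``bounded by a triangle touching at most three tripods'' form, and then checking that the peeling tree genuinely indexes a valid width-$3$ tree-decomposition of $G/\PART$ (in particular that the tripods meeting each triangle occur along a connected subtree of $T^{\ast}$). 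This is exactly the technical heart of \citet{DJMMUW20}.
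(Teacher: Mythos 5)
The paper does not prove this statement itself---it is quoted from \citet{DJMMUW20}---and your proposal is a faithful reconstruction of that proof: reduce to a plane triangulation, fix a BFS layering, build the tripod partition whose quotient is planar with treewidth at most $3$, and apply \cref{MakeProductGeneral} with layered width $3$. The only cosmetic deviation is that in \citet{DJMMUW20} the (at most) three vertical paths forming a tripod are pairwise disjoint and are joined through a triangular face of the current near-triangulation rather than amalgamated along shared ancestors, which changes nothing essential: the parts are still connected in $G$ and still meet each layer in at most three vertices.
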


In \cref{SimpleFinitePlanarStructure}, since $H$ is planar and $\tw(H)\leq 3$, by the above-mentioned result of \citet{KV12}, we have $\stw(H)\leq 3$. This can also be seen directly from the proof of \cref{SimpleFinitePlanarStructure} and is implicitly mentioned in \citep{DJMMUW20}. Then \cref{FiniteToInfiniteProduct,SimpleFinitePlanarStructure} imply:

\begin{thm}
\label{SimpleInfinitePlanarStructure}
$\SS_3\boxtimes \PP \boxtimes K_3$ contains every planar graph.
\end{thm}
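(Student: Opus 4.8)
The plan is to mirror the proof of \cref{InfinitePlanarStructure6}, simply feeding the appropriate finite product structure theorem into the infinite-to-infinite machinery already developed. The relevant finite result here is \cref{SimpleFinitePlanarStructure}, which asserts that every finite planar graph $G$ is isomorphic to a subgraph of $H\boxtimes P\boxtimes K_3$ for some planar $H$ with $\tw(H)\leq 3$ and some path $P$. The one subtlety is that \cref{FiniteToInfiniteSimple} requires the treewidth-$k$ graph to in fact have \emph{simple} treewidth at most $k$, so the first step is to observe that in \cref{SimpleFinitePlanarStructure} we may in addition take $\stw(H)\leq 3$. This is exactly the remark made immediately before the statement: since $H$ is planar with $\tw(H)\leq 3$, the result of \citet{KV12} (that every finite planar graph of treewidth $3$ is a spanning subgraph of a planar $3$-tree, equivalently a graph of simple treewidth $3$) gives $\stw(H)\leq 3$.

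With that in hand, the main step is to invoke \cref{FiniteToInfiniteSimple} with parameters $k=3$, $c=0$, $a=0$, and with the extra strong-product factor $K_3$. More precisely: let $G$ be any (countable) planar graph. By \cref{PlanarityExtendable}, every finite subgraph $G'$ of $G$ is planar, so by the strengthened \cref{SimpleFinitePlanarStructure} each such $G'$ is isomorphic to a subgraph of $H'\boxtimes P'\boxtimes K_3$ for some planar $H'$ with $\stw(H')\leq 3$ and some path $P'$. Then \cref{FiniteToInfiniteSimple} — applied with the product graph $H\boxtimes K_3$ playing the role of ``$H$'' in the layered-partition setup, or equivalently by absorbing $K_3$ into the $K_\ell$ factor via \cref{ProductSubgraphLayeredWidth} — yields that $\SS_3\boxtimes\PP\boxtimes K_3$ contains $G$. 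The cleanest route is to note that a subgraph of $H'\boxtimes P'\boxtimes K_3$ is, by \cref{MakeProductGeneral}, a graph with a partition of layered width $3$ whose quotient is a subgraph of $H'$, hence of simple treewidth at most $3$; so every finite subgraph of $G$ lies in $\Gamma_{3,0,3,0}$, which is extendable by \cref{FiniteToInfiniteGamma}, giving $G\in\Gamma_{3,0,3,0}$, i.e.\ $G$ is isomorphic to a subgraph of $((\SS_3)\boxtimes\PP\boxtimes K_3)$ once we use \cref{SimpleTreewidthUniversal} to replace the quotient by $\SS_3$.

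The converse direction — that $\SS_3\boxtimes\PP\boxtimes K_3$ is itself planar — is not claimed in the statement (the theorem only asserts containment), so there is nothing further to prove; the statement is purely an upper-bound/universality claim, matching the phrasing of \cref{InfinitePlanarStructure6}. I do not expect any genuine obstacle: all the heavy lifting (extendability of simple treewidth via \cref{SimpleTreewidthExtendable}, universality of $\SS_3$ via \cref{SimpleTreewidthUniversal}, the König's-lemma compactness argument in \cref{FiniteToInfiniteGamma}, and the finite structure theorem \cref{SimpleFinitePlanarStructure} together with the Kawarabayashi–Uno upgrade to simple treewidth) is already in place. The only point requiring a sentence of care is the bookkeeping that moves the $K_3$ factor through \cref{ProductSubgraphLayeredWidth}/\cref{MakeProductGeneral}, i.e.\ recognising that ``$H\boxtimes P\boxtimes K_3$ with $\stw(H)\le 3$'' is the same as ``layered width $3$ over a quotient of simple treewidth $\le 3$'', which is immediate from those two lemmas. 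Hence the proof is a two-line deduction: combine \cref{FiniteToInfiniteSimple} (or equivalently \cref{FiniteToInfiniteGamma}) with the simple-treewidth strengthening of \cref{SimpleFinitePlanarStructure}.
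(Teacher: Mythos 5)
Your overall strategy is exactly the paper's: upgrade \cref{SimpleFinitePlanarStructure} to $\stw(H)\leq 3$ via the result of \citet{KV12}, and then push the finite statement through the compactness machinery to get containment in $\SS_3\boxtimes\PP\boxtimes K_3$. The paper's own proof is precisely this two-line deduction, so in substance you have the right argument and all the needed ingredients (\cref{SimpleTreewidthExtendable}, \cref{SimpleTreewidthUniversal}, \cref{MakeProductGeneral}, K\"onig's Lemma).

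One piece of bookkeeping in your write-up is genuinely off, though, and worth fixing. The class $\Gamma_{3,0,3,0}$ is defined via $\TT_3$, so membership in it (equivalently, the conclusion of \cref{FiniteToInfiniteGamma}) only yields a partition of layered width $3$ whose quotient has \emph{treewidth} at most $3$; since there are treewidth-$3$ graphs of simple treewidth $4$, such a quotient need not embed in $\SS_3$, and the step ``$G\in\Gamma_{3,0,3,0}$, then replace the quotient by $\SS_3$ via \cref{SimpleTreewidthUniversal}'' is not licensed. Your alternative of feeding $H\boxtimes K_3$ into \cref{FiniteToInfiniteSimple} in the role of $H$ also does not give the stated product, because then $k$ must be taken to be $\stw(H\boxtimes K_3)>3$ and the conclusion lands in $\SS_k\boxtimes\PP$ rather than $\SS_3\boxtimes\PP\boxtimes K_3$. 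The correct patch is the obvious $K_\ell$-augmented analogue of \cref{FiniteToInfiniteSimple}: define the simple-treewidth version of $\Gamma_{k,0,\ell,0}$ (subgraphs of $\SS_k\boxtimes\PP\boxtimes K_\ell$, characterised via \cref{MakeProductGeneral} as having a layered-width-$\ell$ partition with quotient of \emph{simple} treewidth at most $k$) and rerun the K\"onig argument of \cref{FiniteToInfiniteGamma} verbatim with $\stw$ in place of $\tw$, using \cref{SimpleTreewidthExtendable} at the limit step. With that lemma in hand, your deduction goes through exactly as you intend, and matches what the paper (equally tersely) does.
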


We now show that the graphs described in \cref{InfinitePlanarStructure6,SimpleInfinitePlanarStructure} satisfy properties \cref{KeyPropertyBoundedMinDegree}--\cref{KeyPropertyBoundedColouringNumbers} from \cref{Intro}. It follows from results of \citet{HW25} that every finite subgraph of $\SS_6\boxtimes \PP$ has minimum degree at most $19$, and  every finite subgraph of $\SS_3\boxtimes \PP \boxtimes K_3$ has minimum degree at most $32$. Note that $\chi( \SS_6\boxtimes \PP) \leq 14$ and $\chi(\SS_3\boxtimes \PP \boxtimes K_3)\leq 24$ since $\chi(A\boxtimes B) \leq \chi(A)\,\chi(B)$.  \cref{ProductTreewidth} implies that every $n$-vertex subgraph of  $\SS_6 \boxtimes \PP$ or $\SS_3\boxtimes \PP \boxtimes K_3$ has treewidth $O(\sqrt{n})$ and has a balanced separation of order $O(\sqrt{n})$.  \cref{GenColourProduct} implies that  $\SS_6 \boxtimes \PP$ and $\SS_3\boxtimes \PP \boxtimes K_3$ (since $\tw(\SS_3\boxtimes K_3)\leq 11$) have linear  colouring numbers. Together, this says that $\SS_6 \boxtimes \PP$ and $\SS_3\boxtimes \PP \boxtimes K_3$ satisfy properties \cref{KeyPropertyBoundedMinDegree}--\cref{KeyPropertyBoundedColouringNumbers} from \cref{Intro}.

\subsection{Graphs on Surfaces}
\label{Genus}

\citet{DJMMUW20} proved generalisations of \cref{FinitePlanarStructure,SimpleFinitePlanarStructure} for finite graphs of bounded Euler genus. 
Building on this work, \citet{UWY22} and \citet{DHHW22} proved the following result.


\begin{thm}[{\protect\citep[Theorem~5]{UWY22},~\citep[Theorem~3]{DHHW22}}]
\label{FiniteSurfaceProduct}
Every finite graph $G$ of Euler genus $g$ is contained in:
\begin{enumerate}[label=(\alph*)]
\item $H \boxtimes P \boxtimes K_{\max\{2g,3\}}$ for some planar graph $H$ of simple treewidth 3 and some path $P$,
\item $(H+K_{2g}) \boxtimes P$ for some planar graph $H$ of simple treewidth 6 and some path $P$.
\end{enumerate}
\end{thm}



\cref{FiniteToInfiniteProduct,FiniteSurfaceProduct} imply:

\begin{thm}
\label{InfiniteSurfaceProduct}
For every $g\in\NN_0$, both
$\SS_3 \boxtimes \PP \boxtimes K_{\max\{2g,3\}}$ and 
$(\SS_6 +K_{2g} )  \boxtimes \PP$ 
contain every graph of Euler genus $g$.
\end{thm}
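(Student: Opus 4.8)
The plan is to reduce to the finite structure theorem \cref{SimpleFiniteSurfaceProduct} and then lift to the infinite setting with the finite-to-infinite machinery of \cref{FiniteToInfiniteSimple,FiniteToInfiniteGamma}. Fix $g\in\NN_0$ and a graph $G$ of Euler genus $g$. Since having Euler genus at most $g$ is a minor-closed (hence monotone) property, every finite subgraph $G'$ of $G$ has Euler genus at most $g$, so \cref{SimpleFiniteSurfaceProduct} applies to $G'$: it is isomorphic to a subgraph of (a)~$H\boxtimes P\boxtimes K_{\max\{2g,1\}}$ for an apex graph $H$ of simple treewidth $7$ and a path $P$; (b)~$H\boxtimes P\boxtimes K_{\max\{2g,3\}}$ for an apex graph $H$ of simple treewidth $4$ and a path $P$; and (c)~$(H+K_{2g})\boxtimes P$ for a planar graph $H$ of simple treewidth $6$ and a path $P$ (where $H$ and $P$ may depend on $G'$).

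Next I would rewrite each finite product in a form to which the relevant extendability lemma applies. For~(c): a graph of simple treewidth at most $6$ is a subgraph of $\SS_6$ by \cref{SimpleTreewidthUniversal}, so $H+K_{2g}$ is a subgraph of $\SS_6+K_{2g}$; thus every finite subgraph of $G$ is a subgraph of $(H'+K_{2g})\boxtimes P'$ for some $H'$ with $\stw(H')\le 6$ and some path $P'$, and \cref{FiniteToInfiniteSimple} (with $c:=2g$ and $a:=0$) gives that $(\SS_6+K_{2g})\boxtimes\PP$ contains $G$. For~(a) and~(b), the apex graph $H$ supplied by \cref{SimpleFiniteSurfaceProduct} is --- as is visible from its derivation from \cref{FiniteSurfaceProduct} via the inequality $\stw(X)\le\stw(X-v)+1$ --- of the form $H_0+K_1$ where $\stw(H_0)\le 6$ (respectively $\stw(H_0)\le 3$); hence $H$ is a subgraph of $\SS_6+K_1$ (respectively $\SS_3+K_1$) by \cref{SimpleTreewidthUniversal}. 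Consequently every finite subgraph of $G$ is a subgraph of $(\SS_6+K_1)\boxtimes P'\boxtimes K_{\max\{2g,1\}}$ (respectively $(\SS_3+K_1)\boxtimes P'\boxtimes K_{\max\{2g,3\}}$) for some path $P'$.

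Finally I would lift these to the infinite setting. For a fixed $g$ the clique factor $K_{\max\{2g,1\}}$ (or $K_{\max\{2g,3\}}$) has constant order, so the class of graphs isomorphic to a subgraph of $(\SS_k+K_1)\boxtimes\PP\boxtimes K_\ell$ is extendable: this is the simple-treewidth analogue of \cref{FiniteToInfiniteGamma}, obtained by the same argument --- passing to a vertex enumeration, applying K\"onig's Lemma (\cref{Konig}) to the finitely many choices of apex vertex, layering, and partition given by \cref{MakeProductGeneral,ProductSubgraphLayeredWidth}, and then invoking \cref{SimpleTreewidthUniversal,SimpleTreewidthExtendable} in place of \cref{TreewidthUniversal,TreewidthExtendable}. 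Applying this to $G$ yields that $(\SS_6+K_1)\boxtimes\PP\boxtimes K_{\max\{2g,1\}}$ contains $G$ and likewise that $(\SS_3+K_1)\boxtimes\PP\boxtimes K_{\max\{2g,3\}}$ contains $G$; together with the conclusion for case~(c) this proves the theorem. I expect the only real obstacles to be bookkeeping rather than depth: one must (i)~confirm that the apex graphs occurring in \cref{SimpleFiniteSurfaceProduct} are indeed $K_1$ joined to a graph of the claimed simple treewidth, so that $\SS_6+K_1$ and $\SS_3+K_1$ are the correct universal targets, and (ii)~state precisely the $\boxtimes K_\ell$-augmented extendability lemma, since \cref{FiniteToInfiniteSimple} as written covers only the case $\ell=1$.
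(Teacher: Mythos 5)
Your proposal is correct and follows essentially the same route as the paper, which derives \cref{InfiniteSurfaceProduct} in one line by combining \cref{SimpleFiniteSurfaceProduct} with the finite-to-infinite lifting of \cref{FiniteToInfiniteSimple} (via the extendability machinery behind \cref{FiniteToInfiniteGamma}). The two bookkeeping points you flag are exactly what the paper leaves implicit, and you resolve them correctly: the apex graph $H$ in cases (a) and (b) is a subgraph of $H_0+K_1$ with $H_0=H-v$ of the stated simple treewidth (it need not literally equal $H_0+K_1$, but a subgraph suffices), and the $\boxtimes K_\ell$-augmented extendability follows by the same K\"onig-plus-\cref{SimpleTreewidthExtendable} argument used for $\Gamma_{k,c,\ell,a}$.
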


A graph $X$ is \defn{apex} if $X-v$ is planar for some vertex $v$ (or $V(X)=\emptyset$). \citet{DJMMUW20} proved the following product structure theorem for finite apex-minor-free graphs. 

\begin{thm}[{\protect\citep[Corollary~40]{DJMMUW20}}] 
\label{ApexMinorFree}
For every finite apex graph $X$, there exists $c\in\NN$ such that every finite  $X$-minor-free graph $G$ is contained in $H\boxtimes P$ for some graph $H$ with $\tw(H)\leq c$ and for some path $P$.
\end{thm}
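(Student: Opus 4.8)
The final statement is a cited theorem of \citet{DJMMUW20}; I sketch the strategy of their proof, which combines the Graph Minor Structure Theorem with the surface product structure theorem (\cref{FiniteSurfaceProduct}) and a gluing argument over tree-decompositions.

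\textbf{Step 1: structure, where the apex hypothesis enters.} By the Graph Minor Structure Theorem of \citet{RS-GraphMinors}, there is $k=k(X)$ such that every $X$-minor-free graph $G$ has a tree-decomposition of adhesion at most $k$ in which each torso is $k$-almost-embeddable: obtained from a graph of bounded Euler genus by adding a bounded number of apex vertices and a bounded number of bounded-depth vortices. Since $X$ is apex, write $X=X'+u$ with $X'$ planar, and fix $m$ so that $X'$ is a minor of the $m\times m$ grid. If some torso had an apex vertex $a$ whose associated surface part had treewidth more than a threshold $t(m)$, then by the Grid-Minor Theorem of \citet{RS-V} that surface part would contain an $m\times m$ grid minor disjoint from $a$, and this grid minor together with $a$ would form an $X'+u=X$ minor --- a contradiction; and a torso whose surface part has treewidth at most $t(m)$ has bounded treewidth once the bounded apex set and bounded vortices are added. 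Hence we may assume each torso is \emph{either} of treewidth at most $k'=k'(X)$, \emph{or} obtained from a graph of Euler genus at most $k'$ by adding at most $k'$ vortices of depth at most $k'$ and \emph{no} apices.

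\textbf{Step 2: product structure of the torsos.} Each torso $G_x$ is a subgraph of $H_x\boxtimes P_x\boxtimes K_\ell$ for some $H_x$ with $\tw(H_x)\leq c_1(X)$, some path $P_x$, and some $\ell=\ell(X)$. Bounded-treewidth torsos are trivial. For a no-apex almost-embeddable torso, one extends \cref{FiniteSurfaceProduct}: a vortex of depth $d$ attached to a facial cycle $C$ has a path-decomposition of width $d$ whose underlying path follows the cyclic order of $C$, and this can be spliced into the BFS layering and the partition into geodesic paths that underlie \cref{FiniteSurfaceProduct}, so that each vortex contributes at most $d$ vertices to each part within each layer; this increases $\ell$ and $\tw(H_x)$ by a bounded amount. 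Since $\tw(H\boxtimes K_\ell)\leq\ell(\tw(H)+1)-1$, each torso is then a subgraph of $H_x'\boxtimes P_x$ with $\tw(H_x')\leq c_2(X)$.

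\textbf{Step 3: gluing.} It remains to prove that if $G$ has a tree-decomposition of adhesion at most $k$ in which each torso is a subgraph of $H'\boxtimes P'$ with $\tw(H')\leq c$, then $G\subseteq H\boxtimes P$ with $\tw(H)\leq c'(c,k)$. By \cref{MakeProductGeneral}, each torso $G_x$ has a partition $\PART_x$ of layered width $1$ with $\tw(G_x/\PART_x)\leq c$, with respect to a layering $\LL_x$. Root the tree-decomposition and process its nodes in order of distance from the root, placing each newly introduced vertex of $G$ into a layer and a part: for a node $x$ with parent $y$, the adhesion set $S=B_x\cap B_y$ is a clique of $G_y$, so it meets at most $|S|\leq k$ parts of $\PART_y$ and, since any two of its vertices are adjacent, spans at most two consecutive layers of $\LL_y$; shift $\LL_x$ and refine $\PART_x$ (splitting each part into at most $k$ pieces) so that they agree with $\LL_y,\PART_y$ on $S$, and place the new vertices of $B_x$ accordingly. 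The union $\PART$ of the adjusted $\PART_x$ is a partition of $G$ of layered width bounded in terms of $c$ and $k$, and $G/\PART$ has a tree-decomposition indexed by the same tree whose torsos are the $G_x/\PART_x$ and whose adhesion is bounded; by the argument of \cref{TreewidthOverTreeDecomposition} this forces $\tw(G/\PART)\leq c'(c,k)$. Finally \cref{MakeProductGeneral} gives $G\subseteq (G/\PART)\boxtimes\PP\boxtimes K_{\ell'}$ for a bounded $\ell'$, and absorbing the $K_{\ell'}$ factor into the first coordinate (and restricting to a long enough finite subpath of $\PP$, as $G$ is finite) completes the proof.

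\textbf{Main obstacle.} The delicate part is Step 3: synchronizing the layerings and layered partitions across the entire tree-decomposition so that adhesion cliques are consistently placed, while simultaneously controlling the layered width and the treewidth of the quotient --- the natural shift-and-refine recipe above needs care so that these parameters do not blow up over many levels of the tree. Step 2's vortex handling is technical but follows the template of \cref{FiniteSurfaceProduct}. The apex hypothesis on $X$ is used \emph{only} in the no-apex refinement of Step 1: without it one is left with torsos of the form $(\text{bounded-treewidth}\boxtimes\text{path}\boxtimes K_\ell)+K_a$, and a dominating vertex cannot be absorbed into a single strong product with a path, which is exactly why the general minor-closed case yields only a tree-decomposition \emph{over} such a product --- the ``more complicated'' construction of \cref{ExcludedMinors} referred to in the introduction.
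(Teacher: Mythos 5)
First, a caveat: the paper does not prove \cref{ApexMinorFree} at all --- it is quoted from \citet{DJMMUW20} --- so your sketch has to be judged against their proof. Your overall architecture (Graph Minor Structure Theorem, a vortex-enhanced version of \cref{FiniteSurfaceProduct} for torsos, then gluing across the tree-decomposition) is indeed the right skeleton, but both of the places where the real work happens are handled by arguments that do not work.

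Step 1 is where the genuine gap lies. An apex graph $X$ is only required to have a vertex $u$ with $X-u$ planar; it is not of the form ``planar plus a dominating vertex'', and, more importantly, an apex vertex $a$ sitting over a surface part of huge treewidth does not force an $X$-minor: the $m\times m$ grid minor given by \citet{RS-V} need not attach to $a$ at all, since $a$'s neighbourhood in the torso may be small, or large but concentrated in a bounded region of the surface. So no contradiction arises, and your dichotomy ``each torso has bounded treewidth or has no apices'' does not follow. What an $X$-minor genuinely forbids is an apex vertex attached in a \emph{spread-out} fashion over a large grid; the correct consequence is a confinement of apex neighbourhoods, not their elimination, and converting this into a usable statement is a substantial piece of work. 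Indeed \citet{DJMMUW20} do not derive apex-freeness from the Grid-Minor Theorem: they rely on a considerably stronger input, a refinement of the structure theorem for apex-minor-free graphs (due to Dvo\v{r}\'ak and Thomas) in which each apex vertex of a torso is adjacent only to the vortices, so that apices are absorbed into (slightly wider) vortices rather than removed. This is exactly the step your two-line argument tries to shortcut.

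Step 3 also has a concrete hole beyond the parameter bookkeeping you flag. One cannot in general ``shift'' a child torso's layering to agree with the parent on the adhesion clique $S$: $S$ may lie in a single layer of the child's layering but in two consecutive layers of the parent's (or its two relevant vertices may appear in the opposite relative order), and no translation of the child's layering repairs this. This is precisely why the actual proof does not glue independently chosen layerings: a single layering of $G$ is fixed in advance (via BFS, respectively built coherently along the tree-decomposition), and the product/partition structure of each torso is proved \emph{relative to the induced layering}, which is where most of the technical effort of \citet{DJMMUW20} is spent, both in the surface-with-vortices step and in the clique-sum step. So while your sketch correctly identifies the ingredients and even correctly predicts why the general minor-closed case only yields the clique-sum statement of \cref{MinorProduct}, the two steps in which the apex hypothesis and the gluing actually operate are missing their key ideas, and the specific arguments you propose for them would fail.
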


\cref{FiniteToInfiniteProduct,ApexMinorFree} imply:

\begin{thm} 
For every finite apex graph $X$ there exists $c\in \NN$ such that $\TT_c \boxtimes \PP$ contains  every  $X$-minor-free graph $G$.
\end{thm}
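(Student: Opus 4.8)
The plan is to combine the finite product structure theorem for apex-minor-free graphs (\cref{ApexMinorFree}) with the finite-to-infinite transfer lemma (\cref{FiniteToInfiniteProduct}); the present statement is essentially a corollary of these two results.

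First I would fix a finite apex graph $X$ and apply \cref{ApexMinorFree} to obtain a constant $c\in\NN$ such that every finite $X$-minor-free graph is isomorphic to a subgraph of $H\boxtimes P$ for some graph $H$ with $\tw(H)\le c$ and for some path $P$. The key point, which is exactly what \cref{ApexMinorFree} asserts, is that this $c$ depends only on $X$ and not on the particular finite graph.

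Next, let $G$ be any ($X$-minor-free, countable) graph. Since the class of $X$-minor-free graphs is monotone (a subgraph of a graph with no $X$-minor also has no $X$-minor), every finite subgraph $G'$ of $G$ is $X$-minor-free, and hence is isomorphic to a subgraph of $H\boxtimes P$ for some graph $H$ with $\tw(H)\le c$ and some path $P$. Writing $\TT_c\boxtimes\PP$ as $((\TT_c+K_0)\boxtimes\PP)+K_0$ and $H\boxtimes P$ as $((H+K_0)\boxtimes P)+K_0$, this is precisely the hypothesis of \cref{FiniteToInfiniteProduct} with $k=c$ and both join-parameters equal to $0$. Therefore $\TT_c\boxtimes\PP$ contains $G$, as required.

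Since all of the real work is already packaged in the cited results, there is no serious obstacle here: the only things to check are that ``$X$-minor-free'' is preserved under taking finite subgraphs (immediate from the definition of minor) and that the constant $c$ from \cref{ApexMinorFree} is uniform over finite $X$-minor-free graphs (which is the statement of that theorem). The content that makes the passage from finite subgraphs to the infinite graph $G$ work — a König's Lemma argument together with the extendability of bounded treewidth — is already encapsulated in \cref{FiniteToInfiniteGamma,FiniteToInfiniteProduct}, so it need not be revisited.
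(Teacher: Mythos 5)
Your proposal is correct and is exactly the paper's argument: the theorem is stated in the paper as an immediate consequence of \cref{ApexMinorFree} together with \cref{FiniteToInfiniteProduct} (applied with the join-parameters equal to $0$), and your spelled-out version, including the observation that $X$-minor-freeness passes to finite subgraphs, fills in precisely those routine details.
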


\citet{DEMWW22} showed an analogous result for  bounded degree graphs excluding an arbitrary fixed minor.

\begin{thm}[{\protect\citep[Theorem~24]{DEMWW22}}]
\label{FiniteMinorFreeDegreeStructure}
For every finite graph $X$ there exists $c\in\NN$ such that for all $\Delta\in\NN$ every finite $X$-minor-free graph with maximum degree $\Delta$ is contained in $H\boxtimes P$ for some graph $H$ with $\tw(H) \leq c\Delta$ and for some path $P$. 
\end{thm}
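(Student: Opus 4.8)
The plan is to derive this from the Graph Minor Structure Theorem together with the bounded-genus product structure theorem of \citet{DJMMUW20}, with the maximum-degree bound used only to control how apex vertices are absorbed.

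First I would invoke the Graph Minor Structure Theorem of Robertson and Seymour \citep{RS-GraphMinors}: there is an integer $k=k(X)$ such that every $X$-minor-free graph $G$ has a tree-decomposition $(B_t:t\in V(T))$ of adhesion at most $k$ in which every torso $G_t$ is \emph{$k$-almost-embeddable}---that is, $G_t$ has a set $A_t$ of at most $k$ apex vertices such that $G_t-A_t$ embeds in a surface of Euler genus at most $k$ with at most $k$ vortices of width at most $k$ attached to pairwise disjoint faces. Since $G$ has maximum degree at most $\Delta$, every apex vertex of every torso has at most $\Delta$ neighbours in $G$. The argument then splits into a local step---each such torso of maximum degree at most $\Delta$ is a subgraph of $H_t\boxtimes\PP$ for some $H_t$ with $\tw(H_t)=O(k\Delta)$---and a global step, combining the local product structures along $T$.

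For the local step I would proceed in three stages. Deleting the vortices, the remaining graph has Euler genus at most the constant $k$, so by \cref{FiniteSurfaceProduct} it is a subgraph of $H^0\boxtimes\PP\boxtimes K_{O(k)}$ with $\tw(H^0)\le 9$, hence of $(H^0\boxtimes K_{O(k)})\boxtimes\PP$, a product of a graph of treewidth $O(k)$ with a path. Next I would fold the at most $k$ vortices back in: each vortex is a bounded-pathwidth structure running along a face, so it can be absorbed into the path factor at the cost of increasing the treewidth of the first factor by a bounded amount depending only on $k$, by an argument of the kind already used in \citep{DJMMUW20}; this gives $G_t-A_t\subseteq H^1\boxtimes\PP$ with $\tw(H^1)=O(k)$. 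Finally I would add the apex set $A_t$ back. Here the degree bound enters: $A_t$ together with $N(A_t)$ has at most $k(\Delta+1)$ vertices, and the idea is to replace the layering underlying the product structure of $G_t-A_t$ by one obtained from a breadth-first search rooted at $A_t$, re-partitioning so that the new quotient contains $A_t\cup N(A_t)$ in each relevant bag; the bounded-width bookkeeping then increases the treewidth of the quotient by $O(k\Delta)$, yielding $\tw(H_t)=O(k\Delta)$. For the global step I would use that, as in the proofs of the surface and apex-minor-free cases in \citep{DJMMUW20} (in the same spirit as \cref{TreewidthOverTreeDecomposition}), product structure with treewidth parameter $s$ is preserved under tree-decompositions of adhesion at most $k$, with the parameter growing to $O(s+k)$: the adhesion cliques, having size at most $k$, occupy a bounded region of each product $H_t\boxtimes\PP$ and can be aligned across the path factors while the tree-decompositions of the $H_t$ are glued along $T$. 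Since $k=k(X)$ is a constant and $\Delta\ge 1$, this gives $G\subseteq H\boxtimes\PP$ with $\tw(H)\le c\Delta$ for a suitable $c=c(X)$.

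I expect the apex stage of the local step to be the main obstacle. A bounded-degree apex vertex genuinely cannot be inserted into $H\boxtimes\PP$ while keeping the layering of $G_t-A_t$: every vertex of $H\boxtimes\PP$ has neighbours in only three consecutive layers, whereas an apex vertex's neighbours may be spread across arbitrarily many layers of the embedded part. The graph must therefore be re-layered around the apices, and controlling the layered width and the treewidth of the new quotient simultaneously---while tracking how the genus-$k$ embedding interacts with a BFS layering rooted at the at most $k\Delta$ neighbours of the apices---is the delicate part, and is exactly where the linear-in-$\Delta$ dependence of $\tw(H)$ is forced.
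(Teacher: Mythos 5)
You should first note that this paper does not prove \cref{FiniteMinorFreeDegreeStructure}: it is quoted from \citep{DEMWW} and used here as a black box (to deduce \cref{MinorFreeDegreeStructure} via \cref{FiniteToInfiniteProduct}), so your proposal can only be measured against the cited proof route, not an in-paper argument. Your architecture --- Graph Minor Structure Theorem, surface product structure, vortex absorption, degree-controlled apex absorption, then gluing along the tree --- has the right overall shape and is close in spirit to the literature, where one works from the product-structure form of the structure theorem (essentially \cref{MinorProduct}) rather than redoing the vortex analysis. But as written, the proposal asserts rather than proves exactly the two load-bearing steps, and one of the quoted "black box" facts is false in the generality in which you invoke it.

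Concretely: (a) for the apex step, you rightly observe that the graph must be re-layered around the apexes, but the mechanism you propose --- "replace the layering underlying the product structure of $G_t-A_t$ by one obtained from a BFS rooted at $A_t$" --- is not available from \cref{FiniteSurfaceProduct}: that theorem gives no control over which layering underlies the product, and re-layering means re-proving the surface decomposition for a BFS layering rooted at an arbitrary set of up to $k\Delta$ scattered vertices (adding a dummy root vertex destroys the embedding, and the machinery of \citep{DJMMUW20} is proved for BFS layerings of the embedded graph itself). Since this is where the theorem's content and the linear dependence on $\Delta$ live, a proof must actually carry this out. (b) For the global step, the claim that "product structure with treewidth parameter $s$ is preserved under tree-decompositions of adhesion at most $k$" is not a quotable fact, and with apex sets present it is outright false: the $n\times n$ grid plus one universal vertex is a single torso of the form $(H\boxtimes P)+K_1$ with $\tw(H)\leq 8$, is $K_6$-minor-free, yet any layering places the entire grid in three consecutive layers, forcing $\tw(H')\geq (n+1)/3-1$ in any representation inside $H'\boxtimes P'$. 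So the degree bound must be woven into the gluing itself: your per-torso layerings (each re-rooted at its own $N(A_t)$) are chosen independently, apex vertices may lie in adhesion sets, and there is no a priori single layering of $G$ compatible with all of them; aligning them needs the adhesion cliques to span few layers on both sides plus a global coarsening argument, which has to be proved, not cited. (Two smaller slips: torsos need not have maximum degree $\Delta$, since adhesion-clique edges raise degrees, so the $N(A_t)$ bookkeeping must be done in $G$; and vortices are absorbed into the $H$-factor, not the path factor.) None of this shows your plan is unworkable, but it defers precisely the parts that constitute the proof.
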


\cref{FiniteToInfiniteProduct,FiniteMinorFreeDegreeStructure} imply:

\begin{thm}
\label{MinorFreeDegreeStructure}
For every graph $X$ there exists $c\in\NN$ such that for all $\Delta\in\NN$, the graph $\TT_{c\Delta} \boxtimes \PP$ contains every  $X$-minor-free graph with maximum degree at most $\Delta$.
\end{thm}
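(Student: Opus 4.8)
The plan is to derive \cref{MinorFreeDegreeStructure} from the finite product structure theorem \cref{FiniteMinorFreeDegreeStructure} together with the finite-to-infinite transfer lemma \cref{FiniteToInfiniteProduct}, exactly as the earlier corollaries in this section are derived from their finite counterparts.

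First I would invoke \cref{FiniteMinorFreeDegreeStructure} to fix, for the given graph $X$ (which is finite, as required by that theorem), a constant $c=c(X)\in\NN$ with the property that for every $\Delta\in\NN$ every finite $X$-minor-free graph of maximum degree at most $\Delta$ is isomorphic to a subgraph of $H\boxtimes P$ for some graph $H$ with $\tw(H)\le c\Delta$ and some finite path $P$. The point is that $c$ depends only on $X$ and not on $\Delta$, which is what makes the final conclusion uniform in $\Delta$.

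Next, fix $\Delta\in\NN$ and let $G$ be an $X$-minor-free graph of maximum degree at most $\Delta$. The key (and essentially only) observation is that every finite subgraph $G'$ of $G$ is again $X$-minor-free --- since any minor of $G'$ is a minor of $G$ --- and still has maximum degree at most $\Delta$. Hence the previous paragraph applies to each such $G'$, so every finite subgraph of $G$ is isomorphic to a subgraph of $H\boxtimes P$ for some graph $H$ with treewidth at most $c\Delta$ and some path $P$ (where $H$ and $P$ may depend on $G'$). This is precisely the hypothesis of \cref{FiniteToInfiniteProduct} with $k:=c\Delta$ and with the two join-parameters of that lemma (the copies of $K_c$ and $K_a$ appearing there) both taken to be the empty graph. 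That lemma then yields that $\TT_{c\Delta}\boxtimes\PP$ contains $G$, as required.

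There is no substantive obstacle here: the argument is a routine combination of two results already established above, and the only point needing (trivial) care is the hereditariness of $X$-minor-freeness under passing to finite subgraphs. All of the genuine work --- the application of \cref{Konig} together with the extendability of $\tw_c$ from \cref{TreewidthApexExtendable} --- is packaged inside \cref{FiniteToInfiniteProduct} (via \cref{FiniteToInfiniteGamma}).
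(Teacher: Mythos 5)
Your proposal is correct and is exactly the paper's argument: the paper derives this theorem by the one-line observation that \cref{FiniteMinorFreeDegreeStructure} and \cref{FiniteToInfiniteProduct} combine (with the join parameters trivial), and your write-up just spells out the same steps, including the hereditariness of $X$-minor-freeness and maximum degree for finite subgraphs.
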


\subsection{Beyond Minor-Closed Classes}
\label{NonMinorClosed}

Recent research studies graph product structure theorems for various non-minor-closed graph classes~\citep{DMW23,HW24,DHSW24}. Here we extend their results for infinite graphs. First consider graphs that can be drawn on a fixed surface with a bounded number of crossings per edge. A graph is \hdefn{$(g,k)$}{planar} if it has a drawing in a surface of Euler genus at most $g$ such that each edge is involved in at most $k$ crossings. Even in the simplest case, there are $(0,1)$-planar graphs that contain arbitrarily large complete graph minors \citep{DEW17}. 

\begin{thm}[{\protect\citep[Theorem~11]{DMW23}}] 
\label{gkPlanarStructure1}
Every finite $(g,k)$-planar graph is contained in $H\boxtimes P$, for some graph $H$ of treewidth $O(gk^6)$ and for some path $P$. 
\end{thm}

\begin{thm}[{\protect\citep[Corollary~12]{DHSW24}}]  
\label{gkPlanarStructure2}
There is a function $f$ such that every $(g,k)$-planar graph $G$ is contained in
$H \boxtimes P \boxtimes K_{f(g,k)}$ for some graph $H$ with $\tw( H ) \leq 963\, 922\, 179$.
\end{thm}

\cref{FiniteToInfiniteProduct,gkPlanarStructure1,gkPlanarStructure2} imply:

\begin{thm}
\label{gkPlanarInfinite1}
For all $g,k\in\NN$ there exists an integer $c\in O(gk^6)$ such that $\TT_c\boxtimes \PP$ contains every $(g,k)$-planar graph.
\end{thm}

\begin{thm}
\label{gkPlanarInfinite2}
There is a function $f$ such that $\TT_{963\, 922\, 179}\boxtimes \PP \boxtimes K_{f(g,k)}$ contains every $(g,k)$-planar graph.
\end{thm}

\note{Note that \cref{gkPlanarStructure2,gkPlanarInfinite2} have been added to the paper. }

Map graphs, which are defined as follows, provide another example of a non-minor-closed class that has a product structure theorem. Start with a graph $G_0$ embedded in a surface of Euler genus $g$, with each face labelled a `nation' or a `lake', where each vertex of $G_0$ is incident with at most $d$ nations. Let $G$ be the graph whose vertices are the nations of $G_0$, where two vertices are adjacent in $G$ if the corresponding faces in $G_0$ share a vertex. Then $G$ is called a \hdefn{$(g,d)$}{map graph}. A $(0,d)$-map graph is called a (plane) \hdefn{$d$}{map graph}; see \citep{FLS-SODA12,CGP02} for example. The $(g,3)$-map graphs are precisely the graphs of Euler genus at most $g$; see \citep{DEW17}. So $(g,d)$-map graphs generalise graphs embedded in a surface, and we now assume that $d\geq 4$ for the remainder of this section. 

\begin{thm}[{\protect\citep[Theorem~18]{DMW23}}]
\label{MapPartition}
Every finite $(g,d)$-map graph is  isomorphic to  a subgraph of:
\begin{itemize}
\item  $H\boxtimes P\boxtimes K_{O(gd^2)}$, where $H$ is a graph with treewidth at most 14 and $P$ is a path,
\item $H\boxtimes P$, where $H$ is a graph with treewidth $O(gd^2)$ and $P$ is a path.
\end{itemize}
\end{thm}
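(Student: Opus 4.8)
The plan is to derive the product structure of $G$ from that of an auxiliary \emph{witness} graph of bounded Euler genus, and then transfer it through the ``half-square'' operation defining a map graph, using the bound $d$ on the number of nations per vertex to control the blow-up.

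\textbf{Step 1 (the incidence graph).} Let $G_0$ be a witness for $G$: a graph embedded in a surface of Euler genus at most $g$, with faces labelled nations and lakes, each vertex incident with at most $d$ nations, and $V(G)$ identified with the set of nations; after routine cleanup assume every nation is incident with at least one vertex. Let $B$ be the bipartite \emph{vertex--nation incidence graph}, with $V(B):=V(G_0)\cup V(G)$ and $xn\in E(B)$ whenever vertex $x$ lies on the boundary of nation-face $n$. Then (i) $B$ is a subgraph of the radial graph of $G_0$, so $B$ has Euler genus at most $g$; (ii) every $x\in V(G_0)$ has $\deg_B(x)\le d$; and (iii) two nations are adjacent in $G$ if and only if they have a common neighbour in $B$. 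Thus $G$ is the half-square of $B$ on the side $V(G)$, with the point side $V(G_0)$ an independent set of degree at most $d$.

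\textbf{Step 2 (product structure for $B$).} Apply the Euler-genus product structure theorem \cref{SimpleFiniteSurfaceProduct}(a) (in the plane case $g=0$ one may start instead from \cref{FinitePlanarStructure6}): there is a graph $H_1$ of simple treewidth at most $7$ (equivalently treewidth at most $9$, via \cref{FiniteSurfaceProduct}(a)), a path $P$, and $\ell=\max\{2g,1\}$ with $B$ isomorphic to a subgraph of $H_1\boxtimes P\boxtimes K_\ell$. By \cref{MakeProductGeneral}, $B$ has a layering $(V_0,V_1,\dots)$ and a partition $\PART_B$ of layered width at most $\ell$ whose quotient has treewidth at most a constant. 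Merging the layers in consecutive pairs yields a layering $\mathcal L'$ of $B$ under which $\PART_B$ has layered width at most $2\ell$ and any two vertices with a common $B$-neighbour --- in particular any two $G$-adjacent nations --- lie in equal or consecutive layers; so $\mathcal L'$ restricted to $V(G)$ is a valid layering of $G$.

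\textbf{Step 3 (transfer to the half-square).} Form the partition of $G$ by intersecting each part of $\PART_B$ with $V(G)$. The obstacle is that its quotient embeds only into the \emph{second power} of $B/\PART_B$ (two nation-parts are $G$-adjacent only when some point-part is $B/\PART_B$-adjacent to both), and squares of bounded-treewidth graphs can have unbounded treewidth. The resolution --- and the only place $\deg_B(\cdot)\le d$ is used --- is that each point-part contains at most $2\ell$ points per layer of $\mathcal L'$, each with at most $d$ nation-neighbours, so in $G$ each point-part induces a union of cliques of total order $O(\ell d)$ per layer. These induced cliques can be absorbed ``locally'': either by enlarging the clique factor to $K_{O(\ell d)}=K_{O(gd^2)}$ while leaving the quotient equal to $B/\PART_B$ (treewidth a bounded constant, giving the first bullet of \cref{MapPartition}), or by keeping the clique factor $K_\ell$ and instead routing the extra cliques through the point-parts in an enlarged quotient of treewidth $O(gd^2)$ (giving the second bullet with host $H\boxtimes P$). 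In both regimes \cref{ProductSubgraphLayeredWidth} and \cref{MakeProductGeneral} convert the resulting layered partition of $G$ back into the asserted product.

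\textbf{Main obstacle.} The heart is Step 3: producing, from the product structure of $B$, a host for the half-square $G$ whose tree-like coordinate still has bounded treewidth, since the naive choice $(B/\PART_B)^2$ fails. Once the bounded-degree point side is used to show that each part contributes only a bounded-per-layer family of bounded-size cliques, the rest (Euler genus of $B$, the re-layering, and the finite bookkeeping) is routine.
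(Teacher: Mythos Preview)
First, note that the paper does not prove \cref{MapPartition}; it is quoted from \citep{DMW19b} and used as a black box. So the comparison is with the argument in that source.

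Your Steps 1--2 are correct and match \citep{DMW19b}: pass to the bipartite vertex--nation incidence graph $B$ of Euler genus at most $g$ with the point side of degree at most $d$, apply the surface product structure to $B$, and merge consecutive layers so that nations at $B$-distance $2$ land in equal or adjacent layers.

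Step 3 is where the gap lies. You correctly identify the obstacle --- the quotient of $\{P\cap V(G):P\in\PART_B\}$ only embeds into $(B/\PART_B)^2$, whose treewidth is not controlled --- but ``these induced cliques can be absorbed locally'' is not a construction. In particular, ``leaving the quotient equal to $B/\PART_B$ while enlarging the clique factor'' does not make sense as written: if the parts are $P\cap V(G)$ then the layered width is already at most $2\ell$ and the quotient is already a subgraph of $(B/\PART_B)^2$; making the $K$-factor larger does not turn that quotient into $B/\PART_B$. You also never say what the modified partition is whose quotient \emph{does} have bounded treewidth, nor how a tree-decomposition of $B/\PART_B$ is converted into one for the new quotient. (There is also an arithmetic slip: with $\ell=\max\{2g,1\}$ one has $\ell d=O(gd)$, not $O(gd^2)$, so your stated bound does not match the theorem.) A further confusion is the phrase ``point-part'': parts of $\PART_B$ are typically vertical paths in the BFS layering of $B$, so they contain both points and nations, not one or the other.

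In \citep{DMW19b} this step is handled by a general \emph{shortcut system} lemma. One views the map graph as $B$ augmented by the family of length-$2$ paths $(n,x,n')$ through each point $x$; the lemma says that if $B$ has a layered $H$-partition of layered width $\ell$, then the shortcut graph has a layered $H'$-partition with both parameters controlled. The $d^2$ in the statement arises because each point is an \emph{internal} vertex of up to $\binom{d}{2}$ such paths, and the lemma's layered-width blow-up depends on this count. The specific treewidth bound $14$ for $H'$ likewise comes out of that lemma applied to paths of length $2$. Your Step~3 gestures at the same mechanism, but the construction of $H'$ and the reason its treewidth stays bounded --- which is exactly the content of the shortcut lemma --- are missing.
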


\cref{FiniteToInfiniteProduct,MapPartition} imply:

\begin{thm}
For all $g,d\in\NN$ there exists $c\in O(gd^2)$ such that $\TT_c\boxtimes \PP$ contains every  $(g,d)$-map graph.
\end{thm}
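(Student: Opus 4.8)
The plan is to obtain this statement as an immediate corollary of the two ingredients already assembled in the excerpt, exactly parallel to the preceding results in \cref{NonMinorClosed} (e.g.\ \cref{gkPlanarInfinite}). The two ingredients are the finite product structure theorem for $(g,d)$-map graphs (\cref{MapPartition}), which gives $c\in O(gd^2)$ such that every \emph{finite} $(g,d)$-map graph is isomorphic to a subgraph of $H\boxtimes P$ for some graph $H$ with $\tw(H)\le c$ and some path $P$, and the transfer lemma \cref{FiniteToInfiniteProduct}, which says that if every finite subgraph of $G$ embeds in $(H+K_0)\boxtimes P = H\boxtimes P$ for some $H$ of treewidth at most $c$ and some path $P$, then $\TT_c\boxtimes\PP$ contains $G$ (here we use $a=c'=0$ in the notation of \cref{FiniteToInfiniteProduct}).

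First I would fix $g,d\in\NN$ and let $c\in O(gd^2)$ be the constant supplied by \cref{MapPartition} (using the second item of that theorem, the crossing-free product form). Then I would let $G$ be an arbitrary $(g,d)$-map graph (countable, as always). The one genuinely substantive point is that \cref{FiniteToInfiniteProduct} requires a hypothesis about \emph{every finite subgraph} of $G$, whereas \cref{MapPartition} applies to finite \emph{$(g,d)$-map graphs}. So the key step is to observe that the class of $(g,d)$-map graphs is monotone: every subgraph of a $(g,d)$-map graph is again a $(g,d)$-map graph. This is clear from the definition --- deleting a vertex corresponds to relabelling the associated nation of $G_0$ as a lake (and then the incidence bound $\le d$ and the genus bound $\le g$ are only relaxed), and deleting an edge only removes an adjacency, which does not affect the defining embedded graph $G_0$ at all. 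Hence every finite subgraph $G'$ of $G$ is a finite $(g,d)$-map graph, so by \cref{MapPartition} it is isomorphic to a subgraph of $H'\boxtimes P'$ for some $H'$ with $\tw(H')\le c$ and some path $P'$.

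Having established that, I would simply apply \cref{FiniteToInfiniteProduct} (with $c'=a=0$, i.e.\ no apex vertices and no $K_{c'}$ join) to conclude that $\TT_c\boxtimes\PP$ contains $G$. Since $G$ was an arbitrary $(g,d)$-map graph and $c\in O(gd^2)$ depends only on $g$ and $d$, this proves the statement. I do not expect any real obstacle: the proof is a direct concatenation of \cref{MapPartition} and \cref{FiniteToInfiniteProduct}, with the only thing to spell out being the monotonicity of the $(g,d)$-map graph class used to pass from "finite $(g,d)$-map graphs" to "finite subgraphs of a $(g,d)$-map graph." If one wanted the more refined conclusion mirroring the first item of \cref{MapPartition}, one would instead combine it with \cref{MakeProductGeneral}/\cref{FiniteToInfiniteGamma} to get that $\TT_{14}\boxtimes\PP\boxtimes K_{O(gd^2)}$ also contains every $(g,d)$-map graph, but for the stated theorem the $\TT_c\boxtimes\PP$ form with $c\in O(gd^2)$ suffices.
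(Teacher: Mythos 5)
Your derivation is essentially the paper's: the theorem is obtained there precisely by combining \cref{MapPartition} with \cref{FiniteToInfiniteProduct} (with no apex vertices and no join, i.e.\ $c=a=0$), and the only point requiring any comment is exactly the one you identify, namely passing from finite $(g,d)$-map graphs to finite subgraphs of a countable one. One small correction: your edge-deletion argument for monotonicity is not sound --- keeping $G_0$ unchanged does not witness the edge-deleted graph as a map graph, since $G_0$ determines all adjacencies --- but that step is unnecessary: a finite subgraph $G'$ of a $(g,d)$-map graph $G$ is contained in the finite induced subgraph $G[V(G')]$, which is a $(g,d)$-map graph by your lake-relabelling argument, and applying \cref{MapPartition} to $G[V(G')]$ already places $G'$ inside $H\boxtimes P$, which is all that \cref{FiniteToInfiniteProduct} needs.
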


A \defn{string graph} is the intersection graph of a set of curves in the plane with no three curves meeting at a single point; see  \cite{PachToth-DCG02,FP10,FP14} for example. For $\delta\in\NN$, if each curve is in at most $\delta$ intersections with other curves, then the corresponding string graph is called a \hdefn{$\delta$}{string graph}. A \hdefn{$(g,\delta)$}{string} graph is defined analogously for curves on a surface of Euler genus at most $g$.  

\begin{thm}[{\protect\citep[Theorem~14]{DMW23}}] 
\label{StringPartition}
Every finite  $(g,\delta)$-string graph is contained in $H\boxtimes P$, for some graph $H$ of treewidth $O(g\delta^7)$ and some path $P$.
\end{thm}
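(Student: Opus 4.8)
The plan is to follow the approach of \citet{DMW19b}, reducing the structure of a $(g,\delta)$-string graph to that of a bounded Euler genus graph by planarising the curve arrangement. First I would fix a drawing of the given finite $(g,\delta)$-string graph $G$ on a surface $\Sigma$ of Euler genus at most $g$ realising each vertex $v$ by a curve $\gamma_v$, so that $vw\in E(G)$ exactly when $\gamma_v$ meets $\gamma_w$ and each $\gamma_v$ is in at most $\delta$ crossings. Let $R$ be the \emph{arrangement graph}: its vertices are the crossing points together with the two endpoints of each curve, and its edges are the arcs of the curves between consecutive such points. Then $R$ is drawn in $\Sigma$ without crossings, so $R$ has Euler genus at most $g$. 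Moreover each curve $\gamma_v$ traces a path $P_v$ in $R$ with at most $\delta+2$ vertices, $\gamma_v$ and $\gamma_w$ cross if and only if $V(P_v)\cap V(P_w)\neq\emptyset$, and each vertex of $R$ lies in at most two of the $P_v$ (two if it is a crossing point, one if it is a curve-endpoint). Hence $G$ is the intersection graph of the family $\{V(P_v):v\in V(G)\}$ of connected subgraphs of $R$, each of size at most $\delta+2$, with each vertex of $R$ belonging to at most two of them.

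Next I would apply the product structure theorem for finite bounded Euler genus graphs (\cref{FiniteSurfaceProduct}(c), i.e.\ the result of \citet{DJMMUW20}) to $R$: this gives a graph $H_0$ with $\tw(H_0)\le 2g+8=O(g)$, a path $Q$, and an embedding $\psi$ of $R$ into $H_0\boxtimes Q$. Since each $P_v$ is connected with at most $\delta+2$ vertices, its image $\psi(P_v)$ has $Q$-coordinates confined to an interval of length $O(\delta)$ and $H_0$-coordinates confined to a set of at most $\delta+2$ vertices. As adjacent $v,w$ in $G$ have $\psi(P_v)$ and $\psi(P_w)$ intersecting, grouping the $Q$-coordinates into blocks of width $\Theta(\delta)$ produces a layering of $G$ in which every vertex meets only $O(1)$ layers.

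The core step — and the one I expect to be the main obstacle — is to upgrade this to an honest product structure $G\subseteq H\boxtimes P$ with $\tw(H)\le \tw(H_0)\cdot\delta^{O(1)}=O(g\delta^{7})$. The naive idea of indexing the part containing $v$ by the $H_0$-coordinate of a canonical vertex of $P_v$ fails, because adjacent parts would then lie only within graph-distance $O(\delta)$ in $H_0$, placing the quotient inside the $k$-th power of $H_0$ for some $k=O(\delta)$; powers of bounded-treewidth graphs (for instance of stars) can have unbounded treewidth. What is needed is the finer layered-partition bookkeeping of \citet{DMW19b}: one works band by band, where within each band of $O(\delta)$ consecutive $Q$-layers the relevant part of $H_0\boxtimes Q$ is a strip of controlled ``width'', so the $\delta$-bounded pieces can spread over only a bounded-treewidth region of the new host graph, and these bands are then threaded along the path $P$; the factor of two coming from each point lying on two curves, together with a clique factor of size $O(\delta)$ absorbing the layered width, is what raises the exponent from about $\delta^{6}$ to $\delta^{7}$. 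Given such an $H$ and the associated layered partition, \cref{MakeProductGeneral} yields $G\subseteq H\boxtimes P$, as required. An alternative, yielding a weaker exponent, is to reroute each edge $vw$ of $G$ along $\gamma_v$ and then $\gamma_w$, obtaining a drawing of $G$ on $\Sigma$ with $O(\delta^{2})$ crossings per edge, and then to invoke \cref{gkPlanarStructure}.
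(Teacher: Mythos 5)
There is a genuine gap here, and you have in fact flagged it yourself. Note first that the paper does not prove \cref{StringPartition} at all: it is quoted from \citet{DMW19b}, so the only meaningful comparison is with that paper's argument. Your setup is the right one and matches theirs: planarise the curve arrangement into a graph $R$ of Euler genus at most $g$, observe that each curve becomes a path $P_v$ on $O(\delta)$ vertices with each vertex of $R$ on at most two such paths, and note that $G$ is the intersection graph of these paths. But the entire content of the theorem is the step you label ``the core step'' and then do not carry out: producing a host graph $H$ with $\tw(H)=O(g\delta^7)$ together with a layered partition of $G$ to feed into \cref{MakeProductGeneral}. Deriving from \cref{FiniteSurfaceProduct} that each $P_v$ occupies $O(\delta)$ consecutive layers, and hence that $G$ inherits a layering after coarsening into bands of width $\Theta(\delta)$, is the easy half; the hard half is bounding the treewidth of the quotient, and your description of it (``band by band bookkeeping'', a factor of two and a clique factor of size $O(\delta)$ raising $\delta^6$ to $\delta^7$) is a gesture at the conclusion, not an argument. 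In \citet{DMW19b} this step is done through their shortcut-system machinery: each edge $vw$ of $G$ is realised as a path of length $O(\delta)$ in $R$ (along $\gamma_v$ to a crossing with $\gamma_w$, then along $\gamma_w$), each vertex of $R$ lies on $O(\delta)$ such paths (it lies on at most two curves, each meeting at most $\delta$ others), and a general theorem of theirs says that augmenting a graph of Euler genus $g$ by a $(k,d)$-shortcut system preserves the product structure with treewidth polynomial in $k$ and $d$; plugging in $k,d=O(\delta)$ is exactly what yields $O(g\delta^{7})$. Nothing in your proposal substitutes for that theorem.

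The one route you do describe in full --- rerouting each edge along its two curves to obtain a drawing of $G$ on the surface with $O(\delta^{2})$ crossings per edge and invoking \cref{gkPlanarStructure} --- is sound, but as you note it only gives treewidth $O(g\delta^{12})$, so it proves a strictly weaker statement than the one claimed. As written, the proposal therefore establishes the $O(g\delta^{12})$ bound but not \cref{StringPartition}; to get $O(g\delta^{7})$ you would need to either import or reprove the shortcut-system product structure theorem of \citet{DMW19b}.
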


\cref{FiniteToInfiniteProduct,StringPartition} imply:

\begin{thm}
For all $g,\delta\in\NN$ there exists $c\in O(g\delta^7)$ such that $\TT_c \boxtimes \PP$ contains every  $(g,\delta)$-string graph.
\end{thm}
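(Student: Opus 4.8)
The plan is to combine \cref{StringPartition} with \cref{FiniteToInfiniteProduct}, exactly as in the analogous arguments for $(g,k)$-planar graphs (\cref{gkPlanarInfinite}) and for $(g,d)$-map graphs earlier in this subsection. Fix $g,\delta\in\NN$ and let $c\in O(g\delta^7)$ be the constant supplied by \cref{StringPartition}, so that every \emph{finite} $(g,\delta)$-string graph is isomorphic to a subgraph of $H\boxtimes P$ for some graph $H$ with $\tw(H)\le c$ and some path $P$.

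The first step is to observe that the class of $(g,\delta)$-string graphs is hereditary: given a countable family of curves on a surface of Euler genus at most $g$ with each curve meeting at most $\delta$ others, any finite subfamily is a drawing witnessing that the corresponding induced subgraph is a finite $(g,\delta)$-string graph. Consequently, every finite induced subgraph of a countable $(g,\delta)$-string graph $G$ is a finite $(g,\delta)$-string graph, hence is bounded above in the subgraph order by some product $H\boxtimes P$ with $\tw(H)\le c$. Since every finite subgraph of $G$ is a subgraph of a finite induced subgraph of $G$, and since a subgraph of a subgraph of $H\boxtimes P$ is again a subgraph of $H\boxtimes P$, it follows that every finite subgraph of $G$ is isomorphic to a subgraph of $H\boxtimes P$ for some graph $H$ with $\tw(H)\le c$ and some path $P$.

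The second step is to apply \cref{FiniteToInfiniteProduct} with its apex/join parameters (called $c$ and $a$ there) both set to $0$, so that the $K_c$ and $K_a$ summands vanish, and with its treewidth bound $k$ set equal to our $c$. The hypothesis of \cref{FiniteToInfiniteProduct} is precisely the conclusion of the previous paragraph, and its conclusion is that $\TT_c\boxtimes\PP$ contains $G$. As $G$ ranges over all countable $(g,\delta)$-string graphs, this establishes the theorem.

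There is no genuine obstacle here; all the difficulty is already packaged into the two cited results — \cref{StringPartition}, proved in \cite{DMW19b} via the sparsity/separator machinery for string graphs, and \cref{FiniteToInfiniteProduct}, which is the compactness argument (via K\"onig's Lemma) combined with the universality of $\TT_k$ and the extendability of the relevant product class. The only points requiring care are bookkeeping: the notational clash between the constant $c$ in the statement and the apex parameter $c$ in \cref{FiniteToInfiniteProduct}, and the trivial reduction from finite subgraphs to finite induced subgraphs, which is valid precisely because \cref{StringPartition} gives a \emph{subgraph} (not induced-subgraph) containment.
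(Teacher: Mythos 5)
Your proposal is correct and follows exactly the paper's route: the paper derives the theorem by combining \cref{StringPartition} with \cref{FiniteToInfiniteProduct} (with the $K_c$ and $K_a$ parameters trivial), which is precisely your argument. Your extra care in passing from finite subgraphs to finite induced subgraphs (using that $(g,\delta)$-string graphs are hereditary and that subgraph containment in $H\boxtimes P$ is transitive) is a sound and welcome elaboration of a step the paper leaves implicit.
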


Finally, we mention the following result, which is an immediate corollary of \cref{FiniteMinorFreeDegreeStructure} and Theorem~16 by \citet{DMW23}. The \DefnIndex{$k$-th power}{power} of a graph $G$ is the graph $G^k$ with $V(G^k):=V(G)$, where $vw\in E(G^k)$ whenever $\dist_G(v,w)\in[k]$. 

\begin{thm}
\label{kPower}
For every finite graph $X$ there exists $c\in\NN$ such that for all $k,\Delta\in\NN$ and for every finite $X$-minor-free graph $G$ with maximum degree $\Delta$, the $k$-th power $G^k$ is contained in $H\boxtimes P$, for some graph $H$ of treewidth $k^4(c\Delta)^{k}$ and some path $P$.
\end{thm}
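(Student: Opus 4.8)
The plan is to combine the product structure theorem for bounded‑degree $X$‑minor‑free graphs with the fact that $k$‑th powers behave well under strong products. By \cref{FiniteMinorFreeDegreeStructure} there is $c_1=c_1(X)\in\NN$ such that every finite $X$‑minor‑free graph $G$ with maximum degree at most $\Delta$ is isomorphic to a subgraph of $H\boxtimes P$, for some path $P$ and some graph $H$ with $\tw(H)\le c_1\Delta$. The first step is to record that this $H$ can in addition be taken with maximum degree at most $c_1\Delta$; this extra input is crucial and I return to it below.

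Granting this, I would argue as follows. Distance in a strong product is the maximum of the two coordinate distances, so $(A\boxtimes B)^k=A^k\boxtimes B^k$ for all graphs $A,B$ and all $k\in\NN$. Moreover $P^k$ has bandwidth $k$ for every path $P$, and hence is isomorphic to a subgraph of $P'\boxtimes K_k$ for some path $P'$ (map the $i$‑th vertex of $P^k$ to $(\lceil i/k\rceil,\,1+((i-1)\bmod k))$). Identifying $G$ with a subgraph of $H\boxtimes P$, this gives
\[
G^k\ \subseteq\ (H\boxtimes P)^k\ =\ H^k\boxtimes P^k\ \subseteq\ H^k\boxtimes P'\boxtimes K_k\ =\ (H^k\boxtimes K_k)\boxtimes P'.
\]
So it suffices to put $H':=H^k\boxtimes K_k$ (and $P'$ as above) and to bound $\tw(H')$. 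Since a tree‑decomposition of a graph $A$ of width $w$ yields one of $A\boxtimes K_k$ of width $k(w+1)-1$, we have $\tw(H')\le k(\tw(H^k)+1)-1$, and everything reduces to bounding $\tw(H^k)$.

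To bound $\tw(H^k)$ I would use balanced separators. For any subgraph $F$ of $H^k$, take a balanced separator $S$ of $H$ of order at most $\tw(H)+1$, chosen balanced with respect to the vertices of $F$. Any edge of $H^k$ joining the two sides of this separation arises from an $H$‑path of length at most $k$ that crosses $S$, and a short argument (look at the first and last vertices of the path on $S$) shows that both of its endpoints then lie in the closed ball $N_H^{\le k-1}[S]$. Hence $N_H^{\le k-1}[S]\cap V(F)$ is a balanced separator of $F$, of order at most $(\tw(H)+1)\cdot k\cdot\Delta(H)^{k-1}$. As this holds for every subgraph of $H^k$, the theorem of \citet{DN19} gives $\tw(H^k)\le 15(\tw(H)+1)\,k\,\Delta(H)^{k-1}$. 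Substituting $\tw(H),\Delta(H)\le c_1\Delta$ and then $\tw(H')\le k(\tw(H^k)+1)-1$, a routine calculation yields $\tw(H')\le k^4(c\Delta)^k$ for a suitable $c=c(X)$ (for instance $c=31c_1$ works), completing the proof.

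I expect the main obstacle to be the claim in the first paragraph that the product‑structure graph $H$ may be chosen with maximum degree $O_X(\Delta)$, and not merely with bounded treewidth. A degree bound on $H$ is genuinely needed: without one, $\tw(H^k)$ can be unbounded even for fixed $k$ — for example $K_{1,n}^2=K_{n+1}$ — and the whole argument collapses. I would obtain the degree bound from the construction underlying \cref{FiniteMinorFreeDegreeStructure}, in which $H$ is produced from $G$ by contracting a suitably controlled family of connected subgraphs, so that bounded degree of $G$ propagates to bounded degree of $H$; making this precise is the substantive part of the proof. The remaining ingredients — the power/strong‑product identity, the reduction of $P^k$ to $P'\boxtimes K_k$, and the separator estimate for $\tw(H^k)$ — are routine.
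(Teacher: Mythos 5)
Your reductions in the second and third paragraphs are essentially sound: $\dist_{A\boxtimes B}=\max$ of the coordinate distances, so $(A\boxtimes B)^k=A^k\boxtimes B^k$; $P^k$ has bandwidth $k$ and hence embeds in $P'\boxtimes K_k$; and, \emph{given} a degree bound on $H$, the ball-around-a-separator argument together with the result of \citet{DN19} does give $\tw(H^k)\leq O\bigl(k(\tw(H)+1)\Delta(H)^{k-1}\bigr)$, from which the stated bound follows up to constants. But the whole proof stands or falls on the claim in your first paragraph, that in \cref{FiniteMinorFreeDegreeStructure} the graph $H$ may additionally be taken with $\Delta(H)\leq c_1\Delta$, and the justification you sketch for it is not available. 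In the construction behind \cref{FinitePlanarStructure} and \cref{FiniteMinorFreeDegreeStructure}, $H$ is the quotient of $G$ by a partition into \emph{vertical paths} of a BFS-type layering (a partition of layered width $1$ or $O(1)$), not into parts of bounded size; a single vertical path can be arbitrarily long and can meet arbitrarily many other parts, so $\Delta(H)$ is not bounded by any function of $\Delta(G)$ even for planar $G$ with $\Delta(G)=3$. So bounded degree of $G$ does not ``propagate'' to $H$, and the degree hypothesis cannot simply be dropped, as your own example $K_{1,n}^2=K_{n+1}$ shows. Whether every $X$-minor-free (even planar) graph of maximum degree $\Delta$ is a subgraph of $H\boxtimes P$ with \emph{both} $\tw(H)$ and $\Delta(H)$ bounded by a function of $\Delta$ is a genuinely stronger statement that is not provided by this paper nor by \citep{DJMMUW20,DEMWW}; it is the substantive content missing from your argument, not a routine extraction from the cited construction.

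For comparison, the theorem is quoted here from \citep{DMW19b}, where it is proved by a mechanism that avoids any degree bound on $H$: one introduces $(k,d)$-shortcut systems (families of paths of length at most $k$ with every vertex on at most $d$ of them), shows that adding the corresponding edges to a subgraph of $H\boxtimes P$ yields a subgraph of $H'\boxtimes P'$ with $\tw(H')\leq O(dk^3)(\tw(H)+1)$ --- the proof refines the layered partition rather than passing to a power of $H$ --- and then observes that for $G$ of maximum degree $\Delta$ the power $G^k$ arises from a $(k,O(k\Delta^k))$-shortcut system, which combined with \cref{FiniteMinorFreeDegreeStructure} gives $k^4(c\Delta)^k$. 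If you wish to salvage your route, the missing ingredient is exactly the bounded-degree strengthening of the product structure theorem, and that is where the real work lies.
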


\cref{FiniteToInfiniteProduct,kPower} imply:

\begin{thm}
For every finite graph $X$ there exists $c\in\NN$ such that for all $k,\Delta\in\NN$, for some integer $t\leq k^4(c\Delta)^{k}$, the graph $\TT_t\boxtimes \PP$ contains the $k$-th power $G^k$ of every  $X$-minor-free graph $G$ with maximum degree $\Delta$.
\end{thm}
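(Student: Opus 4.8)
The plan is to obtain this statement as an immediate consequence of \cref{kPower} and \cref{FiniteToInfiniteProduct}, exactly in the style of \cref{gkPlanarInfinite} and \cref{MinorFreeDegreeStructure}. Fix a finite graph $X$ and let $c\in\NN$ be the constant provided by \cref{kPower}. Given $k,\Delta\in\NN$, put $t:=k^4(c\Delta)^k$, which is an integer; I will show that $\TT_t\boxtimes\PP$ contains $G^k$ for every $X$-minor-free graph $G$ with maximum degree at most $\Delta$. By \cref{FiniteToInfiniteProduct}, applied with the two clique parameters both equal to $0$ and with treewidth parameter $t$, it suffices to check that every finite subgraph of $G^k$ is isomorphic to a subgraph of $H\boxtimes P$ for some graph $H$ with $\tw(H)\le t$ and some path $P$.

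The key step is a routine reduction: any finite subgraph $G'$ of $G^k$ is contained in $G_0^k$ for some finite subgraph $G_0$ of $G$. Indeed, for each edge $vw\in E(G')$ we have $\dist_G(v,w)\le k$, so fix a $vw$-path of length at most $k$ in $G$; letting $G_0$ be the subgraph of $G$ formed by the union of $V(G')$ with all these (finitely many) paths, $G_0$ is finite and $G'\subseteq G_0^k$. Since the class of $X$-minor-free graphs is minor-closed and hence closed under taking subgraphs, $G_0$ is $X$-minor-free, and $G_0$ inherits maximum degree at most $\Delta$ from $G$. Now \cref{kPower} applies to $G_0$: the graph $G_0^k$, and therefore its subgraph $G'$, is isomorphic to a subgraph of $H\boxtimes P$ for some graph $H$ with $\tw(H)\le k^4(c\Delta)^k=t$ and some path $P$. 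This is exactly what was needed, so \cref{FiniteToInfiniteProduct} completes the argument.

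There is no real obstacle here; the only point requiring a moment's care is the reduction of an arbitrary finite subgraph of $G^k$ to the $k$-th power of a \emph{finite} subgraph of $G$, together with the observation that this finite subgraph still satisfies both hypotheses of \cref{kPower} (being $X$-minor-free and having maximum degree at most $\Delta$). Everything else is bookkeeping, and the final invocation of \cref{FiniteToInfiniteProduct} is identical to its use in the preceding corollaries of this subsection.
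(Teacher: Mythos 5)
Your proposal is correct and follows the paper's own route: the theorem is obtained there directly from \cref{FiniteToInfiniteProduct} and \cref{kPower}, exactly as you do. The only detail the paper leaves implicit --- that a finite subgraph of $G^k$ sits inside $G_0^k$ for a finite subgraph $G_0$ of $G$ that is still $X$-minor-free with maximum degree at most $\Delta$ --- is precisely the reduction you supply, and it is carried out correctly.
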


\subsection{Excluding an Arbitrary Minor}
\label{ExcludedMinors}

\citet{DJMMUW20} proved the following product structure theorem for finite graphs excluding a fixed minor. 

\begin{thm}[{\protect\citep[Theorem~42]{DJMMUW20}}] 
\label{MinorProduct}
For every finite graph $X$ there exist $k,a\in\NN$ such that every finite $X$-minor-free graph $G$ can be obtained by clique-sums of graphs $G_1,\dots,G_n$ such that for  $i\in[n]$, for some graph $H_i$ with treewidth at most $k$ and some path $P_i$, $G_i$ is contained in $(H_i  \boxtimes P_i ) + K_a$. 
\end{thm}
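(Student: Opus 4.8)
The plan is to combine the Graph Minor Structure Theorem with the bounded-genus product structure theorem (\cref{FiniteSurfaceProduct}); the genuinely new ingredient is how to absorb vortices into a layered partition.

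First I would invoke the Graph Minor Structure Theorem of \citet{RS-GraphMinors}: for the fixed graph $X$ there are integers $g,p,w,a$ (depending only on $X$) such that every finite $X$-minor-free graph $G$ has a tree-decomposition $(B_x:x\in V(T))$ over a finite tree $T$ in which the torso $G_x$ of each node $x$ is $(g,p,w,a)$-almost-embeddable: $G_x$ has a set $A_x$ of at most $a$ vertices such that $G_x-A_x$ is the union of a graph $G_x^0$ embedded in a surface of Euler genus at most $g$ together with at most $p$ vortices, each of width at most $w$, attached to pairwise-disjoint faces of $G_x^0$ and meeting $G_x^0$ only in the vertices of those faces. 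Since the adhesion sets of this tree-decomposition are cliques, $G$ is the clique-sum of the torsos $G_1,\dots,G_n$ with $n=|V(T)|$ finite. So it suffices to produce an integer $k$, depending only on $g,p,w$, such that every $(g,p,w,a)$-almost-embeddable graph is isomorphic to a subgraph of $(H\boxtimes P)+K_a$ for some $H$ with $\tw(H)\le k$ and some path $P$.

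Fix such a graph $G'$ with apex set $A$, surface part $G^0$, and vortices $W_1,\dots,W_{p'}$ with $p'\le p$. Peeling off $A$ accounts for the ``$+K_a$'' summand, so it remains to write $G'-A=G^0\cup W_1\cup\dots\cup W_{p'}$ as a subgraph of $H\boxtimes P$ with $\tw(H)$ bounded in terms of $g,p,w$. By \cref{FiniteSurfaceProduct} and \cref{ProductSubgraphLayeredWidth}, $G^0$ has a partition $\PART_0$ of layered width $O(g)$ whose quotient has treewidth $O(1)$; the crucial point is to arrange this so that the layering and embedding interact well with the faces $F_1,\dots,F_{p'}$ carrying the vortices, which requires revisiting the proof of \cref{FiniteSurfaceProduct} (where the surface is cut along short curves to reduce to the planar case handled by \cref{FinitePlanarStructure}, cutting so that each $F_j$ lands on a boundary). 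A vortex $W_j$ of width $w$ attached to $F_j=(u_1,\dots,u_m)$ comes with a path-decomposition $(X_1,\dots,X_m)$ of width at most $w$ with $u_i\in X_i$; the main step is to extend $\PART_0$ over $W_j$ by assigning each vertex of $W_j-F_j$ to the part of $\PART_0$ containing a suitable $u_i$ (chosen using the path-decomposition), so that the layered width grows by at most $w+O(1)$ and the quotient's treewidth grows by $O(w)$ (the new parts attach to the old ones along a path-like pattern around $F_j$, plus a bounded chordal correction). Iterating over all $p'\le p$ vortices gives a partition $\PART$ of $G'-A$ with layered width $\ell=O(g+pw)$ and $\tw((G'-A)/\PART)\le k'$ with $k'=O(pw)$. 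Finally, \cref{MakeProductGeneral} yields $G'-A\subseteq((G'-A)/\PART)\boxtimes\PP\boxtimes K_\ell\subseteq H\boxtimes\PP$ for $H:=((G'-A)/\PART)\boxtimes K_\ell$, which has treewidth at most $(k'+1)\ell-1=:k$; since $G'$ is finite we may replace $\PP$ by a finite subpath $P$, and then $G'\subseteq(H\boxtimes P)+K_a$, as required.

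I expect the main obstacle to be the vortex-absorption step: one must choose the layering of $G^0$ and the assignment of vortex vertices to parts so that, simultaneously, (i) each layer still meets each part in a bounded number of vertices, and (ii) contracting the enlarged parts leaves a graph of bounded treewidth, the danger being that a vortex attaching to its face $F_j$ in a complicated way produces a quotient with long induced cycles or unbounded-width structure. Making (i) and (ii) hold together is exactly where the bounded width $w$ of the vortex and the linear path-decomposition structure along $F_j$ are used, and it is the technical heart of \citet{DJMMUW20}. A secondary point requiring care is that \cref{FiniteSurfaceProduct} as quoted applies only to genus-$g$ graphs with no vortices, so one needs the version of its proof that exposes a usable (e.g. BFS-based) layering and that tolerates the prescribed faces $F_1,\dots,F_{p'}$ being left free for the vortices.
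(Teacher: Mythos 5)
The paper does not prove \cref{MinorProduct} at all: it is quoted directly from \citet{DJMMUW20}, so the only comparison available is with the proof in that cited work. Your outline follows exactly the route taken there -- the Graph Minor Structure Theorem giving clique-sums of almost-embeddable torsos, stripping the apex set to account for the $+K_a$ term, applying the surface product structure to the embedded part, and absorbing each vortex into the layered partition via its width-$w$ path-decomposition along the attachment face -- so the strategy is the right one and matches the source.

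As a proof, however, what you have written is a plan rather than an argument, and the part you defer is the entire technical content. The claims that the layered width grows by at most $w+O(1)$ and the quotient treewidth by $O(w)$ per vortex are exactly the lemma one has to prove, and it is not routine: one must simultaneously (i) assign each vortex vertex a layer consistent with the chosen layering of the embedded part (vortex vertices need not be close, in the BFS metric of $G^0$, to the boundary vertex whose part they join, so the layering itself has to be built with the vortices in mind, not retrofitted), and (ii) show the quotient stays chordal-like/bounded-treewidth after the face boundary acquires the extra adjacencies coming from consecutive vortex bags. Likewise, your appeal to \cref{FiniteSurfaceProduct} is not legitimate as stated, since that theorem says nothing about prescribing the faces $F_1,\dots,F_{p'}$ or exposing a compatible layering; you acknowledge that one must ``revisit'' its proof (cutting the surface so that the vortex faces land on the boundary, then using the machinery behind \cref{FinitePlanarStructure}), but that modification is itself a substantial piece of \citet{DJMMUW20} and is only gestured at here. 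The surrounding bookkeeping (clique-sums from torsos, \cref{MakeProductGeneral}/\cref{ProductSubgraphLayeredWidth}, folding $K_\ell$ into $H$, truncating $\PP$ to a finite path) is fine, but until the vortex-absorption lemma is actually established with explicit bounds, the proposal does not constitute a proof of \cref{MinorProduct}.
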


\begin{thm}
\label{MinorUniversal}
For every finite graph $X$ there is a constant $c$ and there is a graph $U_X$ that contains every $X$-minor-free graph, such that:
\begin{itemize}
    \item every $n$-vertex subgraph $G$ of $U_X$ has treewidth at most $c \sqrt{n}$ and has a balanced separation of order $c\sqrt{n}$, 
    \item $U_X$ has linear colouring numbers, and
    \item $U_X$ has linear expansion. \note{This point has been added.}
\end{itemize}
\end{thm}

\begin{proof}
\cref{FiniteToInfiniteGamma,TreeDecompExtendable} implies that 
$\DD(\Gamma_{k,c,\ell,a})$ is extendable for all $k,\ell\in\NN$ and $c,a\in\NN_0$. 
\cref{MinorProduct} says that every finite $X$-minor-free graph is in 
$\DD( \Gamma_{k,0,1,a} )$ for some $k,a$ depending only on $X$. 
Thus every $X$-minor-free graph is in $\DD( \Gamma_{k,0,1,a} )$. 
Let $U_X :=\reallywidehat{( \TT_k \boxtimes \PP ) + K_a}$. By  \cref{TreeDecompUniversal}, $U_X$ contains every  $X$-minor-free graph. \cref{ProductTreewidth,TreewidthOverTreeDecomposition} imply that every $n$-vertex subgraph of $U_X$ has treewidth at most $c\sqrt{n}$ and has a balanced separation of order $c\sqrt{n}$. \cref{GenColourProductTreeDecomp,GenColourTreeDecomp} imply that $U_X$ has linear  colouring numbers. \cref{SergeyCorollary} implies $U_X$ has linear expansion.
\end{proof}

We finish this section by mentioning one more property of the graph $U_X$ in \cref{MinorUniversal}. \citet{DDOSRSV04} proved that for every finite graph $X$ and integer $k\geq 2$, there is an integer $c$ such that every finite $X$-minor-free graph (a) has a vertex $k$-colouring such that the subgraph induced by any $k-1$ colour classes has treewidth at most $c$, and (b) has an edge $k$-colouring such that the subgraph induced by any $k-1$ colour classes has treewidth at most $c$.  \citet{DDOSRSV04} called these \defn{low treewidth colourings}. \citet{DJMMUW20} showed that this result is a corollary of \cref{MinorProduct}. Their proof also implies that $U_X$ has low treewidth colourings. It is interesting that not only does every $X$-minor-free graph have low treewidth colourings, but there is a single graph that contains all $X$-minor-free graphs and itself admits low treewidth colourings. 

\section{Avoiding an Infinite Complete Graph Subdivision}
\label{NoInfiniteSubdivision}

This section focuses on key property \cref{KeyPropertyNoInfiniteSubdivision}, which says that a graph that contains every planar graph should contain no $K_{\aleph_0}$ subdivision. We start the section by giving several characterisations of graphs that contain no $K_{\aleph_0}$ subdivision (\cref{NoInfiniteSubdivisionCharacterisation}). We then construct a graph that contains every planar graph, but contains no $K_{\aleph_0}$ subdivision, amongst other key properties. This shows that \cref{InfiniteCliqueMinor} is best possible in the sense that it cannot be strengthened to conclude that every graph that contains every planar graph must contain a subdivision of $K_{\aleph_0}$. Our construction is, in fact, stronger in two respects: it works for $K_t$-minor-free graphs, and actually gives induced subgraphs. Using the same method, we construct a strongly universal treewidth-$k$ graph, and a graph that contains every locally finite graph as an induced subgraph, but contains no $K_{\aleph_0}$ subdivision (\cref{LocallyFiniteGraphs}). We conclude the section by showing that every graph that contains all planar graphs has a subgraph of infinite edge-connectivity that contains all planar graphs (\cref{InfiniteEdgeConnectivity}). 

\subsection{Characterisations}
\label{NoInfiniteSubdivisionCharacterisation}

\note{In response to the referee's comment, we have revised the following discussion and theorem to make the structure clearer. In particular, we have added property \cref{WIT} (and the definition of `treewidth $<\aleph_0$') so that the statement of the result of \citet{RST-TAMS92} is clear. We have also removed two conditions from \cref{InfiniteCompleteSubdivision} since we never used them, and they were not needed in the circular sequence of implications.}

The following theorem presents several characterisations of graphs containing no subdivision of $K_{\aleph_0}$. 
\citet{RST-TAMS92} showed that parts \cref{NoSub} and \cref{WIT} are equivalent. Later, \citet{Diestel94} gave a simple proof showing that \cref{NoSub} and \cref{WIT} are equivalent.  Diestel's proof uses normal spanning trees; see \citep{Jung69,BD94,Pitz20} for more on this theme.
Parts \cref{NoSubChordal}--\cref{NoSubModel} are included in \cref{InfiniteCompleteSubdivision} because of their relationship to other parts of this paper. In particular, condition~\cref{NoSubChordalPart} is used in \cref{ExcludedMinorNoSubdiv} below. Condition~\cref{NoSubModel} is interesting since it is an infinite analogue of $r$-shallow minors; see \cref{Expansion}.

\begin{thm}
\label{InfiniteCompleteSubdivision}
The following are equivalent for any countable graph $G$:
\begin{enumerate}[(a)]
\item\label{NoSub} $G$ contains no subdivision of $K_{\aleph_0}$; 
\item\label{WIT} $G$ has treewidth $<\aleph_0$; 
\item\label{NoSubChordal} $G$ is a spanning subgraph of a chordal graph that contains no $K_{\aleph_0}$ subgraph;
\item\label{NoSubChordalPart} $G$ is a spanning subgraph of a graph $G'$ that has a finite chordal partition $\PART$ such that $G'/\PART$ contains no $K_{\aleph_0}$ subgraph;
\item\label{NoSubModel} $G$ contains no model of $K_{\aleph_0}$ with branch sets of finite radius.
\end{enumerate}
\end{thm}

\begin{proof}
\cref{NoSub} $\Longrightarrow$ \cref{WIT}:
As explained above,~\citet{RST-TAMS92} actually proved that \cref{NoSub} and \cref{WIT} are equivalent.

\smallskip


\cref{WIT} $\Longrightarrow$ \cref{NoSubChordal}:
Let $(B_x\subseteq V(G):x\in V(T))$ be a tree-decomposition of $T$ with finite bags certifying that $G$ has treewidth $<\aleph_0$.  Let $G'$ be the minimal supergraph of $G$ such that each $B_x$ is a clique. Clearly, $G$ is a spanning subgraph of $G'$. Since  $G$ has treewidth $<\aleph_0$, $G'$ contains no $K_{\aleph_0}$. By~\cref{ChordalCharacterisation}, $G'$ is chordal.

\smallskip


\cref{NoSubChordal} $\Longrightarrow$ \cref{NoSubChordalPart}:
Suppose that $G$ is a spanning subgraph of a chordal graph $G'$ containing no $K_{\aleph_0}$ subgraph. Let $\PART$ be the (trivial) chordal partition $\{\{v\}:v\in V(G')\}$ of $G'$. So $G'/\PART\cong G'$, and thus $G'/\PART$ contains no $K_{\aleph_0}$ subgraph.

\smallskip

\cref{NoSubChordalPart} $\Longrightarrow$ \cref{NoSub}: Suppose that $\PART$ is a finite chordal partition of $G'$ such that $G'/\PART$ contains no $K_{\aleph_0}$ subgraph. Suppose that $V$ is an infinite set of vertices in $G'$ such that no  pair of vertices in $V$ are separated by a finite vertex-cut. For each $v\in V$, let $A_v$ be the part of $\PART$ containing $v$. Since each part of $\PART$ is finite, we may assume (by taking a subset) that $A_v\neq A_w$ for all distinct $v,w\in V$. Suppose that $A_vA_w\notin E(G/\PART)$ for distinct $v,w\in V$. Let $S$ be a minimal subset of $V(G/\PART)$ separating $A_v$ and $A_w$. Since $G'/\PART$ is chordal, $S$ is a clique in $G'/\PART$, which is finite by assumption. Hence $\bigcup_{X\in S} X$ is a finite subset of $V(G')$ separating $v$ and $w$, which is a contradiction. Thus $A_vA_w\in E(G'/\PART)$ for all distinct $v,w\in V$. Therefore $\{A_v:v\in V\}$ induces $K_{\aleph_0}$ in $G'/\PART$. This contradiction shows that there is no infinite set $V$ of vertices in $G'$ such that no pair of vertices in $V$ is separated by a finite cut. In particular, $G'$ and hence also $G$ contains no $K_{\aleph_0}$ subdivision. 

\smallskip

\cref{NoSub} $\Longrightarrow$ \cref{NoSubModel}: 
Suppose that $G$ contains a model $(X_i)_{i\in\NN}$ of $K_{\aleph_0}$ such that $X_i$ has finite radius for each $i\in\NN$. We may assume that for all distinct $i,j\in\NN$, there is precisely one edge joining $X_i$ and $X_j$. Let that edge be $v_{i,j}v_{j,i}$, where $v_{i,j}\in X_i$ and $v_{j,i}\in X_j$. We may also assume that $V(G)=\bigcup_{i\in\NN}V(X_i)$, and that each $X_i$ is minimal with the above properties. So each $X_i$ is a tree with finite radius, and every leaf of $X_i$ is adjacent to some vertex in $X_j$ for some $j\neq i$. Say $X_i$ is \defn{clean} if
there is a vertex $r_i$ of infinite degree in $X_i$, such that for all distinct $j,k\in\NN$ with $j,k>i$, the $r_iv_{i,j}$-path and the $r_iv_{i,k}$-path in $X_i$ only intersect at $r_i$. 
Suppose that $X_1,\dots,X_{\ell-1}$ are clean for some $\ell\in\NN$. Since $(X_i)_{i\in\NN}$ is a model of a complete graph, for each $j\in\NN$ with $j<\ell$, there is a vertex $v_{\ell,j}\in X_\ell$ adjacent to $v_{j,\ell}\in X_j$. Since $X_\ell$ has finite radius, some vertex $r_\ell\in V(X_\ell)$ has infinite degree in $G$. By the minimality of $X_\ell$, there is an infinite set $I\subseteq\{\ell+1,\ell+2,\dots\}$, such that for any distinct $j,k\in I$, the $r_\ell v_{\ell,j}$-path and the $r_\ell v_{\ell,k}$-path in $X_\ell$ only intersect at $r_\ell$. Replace $X_\ell$ by the minimal subtree of $X_\ell$ containing
the union of the $r_\ell v_{\ell,j}$-paths in $X_\ell$, taken over all $j\in[\ell-1]\cup I$. Replace $(X_i)_{i\in\NN}$ by 
$(X_i)_{i\in [\ell] \cup I}$. In this sequence, $X_1,\dots,X_{\ell}$ are clean. 

Repeat this step to obtain an infinite $K_{\aleph_0}$-model $(X_i)_{i\in\NN}$ in which every $X_i$ is clean. 
We now construct a subdivision of $K_{\aleph_0}$ in $G$ rooted at $(r_{i^2})_{i\in\NN}$. 
For $i,j\in\NN$ with $i<j$, let $P_{i,j}$ be the path
from $r_{i^2}$ to $v_{i^2,j^2+i}$ in $X_{i^2}$, 
followed by the edge $v_{i^2,j^2+i}v_{j^2+i,i^2}$,
followed by the path from $v_{j^2+i,i^2}$ to $v_{j^2+i,j^2}$ in $X_{j^2+i}$, 
followed by the edge $v_{j^2+i,j^2}v_{j^2,j^2+i}$,
followed by the path from $v_{j^2,j^2+i}$ to $r_{j^2}$ in $X_{j^2}$. 
Note that the $P_{i,j}$ are internally disjoint
(since for any $i<j$ and $\ell<k$ with $j<k$, we have $j^2+i<k^2+\ell$). 
So $G$ contains a $K_{\aleph_0}$ subdivision. 

\smallskip

\cref{NoSubModel} $\Longrightarrow$ \cref{NoSub}: If $G$ contains a subdivision of $K_{\aleph_0}$, then it is easy to see that $G$ contains a model of $K_{\aleph_0}$ with branch sets of finite radius. 
\end{proof}

\subsection{\texorpdfstring{$K_t$-Minor-Free Graphs}{Kt-Minor-Free Graphs}}
\label{ExcludedMinorNoSubdiv}

The main result of this section, \cref{KtMinorFreeNoSubdiv}, says that for every integer $t\geq 3$, there is a graph $U$ that contains every $K_t$-minor-free graph as an induced subgraph, and $U$ satisfies key properties \cref{KeyPropertyBoundedMinDegree}, \cref{KeyPropertyBoundedColouringNumbers}
and \cref{KeyPropertyNoInfiniteSubdivision}. The main tool is based on chordal partitions, which have been used by several authors in the study of $K_t$-minor-free graphs~\cite{RS98,vdHW18,Andreae86,HOQRS17,SSW19}. Chordal partitions are useful since if $\PART$ is a connected chordal partition of a $K_t$-minor-free graph $G$, then the quotient $G/\PART$ is a minor of $G$, implying $G/\PART$ contains no $K_t$ subgraph and $\tw(G/\mathcal{P})\leq t-2$ by \cref{TreewidthCharacterisation}. Thus, using chordal partitions, the structure of $K_t$-minor-free graphs can be described in terms of much simpler graphs of treewidth at most $t-2$. We also use the machinery developed to construct a strongly universal treewidth-$k$ graph~(\cref{StronglyUniversalTreewidth}) and a graph that contains every locally finite graph as an induced subgraph, but contains no $K_{\aleph_0}$ subdivision~(\cref{StronglyUniversalLocallyFinite}).

\note{The referee made an important observation about the possibility of incompatible isomorphisms. To fix this we introduced ordered graphs and divided graphs.} 

All these results rely on the following definitions. 
An \defn{ordered set} consists of a countable set $S$ equipped with a linear ordering of $S$, denoted $S=(v_1,v_2,\dots)$. (Here a `linear ordering' has order-type that of the set $\NN$.)\ An \defn{ordered graph} $G$ consists of a graph $G$ and a linear ordering of $V(G)$. Subgraphs of an ordered graph $G$ are ordered according to the given ordering of $V(G)$. 
If a graph $G_1$ is ordered $(v_1,v_2,\dots)$
and a graph $G_2$ is ordered $(w_1,w_2,\dots)$, 
then write \defn{$G_1\cong G_2$} if for all integers $j>i\geq 1$, 
$$v_iv_j\in E(G_1) \text{ if and only if } w_iw_j\in E(G_2).$$
A \defn{divided graph} is a triple $(G,A,B)$ such that:
\begin{itemize}
    \item $G$ is a graph, 
    \item $V(G)=A\cup B$ and $A\cap B=\emptyset$,
    \item $A$ is an ordered set, and
    \item $B$ is an ordered set.
\end{itemize}
So $G[A]$ and $G[B]$ are ordered graphs. 
Suppose that $(G_1,A_1,B_1)$ and $(G_2,A_2,B_2)$ are divided graphs, where $A_i=(v_{i,1},v_{i,2},\dots)$ and
$B_i=(w_{i,1},w_{i,2},\dots)$ for each $i\in\{1,2\}$, 
and $|A_1|=|A_2|$ and $|B_1|=|B_2|$. 
Then we write 
\defn{$(G_1,A_1,B_1)\cong (G_2,A_2,B_2)$} if 
for all integers $j>i\geq 1$, 
\begin{align*}
& v_{1,i}v_{1,j}\in E(G_1) \text{ if and only if } v_{2,i}v_{2,j}\in E(G_2) \text{ and}\\
& w_{1,i}w_{1,j}\in E(G_1) \text{ if and only if } w_{2,i}w_{2,j}\in E(G_2),
\end{align*}
and for all integers $i,j\geq 1$, 
$$v_{1,i}w_{1,j}\in E(G_1) \text{ if and only if } v_{2,i}w_{2,j}\in E(G_2).$$
This implies that $G_1[A_1]\cong G_2[A_2]$ and
$G_1[B_1]\cong G_2[B_2]$.

Let $\XX:=\{(m,n):m\in\NN_0,n\in\NN,m<n\}$. 
Let $\HH$ be a set $\{H_{m,a}:m\in\NN_0,a\in\NN\}$ of 
finite ordered graphs. 
An \hdefn{$\HH$}{system} is a set 
$$\JJ=\{ (J_{m,a,n,b,\ell},A_{m,a},B_{n,b}) :(m,n)\in\XX,\,a,b,\ell\in\NN\}$$ 
such that for all $(m,n)\in \XX$ and $a,b,\ell\in\NN$: 
\begin{itemize}
\item $(J_{m,a,n,b,\ell},A_{m,a},B_{n,b})$ is a divided graph;
\item  $J_{m,a,n,b,\ell}[A_{m,a}]\cong H_{m,a}$ and $J_{m,a,n,b,\ell}[B_{n,b}]\cong H_{n,b}$; 
\item if $\ell=1$ then there are no edges between $A_{m,a}$ and $B_{n,b}$ in $J_{m,a,n,b,\ell}$.
 \end{itemize}

For any $\HH$-system $\JJ$ and $k\in\NN$, let \defn{$\JJ^{(k)}$} be the set of all graphs $G$ with the following properties:
\begin{itemize}
\item there is a partition $\PART$ of $V(G)$, where each $A\in \PART$ is an ordered set (so $G[A]$ is an ordered graph);
\item there is a  tree $T$ with $V(T)=\PART$ rooted at some part $R\in\PART$ (and thus $T$ is oriented away from $R$); 
\item there is a $(k+1)$-colouring $c$ of $T$ such that $G/\PART$ is a spanning subgraph of $Q:=\GGG{T}{c}$ (which implies $\tw(G/\PART)\leq \tw(Q) \leq k$ by \cref{TreewidthCharacterisation}); and  
\item there is a labelling of $Q$, such that:
\begin{itemize}
\item for each part $A\in\PART$, 
if $\dist_T(R,A)=m$ and $A$ is labelled $a$, then $G[A]\cong H_{m,a}$; and
\item for every oriented edge $AB\in E(Q)$, 
\begin{itemize}
\item if $\dist_T(R,A)=m$ and $A$ is labelled $a\in\NN$ and $\dist_T(R,B)=n$ and $B$ is labelled $b\in\NN$ and $AB$ is labelled $\ell\in\NN$, then 
$(G[A\cup B],A,B)\cong (J_{m,a,n,b,\ell},A_{m,a},B_{n,b})$; and
\item if $AB\in E(Q)\setminus E(G/\PART)$ then $\ell=1$. 
\end{itemize}
\end{itemize}
\end{itemize}

\begin{lem}
\label{JJJ}
For every set $\HH=\{H_{m,a}:m\in\NN_0 ,a\in\NN\}$ where each $H_{m,a}$ is a finite ordered graph, for every $\HH$-system $\JJ=\{ (J_{m,a,n,b,\ell},A_{m,a},B_{n,b}) :(m,n)\in\XX,\,a,b,\ell\in\NN\}$, and for every $k\in\NN$, 
\begin{itemize}
\item no graph in $\JJ^{(k)}$ contains a subdivision of $K_{\aleph_0}$;
\item $\JJ^{(k)}$ has a strongly universal graph $U_{\JJ,k}$;
\item if, for some $t\in\NN$, every graph in $\HH$ is $t$-colourable, then every graph in $\JJ^{(k)}$ is $t(k+1)$-colourable; and
\item if, for some $d,r\in \NN$, every graph in $\HH$ is $d$-degenerate, and for all $(m,n)\in\XX$ and $a,b,\ell\in\NN$, in the graph $J_{m,a,n,b,\ell}$, each vertex in $B_{n,b}$ has at most $r$ neighbours in $A_{m,a}$, then every finite subgraph of any graph in $\JJ^{(k)}$ is $(rk+d)$-degenerate.
\end{itemize}
\end{lem}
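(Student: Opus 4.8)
The plan is to prove the four bullet points in order, treating each as essentially independent once the structural setup is unpacked. Throughout, fix a graph $G\in\JJ^{(k)}$ together with its witnessing partition $\PART$, tree $T$ rooted at $R$, colouring $c$, labelling, and the chordal supergraph $Q:=\GGG{T}{c}$ of $G/\PART$, so that $\tw(Q)\le k$ by \cref{TreewidthCharacterisation}.

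\textbf{No $K_{\aleph_0}$ subdivision.} First I would observe that $\PART$ is a \emph{finite} partition: every part $A$ has $G[A]\cong H_{m,a}$ for some $m,a$, and every graph in $\HH$ is finite. Next, $\PART$ is a \emph{chordal} partition in the sense of \cref{Definitions}, because $G/\PART\subseteq Q$ and $Q$ is chordal (\cref{FindSpanningTree}), so the quotient is a spanning subgraph of a chordal graph; moreover $Q$ has no $K_{k+2}$ subgraph, hence $G/\PART$ has no $K_{\aleph_0}$ subgraph. Then the implication \cref{NoSubChordalPart}$\Longrightarrow$\cref{NoSub} of \cref{InfiniteCompleteSubdivision} applies verbatim with the graph $G':=G$ (take the partition $\PART$ and note $G/\PART$ embeds in $Q$ which has no $K_{\aleph_0}$), giving that $G$ contains no $K_{\aleph_0}$ subdivision.

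\textbf{Strongly universal graph.} This is the crux and the step I expect to be the main obstacle, because unlike the earlier universality results we must preserve \emph{induced} subgraphs, which forces us to build the ``maximal'' host and then read off an induced copy. The strategy is to use the orientation- and label-preserving machinery for $\TT_k$ (\cref{TreewidthUniversalPreserving} and especially \cref{Maximal}). Concretely: given $G\in\JJ^{(k)}$, first extend the $k$-orientation of $G/\PART$ (induced from $Q$) to an edge-maximal chordal $k$-oriented supergraph $Q'$ of $G/\PART$ as in \cref{Maximal}; by that lemma, for any prescribed labelling $Q'$ embeds as an \emph{induced} subgraph of $\TT_k$ via an orientation- and label-preserving isomorphism. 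Now define $U_{\JJ,k}$ by replacing each vertex $x$ of $\TT_k$ (at distance $m$ from the root, labelled $a$) by a fixed copy of $H_{m,a}$, and for each oriented edge $xy$ of $\TT_k$ (with $y$ at distance $n$, labelled $b$, edge labelled $c$) gluing in the triple $(J_{m,a,n,b,c},A_{m,a},B_{n,b})$ between the copies. To see $U_{\JJ,k}\in\JJ^{(k)}$ one takes the partition into these copies, the tree underlying $\TT_k$, and the inherited colouring/labelling; the condition ``$c=1$ on edges of $Q\setminus(G/\PART)$'' is vacuous here since we use all edges. For universality: given $G\in\JJ^{(k)}$, choose its labelling of $Q'$ so that every adjacency pattern is realised — using that $\TT_k$ has, below each vertex, infinitely many children of each colour and (by \cref{TreewidthUniversalPreserving}) every label combination — and then the induced embedding $Q'\hookrightarrow\TT_k$ lifts to an induced embedding $G\hookrightarrow U_{\JJ,k}$ because the triple structure was defined to record $G[A\cup B]$ exactly. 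The care needed is that non-edges of $G/\PART$ that become edges of $Q$ (labelled $c=1$) must map to glued triples with \emph{no} edges between the two parts, which is exactly the third defining clause of an $\HH$-system, so no spurious edges are introduced.

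\textbf{Colouring.} Suppose every graph in $\HH$ is $b$-colourable. Fix a proper $b$-colouring of each copy $H_{m,a}$ and a proper $(k+1)$-colouring of $Q$ (which exists since $\tw(Q)\le k$); colour each vertex $v\in A\in\PART$ by the pair (its $H$-colour, the $Q$-colour of $A$). Two adjacent vertices in the same part get distinct first coordinates; two adjacent vertices in different parts $A,B$ lie in adjacent parts of $G/\PART\subseteq Q$, so get distinct second coordinates. Hence $G$ is $b(k+1)$-colourable.

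\textbf{Degeneracy.} Suppose every graph in $\HH$ is $d$-degenerate and each vertex of $B_{n,b}$ has at most $r$ neighbours in $A_{m,a}$ inside $J_{m,a,n,b,c}$. Let $G'$ be a finite subgraph of $G$; it suffices to bound the degeneracy of $G'$, and by taking the union of the parts meeting $V(G')$ we may assume $G'=G[\PART']$ for a finite set $\PART'\subseteq\PART$ of parts forming a subtree of $T$ (enlarging $G'$ only helps). Order the parts of $\PART'$ by a reverse BFS/leaf-stripping order of $T$, so that at each step we remove a part $A$ that is a leaf of the current subtree: its only neighbouring part still present is its parent $B$ (and possibly some descendants already removed, but in a leaf-stripping order those are gone), hence each vertex of $A$ has at most $r$ neighbours outside $A$ in the current graph, by the hypothesis on $B_{n,b}$ (here $A$ plays the role of the ``$B$-side'' of the edge to its parent). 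Within $A$ we peel off vertices using the $d$-degeneracy of $G[A]$, each having at most $(d-1)+r\le rk+d$ neighbours at the moment of removal — and more carefully, a vertex of $A$ with $\le d$ neighbours in $G[A]$ together with its $\le r$ external neighbours has degree $\le r+d\le rk+d$. Iterating over all parts yields an elimination order of $G'$ of width at most $rk+d$, so $G'$ is $(rk+d)$-degenerate. (The bound $rk+d$ rather than $r+d$ allows for parts adjacent to up to $k$ neighbouring parts of smaller distance, each contributing $r$ external neighbours; using the $k$-orientation of $Q$ we strip parts in a reverse-topological order so each removed part has at most $k$ remaining neighbouring parts, giving at most $rk$ external neighbours, plus $d$ internal, as claimed.)
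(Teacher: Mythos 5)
Your plan for the second, third and fourth bullets is essentially the paper's proof: the same construction of $U_{\JJ,k}$ from the labelled, $k$-oriented $\TT_k$ by substituting copies of $H_{m,a}$ for vertices and gluing the triples along edges, the same use of an edge-maximal chordal supergraph together with \cref{Maximal} (labelling the added edges $1$ so the $\HH$-system's third clause prevents spurious edges and the embedding stays induced), the same product colouring, and the same ``at most $k$ in-neighbouring parts, $r$ neighbours in each, plus $d$ inside the part'' count for degeneracy. On the degeneracy bullet, note that your initial leaf-stripping claim (``the only neighbouring part still present is its parent'') is false, since a part can be adjacent in $Q=\GGG{T}{c}$ to up to $k$ of its ancestors, not just its parent; your parenthetical correction via a reverse-topological order of the $k$-orientation is what actually works, and it is equivalent to the paper's cleaner formulation as a single acyclic orientation of $G$ with in-degree at most $rk+d$.

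There is, however, a genuine gap in the first bullet. You apply \cref{InfiniteCompleteSubdivision}\cref{NoSubChordalPart} with $G':=G$, arguing that $\PART$ is a chordal partition of $G$ ``because $G/\PART$ is a spanning subgraph of the chordal graph $Q$''. But a chordal partition requires the quotient \emph{itself} to be chordal, and $G/\PART$ is in general only a (possibly non-chordal) spanning subgraph of $Q$; the proof of \cref{NoSubChordalPart}$\Rightarrow$\cref{NoSub} really uses chordality of the quotient, since it needs minimal separators of the quotient to be finite cliques. The repair is what the paper does: pass to a supergraph $G'$ of $G$ obtained by adding edges so that $A\cup B$ becomes a clique (or at least so that some $A$--$B$ edge exists) for every $AB\in E(Q)$; then $G'/\PART=Q$ is chordal with no $K_{k+2}$, hence no $K_{\aleph_0}$, and \cref{NoSubChordalPart} applies to $G'\supseteq G$. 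A related, smaller imprecision occurs in your second bullet: \cref{Maximal} requires its base graph to be chordal with a rooted $k$-orientation, so you should take the edge-maximal supergraph $Q'$ over $Q$ (which is chordal), not over $G/\PART$; this also lets you keep the labels on the edges of $Q\setminus E(G/\PART)$, which are already $1$ by the definition of $\JJ^{(k)}$. With these adjustments your argument coincides with the paper's.
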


\begin{proof}
Let $G\in\JJ^{(k)}$. Let $\PART$ be the partition of $G$ and let $Q$ be the supergraph of $G/\PART$ witnessing that $G\in\JJ^{(k)}$. Let $G'$ be the graph obtained by adding edges to $G$ so that $A\cup B$ is a clique in $G'$ for each edge $AB\in E(Q)$. (This makes sense since $A,B\subseteq V(G)$.)\ Now, $\PART$ is a connected partition of $G'$ with quotient $Q$, so $\PART$ is a chordal partition. Since every graph in $\HH$ is finite, $\PART$ is a finite chordal partition. By \cref{InfiniteCompleteSubdivision}\cref{NoSubChordalPart}, $G'$ and thus $G$ contains no $K_{\aleph_0}$ subdivision. This proves the first claim. 

Let $\TT$ be the universal tree rooted at an arbitrary vertex $r$ (see \cref{UniversalTree}). Let $c$ be a $(k+1)$-colouring of $\TT$, such that each vertex $v$ of $\TT$ has infinitely many children of each colour distinct from the colour assigned to $v$. So $\TT_k=\GGG{\TT}{c}$ is the universal treewidth-$k$ graph (\cref{TreewidthUniversal}). 


The graph $U=U_{\JJ,k}$ is obtained from $\TT_k$ as follows. 
For each vertex $v$ of $\TT$ at distance $m$ from $r$ in $\TT$ and labelled $a\in\NN$, replace $v$ by a copy of the ordered graph $H_{m,a}$ with vertex-set $U_v\subseteq V(U)$, where $U_v\cap U_w=\emptyset$ for all distinct $v,w\in V(\TT_k)$. Then, for each oriented edge $vw$ of $\TT_k$ labelled $\ell\in \NN$, if $(m,n):=(\dist_{\TT}(r,v),\dist_{\TT}(r,w))\in\XX$ and $v$ is labelled $a\in\NN$ and $w$ is labelled $b\in\NN$, then add edges between $U_v$ and $U_w$ so that 
\begin{equation}
\label{Iso}
(U[ U_v \cup U_w ], U_v, U_w) \cong (J_{m,a,n,b,\ell},A_{m,a},B_{n,b}).
\end{equation}
So $\{U_v:v\in V(\TT_k)\}$ is a partition of $U$, where $\TT_k$ takes the role of $Q$ in the above definition of $\JJ^{(k)}$. So $U\in \JJ^{(k)}$. 

We now show that every graph $G$ in $\JJ^{(k)}$ is isomorphic to an induced subgraph of $U$. Let $\PART$, $T$, $R$, $c$ and $Q=\GGG{T}{c}$ be as in the definition of $\JJ^{(k)}$. Thus, there is a rooted $k$-orientation and labelling of $Q$, such that for every oriented edge $AB\in E(Q)$, 
if $(m,n):=(\dist_T(R,A),\dist_T(R,B))\in\XX$ and $A$ is labelled $a\in\NN$ and $B$ is labelled $b\in\NN$ and $AB$ is labelled $\ell\in\NN$, then 
\begin{equation}
\label{IsoIso}
(G[A\cup B],A,B)\cong (J_{m,a,n,b,\ell},A_{m,a},B_{n,b}), 
\end{equation} 
and if $AB\in E(Q)\setminus E(G/\PART)$ then $\ell=1$. By \cref{TreewidthUniversalPreservingInduced}, there is an 
orientation-preserving label-preserving isomorphism $\phi$ from $Q$ to an induced subgraph of $\TT_k$. 

Thus, for every part $A\in\PART$ labelled $a\in\NN$, we have that $\phi(A)$ is a vertex in $\TT_k$ labelled $a$. Let 
$$U_G:=U\left[\bigcup_{A\in\PART} U_{\phi(A)}\right].$$ 
Since $\phi$ is orientation-preserving and label-preserving, for every oriented edge $AB\in E(Q)$ labelled $\ell\in\NN$, if $A$ is labelled $a\in\NN$ and $B$ is labelled $b\in\NN$, then $\phi(A)\phi(B)$ is an edge in $\TT_k$ labelled $\ell$, and  
\begin{equation}
\label{IsoIsoIso}
(G[A\cup B],A,B)\cong 
(J_{m,a,n,b,\ell},A_{m,a},B_{n,b}) \cong 
(U[ U_{\phi(A)} \cup U_{\phi(B)} ], U_{\phi(A)}, U_{\phi(B)} ). 
\end{equation}
Note that to conclude that $G[A\cup B]\cong J_{m,a,n,b,\ell}$ in the case that $AB\in E(Q)\setminus E(G/\PART)$, we use the fact that $AB$ is labelled $\ell=1$, implying that there is no edge between $A_{m,a}$ and $B_{n,b}$ in $J_{m,a,n,b,\ell}$ (by the definition of $\HH$-system).

Consider a vertex $v$ of $G$. Let $A$ be the part of $\PART$ containing $v$. \note{revisions have been made here} For each edge of $\TT_k$ incident to $A$, $v$ is mapped to the same vertex in $U_{\phi(A)}$ under the isomorphisms described in \cref{IsoIsoIso} (because the given ordering of $G[A]$ is preserved). 
Let $\psi(v)$ be this vertex. So $\psi(v)$ is in $U_G$. 

For every edge $vw\in E(G)$, if $v\in A\in\PART$ and $w\in B\in\PART$, then $A=B$ or $AB\in E(Q)$, implying $\psi(v)\psi(w)\in E(U_G)$ by the isomorphisms in \cref{IsoIsoIso}. So $G$ is isomorphic to a spanning subgraph of $U_G$. We claim that in fact $G$ is isomorphic to $U_G$. Consider an edge $xy$ of $U_G$. So $x\in U_{\phi(A)}$ and $y\in U_{\phi(B)}$ for some $A,B\in V(Q)$. 
Since $\phi(Q)$ is an induced subgraph of $\TT_k$, either $A=B$ or $AB\in E(Q)$. Thus $\psi^{-1}(x)\psi^{-1}(y)$ is an edge of $G$ by the isomorphisms in \cref{IsoIsoIso}. Hence $G$ is isomorphic to $U_G$, which proves the second claim. 

If every graph in $\HH$ is $t$-colourable, then $\chi(G/\PART)\leq\tw(G/\PART)+1\leq k+1$, and a product colouring gives a $t(k+1)$-colouring of $G$. This proves the third  claim. 

For the fourth claim, each graph in $\HH$ has an acyclic orientation with in-degree at most $d$ at each vertex. Say $G\in\JJ^{(k)}$. Let $\PART$ and $Q$ be as in the definition of $\JJ^{(k)}$. So $Q$ has a $k$-orientation. Each $A\in \PART$ has in-degree at most $k$ in $Q$, and the subgraph $G[A]$ (which is isomorphic to a graph in $\HH$) has an acyclic orientation with in-degree at most $d$ at each vertex. For each edge $AB\in E(Q)$, orient each $AB$-edge of $G$ from $A$ to $B$. By assumption, each vertex in $B$ has at most $r$ neighbours in $A$. In total, the in-degree of each vertex in $G$ is $kr+d$. Thus each finite subgraph of $G$ has minimum degree at most $kr+d$. 
\end{proof}

A very simple special case of the above construction gives a strongly universal graph for treewidth at most~$k$.

\begin{prop}
\label{StronglyUniversalTreewidth}
For each $k\in\NN$ there is a strongly universal treewidth-$k$ graph.
\end{prop}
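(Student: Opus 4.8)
The plan is to obtain this proposition as a direct application of \cref{JJJ}: I will choose the class $\HH$ and the $\HH$-system $\JJ$ so that $\JJ^{(k)}$ is \emph{exactly} the class of graphs of treewidth at most $k$, and then the strongly universal graph $U_{\JJ,k}$ furnished by \cref{JJJ} will be the required graph, since it lies in $\JJ^{(k)}$ and contains every member of $\JJ^{(k)}$ as an induced subgraph.

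First I would set $\HH_m:=\{K_1\}$ for every $m\in\NN_0$, so that $H_{m,a}=K_1$ for all $a\in\NN$; this forces every part of a partition witnessing membership in $\JJ^{(k)}$ to be a single vertex. Next I would define the $\HH$-system $\JJ=\{(J_{m,a,n,b,c},A_{m,a},B_{n,b}):(m,n)\in\XX,\ a,b,c\in\NN\}$ by taking $A_{m,a}$ and $B_{n,b}$ to be disjoint single-vertex sets and letting $J_{m,a,n,b,c}$ be the graph on $A_{m,a}\cup B_{n,b}$ with no edge when $c=1$ and with the single edge between $A_{m,a}$ and $B_{n,b}$ when $c\ge 2$. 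One checks immediately that this is a valid $\HH$-system: each $(J_{m,a,n,b,c},A_{m,a},B_{n,b})$ is an $(\HH,m,n)$-triple with $J_{m,a,n,b,c}[A_{m,a}]\cong K_1\cong J_{m,a,n,b,c}[B_{n,b}]$, and $c=1$ yields no edge between $A_{m,a}$ and $B_{n,b}$.

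The key step is to check $\JJ^{(k)}=\{G:\tw(G)\le k\}$. For the forward inclusion, if $G\in\JJ^{(k)}$ then, as every part of the witnessing partition $\PART$ is a singleton, $G\cong G/\PART$, which by definition of $\JJ^{(k)}$ is a spanning subgraph of $\GGG{T}{c}$ for a $(k+1)$-colouring $c$ of a $1$-oriented tree $T$; by \cref{TreewidthCharacterisation} this graph has treewidth at most $k$, hence so does $G$. For the reverse inclusion, given $G$ with $\tw(G)\le k$ I would invoke \cref{StandardTreewidth} (exactly as in the proof of \cref{TreewidthCharacterisation}) to get a rooted $1$-oriented tree $T$ with $V(T)=V(G)$ and a $(k+1)$-colouring $c$ of $T$ with $G\subseteq\GGG{T}{c}=:Q$; then take $\PART$ to be the partition of $G$ into singletons, root $T$ at the singleton of its root, label every part by $1$, and label each oriented edge $AB$ of $Q$ by $2$ if $AB\in E(G/\PART)$ and by $1$ otherwise. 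One then verifies the requirements in the definition of $\JJ^{(k)}$: $G[A]\cong K_1\cong H_{m,a}$ for every part $A$; $(G[A\cup B],A,B)\cong(J_{m,a,n,b,c},A_{m,a},B_{n,b})$ for every oriented edge $AB$ of $Q$, whether $c=1$ (no edge) or $c=2$ (an edge); and every edge of $Q$ not in $E(G/\PART)$ carries label $1$. Hence $G\in\JJ^{(k)}$.

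Finally, \cref{JJJ} provides a strongly universal graph $U_{\JJ,k}$ for $\JJ^{(k)}$; since $\JJ^{(k)}$ is the class of graphs of treewidth at most $k$ and $U_{\JJ,k}\in\JJ^{(k)}$ contains $K_{k+1}$ as an induced subgraph (so has treewidth exactly $k$), $U_{\JJ,k}$ is a strongly universal treewidth-$k$ graph. The main obstacle is nothing deep — it is just the careful unwinding of the (rather involved) definition of $\JJ^{(k)}$ in the reverse inclusion, in particular making sure the tree $T$ coming from \cref{TreewidthCharacterisation} can be taken rooted (which \cref{StandardTreewidth} guarantees) and that using the label $1$ on the edges of $Q$ outside $E(G/\PART)$ is consistent with the requirement that such edges be labelled $1$ together with the fact that $J_{m,a,n,b,1}$ has no edge.
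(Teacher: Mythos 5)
Your proposal is correct and follows essentially the same route as the paper: take $\HH_m=\{K_1\}$, build the $\HH$-system with an edgeless triple (used in particular for $c=1$) and an edge triple, check that $\JJ^{(k)}$ is exactly the class of graphs of treewidth at most $k$ via \cref{TreewidthCharacterisation}/\cref{StandardTreewidth} with singleton parts, and invoke \cref{JJJ}. The only (inessential) difference is your labelling convention (edge iff $c\ge 2$, versus the paper's even/odd split), which equally satisfies the $\HH$-system requirement that $c=1$ carries no edge.
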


\begin{proof}
Let $\HH:=\{H_{m,1}:m\in\NN_0\}$ where $H_{m,1}$ is an ordered $K_1$ for each $m\in\NN_0$.  
Let $J'$ be the graph with vertex-set $\{v,w\}$ and edge-set $\{vw\}$. 
Let $J''$ be the graph with vertex-set $\{v,w\}$ and edge-set $\emptyset$. For $(m,n)\in\XX$ and $a,b,\ell\in \NN$, let $A_{m,a}:=\{v\}$ and $B_{n,b}:=\{w\}$; 
let $J_{m,a,n,b,\ell}:=J'$ if $\ell$ is even, and let $J_{m,a,n,b,\ell}:=J''$ if $\ell$ is odd. 
So $\JJ=\{ (J_{m,a,n,b,\ell},A_{m,a},B_{n,b}) :(m,n)\in\XX,a,b,\ell\in\NN\}$ is an  $\HH$-system. For every graph $G$ in $\JJ^{(k)}$, if $\PART$ is the partition of $G$ in the definition of $\JJ^{(k)}$, then since each part is a singleton, $\tw(G)=\tw(G/\PART)\leq k$.

We now show that every graph $G$ of treewidth $k$ is in $\JJ^{(k)}$. 
By \cref{TreewidthCharacterisation}, $G$ is a spanning subgraph of a chordal graph $Q=\GGG{T}{c}$ containing no $K_{k+2}$ subgraph, for some spanning tree $T$ of $Q$ rooted at $R$, and for some $(k+1)$-colouring $c$ of $T$. Let $\PART$ be the partition of $G$ and of $Q$ with one part for each vertex. So $\PART$ is a chordal partition of $Q$. Label every vertex of $G/\PART$ by 1. The orientation of $T$ away from the root determines a well-founded $k$-orientation of $Q$. \note{minor revision here} 
Consider an oriented edge $AB$ of $Q$.
Let $m:=\dist_T(R,A)$ and $n:=\dist_T(R,B)$. 
If $AB\in E(G/\PART)$, then label $AB$ by 2, so 
$(G[A\cup B],A,B)\cong(J',\{v\},\{w\})=(J_{m,1,n,1,2},A_{m,1},B_{n,1})$. 
If $AB\not\in E(G/\PART)$, then label $AB$ by 1, so 
$(G[A\cup B],A,B)\cong(J'',\{v\},\{w\})=(J_{m,1,n,1,1},A_{m,1},B_{n,1})$.
Since $J'$ has an edge and $J''$ does not, $\PART$ is a partition of $G$, where $T$ and $Q$ satisfy the definition of $\JJ^{(k)}$. Thus $G$ is in $\JJ^{(k)}$.

Hence $\JJ^{(k)}$ is exactly the set of all graphs with treewidth at most $k$. By \cref{JJJ}, $\JJ^{(k)}$ has a strongly universal graph $U_{\JJ,k}$, which therefore is a strongly universal treewidth-$k$ graph. 
\end{proof}

The following definition and lemmas are used in our proof for $K_t$-minor-free graphs below. If $S$ is a set of $k\geq 2$ vertices in a connected graph $G$ and $r\in S$, then a set $X\subseteq V(G)$ is an \hdefn{$S$}{connector} (\DefNoIndex{rooted} at $r$) if there are geodesic paths $P_1,\dots,P_{k-1}$ in $G$ starting at $r$, such that $S\subseteq X=\bigcup_{i=1}^{k-1} V(P_i)$, and $G[X]$ is a connected subgraph with bandwidth at most $2k-3$, and every vertex in $G-X$ has at most $3k-3$ neighbours in $X$. An $S$-connector is called an \hdefn{$h$}{connector} for any integer $h\geq |S|$. The next lemma is implicit in \citep{vdHW18}; we include the proof for completeness. 

\note{In the above definition, we previously had ``bandwidth at most $k-1$, and every vertex in $G-X$ has at most $2k-2$ neighbours in $X$''. As pointed out by the referee, this was wrong. The definition has been fixed, and a detailed proof of 
\cref{ConnectingSubgraph2} has been added.} 

\begin{lem}
\label{ConnectingSubgraph2}
For any finite set $S$ of $k\geq 2$ vertices in a connected graph $G$, and for any vertex $r\in S$, there is an $S$-connector in $G$ rooted at $r$. 
\end{lem}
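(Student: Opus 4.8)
The plan is to give a direct construction based on a breadth-first search of $G$ from the root $r$, choosing all the geodesics simultaneously from one BFS tree so that they can be normalised together (this is essentially the construction of van den Heuvel and Wood). Write $k:=|S|$ and put $V_\ell:=\{v\in V(G):\dist_G(r,v)=\ell\}$; since $G$ is connected these finite layers partition $V(G)$. Fix a BFS spanning tree $T$ of $G$ rooted at $r$, so that every $r$--$v$ path in $T$ is a geodesic of $G$; for $v\ne r$ write $p(v)$ for the BFS-parent of $v$, which lies one layer closer to $r$. For each $s\in S$ let $Q_s$ be the $r$--$s$ path of $T$, let $F:=T\bigl[\bigcup_{s\in S}V(Q_s)\bigr]$, and put $X:=V(F)$. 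Then $F$ is a finite subtree of $T$ all of whose leaves lie in $S$, hence $F$ has at most $k-1$ leaves; $X$ is the union of the (at most $k-1$) root-to-leaf geodesics of $F$, contains $S$, and induces a connected subgraph of $G$. It remains to bound the bandwidth of $G[X]$ and the number of neighbours in $X$ of each vertex outside $X$.

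For the external-neighbour bound, start from the crude estimate: if $v\notin X$ and $\dist_G(r,v)=\ell$ then every neighbour of $v$ lies in $V_{\ell-1}\cup V_\ell\cup V_{\ell+1}$, and each of the $\le k-1$ geodesics meets each layer in at most one vertex, so $|N_G(v)\cap X|\le 3(k-1)$. To improve this to $2k-2$ I would not use an arbitrary BFS tree but one chosen extremally --- first minimising $|X|$, then optimising a secondary invariant. The relevant local move is rerouting: if some external $v$ at layer $\ell$ is adjacent, along one geodesic $Q_s$, to all three of the layer-$(\ell-1)$, layer-$\ell$ and layer-$(\ell+1)$ vertices $a,x,b$ of $Q_s$, then $a=p(x)$ and $x=p(b)$, and resetting $p(b):=v$ yields another BFS tree in which this \emph{three in a row} configuration at $v$ on $Q_s$ is destroyed; one checks this move does not increase $|X|$, and the secondary invariant forces the process to terminate at a tree with no such configuration. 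With no external vertex adjacent to three consecutive vertices of a single geodesic, each geodesic contributes at most one neighbour from $V_{\ell-1}\cup V_{\ell+1}$ and at most one from $V_\ell$, giving $|N_G(v)\cap X|\le 2(k-1)$.

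For the bandwidth bound, I would order $V(X)$ by BFS layer, breaking ties inside each layer $V_\ell$ in a way consistent with $F$: assign to each vertex of $X\cap V_\ell$ the least index of a root-to-leaf path of $F$ through it, and order by that index. Because $F$ is a tree, two such paths, once separated, never meet again, so each vertex sits in an in-layer position compatible with that of its $F$-parent, and since every edge of $G[X]$ joins vertices in the same or in consecutive layers, every edge spans a controlled number of positions. I expect this to be the main obstacle: forcing the span down to exactly $k-1$ (rather than the $2k-3$ one gets from a naive layer-by-layer listing) requires a careful analysis of how the at most $k-1$ path-indices can be spread across two consecutive layers and of the chords of $F$ that survive in $G[X]$, and one has to check that the extremal choice of $T$ made for the external-degree bound can be carried out consistently with this ordering. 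Once both bounds hold, $X$ together with the root-to-leaf geodesics of $F$ (padded by repetition to exactly $k-1$ paths) is the desired $S$-connector rooted at $r$.
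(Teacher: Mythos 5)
The paper offers no proof of this lemma---it is imported from van den Heuvel and Wood---so your argument has to stand on its own, and its central step fails. The external-degree part rests on the claim that, after finitely many reroutings, no vertex outside $X$ is adjacent to three consecutive vertices (in layers $\ell-1,\ell,\ell+1$) of a single geodesic. There are two problems. First, the move as written, resetting only $p(b):=v$, reroutes the tree path to $b$ through the BFS ancestry of $v$, and so can add $v$ together with all of its tree-ancestors to $X$; to keep $X$ from growing you must also reset $p(v):=a$, i.e.\ swap $x$ out for $v$ on that geodesic. Second, and decisively, even the corrected swap cannot in general terminate in a configuration-free tree, because the configuration can be unavoidable: let $G$ be $K_4$ minus the edge $rb$ and let $S=\{r,b\}$, so $k=2$ and $X$ must be the vertex set of a single geodesic $r,c,b$, where $c$ is one of the two remaining vertices. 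Whichever $c$ is chosen, the other vertex is adjacent to all three of $r$, $c$ and $b$, and your swap merely exchanges the roles of the two middle vertices. So the ``at most two neighbours per geodesic'' property you are driving at is not attainable by any choice of BFS tree or rerouting; indeed this example shows that, with exactly $k-1$ geodesics starting at $r$ as in the stated definition, the bound $2k-2$ itself cannot be met when $k=2$, so a correct argument has to follow the precise formulation and constants in van den Heuvel--Wood rather than the per-path count your plan is built on.

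The bandwidth requirement is also not established. You order $X$ layer by layer with a tie-break by path index, but forcing the span down to $k-1$ is exactly the point you defer: a naive layer-by-layer listing only yields roughly $2k-3$, the geodesic property does not exclude the cross edges between consecutive layers that cause this, and you never verify that your ordering can be chosen compatibly with the extremal or rerouted tree used for the degree bound. Since bandwidth at most $k-1$ is part of the definition of an $S$-connector, the lemma would remain unproved even if the degree bound were granted. As it stands the proposal is an outline with two holes, and the first is not merely unfinished but unfixable in the form proposed.
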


\begin{proof}
    Let $S=\{r,s_1,\dots,s_{k-1}\}$.
    For $i\in[k-1]$, let $P_i$ be a geodesic path from $r$ to $s_i$ and  
    let $X:=\bigcup_{i=1}^{k-1} V(P_i)$. For each integer $d\geq 0$, each $P_i$ has at most one vertex at distance $d$ from $r$. 
    So $X$ has at most $k-1$ vertices at distance $d$ from $r$. 
   Let $(v_1,v_2,\dots)$ be an ordering of  $X$ by distance from $r$, breaking ties arbitrarily. 
    For each edge $v_iv_j$ of $G[X]$, if $d:=\dist_G(r,v_i)$ and $d':=\dist_G(r,v_j)$, then $|d-d'|\leq 1$. 
    Thus $|i-j|\leq 2k-3$. 
    Hence $G[X]$ has bandwidth at most $2k-3$. 
    For $i\in[k-1]$, since $P_i$ is a geodesic, each vertex $w$ in $G-X$ has at most three neighbours in $P_i$.
    Thus $w$ has at most $3k-3$ neighbours in $X$. 
    Hence $X$ is an $S$-connector. 
\end{proof}

We now define a set $\HH$ of finite ordered graphs and an $\HH$-system $\JJ$ to be used in \cref{MinorJJJ} below. This system is designed so that \cref{PartitionGenColNum} can be used to show that graphs in $\JJ^{(t-2)}$ have bounded generalised colouring numbers. 

Fix $t\in\NN$ with $t\geq 3$. For $m\in\NN_0$, let $\HH_m$ be the class of all connected finite graphs $H$ with $V(H)\subseteq \NN_0^{m+1} \times [t-2]$, where each $v\in V(H)$ is denoted by $v=(v_0,v_1,\dots,v_m,v_\star) \in \NN_0^{m+1}\times[t-2]$, such that:
\begin{enumerate}[(a)]
\item for all $v,w\in V(H)$ if $v_m=w_m$ and $v_\star=w_\star$, then $v=w$; 
\item there is exactly one vertex $r\in V(H)$ with $r_m=0$; 
\item for every edge $vw\in E(H)$ and for every $i\in[0,m]$, we have $|v_i-w_i|\leq 1$;
\item for each $i\in[t-2]$, the set $\{r\}\cup \{v\in V(H): v_\star = i \}$ induces a path in $H$.
\end{enumerate}
\note{In the above definition, we previously had property (e) saying $H$ has bounded bandwidth, but this is implied by the above conditions. So we removed it.}
The case with $m=0$ is illustrated in \cref{HnExample}.

Consider each graph $H\in \HH_m$ to be ordered, first by the $v_m$ coordinate and then by the $v_\star$ coordinate.  By (a) there are at most $t-2$ vertices for each given $v_m$ coordinate. By (c) with $i=m$, under this ordering, $H$ has bandwidth at most $2t-5$.  

\begin{figure}[!ht]
\centering
\includegraphics{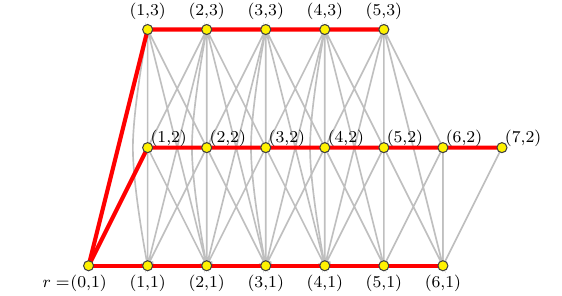}
\caption{A graph in $\HH_0$ with $t=5$. The red (thick) edges must be present, the grey (thin) edges are possibly present.}
\label{HnExample}
\end{figure}

\note{We previously claimed without proof that the path induced by 
$\{r\}\cup \{v\in V(H): v_\star = i \}$
is a geodesic in $H$. The referee said ``This is false. See example in referee report.'' We don't understand this figure in the referee report. We have written a detailed proof of the claim.}

\begin{lem}
For every graph $H\in\HH_m$    and $i\in[t-2]$ 
the path induced by 
$\{r\}\cup \{v\in V(H): v_\star = i \}$
is a geodesic in $H$ with end $r$.
\end{lem}

\begin{proof}
Let $P:=\{r\}\cup \{v\in V(H): v_\star = i \}$. By assumption, $H[P]$ is a path. If $|P|=1$ then $H[P]$ is a 1-vertex geodesic (with vertex-set $\{r\}$), and we are done. Now assume that $|P|\geq 2$. Let $v$ be a neighbour of $r$ in $H[P]$. By (b), $r_m=0$. By (c), $|v_m-r_m|=|v_m|\leq 1$. By (b), $v_m\neq 0$. Thus $v_m=1$. Since $v_\star=i$, by (a), there is only one such vertex $v$. Hence $r$ is an end of $H[P]$. 

Suppose that $H[P]=(v^0,v^1,v^2,\dots,v^n)$ where $r=v^0$. We now prove that $v^j_m=j$ by induction on $j\geq 0$. The cases where $j\in\{0,1\}$ are proved above. Now assume that $j\geq 2$ and that the claim holds up to $j-1$. Since $v^{j-1}v^j\in E(H)$, by (c), $|v^j_m-v^{j-1}_m|=|v^j_m-(j-1)|\leq 1$. By (a), $v^j_m\neq v^{j-1}_m=j-1$ and  
$v^j_m\neq v^{j-2}_m=j-2$. Thus $v^j_m=j$ as claimed. 

Consider any $v^0v^n$-path $Q$ in $H$. For any edge $ab\in E(Q)$, by (c), $|a_m- b_m|\leq 1$. Since $v^0_m=0$ and $v^n_m=n$, we have $|E(Q)|\geq n$. Thus, $H[P]$ is a geodesic, as claimed. 
\end{proof}

Since $\HH_m$ is a countable union of finite sets, $\HH_m$ is countable. Let $H_{m,1},H_{m,2},\dots$ be an arbitrary enumeration of $\HH_m$. 
Let $\HH:=\{H_{m,a}:m\in\NN_0,a\in\NN\}$.
For $(m,n)\in\XX$ and $a,b\in\NN$, let $A_{m,a}:=V(H_{m,a})$ and $B_{n,b}:=V(H_{n,b})$ (which are ordered sets), and 
let $\JJ_{m,a,n,b}$ be the set of all divided graphs $(J,A_{m,a},B_{n,b})$ such that:
\begin{itemize}
    \item for all $v\in A_{m,a}$ and $w\in B_{n,b}$, if $vw\in E(J)$ then 
    $|v_i-w_i|\leq 1$ for each $i\in[0,m]$; and
    \item every vertex in $B_{n,b}$ has at most $3t-6$ neighbours in $A_{m,a}$.
\end{itemize}
Note that $\JJ_{m,a,n,b}$ is countable. 
Let $(J_{m,a,n,b,\ell})_{\ell\in\NN}$ be an enumeration of $\JJ_{m,a,n,b}$, where $J_{m,a,n,b,1}$ is the divided graph $(J,A_{m,a},B_{n,b})\in \JJ_{m,a,n,b}$ with no edges between $A_{m,a}$ and $B_{n,b}$. Now $\JJ=\{(J_{m,a,n,b,\ell},A_{m,a},B_{n,b}):(m,n)\in\XX,\,a,b,\ell\in\NN\}$ is an $\HH$-system. For $(m,n)\in\XX$, let $\JJ_{m,n}:=\bigcup\{\JJ_{m,a,n,b}:a,b\in\NN\}$. 

The next lemma is inspired by an analogous result for finite graphs by \citet{vdHW18}. 

\begin{lem}
\label{MinorJJJ}
Every $K_t$-minor-free graph $G$ is in $\JJ^{(t-2)}$. 
\end{lem}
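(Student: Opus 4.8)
The plan is to build, for any $K_t$-minor-free graph $G$, a chordal partition of $G$ whose quotient has treewidth at most $t-2$, such that each part and each interaction between adjacent parts fits the shape dictated by the $\HH$-system $\JJ$ just defined. Since we may assume $G$ is connected (handle components separately, placing them as children of a single extra root if needed), the starting point is \cref{ConnectingSubgraph2}: iteratively peel off connectors. Concretely, I would run the standard chordal-partition construction for $K_t$-minor-free graphs (as in \citet{vdHW18}): pick a root vertex, take an $S$-connector to get the first part $A_R$, then in each remaining component $C$ of $G-A_R$, let $S$ be the neighbourhood in $A_R$-adjacent vertices plus a root, take an $S$-connector inside $C$, and recurse. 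Because $G$ is $K_t$-minor-free, each connector's attachment set $S$ has size at most $t-1$ (else contracting the connectors gives a $K_t$ minor), so every part is an $\ell$-connector with $\ell \le t-1$; its quotient graph $Q = G/\PART$ is chordal with no $K_t$ subgraph, hence $\tw(Q)\le t-2$ by \cref{TreewidthCharacterisation}. The tree $T$ underlying the partition is rooted at the first part $R$, and $\dist_T(R,A)$ records the recursion depth.

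The second step is to encode each part as a graph in $\HH_m$ where $m=\dist_T(R,A)$. A connector $X$ rooted at $r$ is the union of at most $t-2$ geodesic paths from $r$, has bandwidth at most $|S|-1\le t-2$, and every vertex outside $X$ has at most $2|S|-2\le 2t-4$ neighbours in $X$. So I would label each vertex $v\in A$ by a tuple $(v_0,\dots,v_m,v_\star)\in\NN_0^{m+1}\times[t-2]$: the last coordinate $v_\star$ records which of the $\le t-2$ geodesic paths of the connector $v$ lies on (assigning $r$ arbitrarily, say to $1$), the coordinate $v_m$ records the distance from $r$ to $v$ along that path (so $r$ gets $v_m=0$ and is the unique such vertex), and coordinates $v_0,\dots,v_{m-1}$ are inherited from the parent part in a way that makes consecutive values differ by at most $1$ along any edge — this is where I must be careful: the distance-in-$T$ coordinate needs to propagate down so that, for an edge from a part at depth $m$ to a part at depth $n>m$, the shared "history" coordinates $v_0,\dots,v_m$ of the two endpoints agree up to $\pm 1$. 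The bandwidth condition on $G[A]$ is exactly the bandwidth bound on the connector, and the path condition matches the geodesic-path decomposition. The upshot is $G[A]\cong H_{m,a}$ for some index $a$, and for each oriented edge $AB$ of $Q$ (parent $A$ at depth $m$, child $B$ at depth $n$), $(G[A\cup B],A,B)$ is an $(\HH,m,n)$-triple in $\JJ_{m,n}$: the cross-edge condition $|v_i-w_i|\le 1$ follows from the coordinate propagation, and "each vertex of $B$ has $\le 2t-4$ neighbours in $A$" follows because $B$ is a connector and $A\subseteq G-B$... wait, rather because $A\subseteq G-B$ in the recursion so each vertex of the later-constructed connector $B$ sees $\le 2|S_B|-2\le 2t-4$ neighbours in the earlier part — more precisely each vertex of $B$ has at most $2t-4$ neighbours in $V(G)\setminus B$ that lie in $A$, which is what we need.

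Finally, fix a $k$-orientation of $Q$ with $k=t-2$ (exists by \cref{ChordalCharacterisation2}) and a $(k+1)$-colouring $c$ of $T$; this realises $G/\PART$ as a spanning subgraph of $\GGG{T}{c}$. Label each oriented edge $AB$ of $Q$: if $AB\in E(G/\PART)$ pick $c\in\NN$ with $(G[A\cup B],A,B)\cong(J_{m,a,n,b,c},A_{m,a},B_{n,b})$, which exists by our enumeration of $\JJ$; if $AB\in E(\GGG{T}{c})\setminus E(G/\PART)$ set its label to $1$, and since there are no $A$--$B$ edges in $G$ in that case, $G[A\cup B]$ has no cross edges, matching the $c=1$ requirement in the definition of an $\HH$-system. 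Checking all the bulleted conditions in the definition of $\JJ^{(k)}$ then gives $G\in\JJ^{(t-2)}$.

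The main obstacle is the coordinate bookkeeping in the second step: choosing the tuple-labels on each part so that simultaneously (i) $G[A]\in\HH_m$ (correct arity $m+1$, unique zero at $r$, geodesic-path structure, bandwidth $\le t-2$), and (ii) for every tree edge $AB$ the "inherited" coordinates of the two parts differ by at most $1$ along every $A$–$B$ edge of $G$, so that $(G[A\cup B],A,B)\in\JJ_{m,n}$. The clean way to do this is to define the coordinates recursively: when creating child part $B$ at depth $n$ from parent $A$ at depth $m=n-1$, first fix, for each $w\in B$, its coordinates $w_0,\dots,w_{m}$ to equal the coordinates $v_0,\dots,v_m$ of its (unique, by the connector/geodesic structure) closest attachment vertex $v\in A$, then append $w_m:=w_n$ encoding distance along its geodesic path in the connector $B$, and $w_\star$ its path-index. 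One must verify this closest-attachment map is well-defined enough that $|v_i-w_i|\le 1$ holds for all edges $vw$ with $v\in A,w\in B$ — this follows because connector attachment sets are small and the geodesic paths in a connector, being shortest paths, keep endpoints' inherited coordinates within $1$. I would spell this out carefully but it is ultimately routine once the connector properties from \cref{ConnectingSubgraph2} are invoked.
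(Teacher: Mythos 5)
Your skeleton is the paper's (peel off connectors via \cref{ConnectingSubgraph2} in the style of \citep{vdHW18}, get a connected chordal partition in which each part has at most $t-2$ earlier neighbours, then encode parts and quotient edges by the $\HH$-system), but the step you yourself single out as the main obstacle is resolved incorrectly. The definitions of $\HH_m$ and $\JJ_{m,n}$ require $|v_i-w_i|\le 1$ for every coordinate $i$ up to the ancestor's depth, both for every edge \emph{inside} a part and for every edge between a part and \emph{each} adjacent ancestor part in the quotient (not just its parent: the quotient is chordal, not a tree). Inheriting $w_0,\dots,w_m$ from a ``closest attachment vertex'' in the parent does not deliver this: that vertex is neither unique nor canonical, two adjacent vertices of the child part can inherit from attachment vertices that are far apart inside the parent (so their inherited coordinates jump by much more than $1$, violating the within-part condition of $\HH_n$), and edges from the child to non-parent ancestors are not controlled at all. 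The paper's device is different and is what makes every condition hold at once: if $X_0=R,X_1,\dots,X_m=B$ is the ancestor path of a part $B$ and $X_j^+$ is the component of $G-(X_0\cup\dots\cup X_{j-1})$ containing $X_j$, then $v_j:=\dist_{G[X_j^+]}(r_{X_j},v)$ is defined for \emph{every} vertex $v$ of $X_j^+$, where $r_{X_j}$ is the root of the connector $X_j$. Being a distance function in a fixed graph, $v_j$ changes by at most $1$ along every edge of $G[X_j^+]$, which gives all the Lipschitz conditions simultaneously, while the geodesics defining the connector give the unique-zero, geodesic-path and bandwidth requirements of $\HH_m$. (Relatedly, your justification of ``each vertex of $B$ has at most $2t-4$ neighbours in $A$'' appeals to $2|S_B|-2$, i.e.\ the child's attachment set; the correct source is the \emph{ancestor's} connector property: every vertex of $G[A^+]-A$ has at most $2t-4$ neighbours in $A$.)

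The second gap is the infinite case, which is the point of the lemma here. Your peeling ``recursion'' never terminates on an infinite graph, so the partition exists only as a limit, and you give no argument that this limit covers $V(G)$: with ``a root'' chosen arbitrarily, the nested sequence of residual components containing a fixed vertex can be infinite, so that vertex need never be absorbed into any connector. The paper fixes an enumeration $u_1,u_2,\dots$ of $V(G)$ and always roots the next connector at the least-indexed vertex not yet lying in a finalized part; this forces $u_j$ to be absorbed within $j$ steps, which is exactly the argument that $\PART$ is a partition of $G$. The same sequential construction also carries the tree $T$, its $(t-1)$-colouring and the relation $G/\PART\subseteq\GGG{T}{c}$ through the limit (temporary parts are split repeatedly, and each newly created leaf part is coloured avoiding its at most $t-2$ neighbours), which is why the proof is organised as a sequence of states with invariants rather than a component-by-component recursion. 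With the residual-distance coordinates and this limit bookkeeping added, your outline becomes the paper's proof.
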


\begin{proof}
Since $\TT_k$ contains infinitely many disjoint copies of $\TT_k$ as an induced subgraph, it suffices to assume that $G$ is connected. We may also assume that $G$ is infinite. Let $u_1,u_2,\dots$ be an arbitrary enumeration of $V(G)$. Below we define $(\PART_i,T_i,S_i,c_i)_{i\in\NN}$, where for $i\in\NN$, 
\begin{enumerate}[(P1)] 
 \item $\PART_i$ is a connected chordal partition of $G$ (that is, $G/\PART_i$ is a chordal graph, and $G[A]$ is a connected ordered graph for each $A\in\PART_i$);
 \item $T_i$ is a spanning tree of $G/\PART_i$ and $c_i$ is a $(t-1)$-colouring of $T_i$ such that $G/\PART_i$ is a spanning subgraph of $\GGG{T_i}{c_i}$;
\item  $S_i$ is a subtree of $T_i$ with exactly $i$ nodes, and every node of $T_i-V(S_i)$ is a leaf of $T_i$;
\item $S_1\subseteq S_2\subseteq \dots\subseteq S_i $;
\item for $j\in[i]$ and for each part $A\in\PART_j$, we have $c_j(A)=c_i(A)$;
\item both $S_i$ and $T_i$ are rooted at part $R\in\PART_i$ where $R:=\{u_1\}$;
 \item for each part $B\in V(S_i)$, if $m:=\dist_{S_i}(R,B)$ and $R=X_0,X_1,\dots,X_m=B$ is the $RB$-path in $S_i$, and $B^+$ is the vertex-set of the component of $G-(X_0\cup\dots\cup X_{m-1})$ containing $B$, then 
\begin{itemize}
\item $B$ is a $(t-1)$-connector in $G[B^+]$ rooted at some vertex $r_B$; and 
\item $G[B]$ is isomorphic to some graph in $\HH_m$, where each vertex $v\in B$ is mapped to $(v_0,v_1,\dots,v_m,v_\star)$, where $v_j=\dist_{G[X^+_j]}(r_{X_j},v)$ for each $j\in[0,m]$; and 
\end{itemize}
\item for each directed edge $AB$ of $G/\PART_i$ where $A,B\in V(S_i)$, if $(m,n):=(\dist_{S_i}(R,A),\dist_{S_i}(R,B))\in\XX$, then the divided graph $(G[A\cup B], A,B)$ is isomorphic to some divided graph in $\JJ_{m,n}$,
\item for each directed edge $AB$ of $G/\PART_i$ where $A\in V(S_i)$ and $B\in V(T_i)\setminus V(S_i)$, each vertex in $B$ has at most $3t-6$ neighbours in $A$. 
\end{enumerate}

We first define $(\PART_1,T_1,S_1,c_1)$. Let $(B_j)_{j\in J}$ be the vertex-sets of the components of $G-u_1$. Let $\PART_1$ be the partition of $G$ with parts $R:=\{u_1\}$ and $(B_j)_{j\in J}$. So $G/\PART_1$ is a star with $|J|$ leaves. Let $T_1$ be this star. Colour $R$ by 1 and colour each node $B_j$ by 2. So $T_1$ is a spanning tree of $G/\PART_1$ and $c_1$ is a $(t-1)$-colouring of $T_1$ such that $G/\PART_1=T_1=\GGG{T_1}{c_1}$. Let $S_1$ be the tree with one vertex $R$. Consider $T_1$ and $S_1$ to be rooted at $R$. Then $(\PART_1,T_1,S_1,c_1)$ satisfies the above conditions. 

Note that (P1)--(P3) imply that the nodes of $S_i$ and $T_i$ are parts of $\PART_i$. So (P4) implies that for $j\in[i]$ each part of $\PART_j$ that is in $S_j$ is also a part of $\PART_i$ in $S_i$. Note that (P4) also implies that $B^+$ (defined in (P7)) does not depend on $i$.

Suppose that $(\PART_1,T_1,S_1,c_1),\dots,(\PART_i,T_i,S_i,c_i)$ are defined. Since $|V(S_i)|=i$ and each part of $\PART_i$ in $S_i$ is finite, some vertex of $G$ is in a part of $\PART_i$ that is a node of $T_i-V(S_i)$. Let $j$ be the minimum integer such that $u_j$ is in a part $A\in \PART_i$ that is a node of $T_i-V(S_i)$. So $A$ is a leaf of $T_i$ by (P3). Let $B_1,\dots,B_s$ be the neighbours of $A$ in $G/\PART_i$. So $\{A,B_1,\dots,B_s\}$ is a clique in $G/\PART_i$, implying  $s\in[0,t-2]$. 
Let $B_s$ be the parent of $A$ in $T_i$. So $B_1,\dots,B_s$ are ancestors of $A$ in $T_i$. Let $m:=\dist_{T_i}(R,B_s)$. Let $R=X_0,X_1,\dots,X_m=B_s$ be the $RB_s$-path in $T_i$. For $\ell\in[0,m]$, let $X^+_\ell$ be the vertex-set of the component of $G-(X_0\cup\dots\cup X_{\ell-1})$ containing $X_\ell$. By (P7), $X_\ell$ is a $(t-1)$-connector in $G[X^+_\ell]$ rooted at some vertex $r_{X_\ell}$. Note that $A\subseteq X^+_\ell$. 

For $p\in[s]$, let $a_p$ be any vertex in $A$ adjacent to some vertex in $B_p$, which exists since $AB_p\in E(G/\PART_i)$. Let $C$ be a $\{u_j,a_1,\dots,a_s\}$-connector in $G[A]$ rooted at $u_j$, which exists by \cref{ConnectingSubgraph2}. Let $P_1,\dots,P_s$ be the corresponding geodesic paths. So $C=\bigcup_{p=1}^s V(P_p)$, and $C$ is a $(t-1)$-connector in $G[A]$. 

Let $\PART_{i+1}$ be the partition of $G$ obtained from $\PART_i$ by replacing $A$ by $C$ and the vertex-sets of the components of $G[A]-C$. For $p\in[s]$, since $a_p\in C$, we have $B_pC$ is an edge of $G/\PART_{i+1}$. 
So $\{C,B_1,\dots,B_s\}$ is a clique in $G/\PART_{i+1}$. For each component $X$ of $G[A]-C$, the neighbourhood of $V(X)$ in $G/\PART_{i+1}$ is a subset of $\{C,B_1,\dots,B_s\}$. 
So $G/\PART_{i+1}$ is chordal. By construction, each part of $\PART_{i+1}$ is connected. Thus (P1) is satisfied for $i+1$. 

Let $T_{i+1}$ be the tree obtained from $T$ by renaming the node $A$ by $C$, and by adding, for each component $G[X]$ of $G[A]-C$, the node $X$ and the edge $XC$ to $T_{i+1}$. So $X$ is a leaf in $T_{i+1}$. Let $c_{i+1}$ be the colouring of $T_{i+1}$ obtained from $c_i$ as follows. 
Let $c_{i+1}(C):=c_i(A)$. 
So $C$ has the same in-neighbourhood in $\GGG{T_{i+1}}{c_{i+1}}$ as $A$ in $\GGG{T_i}{c_i}$.
For each component $G[X]$ of $G[A]-C$, the neighbourhood of $X$ in $G/\PART_{i+1}$ is a subset of the clique $\{C,B_1,\dots,B_s\}$ of size at most $t-2$ (since $G$ has no $K_t$ minor). 
Let $c_{i+1}(X)$ be a colour not used by its neighbours in $G/\PART_{i+1}$, which exists since there are $t-1$ colours. 
Since $B_pA$ is an edge of $G/\PART_i$ for each $p\in[s]$, 
if $B_pX$ is an edge of $G/\PART_{i+1}$, then $B_pX$ is an edge of $\GGG{T_{i+1}}{c_{i+1}}$. 
Thus $G/\PART_{i+1}$ is a spanning subgraph of $\GGG{T_{i+1}}{c_{i+1}}$, and (P2) holds for $i+1$. (P5) holds by the definition of $c_{i+1}$. 

Let $S_{i+1}$ be the tree obtained from $S_i$ by adding a new node $C$ adjacent to $B_s$. Thus (P3), (P4) and (P6) are satisfied for $i+1$. 

Now we show that (P7) holds for $i+1$. By construction, $C$ is a $(t-1)$-connector in $G[A]$ and $A=C^+$ (where $C^+$ is the vertex-set of the component of $G-(X_0\cup\dots\cup X_m)$ containing $C$). Thus $C$ is a $(t-1)$-connector in $G[C^+]$. For each $v\in C$, let $v_{m+1}:=\dist_{G[A]}(u_j,v)$, and for each $\ell\in[0,m]$, let $v_\ell:=\dist_{G[X^+_\ell]}(r_\ell,v)$. Thus, $|v_\ell-w_\ell|\leq 1$ for each edge $vw$ in $G[C]$ and for each $\ell\in[m+1]$. For each $v\in C$, let $v_\star$ be an integer $p\in[s]$ such that $v_\star$ is in $P_p$. Since $P_p$ is a geodesic, if $v_{m+1}=w_{m+1}$ and $v_\star=w_\star$, then $v=w$. Thus $\{v\in C: v_\star = p \}$ induces a path in $H$, namely $P_p$. This shows that $G[C]$ is isomorphic to a graph in $\HH_{m+1}$, and (P7) holds for $i+1$.

Next, we show that (P8) holds for $i+1$. Consider $p\in [s]$. Let $\ell:=\dist_{T_i}(R,B_p)$. Thus $G[B_p]$ is isomorphic to some graph in $\HH_\ell$. By (P9), each vertex in $C$ has at most $3t-6$ neighbours in $B_p$.  
For $v\in B_p$ and $w\in C$ and $j\in[\ell]$ 
we have $v_j=\dist_{G[B^+_p]}(r_{B_p},v)$ (by (P7)) and
$w_j=\dist_{G[B^+_p]}(r_{B_p},w)$ (by the definition of $w_j$); 
so if $vw\in E(G)$ then $|v_j-w_j|\leq 1$. Hence the divided graph $(G[B_p\cup C],B_p,C)$ is isomorphic to some divided graph in $\JJ_{\ell,m+1}$. This shows that (P8) holds for $i+1$. 

Finally, by \cref{ConnectingSubgraph2}, every vertex in $G[A]-C$ has at most $3t-6$ neighbours in $A$, implying (P9) is satisfied for $i+1$. 

This shows that $(\PART_1,T_1,S_1,c_1),\dots,(\PART_{i+1},T_{i+1},S_{i+1},c_{i+1})$ satisfy (P1)--(P9), which completes the definition of $(\PART_j,T_j,S_j,c_j)_{j\in\NN}$.

By (P1)--(P4), $S_1,S_2,\dots$ are trees, where each node of $S_i$ is a part of $\PART_i$, and $S_1\subseteq S_2\subseteq \dots$. For $i\in\NN$, let $\PART'_i$ be the subset of $\PART_i$ corresponding to parts in $S_i$. Thus a part of $\PART'_i$ is also a part of $\PART'_\ell$ for all $\ell\geq i$; that is, $\PART'_i\subseteq \PART'_\ell$. Let $\PART:= \bigcup_{i\in\NN}\PART'_i$. We claim that $\PART$ is a partition of $G$. By construction, each vertex of $G$ is in at most one part of $\PART$. Consider vertex $u_j$ of $G$. If $u_j$ is no part of $\PART'_j$, then each of the $j$ nodes of $S_j$ contains a vertex $u_\ell$ with $\ell<j$ (by the choice of connector roots), which is a contradiction. So $u_j$ is in some part of $\PART'_j$. Hence $\PART$ is a partition of $G$. Let $S:=\bigcup_{i\in\NN}S_i$. Then $S$ is a spanning tree of $G/\PART$ rooted at $R=\{u_1\}$. For each part $A\in\PART$, let $c(A):=c_i(A)$ for any $i\in\NN$ for which $A\in\PART'_i$. The choice of $i$ is irrelevant by (P5). Every edge of $G/\PART$ is an edge of $\GGG{S_i}{c_i}$ for some $i\in\NN$, so $G/\PART$ is a spanning subgraph of $\GGG{S}{c}$. For each part $A\in \PART$, if $m:=\dist_S(R,A)$,  then by (P7), $G[A]\cong H_{m,a}$ for some $a\in \NN$; label $A$ by $a$. 
For each oriented edge $AB\in E(G/\PART)$ with $(m,n):=(\dist_S(R,A),\dist_S(R,B))\in\XX$, by (P8) the divided graph $(G[A\cup B],A,B)$ is isomorphic to some divided graph $(J_{m,a,n,b,\ell},A_{m,a},B_{n,b}) \in\JJ_{m,n}$; label $AB$ by $\ell$. Therefore $G\in\JJ^{(t-2)}$ where $Q:=G/\PART$.
\end{proof}

\begin{thm}
\label{KtMinorFreeNoSubdiv}
For each integer $t\geq 3$, there is a graph $U$ with the following properties:
\begin{itemize}
\item $U$ contains every $K_t$-minor-free graph as an induced subgraph, 
\item $U$ contains no subdivision of $K_{\aleph_0}$,
\item $U$ is $(2t-4)(t-1)$-colourable, 
\item $U$ is $(t-1)(3t-7)$-degenerate, 
\item $\col_r(U) \leq (t-2)(t-1)(2r+1)$ for every $r\in\NN$.
\end{itemize}
\end{thm}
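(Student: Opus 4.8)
The plan is to apply \cref{JJJ} to the specific $\HH$-system $\JJ$ constructed immediately above \cref{MinorJJJ}, with parameter $k:=t-2$. All the hard combinatorial work has already been done: \cref{MinorJJJ} establishes that every $K_t$-minor-free graph $G$ lies in $\JJ^{(t-2)}$, and \cref{JJJ} packages the four structural consequences we need. So the proof is really just a matter of verifying that the hypotheses of the last three bullets of \cref{JJJ} are met by our $\HH$ and $\JJ$, and computing the resulting constants.

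First I would set $U:=U_{\JJ,t-2}$, the strongly universal graph for $\JJ^{(t-2)}$ given by the second bullet of \cref{JJJ}. Then by \cref{MinorJJJ}, every $K_t$-minor-free graph is in $\JJ^{(t-2)}$, hence is isomorphic to an induced subgraph of $U$; and by the first bullet of \cref{JJJ}, $U$ contains no subdivision of $K_{\aleph_0}$. Next, for colourability: every graph in $\HH$ is a subgraph of a graph of bandwidth at most $t-2$, hence has chromatic number at most $t-1$ (bandwidth $b$ implies a proper $(b+1)$-colouring via the natural ordering). So with $b=t-1$ in the third bullet of \cref{JJJ}, $U$ is $(t-1)(k+1)=(t-1)(t-1)=(t-1)^2$-colourable. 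For degeneracy: a graph of bandwidth at most $t-2$ is $(t-2)$-degenerate, so take $d:=t-2$; and by construction of $\JJ_{m,n}$, in every triple $(J_{m,a,n,b,c},A_{m,a},B_{n,b})$ each vertex of $B_{n,b}$ has at most $2t-4$ neighbours in $A_{m,a}$, so take $r:=2t-4$. The fourth bullet of \cref{JJJ} then gives that every finite subgraph of $U$ is $(rk+d)=( (2t-4)(t-2)+(t-2) )=(t-2)(2t-3)$-degenerate.

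Finally, for the colouring number bound, I would not use \cref{JJJ} directly but instead invoke \cref{PartitionGenColNum}. Any finite subgraph $G'$ of $U$ inherits (by restriction) a connected partition $\PART$ whose quotient embeds in $\GGG{T}{c}$ for some $(t-1)$-colouring $c$, hence $\tw(G'/\PART)\le t-2$, so each part has at most $\tw(G'/\PART)+1\le t-1$ earlier neighbours in a suitable ordering of the parts (ordering the parts by the total order $\preceq_T$ used in \cref{GenColourTreeDecomp}: the earlier neighbours of a part $A$ are exactly its in-neighbours in $\GGG{T}{c}$, of which there are at most $t-2$; being careful, the bound wanted is $d=t-2$ and $p=t-2$ below). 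Moreover, by property (P7) in the proof of \cref{MinorJJJ}, each part $V(A_i)$ is the union of the vertex-sets of at most $t-2$ geodesic paths in the relevant subgraph, and these remain geodesics in $G'-(A_1\cup\dots\cup A_{i-1})$ since the partition is chordal-and-connected along a tree. Thus \cref{PartitionGenColNum} applies with $p=t-2$ and $d=t-2$, giving $\col_r(G')\le (t-2)(t-1)(2r+1)$. By \cref{colExtendable}, this bound extends from finite subgraphs to $U$ itself. I expect the main obstacle to be the bookkeeping in this last step: namely checking carefully that the partition $\PART$ of $U$ really does restrict to a partition of each finite subgraph satisfying the hypotheses of \cref{PartitionGenColNum} with the stated parameters — in particular that the geodesic paths of property (P7) are genuinely geodesic in the right ``peeled'' subgraph, and that the count of earlier neighbouring parts is the in-degree bound $t-2$ from the $k$-orientation of $\GGG{T}{c}$ with $k=t-2$.
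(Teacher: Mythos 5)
Your handling of the first four bullets is exactly the paper's argument: take $U:=U_{\JJ,t-2}$, use \cref{MinorJJJ} for containment, the first bullet of \cref{JJJ} for the $K_{\aleph_0}$-subdivision claim, $b=t-1$ (via bandwidth) for colourability, and $d=t-2$, $r=2t-4$, $k=t-2$ for degeneracy. The problem is the fifth bullet, precisely the step you flag as ``bookkeeping'' and then do not carry out. Restricting the partition $\PART$ of $U$ to an arbitrary finite subgraph $G'$ does not work: a part intersected with $V(G')$ need not induce a connected subgraph, need not be a union of $\leq t-2$ paths, and even when it is, those paths need not be geodesics in the peeled graph $G'-(A_1\cup\dots\cup A_{i-1})$. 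Your stated justification, ``since the partition is chordal-and-connected along a tree,'' does not establish the geodesic hypothesis of \cref{PartitionGenColNum}; that hypothesis is exactly the non-trivial point. The paper avoids the restriction problem by not restricting: given a finite subgraph $H$ of $U$, it passes to the finite graph $H^+$ induced by the union of all (whole) parts of $\PART$ meeting $V(H)$ (finite because each graph in $\HH$ is finite), applies \cref{PartitionGenColNum} to $H^+$ with $p=d=t-2$, and uses monotonicity $\col_r(H)\leq\col_r(H^+)$ together with \cref{colExtendable}.

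The second missing ingredient is the proof that the covering paths of a part remain geodesics after peeling off the earlier parts. You appeal to property (P7) of the proof of \cref{MinorJJJ}, but that property concerns the partition built for one particular $K_t$-minor-free graph, not the partition of the universal graph $U$; for $U$ the relevant structure comes from the definition of $\HH$ and of the triples $\JJ_{m,n}$. Concretely: every vertex of a graph in $\HH_m$ carries coordinates $(v_0,\dots,v_m,v_\star)$, every edge inside a part and every edge of a triple in $\JJ_{m,n}$ changes each coordinate by at most $1$, and along each covering path $P_j$ of a part at depth $m$ the coordinate $v_m$ increases by exactly $1$ per edge. Hence for $x,y\in V(P_j)$, any path of length $\ell$ in the peeled graph $H^+_i$ satisfies $\ell\geq|x_m-y_m|=\dist_{P_j}(x,y)$, which is what makes $P_1,\dots,P_{t-2}$ geodesics in $H^+_i$ and licenses $p=t-2$ in \cref{PartitionGenColNum}. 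Without this coordinate argument (or an equivalent), the bound $\col_r(U)\leq(t-2)(t-1)(2r+1)$ is not established, so as written your proposal has a genuine gap at this point.
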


\begin{proof}
Let $\HH$ and $\JJ$ be as defined prior to \cref{MinorJJJ}. By \cref{JJJ}, there is a strongly-universal graph $U:=U_{\JJ,t-2}$ in $\JJ^{(t-2)}$ that contains no subdivision of $K_{\aleph_0}$. By \cref{MinorJJJ}, every $K_t$-minor-free graph $G$ is in $\JJ^{(t-2)}$. So $G$ is isomorphic to an induced subgraph of $U$. 

Since $\chi(H)\leq\bw(H)+1$ for every graph $H$, every graph in $\HH$ is $(2t-4)$-colourable. Thus the third part of \cref{JJJ} is applicable with $b=2t-4$, implying $U$ is $(2t-4)(t-1)$-colourable. By assumption, the fourth part of \cref{JJJ} is applicable with $d=2t-5$ (since degeneracy is at most the bandwidth) and $k=t-2$ and $r=3t-6$. Thus every finite subgraph of $U$ is $(rk+d)$-degenerate, where $rk+d=(3t-6)(t-2)+(2t-5)=(t-1)( 3t-7)$.

We now show that $\col_r(U) \leq (t-2)(t-1)(2r+1)$. \note{Minor revisions here, addressing a comment from the referee.} Let $\PART$ be the chordal partition of $U$ witnessing that $U\in \JJ^{(t-2)}$. By \cref{ChordalCharacterisation2}, there is an acyclic $(t-2)$-orientation of $U/\PART$. Let $H$ be a finite subgraph of $U$. Let $\YY$ be the set of parts in $\PART$ that intersect $V(H)$. \note{The referee crossed out ``that intersect $V(H)$'', but we think this phrase is necessary.} Let $H^+$ be the subgraph of $U$ induced by the union of all the parts in $\YY$ that intersect $V(H)$. Since $H$ is finite and each part of $\YY$ is finite, $H^+$ is finite. Note that $H^+/\YY$ is an induced subgraph of $U /\PART$. So $\YY$ is a chordal partition of $H^+$, and the $(t-2)$-orientation of $U/\PART$ defines a $(t-2)$-orientation of $H^+/\YY$. Since $\YY$ is finite, there is an ordering $Y_1,\dots,Y_n$ of the parts in $\YY$, such that $i<j$ for every oriented edge $Y_iY_j\in E(H^+/\YY)$. ($Y_1,\dots,Y_n$ is called a \defn{perfect elimination ordering} in the literature on chordal graphs.)

Consider $Y_i\in\YY$. There are at most $t-2$ neighbouring parts $Y_j\in N_{H^+/\YY}(Y_i)$ with $j<i$ (and they form a clique in $H^+/\YY$, although we will not need this property). Let $H^+_i$ be the connected component of $H^+-(Y_1\cup\dots\cup Y_{i-1})$ that contains $Y_i$. Observe that $H^+_i$ is precisely the subgraph of $H^+$ induced by the union of all parts $Y_j\in\YY$ such that $i=j$ or $Y_j$ is a descendent of $Y_i$ in $H^+/\YY$. By the definition of $\HH$, $V(Y_i)$ is the union of the vertex-sets of $t-2$ geodesic paths $P_1,\dots,P_{t-2}$ in $H^+[Y_i]$. We claim that $P_1,\dots,P_{t-2}$ are geodesics in $H^+_i$. Let $x$ and $y$ be vertices in some $P_j$. Let $P$ be any $xy$-path in $H^+_i$. Say $P$ has length $\ell$. By the definition of $\HH_n$ and $\JJ_{m,n}$, for each edge $vw$ in $P$, we have $|v_i-w_i|\leq 1$. Thus $|x_i-y_i|\leq\ell$, implying $\dist_{P_j}(x,y)\leq\ell$. 
Thus $P_1,\dots,P_{t-2}$ are geodesics in $H^+_i$. 
Hence \cref{PartitionGenColNum} is applicable to $H^+$ with $d=p=t-2$. Thus $\col_r(H) \leq \col_r(H^+) \leq (t-2)(t-1)(2r+1)$. 
By \cref{colExtendable}, $\col_r$ is extendable.
Therefore $\col_r(U) \leq (t-2)(t-1)(2r+1)$. 
\end{proof}

\subsection{Locally Finite Graphs}
\label{LocallyFiniteGraphs}

We have shown above that there exists a graph that contains every planar graph as an induced subgraph, but contains no subdivision of $K_{\aleph_0}$. Indeed, the result holds for $K_t$-minor-free graphs. We now show the same conclusion holds for all locally finite graphs.

\begin{prop}
\label{StronglyUniversalLocallyFinite}
There is a graph that contains every locally finite graph as an induced subgraph, but contains no subdivision of $K_{\aleph_0}$. 
\end{prop}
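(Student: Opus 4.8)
The plan is to apply the $\HH$-system machinery of \cref{JJJ} with $k=1$. For each $m\in\NN_0$, let $\HH_m$ be the set of all finite graphs (up to isomorphism, which is countable), enumerated as $\HH_m=\{H_{m,1},H_{m,2},\dots\}$, and let $\HH:=\bigcup_{m\in\NN_0}\HH_m$. Choose an $\HH$-system $\JJ=\{(J_{m,a,n,b,c},A_{m,a},B_{n,b}):(m,n)\in\XX,\ a,b,c\in\NN\}$ that is as rich as possible: for each $(m,n)\in\XX$ and $a,b\in\NN$, as $c$ ranges over $\NN$ the triples $(J_{m,a,n,b,c},A_{m,a},B_{n,b})$ realise, up to isomorphism of triples, every $(\HH,m,n)$-triple $(J,X,Y)$ with $J[X]\cong H_{m,a}$ and $J[Y]\cong H_{n,b}$, where $c=1$ is reserved for the triple whose bipartite part between $X$ and $Y$ is empty. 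This is possible since there are only countably many such triples. Let $U:=U_{\JJ,1}$ be the graph provided by \cref{JJJ}. Then $U$ is strongly universal for $\JJ^{(1)}$ and contains no subdivision of $K_{\aleph_0}$. So it suffices to show that every locally finite graph lies in $\JJ^{(1)}$.

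The key structural fact is that every locally finite graph has a partition into finite parts whose quotient has treewidth at most $1$, obtained by breadth-first search. Suppose first that $G$ is connected. Fix $r\in V(G)$ and let $V_i:=\{v\in V(G):\dist_G(r,v)=i\}$; since $G$ is connected and locally finite, each $V_i$ is finite and $(V_0,V_1,\dots)$ is a layering of $G$. Let $\PART:=\{V_i:V_i\neq\emptyset\}$, let $T$ be the path with vertex set $\PART$ in the natural order rooted at $R:=V_0$, and let $c$ be the $2$-colouring of $T$ with $c(V_i)=i\bmod 2$. Then $\GGG{T}{c}$ is exactly this path, so $G/\PART$ (whose only possible edges are the $V_iV_{i+1}$) is a spanning subgraph of $Q:=\GGG{T}{c}$ and $\tw(Q)\le 1$. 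Now label each part $V_m$ by an $a$ with $G[V_m]\cong H_{m,a}$, and label each edge $V_mV_{m+1}$ of $Q$ by a $c$ realising the triple $(G[V_m\cup V_{m+1}],V_m,V_{m+1})$, taking $c=1$ if this edge is not in $G/\PART$; such $a$ and $c$ exist by the choice of $\HH$ and $\JJ$. One checks directly that all conditions in the definition of $\JJ^{(1)}$ hold, so $G\in\JJ^{(1)}$.

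For general locally finite $G$ with components $G_1,G_2,\dots$, carry out the above construction in each $G_i$ with root $r_i$ and layers $V^i_0,V^i_1,\dots$, set $\PART:=\{V^i_j:V^i_j\neq\emptyset\}$, and let $T$ be the tree rooted at $R:=V^1_0$ obtained by hanging the path $V^1_0,V^1_1,\dots$ below $R$ and, for each $i\ge 2$, hanging the path $V^i_0,V^i_1,\dots$ below $R$ (so $R$ may have infinite degree, which is permitted). Colour $T$ with two colours so that each of these paths is alternately coloured starting from its top vertex; then each $V^i_jV^i_{j+1}$ is an edge of $\GGG{T}{c}$, while $V^1_0V^i_0$ ($i\ge 2$) is also an edge of $\GGG{T}{c}$. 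Hence $G/\PART\subseteq Q:=\GGG{T}{c}$ and $\tw(Q)\le 1$. Label parts and edges as before, using $c=1$ for every edge of $Q$ not in $G/\PART$ (in particular the edges $RV^i_0$ for $i\ge 2$, which join distinct components). Again the definition of $\JJ^{(1)}$ is satisfied, so $G\in\JJ^{(1)}$, and therefore $U$ contains $G$ as an induced subgraph. This completes the proof.

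The part that needs care is the bookkeeping rather than any hard combinatorics: one must set up $\JJ$ so that it realises every finite induced subgraph $G[V_i]$ and every bipartite interface $G[V_i\cup V_{i+1}]$ that can occur, and then verify — routinely but clause by clause — that the data $(\PART,T,R,c)$ together with these labels meet the full definition of $\JJ^{(1)}$ (the $(k+1)$-colouring condition, the spanning-subgraph condition $G/\PART\subseteq\GGG{T}{c}$, the triple-isomorphism conditions for every oriented edge of $Q$, and the $c=1$ convention for edges of $Q$ outside $G/\PART$). We also note that a locally finite graph trivially contains no $K_{\aleph_0}$ subdivision, since a branch vertex of such a subdivision has infinite degree; the content of the proposition is the existence of a single graph containing all locally finite graphs while itself avoiding this subdivision.
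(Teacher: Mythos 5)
Your proposal is correct and follows essentially the same route as the paper's proof: take $\HH_m$ to be all finite graphs, invoke \cref{JJJ} with $k=1$, and place each locally finite graph in $\JJ^{(1)}$ via breadth-first layerings of its components (finite layers, quotient a union of rays) with the inter-component tree edges labelled $1$. The only difference is cosmetic: you attach all component rays directly below the root $R$, whereas the paper strings the components' roots along a path; both yield a suitable tree $T$ with a proper $2$-colouring for which $\GGG{T}{c}=T$.
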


\begin{proof}
\note{Minor revisions here in response to referee comments. Also revised according to the `divided graph' setup. }
Let $\FF$ be the set of all finite ordered graphs, with one representative from each isomorphism class. So $\FF$ is countable. For each $m\in\NN_0$, let $(H_{m,a})_{a\in\NN}$ be an enumeration of $\FF$. 
Let $\HH:=\{H_{m,a}:m\in\NN_0,a\in\NN\}$. 
For $(m,n)\in \XX$ and $a,b\in\NN$, let $\JJ_{m,a,n,b}$ be the set of all graphs $J$ that can be obtained from the disjoint union of $H_{m,a}$ and $H_{n,b}$ by adding edges between 
$H_{m,a}$ and $H_{n,b}$. 
So $\JJ_{m,a,n,b}$ is countable. 
Let $(J_{m,a,n,b,\ell})_{\ell\in\NN}$ be an enumeration of $\JJ_{m,a,n,b}$, where $J_{m,a,n,b,1}$ is the graph 
with no edges between $H_{m,a}$ and $H_{n,b}$. 
Consider the $\HH$-system, 
$$\JJ=\{ (J_{m,a,n,b,\ell},A_{m,a},B_{n,b}) :(m,n)\in\XX,\,a,b,\ell\in\NN\}.$$ 

By \cref{JJJ}, $\JJ^{(1)}$ has a strongly universal graph $U_{\JJ,1}$ that contains no subdivision of $K_{\aleph_0}$. We now prove that every locally finite graph $G$ is in $\JJ^{(1)}$. 

Let $G_1,G_2,\dots$ be the connected components of $G$. For $i\in\NN$, let $r_i$ be any vertex of $G_i$. For $j\in\NN_0$, let $V_{i,j}$ be the set of vertices in $G_i$ at distance $j$ from $r_i$. Note that $V_{i,0}=\{r_i\}$. Fix $i\in\NN$. We prove by induction on $j\in\NN_0$ that $V_{i,j}$ is finite. The base case holds since $|V_{i,0}|=1$. Assume that $V_{i,j}$ is finite. Let $\Delta_{i,j}$ be the maximum degree of vertices in $V_{i,j}$. Thus $|V_{i,j+1}|\leq \Delta_{i,j}\,|V_{i,j}|$, and $V_{i,j+1}$ is finite. 

So $\PART:=\{V_{i,j}:i\in\NN,j\in\NN_0\}$ is a partition of $G$. The quotient $G/\PART$ is the disjoint union of infinite 1-way paths (one for each component of $G$). Let $T$ be the graph obtained from $G/\PART$ by adding an edge between $V_{i,0}$ and $V_{i+1,0}$ for each $i\in\NN$. So $T$ is a tree, which has a 1-orientation obtained by directing the edges from $V_{i,j}$ to $V_{i,j+1}$, and from $V_{i,0}$ to $V_{i+1,0}$. Moreover, $R:=V_{1,0}$ is the root of $T$ (with in-degree 0). Note that $\dist_T(R,V_{i,j})=i+j-1$. Let $c$ be a proper 2-colouring of $T$. Let $Q:=\GGG{T}{c}$ which equals $T$.


For $i\in\NN$ and $j\in\NN_0$, since $V_{i,j}$ is finite, $G[V_{i,j}]\cong H_{m,a}$ for some $a\in\NN$, where $m:=i+j-1=\dist_T(R,V_{i,j})$. Label $V_{i,j}$ by $a$. If $V_{i,j}$ is labelled $a$ and $V_{i,j+1}$ is labelled $b$, 
and $m=i+j-1$ and $n=i+(j+1)-1$, then there exists $c\in\NN$ such that $(G[V_{i,j}\cup V_{i,j+1}],V_{i,j},V_{i,j+1})\cong (J_{m,a,n,b,\ell},V(H_{m,a}),V(H_{n,b}))\in \JJ$; label the edge $V_{i,j}V_{i,j+1}$ of $Q$ by $\ell$. For $i\in\NN$, if $V_{i,0}$ is labelled $a$ and $V_{i+1,0}$ is labelled $b$, then label the edge $V_{i,0}V_{i+1,0}$ of $Q$ by 1. In this case, if  $m=i+0-1$ and $n=(i+1)+0-1$, then $(G[V_{i,0}\cup V_{i+1,0}],V_{i,0},V_{i+1,0})\cong(J_{m,a,n,b,1},V(H_{m,a}),V(H_{n,b}))\in \JJ$ since there are no edges between $V(H_{m,a})$ and $V(H_{n,b})$ in $J_{m,a,n,b,1}$.


Together, this shows that $G\in\JJ^{(1)}$, and therefore $G$ is isomorphic to an induced subgraph of $U_{\JJ,1}$. 
\end{proof}

%

\subsection{Forcing Infinite Edge-Connectivity}
\label{InfiniteEdgeConnectivity}

As shown above, there is a graph $U$ that contains all planar graphs, but does not contain a subdivision of an infinite clique.  In particular, $U$ does not contain a subgraph of infinite vertex-connectivity. We now show that the analogous statement for edge-connectivity is false.  That is, infinite edge-connectivity is unavoidable in any graph that contains all planar graphs, and the same property holds for other natural graph classes. 

\note{This section has been substantially revised.}

\begin{thm}
\label{Universal edge-connectivity}
If a graph $U$ contains all planar graphs, then $U$ contains a subgraph of infinite edge-connectivity that contains all planar graphs.
\end{thm}

\begin{thm}
\label{Universal edge-connectivity2}
If a graph $U$ contains all $K_4$-minor-free graphs, then $U$ contains a subgraph of infinite edge-connectivity that contains all $K_4$-minor-free graphs.
\end{thm}

\begin{thm}
\label{Universal edge-connectivity3}
If a graph $U$ contains all outerplanar graphs, then $U$ contains a subgraph of infinite edge-connectivity that contains all outerplanar graphs.
\end{thm}

The proofs of these results depend on the following definitions. Let $H_0$ be a graph and $B \subseteq E(H_0)$.  For each $xy \in B$, add a new vertex of degree 2 adjacent to $x$ and $y$. Call the resulting graph $H_1$. For each $xy \in E(H_1) \setminus E(H_0)$, add a new vertex of degree 2 adjacent to $x$ and $y$. Call the resulting graph $H_2$. Continue like this defining $H_3,H_4, \ldots$ and let the \defn{$B$-limit} of $H_0$ be $\bigcup_{i \in N_0} H_i$.  Let $\mathcal{G}$ be a class of graphs. We say that $\GG$ is \defn{robust} if  $\GG$ is closed under disjoint union and for every $G \in \mathcal{G}$ there exists $H \in \mathcal{G}$ and a spanning tree $T$ of $H$ such that $G \subseteq H$ and the $E(T)$-limit of $H$ is in $\GG$.    

Observe that the class $\GG$ of planar graphs is robust because every planar graph is a subgraph of a connected planar graph and for every planar graph $G$, the $E(G)$-limit of $G$ is also planar.  Similarly, the class of $K_4$-minor-free graphs is robust.  Finally, observe that the class of outerplanar graphs is robust because every outerplanar graph is a subgraph of a connected outerplanar graph, and if $G$ is a connected outerplane graph, where every vertex of $G$ is on the boundary of face $f$, and $B$ is the set of edges on the boundary of $f$, then $G$ has a spanning tree consisting only of edges in $B$, and the $B$-limit of $G$ is outerplanar. The following lemma applied to these three examples proves the above three results. 

\begin{lem} \label{extremefamily}
Let $\GG$ be a robust class of graphs. If a graph $U$ contains every graph in $\GG$, then $U$ contains a subgraph of infinite edge-connectivity that contains all graphs in $\GG$.
\end{lem}

\begin{proof}
For each $G \in \GG$, we will define a graph $I(G) \in \GG$ such that $G \subseteq I(G)$ and $I(G)$ has infinite edge-connectivity.  Since $\GG$ is robust, there exists $H_0 \in \GG$ and a spanning tree $T$ of $H_0$ such that $G \subseteq H_0$ and the $E(T)$-limit of $H_0$ is in $\GG$.  Let $I(G)$ be the $E(T)$-limit of $H_0$.  Since $\GG$ is robust, $I(G) \in \GG$.  Let $x,y \in V(I(G))$.  Thus, $x,y \in V(H_\ell)$ for some $\ell \in \NN_0$, where $H_0, H_1, \dots$ is the sequence of graphs used to define the $E(T)$-limit of $H_0$.  Since $T$ is a spanning tree of $H_0$, $B_\ell:=E(H_\ell) \setminus \bigcup_{i=0}^{\ell-1} E(H_i)$ are the edges of a spanning tree of $H_\ell$.   Thus, there is an $xy$-path $P$ in $H_\ell$ such that $E(P) \subseteq B_\ell$.  By the definition of $I(G)$, for all distinct $ab, cd \in E(P)$ there is an infinite collection $\mathcal{P}_{ab}$ of edge-disjoint $ab$-paths in $I(G)$ and an infinite collection $\mathcal{P}_{cd}$ of edge-disjoint $cd$-paths in $I(G)$ such that $E(P_{ab}) \cap E(P_{cd})=\emptyset$ for all $P_{ab} \in \mathcal{P}_{ab}$ and $P_{cd} \in \mathcal{P}_{cd}$.   Thus, there are infinitely many edge-disjoint $xy$-paths in $I(G)$. Hence $I(G)$ has infinite edge-connectivity, as required. 

Now assume $U$ contains all graphs in $\mathcal{G}$. Let $U'$ be the union of all subgraphs $H$ of $U$ such that $H \in \mathcal{G}$ and $H$ has infinite edge-connectivity.  By definition, $U'$ contains $I(G)$ for every $G \in \mathcal{G}$, and hence $U'$ contains every $G \in \mathcal{G}$ (since $G \subseteq I(G)$).  Moreover, each component of $U'$ has infinite edge-connectivity.  It remains to prove that some component of $U'$ contains every $G \in \mathcal{G}$. Let $U_1',U_2', \ldots$ be the components of $U'$.   For each $i \in \NN$, let $M_i \in \mathcal{G}$ such that $U_i'$ does not contain $M_i$. Let $M$ be the disjoint union of $M_1, M_2,  \ldots$. Since $\mathcal{G}$ is closed under disjoint union, $M \in \mathcal{G}$.  However, $I(M)$ is not a subgraph of any $U_i'$ and hence not a subgraph of $U'$, a contradiction which proves the lemma.
\end{proof}

\section{Final Remarks}

We conclude with several remarks and open problems.

\medskip\textbf{The Primary Question:} 
What is the simplest graph that contains all planar graphs? We have shown that $\SS_6\boxtimes \PP$ and $\SS_3\boxtimes \PP\boxtimes K_3$ contain all planar graphs. It is a tantalising open problem whether $\SS_3\boxtimes\PP$  contains all planar graphs. Note that for each $\ell\in\NN$, \citet{DJMMUW20} constructed a planar graph (with simple treewidth 3)  that is not a subgraph of $\TT_2 \boxtimes \PP \boxtimes K_{\ell}$. More generally, for all $k,\ell\in\NN$, \citet{DJMMUW20} constructed a graph with simple treewidth $k$  that is not a subgraph of $\TT_{k-1} \boxtimes \PP \boxtimes K_{\ell}$. 

\medskip\textbf{Minimality:} 
Does there exist a graph $U_0$ that contains every planar graph and is a subgraph of every graph that contains every planar graph? Note that $U_0$ must be a common subgraph of $K_{\aleph_0,\aleph_0,\aleph_0,\aleph_0}$, 
$\SS_6 \boxtimes \PP$, $\SS_3 \boxtimes \PP \boxtimes K_3$, and the graph in \cref{KtMinorFreeNoSubdiv} (with $t=5$); it must also contain $K_{\aleph_0}$ as a minor. 

\medskip\textbf{Acyclic Colourings:} 
One way to measure the `quality' of a  graph that contains every planar graph is via the acyclic chromatic number. The result of \citet{KY03} mentioned in \cref{GeneralisedColouringNumbers} along with \cref{GenColourProduct} implies 
 \begin{equation}
 \label{AcyclicColouringProduct}
 \chi_\text{a}( \TT_k \boxtimes P ) 
 \leq 
 \col_2( \TT_k \boxtimes P ) 
 \leq 
 5(k+1).
 \end{equation}
With \cref{InfinitePlanarStructure6} this implies that $\SS_6 \boxtimes \PP$ contains every planar graph and is acyclically $35$-colourable.  What is the minimum $c$ such that there exists an acyclically $c$-colourable graph that contains every planar graph? \citet{Borodin-DM79} proved that every finite planar graph has an acyclic $5$-colouring. An easy application of \cref{Konig} shows that acyclic chromatic number is extendable. Thus every planar graph has an acyclic $5$-colouring. More generally, \cref{InfiniteSurfaceProduct} and \eqref{AcyclicColouringProduct} imply that $(\SS_6+K_{2g})\boxtimes \PP$ contains every graph of Euler genus $g$ and has an acyclic $(10g+35)$-colouring. \citet{AMS96} proved that graphs with Euler genus $g$ are acyclically $O(g^{4/7})$-colourable (and this bound is tight up to a polylogarithmic factor). Is there a graph that contains every graph of Euler genus $g$ and has acyclic chromatic number $o(g)$? The analogous questions for game chromatic number or oriented chromatic number are also of interest.

\medskip\textbf{List Colouring:} Given $d\in\NN$, is there a universal graph for the class of $d$-list-colourable graphs? For $d=5$, any such graph must contain every planar graph.

\medskip\textbf{Bounded Degree Planar Graphs:} If a graph $U$ contains all planar graphs with maximum degree 3, must $U$ have infinite maximum degree? Or is there a graph with bounded maximum degree that contains all planar graphs with maximum degree 3? More generally, is there is a graph with bounded maximum degree that contains all graphs with maximum degree 3 (regardless of planarity)? Of course, the above questions are of interest with 3 replaced by any constant. 

\medskip\textbf{Minor-Closed Classes:} 
\citet{DHV85} first addressed the question of which minor-closed classes have a universal element. \cref{InfiniteCliqueMinor}(a) implies that every proper minor-closed class that contains every planar graph has no universal element. It is open whether every proper minor-closed class excluding some finite planar graph has a universal element. The following result is in this direction:

\begin{thm}
The following are equivalent for a minor-closed class $\GG$:
\begin{enumerate}[label=(\alph*)]
\item $\GG$ has bounded treewidth, 
\item some graph with bounded treewidth contains every graph in $\GG$,
\item some finite planar graph is not in $\GG$.
\end{enumerate}
\end{thm}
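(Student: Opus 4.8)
The plan is to read condition $(a)$ as ``$\sup\{\tw(G):G\in\GG\}<\infty$'' and to obtain the equivalence by combining results already proved in the paper with the Grid-Minor Theorem. Since $(a)\Leftrightarrow(c)$ is exactly the Grid-Minor Theorem of \citet{RS-V} recalled in \cref{Intro}, the genuinely new content is the pair $(a)\Rightarrow(b)$ and $(b)\Rightarrow(a)$; I would present the result as the cycle $(c)\Rightarrow(a)\Rightarrow(b)\Rightarrow(c)$, or equivalently prove $(a)\Leftrightarrow(b)$ and quote $(a)\Leftrightarrow(c)$.

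For $(a)\Rightarrow(b)$: if $\tw(G)\le k$ for every $G\in\GG$, then by \cref{TreewidthUniversal} the universal treewidth-$k$ graph $\TT_k$ contains every graph in $\GG$, and $\tw(\TT_k)\le k<\infty$ by \cref{TreewidthCharacterisation} (indeed $\tw(\TT_k)=k$, as $K_{k+1}\subseteq\TT_k$), so $\TT_k$ witnesses $(b)$. For $(b)\Rightarrow(a)$: if a graph $U$ with $\tw(U)=k<\infty$ contains every graph in $\GG$, then the restriction of a width-$k$ tree-decomposition of $U$ to any subgraph is a tree-decomposition of width at most $k$; since each $G\in\GG$ is isomorphic to a subgraph of $U$, we get $\tw(G)\le k$ for all $G\in\GG$, which is $(a)$. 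The implication $(a)\Rightarrow(c)$ is immediate the other way round: a sufficiently large planar grid $P_N\CartProd P_N$ has treewidth exceeding $k$, hence is not in $\GG$.

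For the one substantive direction, $(c)\Rightarrow(a)$, I would give the Grid-Minor argument in the countable-graph setting: if a finite planar graph $H$ is not in the minor-closed class $\GG$, then $\GG$ excludes $H$ as a minor; by \citet{RST-JCTB94} $H$ is a minor of $P_N\CartProd P_N$ for some $N$; by the finite Grid-Minor Theorem there is an integer $f(N)$ such that every finite graph of treewidth at least $f(N)$ contains $P_N\CartProd P_N$, hence $H$, as a minor; and by extendability of treewidth (\cref{TreewidthExtendable}, due to \citet{Thomas88}) every graph of treewidth at least $f(N)$ has a finite subgraph of treewidth at least $f(N)$, hence contains $H$ as a minor --- impossible for a graph in $\GG$. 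So every graph in $\GG$ has treewidth less than $f(N)$, giving $(a)$. The main thing to get right here is not a deep obstacle but a point of interpretation: $(a)$ must mean \emph{bounded} treewidth, since the minor-closure of all finite grids is a minor-closed class in which every graph has finite treewidth yet the treewidths are unbounded and every finite planar graph occurs (so $(c)$ fails for it); with that reading fixed, the proof is a short assembly of \cref{TreewidthUniversal,TreewidthCharacterisation,TreewidthExtendable} with the cited Grid-Minor results.
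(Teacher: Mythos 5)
Your proof is correct and follows essentially the same route as the paper: $\TT_k$ (\cref{TreewidthUniversal}) gives (a)$\Rightarrow$(b), a large grid shows a treewidth bound forces (c), and (c)$\Rightarrow$(a) is the Grid-Minor Theorem combined with extendability of treewidth (\cref{TreewidthExtendable}), which you merely spell out in more detail than the paper. The only cosmetic difference is that you close the cycle via (b)$\Rightarrow$(a) and (a)$\Rightarrow$(c) instead of the paper's direct (b)$\Rightarrow$(c), and your reading of (a) as bounded treewidth is indeed the intended one.
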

\begin{proof}
First, (a) implies (b) since for every $k\in\NN$ there is a universal graph for the class of treewidth $k$ graphs (\cref{TreewidthUniversal}). 

To see that (b) implies (c), assume that some graph with treewidth $n\in\NN$ contains every graph in $\GG$. Since treewidth is subgraph-closed, every graph in $\GG$ has treewidth at most $n$. Thus, the $(n+1)\times (n+1)$ grid graph, which has treewidth $n+1$, is not in $\GG$. That is, some finite planar graph is not in $\GG$, as desired. 

Finally, (c) implies (a) by the Grid-Minor Theorem of \citet{RS-V} and since treewidth is extendable (\cref{TreewidthExtendable}). 
\end{proof}

\section*{Acknowledgements} 
Thanks to Kevin Hendrey, Daniel Mathews, and Alex Scott for helpful comments.  Huge thanks to the referee for numerous helpful comments, including identifying some errors in our first submission. The example after \cref{FindSpanningTree} is due to the referee. Subsequent to this work, several papers on infinite universal graphs have appeared~\citep{Lehner23,Lehner24,Krill23,Krill25,GH23,Georgakopoulos25}.

{\fontsize{11pt}{12pt}
\selectfont
\def\soft#1{\leavevmode\setbox0=\hbox{h}\dimen7=\ht0\advance \dimen7
  by-1ex\relax\if t#1\relax\rlap{\raise.6\dimen7
  \hbox{\kern.3ex\char'47}}#1\relax\else\if T#1\relax
  \rlap{\raise.5\dimen7\hbox{\kern1.3ex\char'47}}#1\relax \else\if
  d#1\relax\rlap{\raise.5\dimen7\hbox{\kern.9ex \char'47}}#1\relax\else\if
  D#1\relax\rlap{\raise.5\dimen7 \hbox{\kern1.4ex\char'47}}#1\relax\else\if
  l#1\relax \rlap{\raise.5\dimen7\hbox{\kern.4ex\char'47}}#1\relax \else\if
  L#1\relax\rlap{\raise.5\dimen7\hbox{\kern.7ex
  \char'47}}#1\relax\else\message{accent \string\soft \space #1 not
  defined!}#1\relax\fi\fi\fi\fi\fi\fi}

}
\printindex
\end{document}